\documentclass[11pt,oneside,dvipsnames]{amsart}
\usepackage{stackrel}
\usepackage[french, english]{babel}
\usepackage{a4wide}
\usepackage[utf8,latin1]{inputenc}
\usepackage{times}
\usepackage{adjustbox}
\usepackage{float}
\usepackage[cyr]{aeguill}
\usepackage{amsmath,amscd}
\usepackage{amssymb}
\usepackage{mathrsfs}
\usepackage{amsthm}
\usepackage{geometry}
\usepackage{graphicx}
\usepackage{epstopdf}
\usepackage{amsfonts}
\usepackage{amsmath}
\usepackage{amsmath,mathtools}
\usepackage{amsmath,graphicx}
\usepackage{amscd}
\usepackage{dsfont}
\usepackage{latexsym}
\usepackage{verbatim}
\usepackage{comment}
\usepackage{tikz-cd}
\usetikzlibrary{matrix,arrows}
\usepackage{tikz}
\usetikzlibrary{tikzmark}
\usepackage{tensor}
\usepackage{leftidx}
\usepackage[retainorgcmds]{IEEEtrantools}
\usepackage[title]{appendix}
\usepackage[mathscr]{euscript}
\usepackage[colorinlistoftodos]{todonotes}
\usepackage{bbm}
\usepackage{url}
\usepackage{upgreek}
\usepackage{mathtools, nccmath}
\usepackage[utf8]{inputenc}
\usepackage{multirow}
\usepackage{rotating}
\usepackage{marginnote}
\usepackage{hyperref} 
\usepackage{nameref}
\usepackage{ytableau}
\usepackage{youngtab}

\usepackage[all]{xy}
\usepackage[T1]{fontenc}

\usepackage{hyperref}
\usepackage{array}
\usepackage{tabularx}
\usepackage{yfonts}
\usepackage{setspace}  \linespread{1.1}
 \usepackage[foot]{amsaddr}
\geometry{
  left=1.2in,
  right=1.2in,
  top=1.25in,
  bottom=1.2in,
  } 
\raggedbottom

\makeatletter
\renewcommand\subsubsection{\@startsection{subsubsection}{3}{\z@}%
                                     {-2ex\@plus -1ex \@minus -.2ex}%
                                     {-0.5em}
                                     {\normalfont\normalsize\bfseries}}
                                     
                                     \renewcommand\subsection{\@startsection{subsection}{3}{\z@}%
                                     {-3.25ex\@plus -1ex \@minus -.2ex}%
                                     {-0.5em}
                                     {\normalfont\normalsize\bfseries}}

\makeatother

\theoremstyle{plain}
\newtheorem{Thm}{Theorem}
\newtheorem{ThmA}{Theorem}

\newtheorem{Prop}[Thm]{Proposition}
\newtheorem{Lem}[Thm]{Lemma}
\newtheorem{Cor}[Thm]{Corollary}

\theoremstyle{definition}

\newtheorem{Rem}[Thm]{Remark}

\newtheoremstyle{named}{}{}{}{}{\bfseries}{.}{.5em}{\thmnote{#3}#1}
\theoremstyle{named}


\DeclareMathOperator{\Aut}{Aut}
\DeclareMathOperator{\Bun}{Bun}

\DeclareMathOperator{\GL}{GL}

\DeclareMathOperator{\Hom}{Hom}
\DeclareMathOperator{\SHom}{\mathscr{H}\textit{om}}

\DeclareMathOperator{\Id}{Id}
\DeclareMathOperator{\Ima}{Im}

\DeclareMathOperator{\isom}{\!\!\smash{\begin{array}{c}\sim\\[-1em]
\rightarrow\end{array}}\!\!}
\DeclareMathOperator{\lisom}{\!\!\smash{\begin{array}{c}\sim\\[-1em]
\longrightarrow\end{array}}\!\!}

\DeclareMathOperator{\Ind}{Ind}
\DeclareMathOperator{\Ker}{Ker}

\DeclareMathOperator{\Nm}{Nm}

\DeclareMathOperator{\Pic}{Pic}
\DeclareMathOperator{\PGL}{PGL}
\DeclareMathOperator{\pr}{pr}

\DeclareMathOperator{\red}{red}
\DeclareMathOperator{\reg}{reg}

\DeclareMathOperator{\Res}{Res}

\DeclareMathOperator{\Sch}{Sch}

\DeclareMathOperator{\SL}{SL}

\DeclareMathOperator{\Spec}{Spec}

\DeclareMathOperator{\Stab}{Stab}
\DeclareMathOperator{\Supp}{Supp}

\DeclareMathOperator{\Tr}{Tr}

\newcommand*\circled[1]{
  \tikz[baseline=(char.base)]\node[shape=circle,draw,inner sep=0.2pt,font=\tiny,minimum size=8pt] (char) {#1};}

\makeatletter
\newcommand*{\rom}[1]{\expandafter\@slowromancap\romannumeral #1@}
\makeatother

\renewcommand{\descriptionlabel}[1]{\hspace\labelsep\upshape\bfseries #1.}
\makeatletter
\let\orgdescriptionlabel\descriptionlabel
\renewcommand*{\descriptionlabel}[1]{%
  \let\orglabel\label
  \let\label\@gobble
  \phantomsection
  \edef\@currentlabel{#1}%
  \let\label\orglabel
  \orgdescriptionlabel{#1}%
}
\makeatother

\usepackage{subfig}	 

\title{Mirror of Orbifold Singularities in the Hitchin Fibration: the case $(\SL_n,\PGL_n)$}
\author{Yongbin Ruan}
\email[Y. Ruan]{ruanyb@zju.edu.cn}
\author{Cheng Shu}
\email[C. Shu]{shuc@zju.edu.cn}
\address[Y. Ruan]{Institute for Advanced Study in Mathematics, Zhejiang University, Hangzhou, China}
\address[C. Shu]{Institute for Theoretical Sciences, Westlake University, Hangzhou, China}



\usepackage{pxfonts}
\usepackage{indentfirst}
\usepackage{setspace}  \linespread{1.2}
\usepackage{hyperref}
\colorlet{ivory}{Apricot!30!}
\colorlet{space}{black!85!}
\definecolor{bgc}{RGB}{29, 44, 46}
\definecolor{txt}{RGB}{223, 222, 189}
\definecolor{cmd}{RGB}{206, 151, 88}
\begin{document}
\let\bs\boldsymbol
\begin{abstract}
We study the geometry of singular $\SL_n$-Hitchin fibres over the elliptic locus. We show that orbifold singularities appear in the $\PGL_n$-moduli space $\mathcal{M}^{ell}_C(\PGL_n)$ exactly when the $\SL_n$ side $\mathcal{M}^{ell}_C(\SL_n)$ has a reducible Hitchin fibre. Our main theorem shows that the Fourier-Mukai transform of a skyscraper sheaf supported at an orbifold singularity in $\mathcal{M}^{ell}_C(\PGL_n)$ satisfies a version of the fractional Hecke eigenproperty, as conjectured by Frenkel and Witten.
\end{abstract}
\maketitle

\tableofcontents
\addtocontents{toc}{\protect\setcounter{tocdepth}{-1}}
\setcounter{tocdepth}{1}
\numberwithin{Thm}{subsection}
\numberwithin{equation}{section}
\addtocontents{toc}{\protect\setcounter{tocdepth}{1}}
\section{Introduction}\label{INT}
\subsection*{Duality of Hitchin Fibrations}\hfill

Let $G$ be a reductive group over $\mathbb{C}$
and let $C$ be a smooth projective curve of genus $g>1$. We denote by $\check{M}$ the moduli space of semi-stable $G$-Higgs bundles on $C$. The Hitchin map is a proper morphism $\check{h}:\check{M}\rightarrow\check{A}$ with the target being a certain affine space called the Hitchin base. The Hitchin base parametrises cameral covers over $C$. If $\hat{G}$ is the Langlands dual group of $G$, then we have a similarly defined morphism $\hat{h}:\hat{M}\rightarrow\hat{A}$. According to \cite[Theorem A]{DP}, the two Hitchin bases $\check{A}$ and $\hat{A}$ are isomorphic, and so we will write $\hat{A}$ as $\check{A}$ if no confusion arises. The classical limit geometric langlands conjecture of Donagi and Pantev predicts the existence of an equivalence of categories (\cite[Conjecture 2.5]{DP})
\begin{equation}\label{eq-intro-DP2.5}
\mathfrak{cl}:\mathcal{D}(\hat{M})\lisom\mathcal{D}(\check{M})
\end{equation}
where $\mathcal{D}(-)$ denotes the bounded derived category of coherent sheaves on the indicated space. This equivalence should be compatible with two families of operators acting on each side. Such an equivalence has been constructed by Donagi and Pantev over the open subset $\check{A}^{\diamondsuit}\subset\check{A}$ where the corresponding cameral covers are smooth. That is, a derived equivalence between $\check{M}^{\diamondsuit}$ and $\hat{M}^{\diamondsuit}$, the inverse images of $\check{A}^{\diamondsuit}$ under the respective Hitchin maps.
Indeed, they have shown that for any $a\in\check{A}^{\diamondsuit}$, the neutral connected components of the fibres of $\check{M}^{\diamondsuit}$ and $\hat{M}^{\diamondsuit}$ respectively, are dual abelian varieties; that is
\begin{equation}\label{eq-intro-dual-Ab}
\Pic^0(\check{h}^{-1}(a)^{\circ})\cong \hat{h}^{-1}(a)^{\circ},
\end{equation} 
where we have used the superscript $\circ$ to indicate the neutral component. Then the equivalence (\ref{eq-intro-DP2.5}) over $\check{A}^{\diamondsuit}$ is the Fourier-Mukai transform defined by a Poincar\'e line bundle on $\check{M}^{\diamondsuit}\times_{\check{A}^{\diamondsuit}}\hat{M}^{\diamondsuit}$. Fibrewise, this equivalence is reduced to the derived equivalence of dual abelian varieties due to Mukai \cite{Muk}. 

There are only partial results outside $\check{A}^{\diamondsuit}$. For $G=\GL_n$ and over the open subset of integral spectral covers $\check{A}^{ell}\subset\check{A}$, Arinkin \cite{Ari2} has proved a fibrewise equivalence. This has been generalised to the larger open subset $\check{A}^{\heartsuit}\subset\check{A}$ of reduced spectral covers by Melo-Rapagnetta-Viviani \cite{MRV2}, with an additional assumption on the polarisation on the spectral curve. For the dual groups $(\SL_n,\PGL_n)$, a fibrewise equivalence over $\check{A}^{ell}$ has been proved by Franco-Hanson-Ruano \cite{FHR}. A variant taking into account of certain gerbes on the moduli spaces is also proved by Groechenig-Shen \cite{GS}. For general reductive groups, an integral functor is constructed in \cite[Theorem 1.2.2]{Li} which is only known to be fully faithful.

The goal of this article however, is to study the geometry of singular Hitchin fibres $\check{h}^{-1}(a)$ for $a\in\check{A}^{ell}$ and $G=\SL_n$, and understand how the duality (\ref{eq-intro-dual-Ab}) behaves in the singular case. The duality (\ref{eq-intro-dual-Ab}) is a key requirement in the SYZ definition of mirror partners. See for example \cite{HT}. Therefore, our work is an attempt to understand the mirror symmetry for singular fibres. The mirror of a smooth point is known to be a line bundle on the dual Hitchin fibre, and by mirror we mean the Fourier-Mukai transform defined by a certain Poincar\'e sheaf on $\check{M}^{ell}\times_{\check{A}^{ell}}\hat{M}^{ell}$. It turns out that in $\hat{M}^{ell}$, there are at worst orbifold singularities, and we can indeed give a nice description of the mirror.  Some results in this direction include \cite{FGOP} for $\GL_n$ and \cite{Ho} for $\SL_2$.

\subsection*{Singular Hitchin Fibres}\hfill

For $G=\GL_n$ and $a\in\check{A}^{ell}$, the Hitchin fibre $\check{h}^{-1}(a)=\hat{h}^{-1}(a)$ is isomorphic to the compactified Jacobian of the corresponding spectral curve. The generalisation of (\ref{eq-intro-dual-Ab}) in this case is \cite[Theorem B]{Ari2}, which says that $\check{h}^{-1}(a)$ parametrises torsion-free sheaves of rank one on the dual Hitchin fibre, and an open dense subset $\check{h}^{-1}(a)^{\reg}$ parametrises line bundles. It is conceivable that a similar statement should hold for $G=\SL_n$, if the fibre $\check{h}^{-1}(a)$ is irreducible. However, the new phenomenon for $\SL_n$ is that $\check{h}^{-1}(a)$ might not be irreducible (but always connected). This is closely related to the endoscopy theory for $\SL_n$. See \cite{HP} and \cite{dC}.

In the rest of this article, we will only consider the case $G=\SL_n$ and $\hat{G}=\PGL_n$. We will write $\Gamma=\Pic^0(C)[n]$, i.e. the $n$-torsion subgroup of the Jacobian variety of $C$. We also fix a line bundle $D$ on $C$ which is either isomorphic to the canonical bundle $\Omega_C$ or has degree $\deg D>2g-2$. Of course, only the case $D=\Omega_C$ is relevant to mirror symmetry, but our results hold in this greater generality. Now $\check{M}$ denotes the moduli space of semi-stable $D$-twisted Higgs bundles with trivial determinant and traceless Higgs field, and 
$$
\check{A}=\bigoplus_{i=2}^{n}H^0(C,D^{\otimes i})
$$
is the Hitchin base parametrising spectral covers of $C$ of degree $n$. The open subset of $\check{A}$ corresponding to integral spectral covers is denoted by $\check{A}^{ell}$. Then $\check{M}^{ell}=\check{h}^{-1}(\check{A}^{ell})$ is an open subset of the locus $\check{M}^s\subset\check{M}$ of stable Higgs bundles. In particular, it is smooth. The neutral connected component of the moduli space of semi-stable $D$-twisted $\PGL_n$-Higgs bundles is the quotient space $\hat{M}:=\check{M}/\Gamma$. Its Hitchin base $\hat{A}$ is identified with $\check{A}$. Then $\hat{M}^{ell}=\check{M}^{ell}/\Gamma$ only has orbifold singularities. \footnote{We could equally consider the moduli space of Higgs bundles with determinant isomorphic to a nontrivial line bundle. But there will be no essential difference, since we are only interested in a single Hitchin fibre.}

The presence of orbifold singularities in $\hat{M}^{ell}$ is exactly the $\PGL_n$-counterpart of the fact that $\SL_n$-Hitchin fibres might be reducible. According to \cite[Theorem 1.1]{HP} and \cite[Fact 2.5.1]{dC}, for any $a\in\check{A}^{ell}$, the irreducible components of $\check{h}^{-1}(a)$ are parametrised by $K_a^{\vee}$, the group of multiplicative characters of a certain subgroup $K_a\subset\Gamma$. In \cite{FW}, Frenkel and Witten conjecture that if $x$ is an element of the dual fibre $\hat{h}^{-1}(a)$, then the automorphism group $\Aut(x)$ is a subgroup of $K_a$, and that there exists some $x\in\hat{h}^{-1}(a)$ such that $\Aut(x)=K_a$. Our first theorem confirms this conjecture. 

Now the category of coherent sheaves supported at an orbifold singularity is generated by the irreducible representations $H_x^{\vee}$ of the isotropy group $H_x=\Aut(x)$ of $x$. If $\xi\in H_x^{\vee}$, we denote by $\mathbf{A}_x(\xi)$ the mirror of $\xi$, which is a torsion-free sheaf on $\check{h}^{-1}(a)$. If $R$ is the regular representation of $H_x$, then we simply write $\mathbf{A}_x=\mathbf{A}_x(R)$, and we have $\mathbf{A}_x=\bigoplus_{\xi\in H_x^{\vee}}\mathbf{A}_x(\xi)$. It is easy to show that $\mathbf{A}_x$ is the eigensheaf for a family of translation operators acting on the Hitchin fibre. Frenkel and Witten predict that each simple component $\mathbf{A}_x(\xi)$ is what they call a fractional Hecke eigensheaf. The main theorem of this article shows that this prediction holds in a suitable sense. See Theorem \ref{Thm-D} below. Note that Frenkel and Witten work with $\mathscr{D}$-modules or A-branes instead of coherent sheaves, therefore our results are in fact a classical limit of their predictions. 

\subsection*{Hecke Eigenproperty}\hfill

The (fractional) Hecke eigenproperty that we will prove is not what is literally stated in \cite{FW}. Let us recall what the Hecke eigenproperty means, and explain how it is related to the eigenproperty that we will prove. We fix a maximal torus $\hat{T}\subset\PGL_n$. For any dominant character $\lambda\in X(\hat{T})_+$, we denote by $V^{\lambda}$ the irreducible representation of $\PGL_n$ corresponding to $\lambda$. The universal $\PGL_n$-Higgs bundle on $\hat{M}\times C$ is denoted by $(\mathcal{E},\vartheta)$.

There are two families of operators acting on each side of the equivalence (\ref{eq-intro-DP2.5}). On the $\PGL_n$-side we have the \textit{classical limit tensorisation functors} (\cite[\S 2]{DP})
\begingroup
\allowdisplaybreaks
\begin{align*}
\mathbf{W}_{\lambda,p}:\mathcal{D}(\hat{M})&\longrightarrow\mathcal{D}(\hat{M})\\
K&\longmapsto K\otimes (V^{\lambda}_{\mathcal{E}})|_{\hat{M}\times\{p\}}
\end{align*}
\endgroup
defined for any $p\in C$ and $\lambda\in X(\hat{T})_+$, where $V^{\lambda}_{\mathcal{E}}$ is the vector bundle obtained from $\mathcal{E}$ by extending the structure group using $V^{\lambda}$. A complex $K\in\mathcal{D}(\hat{M})$ is called an eigensheaf of $\mathbf{W}_{\lambda,p}$ if there is a quasi-isomorphism
$$
\mathbf{W}_{\lambda,p}(K)\cong K\otimes E
$$
for some $\mathbb{C}$-vector space $E$, called the multiplier. For any smooth point $x\in\hat{M}$, the skyscraper sheaf $\underline{\mathbb{C}}_x$ supported at $x$ is an eigensheaf for $\mathbf{W}_{\lambda,p}$. Indeed, we have
\begin{equation}\label{eq-intro-W-eigen}
\mathbf{W}_{\lambda,p}(\underline{\mathbb{C}}_x)\cong \underline{\mathbb{C}}_x\otimes V_{\mathcal{E}}^{\lambda}|_{\{x\}\times\{p\}}.
\end{equation}

On the $\SL_n$-side, we have the \textit{classical limit Hecke functors} (\cite[\S 2]{DP})
$$
\mathbf{H}_{\lambda,p}:\mathcal{D}(\check{M})\longrightarrow\mathcal{D}(\check{M}),
$$
which are the integral functors defined by certain correspondences called Hecke stacks. There are two approaches to defining these functors. One is proposed by Donagi-Pantev in \cite[\S 2]{DP}. By identifying $\mathcal{D}(\check{M})$ as a subcategory of the derived category of Higgs sheaves on $\Bun_{\SL_n}$, we can use the usual Hecke stacks (see \cite[Eq. (2)]{DP}) to define the Hecke functors, though the classical limits of Hecke kernels are more subtle \cite[Definition 2.4]{DP}. Another approach is directly considering the stack of pairs $((E,\theta),(E',\theta'))$ of $\SL_n$-Higgs bundles, where $(E',\theta')$ is a \textit{Hecke transform} of $(E,\theta)$ at a given point $p\in C$ of the curve with respect to a given $\theta$-invariant subspace of the fibre $E_p$ (see for example \cite[Definition 4.10]{HH}, which should agree with the \textit{$\varphi$-invariant Hecke modifications} in \cite[\S 5.3]{FW}). This gives a correspondence from $\check{M}$ to itself and so defines a Hecke functor. The equivalence (\ref{eq-intro-DP2.5}) is expected to be compatible with $\mathbf{W}_{\lambda,p}$ and $\mathbf{H}_{\lambda,p}$ for any $p$ and $\lambda$, and so the mirror $\mathbf{A}_x=\mathfrak{cl}(\underline{\mathbb{C}}_x)$ must be an eigensheaf for all $\mathbf{H}_{\lambda,p}$; that is,
$$
\mathbf{H}_{\lambda,p}(\mathbf{A}_x)\cong \mathbf{A}_x\otimes E
$$
with $E\cong V_{\mathcal{E}}^{\lambda}|_{\{x\}\times\{p\}}$. Moreover, since the operators $\mathbf{W}_{\lambda,p}$ restricts to well-defined operators on each Hitchin fibre, so do the operators $\mathbf{H}_{\lambda,p}$. Note that the mirror $\mathbf{A}_x$ is supported on a single Hitchin fibre.

\subsection*{Fractional Hecke Eigenproperty}\hfill

Suppose that $x$ is an orbifold singularity of $\hat{M}$ with isotropy group $H_x$. Then a skyscraper sheaf at $x$ can be equipped with an action of $H_x$. We will denote by $\underline{\mathbb{C}}_x(\xi)$ the rank one skyscraper sheaf supported at $x$ with $H_x$ acting on its stalk via the character $\xi\in H_x^{\vee}$. Now the multiplier $E=V_{\mathcal{E}}^{\lambda}|_{\{x\}\times\{p\}}$ is an $H_x$-representation, and let $E=\bigoplus_i E_i$ be its decomposition into irreducible representations $E_i$ of $H_x$. Each $E_i$ is isomorphic to a representation $\xi_i$ for some $\xi_i\in H_x^{\vee}$. There might be isomorphic representations among the $\xi_i$'s. Then (\ref{eq-intro-W-eigen}) becomes an isomorphism of $H_x$-representations
$$
\mathbf{W}_{\lambda,p}(\underline{\mathbb{C}}_x(\xi))\cong \bigoplus_i\underline{\mathbb{C}}_x(\xi)\otimes E_i\cong\bigoplus_i\underline{\mathbb{C}}_x(\xi\otimes\xi_i).
$$
Since the right hand side may not be the direct sum of copies of $\underline{\mathbb{C}}_x(\xi)$, this equation is not literally an eigenproperty. The \textit{fractional Hecke eigenproperty} of the mirror $\mathbf{A}_x(\xi)$ proposed by Frenkel and Witten is that there exists an isomorphism
\begin{equation}
\mathbf{H}_{\lambda,p}(\mathbf{A}_x(\xi))\cong\bigoplus_i\mathbf{A}_x(\xi\otimes \xi_i).
\end{equation}
In the situation of \cite[\S 5.3]{FW}, one expects a family of translation maps $\{\Phi_i\}_i$ on the Hitchin fibre $\check{h}^{-1}(a)$ such that for any $K\in\mathcal{D}(\check{h}^{-1}(a))$, we have
\begin{equation}\label{eq-intro-decomp-Hecke}
\mathbf{H}_{\lambda,p}(K)=\bigoplus_i\Phi_i^{\ast}K,
\end{equation}
and that for each $i$, we have $\Phi_i^{\ast}\mathbf{A}_x(\xi)\cong\mathbf{A}_x(\xi\otimes\xi_i)$. 

In this article, we will construct a family of translation operators $T_{p_1,p_2}$ acting on the Hitchin fibre $\check{h}^{-1}(a)$, and associate to each of them a character $\chi_{p_1,p_2}\in K_a^{\vee}$, where $K_a^{\vee}$ is the finite group parametrising the irreducible components of $\check{h}^{-1}(a)$. We will show that 
$$
T_{p_1,p_2}^{\ast}\mathbf{A}_x(\xi)\cong\mathbf{A}_x(\xi\otimes\Res^{K_a}_{H_x}\chi_{p_1,p_2})
$$
for any $\xi\in H_x^{\vee}$, where $\Res^{K_a}_{H_x}\chi_{p_1,p_2}$ denotes the restriction of the character $\chi_{p_1,p_2}$ to $H_x$. This is our fibrewise version of the fractional Hecke eigenproperty. See Theorem \ref{Thm-D} below. Of course, to verify the full fractional Hecke eigenproperty, one needs to show that the decomposition (\ref{eq-intro-decomp-Hecke}) exists and that it is compatible with the decomposition $E=\bigoplus_i E_i$ of the multiplier. We hope to address this problem in the future.

\subsection*{Main Results}\hfill

Our first theorem proves a conjecture of Frenkel and Witten \cite[\S 1.5]{FW} concerning the automorphism group of a $\PGL_n$-Higgs bundle lying over the elliptic locus of the Hitchin base. This is also an improvement upon the result of Ng\^o \cite[Corollaire 4.11.3]{Ngo} for $\PGL_n$. 
\begin{ThmA}[Theorem \ref{M-T}]\label{Thm-FW}
The following assertions hold:
\begin{itemize}
\item[(i)] For any $a\in \check{A}_n^{ell}(C)$, and any $x\in 
\hat{h}_C^{-1}(a)$, we have $\Aut(x)\subset K_a$.
\item[(ii)] For any $a\in \check{A}^{ell}_n(C)$, there exists $x\in 
\hat{h}_C^{-1}(a)$ such that $\Aut(x)=K_a$.
\end{itemize}
\end{ThmA}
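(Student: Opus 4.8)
The plan is to analyse the automorphism group of a $\PGL_n$-Higgs bundle $x \in \hat{h}_C^{-1}(a)$ by pulling everything back to the $\SL_n$-side and to the spectral/cameral cover. Concretely, writing $p \colon \tilde{C}_a \to C$ for the (integral, by hypothesis since $a \in \check{A}^{ell}$) spectral curve, a point of $\check{h}_C^{-1}(a)$ is a rank-one torsion-free sheaf on $\tilde{C}_a$ with fixed determinant, and a $\PGL_n$-Higgs bundle over $a$ is such an object taken modulo the action of $\Gamma = \Pic^0(C)[n]$, which acts by tensoring with the pullback $p^{\ast}\gamma$. Thus $\Aut(x)$ for $x \in \hat{h}_C^{-1}(a)$ is the stabiliser in $\Gamma$ of (the isomorphism class of) a lift $\check{x} \in \check{h}_C^{-1}(a)$; that is, $\Aut(x) = \{\gamma \in \Gamma : \check{x} \otimes p^{\ast}\gamma \cong \check{x}\}$.

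For part (i), I would argue that $\{\gamma \in \Gamma : \check{x} \otimes p^{\ast}\gamma \cong \check{x}\}$ is contained in $K_a$ for \emph{every} $\check{x}$. The subgroup $K_a \subset \Gamma$ is, by the results of \cite{HP} and \cite{dC} cited in the excerpt, precisely the kernel of the pullback map $p^{\ast} \colon \Gamma = \Pic^0(C)[n] \to \Pic(\tilde{C}_a)$ — equivalently, the group of $\gamma$ whose pullback $p^{\ast}\gamma$ is trivial on the integral curve $\tilde{C}_a$ — and it is exactly this $K_a$ that indexes the irreducible components of $\check{h}_C^{-1}(a)$ via the connected components of the $\Gamma$-action. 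So the key point is: if $p^{\ast}\gamma$ is a nontrivial line bundle on the integral curve $\tilde{C}_a$, then tensoring any rank-one torsion-free sheaf $\mathcal{F}$ by $p^{\ast}\gamma$ cannot preserve its isomorphism class, because it changes the class in $\Pic^0(\tilde{C}_a)$ acting on the (generalised) Jacobian — more precisely one compares the pushforwards or the action on the smooth locus where $\mathcal{F}$ is a line bundle. Hence $\Stab_\Gamma(\check{x}) \subseteq \ker(p^{\ast}) = K_a$, giving (i). Here one must be a little careful about torsion-free-but-not-locally-free sheaves and about the determinant constraint, but on an integral curve the argument via the smooth locus goes through.

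For part (ii), I need to exhibit $\check{x}$ with $\Stab_\Gamma(\check{x}) = K_a$, i.e. a point fixed by \emph{all} of $K_a$. Since every $\gamma \in K_a$ has $p^{\ast}\gamma \cong \mathcal{O}_{\tilde{C}_a}$, tensoring by $p^{\ast}\gamma$ is isomorphic to the identity functor on sheaves on $\tilde{C}_a$ — but not canonically, and the subtlety is the fixed determinant/$\SL_n$ rigidification: the isomorphism $p^{\ast}\gamma \cong \mathcal{O}$ is only defined up to scalar, and one must check that the induced action on the relevant Hitchin fibre (with its determinant trivialisation) genuinely fixes some point rather than acting by a translation with no fixed point. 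The natural candidate is to take $\check{x}$ corresponding to a line bundle pulled back from $C$, or more robustly to use that the $K_a$-action on the component group is trivial on the component $\check{h}_C^{-1}(a)^{\circ}$ containing such a point, so $K_a$ acts on that component as a subgroup of translations of an abelian variety (a torsor under $\Pic^0(\tilde{C}_a)$); then one must show this translation action is actually \emph{trivial}, not merely component-preserving. This reduces to showing the canonical homomorphism $K_a \to \Pic^0(\tilde{C}_a)$, $\gamma \mapsto p^{\ast}\gamma$, is the zero map — which is exactly the definition of $K_a$ — so the action on $\check{h}_C^{-1}(a)^{\circ}$ is by an \emph{extension datum} that is cohomologically trivial, and every point is fixed; alternatively one locates an explicit fixed point among pullbacks from $C$.

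The main obstacle I expect is part (ii), and specifically the bookkeeping of the $\SL_n$-rigidification: making precise that ``$p^{\ast}\gamma \cong \mathcal{O}$ up to scalar'' implies the existence of an \emph{honest} fixed point in the fibre of the $\SL_n$-moduli space (with its trivial-determinant condition), rather than merely that $K_a$ preserves a component. This is a genuine $H^1$/gerbe-type subtlety — the obstruction to a fixed point lives in a group cohomology $H^1(K_a, -)$ or equivalently measures whether the $K_a$-action on the component is by a nontrivial torsor automorphism — and disposing of it will require either a clean cohomological vanishing argument or an explicit construction of the fixed Higgs bundle (e.g. one induced from a rank-one object on a suitable intermediate cover, or directly a semistable Higgs bundle whose underlying bundle is a sum of twists by $\Gamma$-characters). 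Part (i), by contrast, should be relatively formal once the characterisation of $K_a$ as $\ker(p^{\ast})$ from \cite{HP}, \cite{dC} is in hand.
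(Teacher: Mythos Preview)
Your translation of $\Aut(x)$ into the stabiliser $\Stab_\Gamma(\check{x})$ on the $\SL_n$-side is correct, and your instinct that part (i) should be the easier half is right. But there is a genuine gap, and it stems from a misidentification of $K_a$.

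You write that $K_a$ is the kernel of $p^{\ast}\colon\Gamma\to\Pic(\tilde{C}_a)$, where $\tilde{C}_a$ is the spectral curve. In fact, as defined in the paper (following \cite{HP}), $K_a$ is the kernel of pullback to the \emph{normalisation} $\tilde{X}_a$ of the spectral curve, i.e.\ $K_a=\Ker\tilde{\pi}^{\vee}$ with $\tilde{\pi}=\pi\circ\nu$. These two kernels need not agree when $X_a$ is singular: the kernel of $\nu^{\ast}\colon\Pic(X_a)\to\Pic(\tilde{X}_a)$ is an affine group which can contain torsion, so $\Ker\pi^{\vee}$ may be strictly smaller than $\Ker\tilde{\pi}^{\vee}$.

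This matters for your key lemma in part (i). You claim that if $p^{\ast}\gamma$ is a nontrivial line bundle on the integral curve $X_a$, then $M\otimes p^{\ast}\gamma\not\cong M$ for any rank-one torsion-free $M$. This is false. Take $M=\nu_{\ast}N$ for a line bundle $N$ on $\tilde{X}_a$; by the projection formula $M\otimes p^{\ast}\gamma\cong\nu_{\ast}(N\otimes\tilde{\pi}^{\ast}\gamma)$, which is isomorphic to $M$ whenever $\tilde{\pi}^{\ast}\gamma$ is trivial, regardless of whether $\pi^{\ast}\gamma$ is trivial on $X_a$. Your suggested fix via ``the smooth locus where $\mathcal{F}$ is a line bundle'' does not help: on a singular integral curve, a nontrivial line bundle can restrict to the trivial bundle on any dense open (this is exactly the $\Ker\nu^{\ast}$ phenomenon). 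The paper's argument is precisely to pass to the normalisation: pull $M$ back along $\nu$, kill torsion to obtain a line bundle on the smooth curve $\tilde{X}_a$, and conclude $\tilde{\pi}^{\ast}\gamma\cong\mathcal{O}_{\tilde{X}_a}$, i.e.\ $\gamma\in K_a$.

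For part (ii), the same misidentification derails your main line. Since $\gamma\in K_a$ only guarantees $\tilde{\pi}^{\ast}\gamma$ trivial (not $\pi^{\ast}\gamma$), tensoring by $\pi^{\ast}\gamma$ is \emph{not} isomorphic to the identity functor on sheaves on $X_a$, and the $K_a$-translation on the fibre is genuinely nontrivial; so the ``$H^1$/gerbe'' framing and the candidate ``line bundle pulled back from $C$'' do not do the job. What does work is the option you list last but do not develop: construct the fixed Higgs bundle by pushforward from an intermediate \'etale cover. The paper shows that for the $G^{\vee}$-Galois cover $\psi\colon\tilde{C}\to C$ attached to $G=K_a$, pushforward identifies $\check{M}_m^{ell}(\tilde{C},G)/G^{\vee}$ with the $G$-fixed locus $\check{M}_n^{ell}(C)^G$ (an analogue of \cite{NR}); surjectivity of the Hitchin map on $\tilde{C}$ then produces a point $x$ with $G\subset\Stab_\Gamma(x)$, and combined with (i) and $K_a=G$ this gives equality.
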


Since we are only interested in fibrewise phenomenon, we will always work with a single $\SL_n$-Hitchin fibre and identify it with a compactified Prym variety. The rest of our results are proved for compactified Prym varieties.

Our second theorem is an analogue of Arinkin's theorem \cite[Theorem 1.3]{Ari1} for Prym varieties. It is well known that the Jacobian variety of a smooth projective curve is a self-dual abelian variety, i.e., there is an isomorphism $\Pic^0(J_C)\cong J_C$. For an integral projective curve $X$ with planar singularities, Arinkin has shown that there is an isomorphism $\Pic^0(\bar{J}_X)\cong J_X$, where $\bar{J}_X$ is the compactified Jacobian of $X$. In both cases, such an isomorphism is induced by the Abel-Jacobi map. Moreover, the torsion components $\Pic^{\tau}(\bar{J}_X)$ (i.e., the subvariety consisting of line bundles $L$ on $\bar{J}_X$ satisfying $L^{\otimes n}\in\Pic^0(\bar{J}_X)$ for some $n>0$.) of the Picard variety of $\bar{J}_X$ is equal to $\Pic^0(\bar{J}_X)$ (see \cite[Proposition 6.2]{Ari1}). For the Prym variety $P$ associated to a smooth spectral curve $X\rightarrow C$, we know that $P$ and $P/\Gamma$ are dual abelian varieties. It is natural to ask what happens for the compactified Prym varieties $\bar{P}$, the inverse images of $0$ under a suitably defined norm map $\Nm:\bar{J}_X\rightarrow J_C$ when $X$ is not smooth. Now the action of $\Gamma$ on $\bar{P}$ is not necessarily free, therefore the dual side is the quotient stack $[\bar{P}/\Gamma]$. 
\begin{ThmA}[Theorem \ref{Thm-Pic'-repre-P}]\label{Thm-A}
The \'etale sheaf $\Pic^{\tau}_{[\bar{P}/\Gamma]}$ on $\Sch_{\mathbb{C}}$ is representable by $P$.
\end{ThmA}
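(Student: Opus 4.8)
The plan is to adapt Arinkin's strategy for compactified Jacobians (\cite[Theorem 1.3]{Ari1}, \cite[Proposition 6.2]{Ari1}) to the Prym setting, exploiting the $\Gamma$-equivariance throughout. First I would establish that $\Pic^{\tau}_{[\bar P/\Gamma]}$, as an \'etale sheaf on $\Sch_{\mathbb C}$, is the same as the subsheaf of $\Gamma$-equivariant objects inside $\Pic^{\tau}_{\bar P}$, i.e. line bundles on $\bar P$ together with a $\Gamma$-linearisation, modulo the line bundles pulled back from $B\Gamma$. Since $\bar P$ is an integral projective variety with planar singularities (being a closed subscheme of the compactified Jacobian $\bar J_X$ cut out by the norm map, and this can be checked locally), Arinkin's theorem applies to $\bar P$ itself: the Abel--Jacobi map $\mathrm{AJ}:\bar P\dashrightarrow \Pic^0(\bar P)$, defined on the smooth locus $\bar P^{\reg}=P$, induces an isomorphism $\Pic^{\tau}(\bar P)\cong \Pic^0(\bar P)\cong \hat P$, where $\hat P$ is the dual abelian variety of $P$. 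So the whole problem reduces to identifying the $\Gamma$-equivariant structures: I need to show that the $\Gamma$-equivariant part of $\hat P$ is exactly $P$ — that is, the natural map from $P$ (thought of as parametrising $\Gamma$-linearised degree-zero line bundles on $\bar P$, via translation-invariance of the Abel--Jacobi construction) is an isomorphism onto $\Pic^{\tau}_{[\bar P/\Gamma]}$.

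The key steps, in order, would be: (1) verify $\bar P$ is integral with planar singularities and that $\Nm:\bar J_X\to J_C$ is flat with $\bar P$ as a fibre, so that $\bar P^{\reg}=P$ and the group $P$ acts on $\bar P$ by translation, compatibly with the $\Gamma$-action; (2) invoke Arinkin to get $\Pic^{\tau}_{\bar P}\cong \hat P$, and track how $\Gamma$ acts on $\hat P$ through its translation action on $\bar P$ — here the crucial computation is that $\gamma\in\Gamma\subset P$ acts on $\Pic^0(\bar P)=\hat P$ by a translation, so a $\Gamma$-linearisation of $L\in\Pic^0(\bar P)$ forces $L$ to be $\Gamma$-translation-invariant, i.e. $L\in \hat P^{\Gamma}$ where $\Gamma$ acts on $\hat P\cong P$ via... and conversely; (3) identify the exact sequence relating $\Pic^{\tau}_{[\bar P/\Gamma]}$, $\Pic^{\tau}_{\bar P}=\hat P$, the characters $\Gamma^{\vee}$ (obstruction to descending a $\Gamma$-invariant bundle), and the ambiguity in the linearisation ($H^1(\Gamma,\mathbb{G}_m)$ torsor issues), and check that the resulting group is $P$; (4) construct the isomorphism explicitly via the Abel--Jacobi / Poincar\'e-sheaf description so that it is manifestly an isomorphism of sheaves, not just of groups, and in particular is compatible with base change over $\Sch_{\mathbb C}$.

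The main obstacle I expect is step (2)–(3): precisely controlling how the linearisation data interacts with Arinkin's identification $\Pic^{\tau}(\bar P)\cong\Pic^0(\bar P)$. The subtlety is that the $\Gamma$-action on $\bar P$ need not be free, so $[\bar P/\Gamma]$ is a genuine stack and $\Pic^{\tau}_{[\bar P/\Gamma]}$ sees not only $\Gamma$-invariant line bundles but also the choice of equivariant structure; one must show the net effect is that $\Pic^{\tau}_{[\bar P/\Gamma]}$ sits in an extension that collapses to $P$. Concretely, since $P\to P/\Gamma=\hat{\hat P}/\Gamma$ and $\hat P\to\hat P/\Gamma$ are dual isogenies with kernels $\Gamma$ and $\Gamma^{\vee}$ respectively, I expect the equivariant Picard sheaf to fit into $1\to \Gamma^{\vee}\to \Pic^{\tau}_{[\bar P/\Gamma]}\to \hat P^{\Gamma}\to 1$ with $\hat P^{\Gamma}$ of dimension $\dim P$ and the extension exactly realising $P$ as the pullback of $\hat P\to \hat P/\Gamma^{\vee}\cong$ (something), giving $\Pic^{\tau}_{[\bar P/\Gamma]}\cong P$. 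Making this identification canonical — i.e. induced by the Abel--Jacobi map as in the smooth case, and functorial in the base scheme — is where the real work lies; the group-theoretic bookkeeping, while routine in spirit, must be done carefully because the non-freeness of the $\Gamma$-action is the whole reason the statement is interesting.
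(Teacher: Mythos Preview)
Your proposal has a fundamental gap at step (1): $\bar P$ is \emph{not} integral in general. This is precisely the phenomenon at the heart of the paper --- when $K_a=G\ne\{1\}$, the compactified Prym $\bar P$ is connected but has $|G^{\vee}|$ irreducible components (see the discussion preceding Theorem~\ref{Thm-B-proof} and \cite[Fact 2.5.1]{dC}). So ``closed subscheme of $\bar J_X$ cut out by the norm map'' does not make $\bar P$ integral, and Arinkin's theorem cannot be invoked as you propose. In any case Arinkin's result in \cite{Ari1} is the statement $\Pic^0(\bar J_X)\cong J_X$ for the compactified Jacobian of an integral curve $X$; it is not a general theorem about varieties with planar singularities, and $\bar P$ is not a compactified Jacobian of anything. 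Your identification $\Pic^{\tau}(\bar P)\cong\hat P$ is therefore unsupported, and with it the whole of steps (2)--(4). Note also that $P$ itself is disconnected, so ``the dual abelian variety $\hat P$'' already requires care.

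The paper's route avoids this by never applying Arinkin to $\bar P$. Instead it uses the isomorphism $(\bar P_0\times J_C)/\Gamma_0\cong\bar J$ of Remark~\ref{Rem-JCP=barJ-0} to produce an \emph{injective} morphism of sheaves $p^{\vee}:\Pic_{[\bar P/\Gamma]}\hookrightarrow\Pic_{\bar J}$ (Lemma~\ref{Inj-pV}). Now $\bar J$ \emph{is} the compactified Jacobian of the integral spectral curve $X$, so Arinkin's isomorphism $\Pic^0_{\bar J}\cong J$ is legitimately available. Defining $\Pic'_{[\bar P/\Gamma]}:=(p^{\vee})^{-1}(\Pic^0_{\bar J})$, the paper identifies this subsheaf with its image inside $J$: containment in $P\subset J$ follows from the commutative diagram~(\ref{CD-PrymAJ}) relating $p^{\vee}$ to the norm map, and surjectivity onto $P$ is obtained by exhibiting the Poincar\'e sheaf $\mathcal Q$ on $P\times[\bar P/\Gamma]$ (descended from $\mathcal P$ via the equivariant structure of Proposition~\ref{Prop-equiv-barP-a'}) and checking that $p^{\vee}\circ\tilde\rho=\rho|_P$. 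A short final argument then shows $\Pic'=\Pic^{\tau}$. The moral is that the ``$\Gamma$-equivariance bookkeeping'' you anticipated is done not on $\Pic^{\tau}_{\bar P}$ (which is not under control) but inside $\Pic^0_{\bar J}$, where Arinkin gives a firm foothold.
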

In particular, if $P$ is not connected, then $\Pic^0_{[\bar{P}/\Gamma]}\neq\Pic^{\tau}_{[\bar{P}/\Gamma]}$. We do not know a priori whether $\Pic^{\tau}_{[\bar{P}/\Gamma]}$ is representable, so we directly work with sheaves. The above theorem is needed in the process of proving Theorem \ref{Thm-B} below.

If $x\in\hat{M}^{ell}$ is a smooth point, then the mirror $\mathbf{A}_x$ is a line bundle on $\bar{P}$. However, if $x$ is an orbifold singularity, $\mathbf{A}_x$ is no longer indecomposable. Our third theorem determines the support of each simple summand $\mathbf{A}_x(\xi)$. Here we see a finer interplay between the orbifold singularities on the $\PGL_n$ side and the reducibility of $\SL_n$-Hitchin fibres.
\begin{ThmA}[Theorem \ref{Thm-B-proof}]\label{Thm-B}
Let $a\in\check{A}^{ell}$ and let $\bar{P}$ be the compactified Prym variety associated to the corresponding spectral cover $X_a\rightarrow C$. Let $x\in[\bar{P}/\Gamma]$ be such that its isotropy group is equal to $H\subset K_a$. Then for any $\xi\in H^{\vee}$, we have
$$
\Supp\mathbf{A}_x(\xi)=\bigcup_{\chi\in\Ind^{K_a}_H\xi}\bar{P}_{\chi}^{irr},
$$
where $\Ind^{K_a}_H\xi$ denotes the character of $K_a$ induced from $\xi$, and $\bar{P}_{\chi}^{irr}$ is the irreducible component of $\bar{P}$ corresponding to $\chi$ (the notation $\chi\in\Ind^{K_a}_H\xi$ means that $\chi$ is an irreducible constituent of the induced character).
\end{ThmA}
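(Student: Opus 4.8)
The plan is to compute each $\mathbf{A}_x(\xi)$ fibrewise over the generic point of every irreducible component of $\bar{P}$, and to identify those generic fibres, via Theorem~\ref{Thm-A}, with fibres of line bundles on $[\bar{P}/\Gamma]$.

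First I would unwind the definition of the mirror. Let $\iota_x\colon BH\hookrightarrow[\bar{P}/\Gamma]$ be the residual gerbe at $x$, so that (up to the sign convention fixed in the definition) $\underline{\mathbb{C}}_x(\xi)=\iota_{x\ast}(\xi^{-1})$, and let $\mathcal{P}$ be the Poincar\'e kernel on $[\bar{P}/\Gamma]\times\bar{P}$ defining $\mathfrak{cl}$. By the projection formula, $\mathbf{A}_x(\xi)=\mathfrak{cl}(\underline{\mathbb{C}}_x(\xi))$ is the $\xi$-isotypic part of $\mathcal{Q}:=\mathcal{P}|_{\{x\}\times\bar{P}}$, regarded as an $H$-equivariant object on $\bar{P}$ with $H$ acting trivially on $\bar{P}$. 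Since $\mathbf{A}_x(\xi)$ is a torsion-free sheaf on $\bar{P}$, its support is a union of irreducible components of $\bar{P}$, and $\bar{P}_{\chi}^{irr}$ lies in $\Supp\mathbf{A}_x(\xi)$ precisely when $\mathbf{A}_x(\xi)$ has positive generic rank there. For $L$ a general line bundle in $\bar{P}_{\chi}^{irr}$ (in particular a smooth point of $\bar{P}$), the Poincar\'e kernel is locally free of rank one near $\{x\}\times\{L\}$, so $\mathcal{Q}_L$ is a one-dimensional $H$-representation; denote it $\rho(\chi)$, which depends only on $\chi$ because $\bar{P}_{\chi}^{irr}$ is irreducible and $H^\vee$ is discrete. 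Then the generic rank of $\mathbf{A}_x(\xi)$ on $\bar{P}_{\chi}^{irr}$ is the multiplicity of $\xi$ in $\rho(\chi)$, namely $1$ if $\rho(\chi)=\xi$ and $0$ otherwise, whence $\Supp\mathbf{A}_x(\xi)=\bigcup_{\chi\,:\,\rho(\chi)=\xi}\bar{P}_{\chi}^{irr}$. Since $K_a$ is abelian, Frobenius reciprocity gives $\{\chi:\chi|_H=\xi\}=\{\chi:\chi\text{ occurs in }\Ind^{K_a}_H\xi\}$, so the theorem is reduced to the identity $\rho(\chi)=\chi|_H$.

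Next I would make $\rho$ accessible through two reductions. By Theorem~\ref{Thm-A}, $L\mapsto\mathcal{L}_L:=\mathcal{P}|_{[\bar{P}/\Gamma]\times\{L\}}$ identifies $P$ with $\Pic^\tau_{[\bar{P}/\Gamma]}$ as groups, and $\mathcal{Q}_L=(\mathcal{L}_L)|_x$; as $(\mathcal{L}_{L\otimes L'})|_x=(\mathcal{L}_L)|_x\otimes(\mathcal{L}_{L'})|_x$, the assignment $\rho\colon\pi_0(P)=K_a^\vee\longrightarrow H^\vee$ is a group homomorphism. Moreover, for $L$ in the neutral component $P^\circ=\Pic^0_{[\bar{P}/\Gamma]}$ the line bundle $\mathcal{L}_L$ deforms within $\Pic^0_{[\bar{P}/\Gamma]}$ to the trivial bundle, so $(\mathcal{L}_L)|_x$ is the trivial $H$-representation; this gives $\rho(\mathbf{1})=\mathbf{1}$ and, more usefully, pins down $\rho$ once its values on a generating set of $K_a^\vee$ are known. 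It therefore remains to show that $\rho$ coincides with $\Res^{K_a}_H$, equivalently that the pairing $K_a^\vee\times H\to\mathbb{C}^\times$, $(\chi,h)\mapsto\rho(\chi)(h)$, is the tautological evaluation pairing $(\chi,h)\mapsto\chi(h)$.

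This last identification is the main obstacle. Here $\rho(\chi)(h)$ is the scalar by which the automorphism $h\in H=\Aut(x)$ acts on the fibre at $x=[F]$ of the $\Gamma$-equivariant line bundle on $\bar{P}$ underlying $\mathcal{L}_L$, for $L$ in the component $\chi$. To compute it I would unwind the construction of $\mathcal{P}$ from Arinkin's Poincar\'e sheaf on $\bar{J}_{X_a}$ by restriction and descent along $\Gamma$, and present that $\Gamma$-equivariant line bundle as a suitably normalised determinant of cohomology $\det Rp_{\bar{P}\ast}(\mathcal{F}\otimes p_{X_a}^{\ast}L)$ of a (possibly twisted) universal sheaf $\mathcal{F}$ on $X_a\times\bar{P}$; then the $\Gamma$-equivariant structure is governed by the isomorphisms $F\xrightarrow{\ \sim\ }F\otimes\pi^{\ast}h$ generating $H=\Stab_\Gamma(F)$, and $\rho(\chi)(h)$ measures how $h$ permutes a $\mathbb{C}^\times$-torsor over the set $\pi_0(\bar{P})$ of components. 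By the description of $\pi_0(\bar{P})\cong K_a^\vee$ and of the $\Gamma$-action on it from \cite{HP} and \cite{dC}, this permutation is recorded exactly by $\chi\mapsto\chi(h)$ once the Weil-pairing normalisations in the various identifications are matched, the residual overall-character ambiguity being killed by the neutral-component computation. Granting $\rho=\Res^{K_a}_H$ we obtain $\rho(\chi)=\chi|_H$, so $\mathbf{A}_x(\xi)$ is supported on $\bar{P}_{\chi}^{irr}$ exactly when $\chi|_H=\xi$, i.e.\ exactly when $\chi$ occurs in $\Ind^{K_a}_H\xi$; this is the asserted formula $\Supp\mathbf{A}_x(\xi)=\bigcup_{\chi\in\Ind^{K_a}_H\xi}\bar{P}_{\chi}^{irr}$.
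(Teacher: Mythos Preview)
Your reduction in the first two paragraphs is correct and is exactly how the paper proceeds: restrict to the smooth locus $P\subset\bar P$, observe that $\mathbf{A}_x(\xi)$ is the $\xi$-isotypic piece of the rank-one $H$-equivariant sheaf $\mathcal{Q}=\overline{\mathcal{P}}|_{\bar P\times\{x\}}$, and conclude that $\bar P_\chi^{irr}\subset\Supp\mathbf{A}_x(\xi)$ if and only if the one-dimensional $H$-representation $\rho(\chi)$ carried by a general fibre over $P_\chi$ equals $\xi$. Your observation that $\rho$ is a group homomorphism via Theorem~\ref{Thm-A} is also fine, and the Frobenius-reciprocity rephrasing $\{\chi:\Res^{K_a}_H\chi=\xi\}=\Ind^{K_a}_H\xi$ is used in the paper as well (Proposition~\ref{Prop-IndGH}).

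The gap is your third paragraph, where you propose to compute $\rho(\chi)(h)$ directly on $\bar P$ by ``unwinding the construction from Arinkin's Poincar\'e sheaf'' and reading off how $h$ ``permutes a $\mathbb{C}^\times$-torsor over $\pi_0(\bar P)$''. This is not a computation; in particular $\bar P$ is connected (it is an $\SL_n$-Hitchin fibre), so there is no nontrivial $\pi_0(\bar P)$ to permute, and the scalar $\rho(\chi)(h)$ is not visibly encoded by the description of irreducible components in \cite{HP} or \cite{dC}. The paper does \emph{not} attempt this direct calculation. Instead it passes to the auxiliary compactified Prym $\bar P'$ attached to the intermediate cover determined by $H$, where the point $x$ lives naturally (Lemma~\ref{Lem-barP'=barPG} identifies $\bar P'\cong\bar P^H$) and where the relevant $\Gamma_0$-equivariant structure on the Poincar\'e sheaf $\overline{\mathcal{P}}{}'$ can be pinned down explicitly (Proposition~\ref{Prop-equiv-barP-a'}). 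The key technical input is then Theorem~\ref{Thm-Twisted-Sheaf}, which shows that on $\bar P'_\xi\times[\bar P'_0/\Gamma_0]$ the descended Poincar\'e sheaf $\overline{\mathcal{Q}}{}'$ is $\xi$-twisted; combined with the compatibility $(\Id\times\iota_H)^\ast\mathcal{Q}\cong(\phi^\vee\times\Id)^\ast\mathcal{Q}'\otimes\pr_1^\ast M$ of Corollary~\ref{Cor-Comp-Poinc-sh} (itself resting on the determinant-of-cohomology computation of Proposition~\ref{Prop-phiWV-phiV}) and with Proposition~\ref{Prop-IndGH} identifying which $P'_\chi$ sit in $\bar P'_\xi$, this gives precisely $\rho(\chi)=\Res^{K_a}_H\chi$.

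In short: your outline is on the right track up to the point where the actual character has to be computed, but what you sketch there is a heuristic, not an argument. The paper's route through the intermediate Prym $\bar P'$ and the twisted-sheaf theorem is what makes the computation go through; if you want to avoid that detour, you would need to supply an equally explicit normalisation of the $\Gamma$-equivariant structure on $\overline{\mathcal{P}}$ over $\bar P$ and an honest calculation of the induced $H$-action at $x$, which is essentially the content of \S\ref{S-CDC} and \S 6.1--6.4 repackaged.
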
 

Our fourth theorem gives a new parametrisation of the connected components of Prym varieties. Let $G\subset\Gamma$ be a subgroup, and let $\psi_G:\tilde{C}\rightarrow C$ be the \'etale Galois covering determined by $G$. Its Galois group is isomorphic to $G^{\vee}$. Let $J^{-1}_{\tilde{C}}$ be the Jacobian variety of degree -1 line bundles on $\tilde{C}$. For any $\chi\in G^{\vee}$, regarded as a covering transformation of $\psi_G$, define the morphism
\begingroup
\allowdisplaybreaks
\begin{align*}
f_{\chi}:J^{-1}_{\tilde{C}}&\longrightarrow J_{\tilde{C}}\\
L&\longmapsto L^{-1}\otimes\chi^{\ast}L.
\end{align*}
\endgroup
\begin{ThmA}[Theorem \ref{Thm-Conn-Prym-New}]\label{Thm-C}
Let $P_{\psi_G}$ be the Prym variety associated to $\psi_G$. For any $\chi\in G^{\vee}$, the image of $f_{\chi}$ is contained in a unique connected component of $P_{\psi_G}$, and this defines an isomorphism of groups $G^{\vee}\cong\pi_0(P_{\psi_G})$.
\end{ThmA}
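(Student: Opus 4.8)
The plan is to write the assignment $\chi\mapsto[\Ima f_\chi]$ down explicitly, check it is a well-defined group homomorphism $G^\vee\to\pi_0(P_{\psi_G})$, and then match it with the standard monodromy description of $\pi_0(P_{\psi_G})$; throughout recall $P_{\psi_G}=\Ker\big(\Nm\colon J_{\tilde C}\to J_C\big)$. First, since $\chi$ is a covering transformation, $\psi_G\circ\chi=\psi_G$, hence $\Nm\circ\chi^\ast=\Nm$ on Picard groups, so for $L\in J^{-1}_{\tilde C}$ we have
\[
\Nm\big(f_\chi(L)\big)=\Nm(\chi^\ast L)\otimes\Nm(L)^{-1}=\mathcal O_C,
\]
and therefore $\Ima f_\chi\subset P_{\psi_G}$. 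Moreover $f_\chi(L\otimes M)=f_\chi(L)\otimes g_\chi(M)$, where $g_\chi\colon J_{\tilde C}\to J_{\tilde C}$, $M\mapsto M^{-1}\otimes\chi^\ast M$, is a homomorphism of abelian varieties with image a connected abelian subvariety of $\Ker\Nm$, hence contained in $P_{\psi_G}^{\,0}$. Thus $\Ima f_\chi$ is a single coset of that subvariety and lies in one connected component of $P_{\psi_G}$, which we call $\rho(\chi)$. That $\rho$ is a homomorphism follows from the identity $f_{\chi_1\chi_2}(L)=f_{\chi_1}(\chi_2^\ast L)\otimes f_{\chi_2}(L)$ (a one-line check using $(\chi_1\chi_2)^\ast=\chi_1^\ast\chi_2^\ast$, valid because $G^\vee$ is abelian): as $\chi_2^\ast L$ and $L$ both lie in the irreducible variety $J^{-1}_{\tilde C}$, the right-hand side lies in the component $\rho(\chi_1)+\rho(\chi_2)$, so $\rho(\chi_1\chi_2)=\rho(\chi_1)+\rho(\chi_2)$.

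It remains to see that $\rho$ is bijective, and the plan is to identify it with the topological isomorphism. We have $\pi_0(P_{\psi_G})\cong H_1(C,\mathbb Z)/(\psi_G)_\ast H_1(\tilde C,\mathbb Z)$, the cokernel of the norm map on $H_1$, and via the transfer sequence for the covering $\psi_G$ this cokernel is canonically $I/I^2\cong G^\vee$, where $I$ is the augmentation ideal of $\mathbb Z[G^\vee]$; in particular $\lvert\pi_0(P_{\psi_G})\rvert=\lvert G\rvert$. Under this chain of identifications, the class of $\mathcal O_{\tilde C}(\tilde q_1-\tilde q_2)$ with $\psi_G(\tilde q_1)=\psi_G(\tilde q_2)$ is the covering transformation $a$ with $\tilde q_1=a\cdot\tilde q_2$ (up to inversion, depending on conventions): a path in $\tilde C$ from $\tilde q_2$ to $\tilde q_1$ projects to a loop in $C$ whose monodromy is $a$ by the very definition of $\psi_G$. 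Taking $L=\mathcal O_{\tilde C}(-\tilde q)$ gives $f_\chi(L)=\mathcal O_{\tilde C}(\tilde q-\chi^{-1}\!\cdot\tilde q)$, whose class corresponds to $\chi^{\pm1}$; hence $\rho$ is, up to the inner symmetry $\chi\mapsto\chi^{-1}$ of $G^\vee$, the canonical isomorphism $G^\vee\xrightarrow{\sim}\pi_0(P_{\psi_G})$, and in particular an isomorphism of groups.

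The main obstacle is exactly the last step: verifying that the composite identification $G^\vee\to\operatorname{coker}(\psi_G)_\ast\to\pi_0(P_{\psi_G})$ really is computed by $\rho$, which requires keeping careful track of the direction of the $G^\vee$-action on $\tilde C$, the degree-$(-1)$ torsor bookkeeping for $J^{-1}_{\tilde C}$, and the inverse occurring in $f_\chi$. A route that shifts most of the topological input elsewhere is to prove injectivity of $\rho$ directly: for $\chi\neq1$ one restricts to the cyclic sub-covering of $\psi_G$ cut out by $\langle\chi\rangle$, over which $\Ima f_\chi$ already lies in the corresponding Prym, reducing to the cyclic case; one then concludes with the classical computation $\lvert\pi_0(P_{\psi_G})\rvert=\lvert G\rvert$ recalled above. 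Either way, the combinatorics of matching the explicit translation $f_\chi$ to a chosen model of $\pi_0(P_{\psi_G})$ is where the care is needed; the rest is formal.
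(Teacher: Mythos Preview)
Your argument is correct and takes a genuinely different route from the paper's. The first half (image lands in $P_{\psi_G}$, lies in a single component, and $\chi\mapsto\rho(\chi)$ is a homomorphism) matches the paper's preparatory lemmas almost verbatim. For bijectivity, however, the paper works purely algebraically: it takes the identification $\pi_0(P_{\psi_G})\cong G^\vee$ from \cite{HP} as given (via Poincar\'e line bundles on $J_C$ and the reformulation in Lemma~\ref{Lem-reform-HP2.1}), and then shows, through a chain of equivariance diagrams and a character comparison (Lemma~\ref{Lem-G1G2} applied to the $G^\vee$-equivariant map $C''\to J_{C''}/P_0$ lying over $C\to J_C$), that the assignment $\sigma\colon\chi\mapsto\rho(\chi)$ is precisely inversion on $G^\vee$ relative to that fixed identification. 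You instead compute $\pi_0(P_{\psi_G})$ topologically as $\operatorname{coker}\big((\psi_G)_\ast\colon H_1(\tilde C,\mathbb Z)\to H_1(C,\mathbb Z)\big)\cong G^\vee$ via monodromy, and read off the class of $f_\chi(\mathcal O(-\tilde q))=\mathcal O(\tilde q-\chi^{-1}\tilde q)$ directly as the deck transformation carrying $\chi^{-1}\tilde q$ to $\tilde q$. Your approach is more self-contained and conceptually transparent for the bare isomorphism statement; the paper's algebraic route, on the other hand, pins down $\sigma$ exactly relative to the \cite{HP} labelling, which is what is actually consumed downstream in Proposition~\ref{Prop-T-pi0} and hence in the fractional Hecke eigenproperty. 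Two small points: your invocation of the transfer sequence and $I/I^2$ is a red herring (the cokernel is $G^\vee$ on the nose, since $G^\vee$ abelian forces $[\pi_1(C),\pi_1(C)]\subset\pi_1(\tilde C)$), and your alternative ``reduce to cyclic subcovers'' sketch is underspecified --- it is unclear which intermediate cover you mean --- and in any case unnecessary given that your monodromy computation already yields bijectivity.
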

In fact, if we denote by $P_{\psi_G,\chi}$ the connected component of $P_{\psi_G}$ corresponding to $\chi$ under the identification in \cite[Lemma 2.1]{HP}, then $P_{\psi_G,\chi}$ contains the image of $f_{\chi}$.

For any smooth points $p_1$ and $p_2$ of $X_a$ lying in a fibre of the spectral cover $\pi:X_a\rightarrow C$, tensoring by $\mathcal{O}_{X_a}(p_1-p_2)$ defines a translation operator $T_{p_1,p_2}$ on the Prym variety. This is our fibrewise version of Hecke operator. Suppose that $\mathcal{O}_{X_a}(p_1-p_2)$ lies in the connected component $P_{\chi^{-1}}$. Then $T_{p_1,p_2}$ induces the multiplication by $\chi^{-1}$ on $\pi_0(P)\cong K_a^{\vee}$. Theorem \ref{Thm-C} is used to determine $\chi$ from $p_1$ and $p_2$.

The following theorem is the main result of this article. It shows that the simple torsion-free sheaves $\mathbf{A}_x(\xi)$ satisfy a fibrewise version of the fractional Hecke eigenproperty proposed by Frenkel and Witten in \cite{FW}.
\begin{ThmA}[Theorem \ref{Thm-FHEP}]\label{Thm-D}
Let $a\in\check{A}^{ell}$, $p_1$, $p_2$ and $\chi$ be as above. Then for any subgroup $H\subset K_a$ and any $x\in[\bar{P}/\Gamma]$ such that $\Aut(x)=H$, we have the following isomorphisms of torsion-free sheaves on $\bar{P}$.
\begin{itemize}
\item[(i)] Hecke eigenproperty.
$$
T_{p_1,p_2}^{\ast}\mathbf{A}_x\cong\mathbf{A}_x.
$$
\item[(ii)] Fractional Hecke Eigenproperty.
$$
T^{\ast}_{p_1,p_2}\mathbf{A}_x(\xi)\cong\mathbf{A}_x(\xi\otimes\Res^{K_a}_H\chi)
$$
for any $\xi\in H^{\vee}$.
\end{itemize}
\end{ThmA}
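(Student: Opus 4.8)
The plan is to deduce both parts from the behaviour of the Fourier--Mukai transform $\mathfrak{cl}$ under the translation $T_{p_1,p_2}$, combined with Theorems \ref{Thm-A} and \ref{Thm-C}. First, (i) is a formal consequence of (ii): since $H=\Aut(x)\subset K_a\subset\Gamma$ is abelian, the regular representation decomposes as $R=\bigoplus_{\xi\in H^{\vee}}\xi$, so $\mathbf{A}_x=\bigoplus_{\xi\in H^{\vee}}\mathbf{A}_x(\xi)$; and as $\xi\mapsto\xi\otimes\Res^{K_a}_H\chi$ is a permutation of $H^{\vee}$, part (ii) gives $T^{\ast}_{p_1,p_2}\mathbf{A}_x=\bigoplus_{\xi}\mathbf{A}_x(\xi\otimes\Res^{K_a}_H\chi)=\bigoplus_{\xi}\mathbf{A}_x(\xi)=\mathbf{A}_x$. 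So only (ii) needs to be proved, and we use throughout that $\mathbf{A}_x(\xi)=\mathfrak{cl}(\underline{\mathbb{C}}_x(\xi))$, where $\mathfrak{cl}$ is the fibrewise Fourier--Mukai transform $\mathcal{D}([\bar{P}/\Gamma])\to\mathcal{D}(\bar{P})$ defined by the Poincaré sheaf $\mathcal{P}$ on $\bar{P}\times[\bar{P}/\Gamma]$.

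The core of the argument is to intertwine $T^{\ast}_{p_1,p_2}$ with a tensorisation operator on the $\PGL_n$-side. Write $q:=[\mathcal{O}_{X_a}(p_1-p_2)]\in P$, so that $T_{p_1,p_2}$ is translation by $q$ on $\bar{P}$. The translation-equivariance of the Poincaré sheaf — which by the theory of compactified Jacobians holds over all of $\bar{P}$, not merely over the Prym torus, and descends along the $\Gamma$-action — yields a projection-formula identity $T^{\ast}_{p_1,p_2}\circ\mathfrak{cl}\cong\mathfrak{cl}\circ(-\otimes\mathcal{L}_q)$, where $\mathcal{L}_q:=\mathcal{P}|_{\{q\}\times[\bar{P}/\Gamma]}$ is the line bundle on the stack $[\bar{P}/\Gamma]$ corresponding to $q$ under the isomorphism $\Pic^{\tau}_{[\bar{P}/\Gamma]}\cong P$ of Theorem \ref{Thm-A}. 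Hence $T^{\ast}_{p_1,p_2}\mathbf{A}_x(\xi)\cong\mathfrak{cl}\big(\underline{\mathbb{C}}_x(\xi)\otimes\mathcal{L}_q\big)$. Because $\mathcal{L}_q$ is a genuine line bundle on the quotient stack, tensoring a rank-one skyscraper at the orbifold point $x$ only twists the isotropy action, so $\underline{\mathbb{C}}_x(\xi)\otimes\mathcal{L}_q\cong\underline{\mathbb{C}}_x(\xi\otimes\eta_q)$ for a character $\eta_q\in H^{\vee}$ determined by the $H$-action on the fibre of $\mathcal{L}_q$ at $x$. Applying $\mathfrak{cl}$ again, part (ii) is reduced to the single identity $\eta_q=\Res^{K_a}_H\chi$.

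It remains to compute $\eta_q$, which is the most delicate step. Under the isomorphism $\Pic^{\tau}_{[\bar{P}/\Gamma]}\cong P$, the component group $\pi_0(P)\cong K_a^{\vee}$ corresponds to the part of $\Pic^{\tau}$ carried by the characters of $\Gamma$ factoring through $K_a$; for a class of that type, restriction to an orbifold point with isotropy $H\subset K_a$ is, up to the sign conventions built into the $\Gamma$-equivariant structure of $\mathcal{P}$ (which is the source of the inverses appearing in the statement), the restriction $\Res^{K_a}_H$ of the corresponding character of $K_a$. Since $q\in P_{\chi^{-1}}$ means exactly that the class of $q$ in $\pi_0(P)\cong K_a^{\vee}$ is $\chi^{-1}$, this gives $\eta_q=\Res^{K_a}_H\chi$; Theorem \ref{Thm-C} is what makes the class of $q=[\mathcal{O}_{X_a}(p_1-p_2)]$ in $\pi_0(P)$, hence $\chi$, computable from $p_1,p_2$. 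As a consistency check one has, using the projection formula $\Ind^{K_a}_H(\xi\otimes\Res^{K_a}_H\chi)\cong(\Ind^{K_a}_H\xi)\otimes\chi$ for the abelian group $K_a$, that $\Supp T^{\ast}_{p_1,p_2}\mathbf{A}_x(\xi)=\bigcup_{\psi\in\Ind^{K_a}_H\xi}\bar{P}_{\psi\chi}^{irr}=\Supp\mathbf{A}_x(\xi\otimes\Res^{K_a}_H\chi)$, in agreement with Theorem \ref{Thm-B}. The main obstacles are exactly in this last paragraph: (a) establishing that the translation-equivariance of Arinkin's Poincaré sheaf descends to the stack $[\bar{P}/\Gamma]$ and remains valid over the reducible, singular locus of $\bar{P}$; (b) determining the $\Gamma$-equivariant structure of $\mathcal{P}$ at a point of $\bar{P}$ with nontrivial $\Gamma$-stabiliser, which is what actually produces $\eta_q$; and (c) carefully bookkeeping the inverse/sign conventions relating ``$q\in P_{\chi^{-1}}$'', ``$T_{p_1,p_2}$ acts by multiplication by $\chi^{-1}$ on $K_a^{\vee}$'', and the twist $\xi\otimes\Res^{K_a}_H\chi$.
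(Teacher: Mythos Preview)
Your route is plausible but differs from the paper's in a way that makes it harder, not easier. The paper argues in the opposite order: part (i) is proved first, directly from Arinkin's translation formula for the Poincar\'e sheaf on $\bar{J}\times\bar{J}$ (restricting $(T_{p_1,p_2}\times\Id)^\ast\overline{\mathcal{P}}\cong\overline{\mathcal{P}}\otimes\pr_2^\ast M$ to $\bar{P}\times\{x\}$ gives $T_{p_1,p_2}^\ast\mathbf{A}_x\cong\mathbf{A}_x$ immediately). Then (ii) is deduced from (i) by a Krull--Schmidt argument: since each $\mathbf{A}_x(\xi)$ is simple (Lemma~\ref{Lem-Decomp}) and $\bigoplus_\xi T_{p_1,p_2}^\ast\mathbf{A}_x(\xi)\cong\bigoplus_\xi\mathbf{A}_x(\xi)$, one has $T_{p_1,p_2}^\ast\mathbf{A}_x(\xi)\cong\mathbf{A}_x(\xi')$ for \emph{some} $\xi'$, and then $\xi'$ is read off from the supports using Theorem~\ref{Thm-B} together with Proposition~\ref{Prop-T-pi0}. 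No descent of the translation identity to the quotient stack is ever needed.

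Your obstacle (a) is therefore an artefact of your ordering: the paper never needs $(T_{p_1,p_2}\times\Id)^\ast\overline{\mathcal{Q}}\cong\overline{\mathcal{Q}}\otimes\pr_2^\ast\mathcal{L}_q$ on $\bar{P}\times[\bar{P}/\Gamma]$, because it works entirely with $\overline{\mathcal{P}}$ on $\bar{P}\times\bar{P}$ and uses Krull--Schmidt instead of a direct character computation. Establishing (a) would require checking that Arinkin's isomorphism is compatible with the specific $\Gamma$-equivariant structure fixed in Proposition~\ref{Prop-equiv-barP-a'}; this is not automatic (a priori the two sides could differ by a character of $\Gamma$), and you give no argument for it.

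Your obstacle (b) --- computing the $H$-action on $\mathcal{L}_q|_x$ --- is not a new computation: it is precisely what the paper proves as Theorem~\ref{Thm-Twisted-Sheaf} (combined with Corollary~\ref{Cor-Comp-Poinc-sh}), namely that $(\Id\times i_x)^\ast\mathcal{Q}|_{P_\chi\times\mathbf{B}H}$ is $\Res^G_H\chi$-twisted. That result is already the engine behind Theorem~\ref{Thm-B}, which the paper then invokes as a black box. So your ``direct'' route in fact re-derives the key step of Theorem~\ref{Thm-B} while simultaneously adding the unresolved descent problem (a). Your description of $\pi_0(P)$ as ``the part of $\Pic^\tau$ carried by the characters of $\Gamma$ factoring through $K_a$'' is also not accurate: for general $q\in P_\chi$ the line bundle $\mathcal{L}_q$ has nontrivial underlying sheaf on $\bar{P}$, not merely a character.

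In short: your outline can be completed, but only by proving the twisted-sheaf statement you are implicitly using (which is Theorem~\ref{Thm-Twisted-Sheaf}) and by justifying the stacky descent in (a). The paper's route is shorter because it packages the same twisted-sheaf input into Theorem~\ref{Thm-B} and then finishes with Krull--Schmidt, sidestepping (a) entirely.
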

The isomorphism (i) is easy to obtain. The key to (ii) is Theorem \ref{Thm-B}.

\subsection*{Organisation of the article and the strategy of the proofs}\hfill

Section \ref{Sec-P} collects some definitions and properties concerning (compactified) Jacobian varieties, (compactified) Prym varieties and the norm maps. Some lemmas are proved and will be used in later sections.

Section \ref{Sec-HF} is essentially self-contained. The goal of this section is to prove Theorem \ref{M-T}. The proof relies on \S \ref{SS-MSHB}, \S \ref{SS-SSHB} and \S \ref{subsec-FPLMH}, and the key arguments are given in \S \ref{subsec-OSRHF}. Besides, \S \ref{SS-SSHB} contains the generalisations of some results of Hausel-Pauly \cite{HP} that will be frequently used in later sections.

The rest of the article concerns the Fourier-Mukai transform of a skyscraper sheaf supported at an orbifold singularity of the $\PGL_n$-moduli space $\hat{M}^{ell}$. The problem will be rephrased in terms of the compactified Prym varieties $\bar{P}\cong\check{h}^{-1}_C(a)$, with $a\in\check{A}^{ell}_n(C)$. As of Section \ref{CPV}, we will work with a fixed $a\in\check{A}^{ell}_n(C)$, and the group $K_a$ controlling the irreducible components of $\bar{P}$ will be denoted by $G$. 

Suppose $x\in\bar{P}$ and $\Stab_{\Gamma}(x)=H$; that is, $x$ defines a point of $\hat{M}^{ell}$ with isotropy group $H$. Theorem \ref{M-T} guarantees that $H\subset G$. This allows us to consider
$$
C''\longrightarrow C'\longrightarrow C,
$$
where $C''\rightarrow C$ is a $G^{\vee}$-Galois covering and $C'\rightarrow C$ is an $H^{\vee}$-Galois covering. Associated to $C'$ is an intermediate compactified Prym variety $\bar{P}'$, while $\bar{P}$ is thought of as associated to $C$ (of course, it also depends on the spectral data). The precise definitions will be given in \S \ref{subsec-prym}, where we also introduce many notations that will be used throughout the article. The variety $\bar{P}$ is already studied in the literature (see \cite{dC} and \cite{HP}). We study $\bar{P}'$, as well as its regular part $P'$, in Section \ref{CPV}. In particular, we will determine their connected components. As opposed to $\bar{P}$, the variety $\bar{P}'$ is not connected as long as $H$ is nontrivial. Although $\bar{P}'$ is our focus of interest, the consideration of $C''$ is inevitable, since it is directly related to $G$. 

Section \ref{S-CDC} is a digression. By some computations of determinants of cohomology, we obtain Proposition \ref{Prop-phiWV-phiV}, which is only used in \S \ref{subsec-RPF}.

Section \ref{Sec-PS} begins with the definitions of the Fourier-Mukai kernels $\overline{\mathcal{Q}}{}'$ on $\bar{P}'\times[\bar{P}'/\Gamma]$. In particular, we also define the Fourier-Mukai kernel $\overline{\mathcal{Q}}$ on $\bar{P}\times[\bar{P}/\Gamma]$. In \S \ref{subsec-RPF}, we prove Theorem \ref{Thm-Pic'-repre-P}, which is only used in Proposition \ref{Prop-phiV-ptil}, leading to to Corollary \ref{Cor-Comp-Poinc-sh}. Another main result of this section is Theorem \ref{Thm-Twisted-Sheaf}. Its proof does not rely on \S \ref{subsec-RPF} or \S \ref{SS-Comp-Poinc-sh}. Corollary \ref{Cor-Comp-Poinc-sh} and Theorem \ref{Thm-Twisted-Sheaf} are the two main ingredients in the proof of Theorem \ref{Thm-B-proof}, and are only used therein.

Let us explain more about the roles of Corollary \ref{Cor-Comp-Poinc-sh} and Theorem \ref{Thm-Twisted-Sheaf}. Theorem \ref{Thm-B-proof} concerns the behaviour of a skyscraper sheaf on $[\bar{P}/\Gamma]$ under the Fourier-Mukai transform defined by $\overline{\mathcal{Q}}$. For our purpose, it suffices to restrict everything to $P\times[\bar{P}/\Gamma]$, where $\overline{\mathcal{Q}}$ becomes a line bundle $\mathcal{Q}$. The core of the problem is to understand the restriction of $\mathcal{Q}$ via the closed immersion $P\times\mathbf{B}H\hookrightarrow P\times[\bar{P}/\Gamma]$ that is defined by the orbifold singularity in question (here, $\mathbf{B}H$ is the classifying stack). Recall that in Section \ref{CPV} we have introduced an intermediate compactified Prym variety $\bar{P}'$. It turns out that it is isomorphic to $\bar{P}^H$ by Lemma \ref{Lem-barP'=barPG}, where $\bar{P}^H$ is the fixed-point locus of $H$. Thus, the closed immersion $\mathbf{B}H\hookrightarrow[\bar{P}/\Gamma]$ factors through $$[\bar{P}'/\Gamma]\cong[\bar{P}^H/\Gamma]\hookrightarrow[\bar{P}/\Gamma].$$Now, Corollary \ref{Cor-Comp-Poinc-sh} tells us that the pullback of $\mathcal{Q}$ to $P\times[\bar{P}'/\Gamma]$ is the same as the pullback of $\mathcal{Q}'=\overline{\mathcal{Q}}{}'|_{P'\times[\bar{P}'/\Gamma]}$ to $P\times[\bar{P}'/\Gamma]$ (let us admit for the moment that there exists a map from $P$ to $P'$).

It remains to understand the restriction of $\mathcal{Q}'$ to $P'\times\mathbf{B}H$. As a coherent sheaf on $P'\times\mathbf{B}H$, it decomposes according to the action of $H$. On each connected component of $P'$, there is a unique character $\xi\in H^{\vee}$, such that the $\xi$-isotypic component of $\mathcal{Q}'$ is nontrivial, since it is a line bundle. Theorem \ref{Thm-Twisted-Sheaf} explicitly determines the character $\xi$. The effect of projecting a sheaf to the first factor of $P'\times\mathbf{B}H$, which is part of the Fourier-Mukai transform operation, amounts to taking the $H$-invariant part. If $\xi$ is nontrivial, we get the zero sheaf. This explains the reason why the Fourier-Mukai transform of a skyscraper sheaf is only support at some particular connected components.

Finally, Theorem \ref{Thm-FHEP} follows easily from Theorem \ref{Thm-B-proof}.

\subsection*{Notations}\hfill

For any finite abelian group $G$, we write $G^{\vee}=\Hom(G,\mathbb{C}^{\ast})$, and by $|G|$ the cardinality of $G$. If $f:X\rightarrow Y$ is a morphism of algebraic stacks, and $\mathcal{M}(X)$ and $\mathcal{M}(Y)$ are the moduli spaces of certain class of sheaves on $X$ and $Y$, e.g. line bundles or torsion-free sheaves, then we denote by $f^{\vee}:\mathbf{M}(Y)\rightarrow\mathcal{M}(X)$ the morphism defined by pulling back sheaves if it is well-defined. However, if $F$ is a sheaf on $Y$, then its pullback via $f$ is denoted by $f^{\ast}F$ as usual. For any algebraic stack $\mathfrak{X}$, we denote by $\mathcal{D}(\mathfrak{X})$ the bounded derived category of coherent sheaves.

\subsection*{Acknowledgement.}\hfill

We would like to thank Zhiyu Liu for answering some questions. We would also like to thank Institute for Advanced Study in Mathematics at Zhejiang University for the wonderful research environment. We thank the anonymous referees for many suggestions that significantly improved the exposition of the article.

\numberwithin{equation}{subsection}
\section{Preliminaries}\label{Sec-P}
\subsection{Jacobians}\label{Sub-Jacob}\hfill

Let $\mathfrak{X}$ be a projective Deligne-Mumford stack over $\mathbb{C}$. For any scheme $S$ over $\mathbb{C}$, an $S$-flat coherent sheaf $F$ on $S\times\mathfrak{X}$ is a \textit{relative torsion-free rank one sheaf} if for any point $s\in S$, the restriction of $F$ to $\mathfrak{X}_s:=\Spec k(s)\times\mathfrak{X}$, denoted by $F_s$, is torsion-free and has generic rank one. We say that two such sheaves $F$ and $F'$ are equivalent if they differ by the pullback of a line bundle on $S$. Let $\overline{\Pic}{}_{\mathfrak{X}}^{pre}$ be the presheaf on $\Sch_{\mathbb{C}}$ that associates to any $S\in\Sch_{\mathbb{C}}$ the equivalence classes of relative torsion-free rank one sheaves on $S\times\mathfrak{X}$. Let $\Pic{}_{\mathfrak{X}}^{pre}\subset\overline{\Pic}{}_{\mathfrak{X}}^{pre}$ be the subpresheaf consisting of line bundles. We will respectively denote by $\Pic_{\mathfrak{X}}$ and $\overline{\Pic}{}_{\mathfrak{X}}$ the \'etale sheafification of $\Pic{}_{\mathfrak{X}}^{pre}$ and $\overline{\Pic}{}_{\mathfrak{X}}^{pre}$. Also, for any algebraic stack $\mathfrak{X}$, we will denote by $\Pic(\mathfrak{X})$ the group of isomorphism classes of line bundles on $\mathfrak{X}$. According to \cite[Th\'eor\`eme 2.2.6]{B}, in nice situations we have $\Pic{}_{\mathfrak{X}}^{pre}=\Pic_{\mathfrak{X}}$ (i.e., the presheaf $\Pic{}_{\mathfrak{X}}^{pre}$ is a sheaf). This is the case when for example $\mathfrak{X}$ is a quotient stack $[X/G]$, where $X$ is a connected projective variety over $\mathbb{C}$ (see \cite[Remarque 2.2.5]{B}). If $\mathfrak{X}$ is an integral projective variety on which rank one torsion-free sheaves are Cohen-Macaulay (for example a curve), then $\overline{\Pic}{}_{\mathfrak{X}}^{pre}=\overline{\Pic}{}_{\mathfrak{X}}$. This follows from \cite[Theorem 3.4 (iii)]{AK2}. As explained in \cite[\S 1.5]{AK2}, the rigidified Picard functor therein coincides with (the nonrigidified) $\overline{\Pic}{}_{\mathfrak{X}}^{pre}$.

Suppose that $X$ is an integral, projective and finitely presented scheme over $\mathbb{C}$. By \cite[n\textdegree{}232, Th\'eor\`eme 3.1]{FGA}, the sheaf $\Pic_X$ is representable by a scheme that is separated and locally of finite presentation over $\mathbb{C}$. (By \cite[\S 8.2, Theorem 2]{BLR}, if $X$ is not irreducible, then $\Pic_X$ is not necessarily separated.) By \cite[Theorem 3.1]{AK2}, the sheaf $\overline{\Pic}_X$ is representable by a scheme and the connected components are proper. We will denote by $\Pic^0_X\subset\Pic_X$ and $\overline{\Pic}{}^0_X\subset\overline{\Pic}_X$ the connected components containing $\mathcal{O}_X$.

Assume further that $X$ is an integral projective curve. For any scheme $S$ over $\mathbb{C}$ and a relative torsion-free rank one sheaf $F$ on $S\times X$, we say that $F$ is of degree $i$ if for any $s\in S$, the restriction $F_s$ satisfies $\chi(F_s)-\chi(\mathcal{O}_{X_s})=i$, where $\chi(-)$ denotes the Euler characteristic of a coherent sheaf.  For any $i\in\mathbb{Z}$, we denote by $\bar{J}^i_{X}\subset\overline{\Pic}_{X}$ the subscheme consisting of degree $i$ sheaves, and by $J^i_{X}\subset\Pic_{X}$ the subscheme consisting of degree $i$ line bundles. We have $J^0_X=\Pic^0_X$ and $\bar{J}^0_X=\overline{\Pic}{}^0_X$. In fact, $\bar{J}^i_X$ is an algebraic variety for any $i$. We will write $\bar{J}_X:=\bar{J}^0_X$ and $J_X=J^0_X$. We call the proper algebraic variety $\bar{J}^i_X$ the \textit{compactified Jacobian} of degree $i$ of $X$, and by the compactified Jacobian of $X$ we mean the degree 0 component $\bar{J}_X$. The open subvariety $J_X\subset\bar{J}_X$ is called the Jacobian of $X$. It is a commutative algebraic group, and its identity element will be written as $0$. We will denote by $\mathcal{U}_X$ (resp. $\overline{\mathcal{U}}_X$) the universal sheaf on $X\times J_X$ (resp. $X\times \bar{J}_X$). If $X$ is a smooth projective curve, then $J_X=\bar{J}_X$ is an abelian variety.

The \textit{Abel-Jacobi map} $\bs\alpha_{X,-1}:X\rightarrow\bar{J}^{-1}_X$ of degree -1 is defined by the ideal sheaf $\mathcal{I}_{\Delta}$ of the diagonal $\Delta:X\hookrightarrow X\times X$. Fix an invertible sheaf of degree $1$ on $X$, say $L_0$. The \textit{Abel-Jacobi map} $\bs\alpha_{X,L_0}:X\rightarrow\bar{J}_X$ is defined by the coherent sheaf $\mathcal{I}_{\Delta}\otimes \pr_1^{\ast}L_0$ on $X\times X$. We have
\begin{equation}\label{eq-AJ-uni}
(\Id\times\bs\alpha_{X,L_0})^{\ast}\overline{\mathcal{U}}_X\cong\mathcal{I}_{\Delta}\otimes \pr_1^{\ast}L_0\otimes\pr_2^{\ast}M
\end{equation}
for some invertible sheaf $M$ on $X$, where $\pr_i$ is the projection from $X\times X$ to the $i$'th factor.

\subsection{Prym Varieties}\label{Sub-Prym}\hfill

Let $X$ be an integral projective curve, $C$ a connected smooth projective curve, and $\pi:X\rightarrow C$ a finite morphism of degree $n$. Let $S$ be a scheme over $\mathbb{C}$ and let $F$ be a relative torsion-free rank one sheaf on $S\times X$. Write $\pi_S:=\Id_S\times\pi$. The norm of $F$ is the coherent sheaf on $S\times C$ defined by
\begin{equation}
\Nm_{\pi_S}(F):=\det(\pi_{S\ast}F)\otimes\det(\pi_{S\ast}\mathcal{O}_{S\times X})^{-1}.
\end{equation}
By \cite[Definition/Lemma 8.1]{Car}, this is well-defined and is a line bundle. These norm maps for varying $S$ define a norm map between the (compactified) Jacobians $\Nm_{\pi}:\bar{J}_X\rightarrow J_C$. By \cite[Corollary 3.12]{HP}, its restriction to $J_X$ coincides with the usual norm map as defined in \cite[\S 6.5]{EGAII}. The norm map $\Nm_{\pi}$ satisfies the following properties
\begin{itemize}
\item[(i)] For any line bundles $L$ and $L'$ on $X$, we have $\Nm_{\pi}(L\otimes L')=\Nm_{\pi}(L)\otimes\Nm_{\pi}(L')$ (see \cite[\S 3.1 Equation (8)]{HP}).
\item[(ii)] For any torsion-free rank one sheaf $F$ on $X$ and any line bundle $L$ on $C$, we have $\Nm_{\pi}(F\otimes\pi^{\ast}L)\cong \Nm_{\pi}(F)\otimes L^{\otimes n}$ (see \cite[Proposition 3.10]{HP}).
\end{itemize}

The compactified Prym variety associated to $\pi$ is defined by $\bar{P}_{\pi}:=\Nm_{\pi}^{-1}(0)\subset\bar{J}_X$. We call the open subvariety $P_{\pi}:=\bar{P}_{\pi}\cap J_X$ the Prym variety associated to $\pi$. 
Regarded as an \'etale sheaf on $\Sch_{\mathbb{C}}$, $\bar{P}_{\pi}$ associates to any scheme $S$ the set
\begingroup
\allowdisplaybreaks
\begin{align*}
\bar{P}_{\pi}(S)=\{F\in\bar{J}_X(S)\mid&\Nm_{\pi_{S}}(F)\cong\pr_1^{\ast}M\text{ for some line bundle $M$ on $S$.}\}
\end{align*}
\endgroup

Let $\nu:\tilde{X}\rightarrow X$ be the normalisation map, and write $\tilde{\pi}=\pi\circ\nu$. Denote by $K\subset\Gamma$ the kernel of $\tilde{\pi}^{\vee}:J_C\rightarrow J_{\tilde{X}}$. According to \cite[Theorem 1.1]{HP}, there is a natural bijection $\pi_0(P_{\pi})\cong K^{\vee}$, where $\pi_0(P_{\pi})$ denotes the group of connected components of $P_{\pi}$. 

\subsection{Poincar\'e Sheaves}\label{Sub-Poincare}\hfill

Let $X$ be as in \S \ref{Sub-Prym} and assume further that $X$ has planar singularities. Then the compactified Jacobian $\bar{J}:=\bar{J}_X$ is irreducible (see \cite{AIK}). This allows us to consider the proper scheme $\overline{\Pic}{}^0_{\bar{J}}$. 
\begin{Thm}(\cite[Theorem A, Theorem B]{Ari2})\label{Thm-Ari2AB}
There exists a coherent sheaf $\overline{\mathcal{P}}$ on $\bar{J}\times\bar{J}$ such that
\begin{itemize}
\item[(i)] The restriction $\overline{\mathcal{P}}|_{J\times\bar{J}\cup\bar{J}\times J}$ is a line bundle.
\item[(ii)] For any closed point $x\in \bar{J}$, the restriction $\overline{\mathcal{P}}|_{x\times\bar{J}}$ is a (Cohen-Macaulay) torsion-free sheaf of rank one and degree zero.
\item[(iii)] Regarded as a family of torsion-free sheaves on $\bar{J}$, the coherent sheaf $\overline{\mathcal{P}}$ induces an isomorphism 
\begin{equation}\label{Eq-rho}
\rho:\bar{J}\lisom\overline{\Pic}{}^0_{\bar{J}}.
\end{equation}
In particular, the connected scheme $\overline{\Pic}{}^0_{\bar{J}}$ is also irreducible and contains $\Pic^0_{\bar{J}}$ as an open dense subscheme.
\end{itemize}
\end{Thm}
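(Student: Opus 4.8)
The plan is to follow Arinkin's two-step strategy: first build $\overline{\mathcal{P}}$ by extending the classical Poincar\'e line bundle off the locus of line bundles, and then identify the family formed by its vertical slices with $\overline{\Pic}{}^0_{\bar J}$. Planarity of $X$ enters to control the degenerations needed in the extension.

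\emph{Construction and part (i).} Write $\bar J=\bar J_X$, $J=J_X$, $p_a=p_a(X)=\dim\bar J$, and fix a theta line bundle $\mathcal{O}(\Theta)$ on $\bar J$, e.g. one produced from the determinant of cohomology of the universal sheaf of degree $p_a-1$, transported to $\bar J=\bar J^0_X$. Since tensoring by an invertible sheaf is an automorphism of $\bar J$, on the open set $U:=(J\times\bar J)\cup(\bar J\times J)$ the tensor-product morphism $m\colon U\to\bar J$, $(F,F')\mapsto F\otimes F'$, is well defined --- one of the two factors is invertible in each chart --- and I set
\[
\mathcal{P}^{\circ}\;:=\;m^{\ast}\mathcal{O}(\Theta)\otimes\pr_1^{\ast}\mathcal{O}(\Theta)^{-1}\otimes\pr_2^{\ast}\mathcal{O}(\Theta)^{-1}
\]
on $U$. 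This is a line bundle, invariant under interchanging the two factors, restricting on $J\times J$ to the Mumford bundle of $\Theta$ --- i.e. the classical Poincar\'e bundle --- and on $\{L\}\times\bar J$ for $L\in J$ to the degree-zero line bundle $t_L^{\ast}\mathcal{O}(\Theta)\otimes\mathcal{O}(\Theta)^{-1}$; the assignment $L\mapsto t_L^{\ast}\mathcal{O}(\Theta)\otimes\mathcal{O}(\Theta)^{-1}$ is the canonical isomorphism $J_X\cong\Pic^0_{\bar J}$ of \cite[Theorem 1.3]{Ari1}. Once $\overline{\mathcal{P}}$ is built with $\overline{\mathcal{P}}|_U\cong\mathcal{P}^{\circ}$, part (i) follows at once. (This is the only point at which the group law is used, and it is precisely what breaks down at sheaves outside $J$.)

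\emph{The extension --- the main obstacle.} The heart of the proof is extending $\mathcal{P}^{\circ}$ from $U$ to a coherent sheaf $\overline{\mathcal{P}}$ on all of $\bar J\times\bar J$ each of whose slices $\overline{\mathcal{P}}|_{\{x\}\times\bar J}$ is a Cohen--Macaulay torsion-free rank-one sheaf of degree zero. The bad locus $(\bar J\times\bar J)\setminus U=(\bar J\setminus J)\times(\bar J\setminus J)$ has codimension $\geq 2$, but $\bar J$ is in general non-normal, so one cannot simply take a reflexive hull, and $m$ does not extend. Instead one builds the extension one slice at a time as a flat limit: given $F\in\bar J\setminus J$, choose a one-parameter family of line bundles $L_t$ degenerating to $F$ inside $\bar J$ and take the flat limit of the line bundles $\mathcal{P}^{\circ}|_{\{L_t\}\times\bar J}$ inside the proper scheme $\overline{\Pic}{}^0_{\bar J}$. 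One must then prove that this limit exists, is independent of the chosen degeneration, is Cohen--Macaulay torsion-free of rank one and degree zero, and that the slices assemble into a single $\bar J$-flat sheaf on $\bar J\times\bar J$. This is exactly where planarity is used: it makes $\bar J$ integral of pure dimension $p_a$ with Cohen--Macaulay singularities (\cite{AIK}), and --- more importantly --- it lets one spread $\bar J$ out flatly, with relatively planar fibres, over the versal deformation of the singularities of $X$, over whose smooth locus everything degenerates to the classical picture of dual abelian varieties; the line-bundle cohomology computations of \cite{Ari1} also feed in here. Granting this, part (ii) is clear (rank and degree are locally constant in a flat family, and the limiting sheaf is torsion-free of rank one because it lies in $\overline{\Pic}{}^0_{\bar J}$), and with it part (i).

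\emph{Autoduality, part (iii).} By (ii), $\overline{\mathcal{P}}$ is a $\bar J$-flat family of rank-one torsion-free degree-zero sheaves on $\bar J$, hence defines a morphism $\rho\colon\bar J\to\overline{\Pic}{}^0_{\bar J}$ (it lands in the component of $\mathcal{O}_{\bar J}=\rho(\mathcal{O}_X)$ since $\bar J$ is connected), which over the dense open $J\subset\bar J$ is the canonical isomorphism $J=J_X\cong\Pic^0_{\bar J}$ of \cite[Theorem 1.3]{Ari1} (\cite[Proposition 6.2]{Ari1} further gives $\Pic^{\tau}_{\bar J}=\Pic^0_{\bar J}$). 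Thus $\rho$ is proper and restricts to an isomorphism onto the open subscheme $\Pic^0_{\bar J}$; it remains to see it is globally an isomorphism. I would deduce this from the Fourier--Mukai equivalence $\Phi_{\overline{\mathcal{P}}}\colon\mathcal{D}(\bar J)\to\mathcal{D}(\bar J)$, which Arinkin establishes alongside the construction by degenerating to the case of a smooth curve --- where $\Phi_{\overline{\mathcal{P}}}$ is Mukai's equivalence of dual abelian varieties \cite{Muk} --- together with a base-change argument. Being an equivalence, $\Phi_{\overline{\mathcal{P}}}$ forces $\rho$ to be injective (distinct skyscrapers have non-isomorphic images), unramified (since $T_x\bar J=\mathrm{Ext}^1(\underline{\mathbb{C}}_x,\underline{\mathbb{C}}_x)\cong\mathrm{Ext}^1(\overline{\mathcal{P}}|_{\{x\}\times\bar J},\overline{\mathcal{P}}|_{\{x\}\times\bar J})=T_{\rho(x)}\overline{\Pic}{}^0_{\bar J}$), and surjective (the inverse equivalence carries a rank-one torsion-free degree-zero sheaf $\mathcal{G}$ on $\bar J$ --- a point object of $\mathcal{D}(\bar J)$ whose class is preserved by the equivalence --- to some skyscraper $\underline{\mathbb{C}}_x$, so that $\mathcal{G}\cong\Phi_{\overline{\mathcal{P}}}(\underline{\mathbb{C}}_x)=\overline{\mathcal{P}}|_{\{x\}\times\bar J}=\rho(x)$; this uses a classification of point objects of $\mathcal{D}(\bar J)$, for which the cohomology vanishing of \cite{Ari1} is again relevant). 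A proper, unramified and universally injective morphism is a closed immersion; being moreover surjective onto the reduced scheme $\overline{\Pic}{}^0_{\bar J}$, it is an isomorphism $\rho\colon\bar J\xrightarrow{\ \sim\ }\overline{\Pic}{}^0_{\bar J}$, whence in particular $\overline{\Pic}{}^0_{\bar J}$ is irreducible and contains $\Pic^0_{\bar J}$ as an open dense subscheme. The step I expect to be genuinely hard is the construction of the extension across $(\bar J\setminus J)\times(\bar J\setminus J)$ and the equivalence $\Phi_{\overline{\mathcal{P}}}$; everything after that is formal once the relevant inputs from \cite{Ari1} are in hand.
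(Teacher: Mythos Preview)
The paper does not prove this theorem; it is quoted from \cite{Ari2}. What the paper \emph{does} record, however, differs from your outline in two respects that are worth noting.

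First, the extension. In \S\ref{SS-DoC} the paper builds the line bundle $\mathcal{P}''$ on $U=(J\times\bar J)\cup(\bar J\times J)$ via determinant of cohomology (your theta-divisor formulation is equivalent), and then states plainly that $\overline{\mathcal{P}}=i_\ast\mathcal{P}''$ where $i\colon U\hookrightarrow\bar J\times\bar J$ is the open inclusion. The content of Arinkin's argument is not to \emph{define} the extension via flat limits as you suggest, but to prove that this explicit pushforward is already flat over $\bar J$ with Cohen--Macaulay slices; this is where the Hilbert-scheme presentation of $\bar J$ (available because $X$ has planar singularities) is used. Your flat-limit picture is heuristically right but misplaces the difficulty.

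Second, and more substantively, the isomorphism in (iii). The paper (Remark~\ref{Rem-Ari2AB}) records Arinkin's actual argument: one writes down an explicit inverse to $\rho$, namely the pullback $\bs\alpha^{\vee}\colon\overline{\Pic}{}^0_{\bar J}\to\overline{\Pic}{}^0_X=\bar J$ along the Abel--Jacobi map, after checking (via \cite{EK}) that torsion-free sheaves pull back to torsion-free sheaves under $\bs\alpha$. This is much more direct than your route through the Fourier--Mukai equivalence. Your argument for surjectivity also has a genuine gap: you assert that the inverse transform of a rank-one torsion-free degree-zero sheaf $\mathcal{G}$ is a skyscraper because $\mathcal{G}$ is a ``point object'', but on a singular variety the characterisation of point objects as shifted skyscrapers fails --- at a singular point $x\in\bar J$ one has $\mathrm{Ext}^i(\underline{\mathbb{C}}_x,\underline{\mathbb{C}}_x)\neq 0$ for $i>\dim\bar J$, so skyscrapers are not point objects in the Bridgeland sense, and conversely nothing prevents other objects from satisfying whatever axioms you impose. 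The Abel--Jacobi inverse sidesteps all of this.
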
  

We will call $\overline{\mathcal{P}}$ the Poincar\'e sheaf. The restriction of $\overline{\mathcal{P}}$ to $J\times\bar{J}\cup\bar{J}\times J$, as well as any subvariety thereof, will be denoted by $\mathcal{P}$. By \cite[Theorem 3.4 (iii)]{AK2}, the subscheme of $\overline{\Pic}{}_{\bar{J}}$ parametrising Cohen-Macaulay sheaves admits a universal family. The above theorem says that the torsion-free sheaves that $\overline{\Pic}{}^0_{\bar{J}}$ parametrises are Cohen-Macaulay. Thus we obtain a universal sheaf $\overline{\mathcal{P}}_0$ on $\smash{\overline{\Pic}{}^0_{\bar{J}}\times\bar{J}}$ by restriction. We may require that the restriction of $\overline{\mathcal{P}}_0$ to $\smash{\overline{\Pic}{}^0_{\bar{J}}\times\{0\}}$ is the trivial line bundle. Then we have
\begin{equation}\label{eq-barPbarP0}
(\rho\times\Id)^{\ast}\overline{\mathcal{P}}_0\cong\overline{\mathcal{P}}.
\end{equation}

\begin{Rem}\label{Rem-Ari2AB}
Let $\bs\alpha_L:X\rightarrow\bar{J}$ be the Abel-Jacobi map as in \S \ref{Sub-Jacob}. As is pointed out in the remark in \cite{Ari2} that follows \cite[Theorem B]{Ari2}, the arguments in the proof of \cite[Theorem 2.6]{EK} shows that under $\bs\alpha_L$, a torsion-free sheaf pulls back to a torsion-free sheaf. Thus there is a well-defined morphism
\begin{equation}
\bs\alpha^{\vee}_L:\overline\Pic{}^0_{\bar{J}}\longrightarrow\overline\Pic{}^0_X=\bar{J}.
\end{equation}
According to the proof of \cite[Theorem B]{Ari2}, it is the inverse of $\rho$. In particular, $\bs\alpha^{\vee}_L$ is independent of the choice of $L$. We may therefore write $\bs\alpha^{\vee}=\bs\alpha^{\vee}_L$.
\end{Rem}

Let $\pi:X\rightarrow C$ be as in \S \ref{Sub-Prym}. Suppose that $\bs\alpha$ is defined by some degree 1 line bundle $L$. Let $L_C:=\Nm_{\pi}(L)\in J^1_C$, and let $\bs\alpha_C:C\rightarrow J_C$ be the Abel-Jacobi map defined by $L_C$. Then we have a commutative diagram
\begin{equation}\label{CD-AJ-XaC}
\begin{tikzcd}[row sep=2.5em, column sep=2em]
X \arrow[r, "\bs\alpha"] \arrow[d, swap, "\pi"] & \bar{J} \arrow[d, "\Nm_{\pi}"]\\
C \arrow[r, "\bs\alpha_C"'] & J_C.
\end{tikzcd}
\end{equation}
Taking the compactified Jacobians, we obtain the following commutative diagram:
\begin{equation}\label{CD-PicAJ-XaC}
\begin{tikzcd}[row sep=2.5em, column sep=2em]
\overline{\Pic}{}^0_{\bar{J}} \arrow[r, "\sim", "\bs\alpha^{\vee}"'] & \overline{\Pic}{}^0_{X}=\bar{J} \\
\Pic^0_{J_C} \arrow[r, "\sim", "\bs\alpha_C^{\vee}"'] \arrow[u, "\Nm_{\pi}^{\vee}"] & \Pic^0_C=J_C \arrow[u, swap, "\pi^{\vee}"],
\end{tikzcd}
\end{equation}
where the horizontal arrows are the isomorphisms induced by the Abel-Jacobi maps, and the vertical arrows are the pullback morphisms. Let $\mathcal{P}_{C,0}$ be the universal sheaf on $\Pic^0_{J_C}\times J_C$ and require that its restriction to $\Pic^0_{J_C}\times \{0\}$ is the trivial line bundle. We have the following isomorphism of coherent sheaves on $\Pic^0_{J_C}\times\bar{J}$:
\begin{equation}\label{Eq-NmV}
(\Nm^{\vee}_{\pi}\times\Id)^{\ast}\overline{\mathcal{P}}_0\cong(\Id\times\Nm_{\pi})^{\ast}\mathcal{P}_{C,0}.
\end{equation}
Indeed, a morphism $f$ from $\Pic^0_{J_C}$ to $\overline{\Pic}{}^0_{\bar{J}}$ is equivalent to a family $F$ of torsion free sheaves on $\bar{J}$ parametrised by $\Pic^0_{J_C}$. By definition, $\Nm^{\vee}_{\pi}$ is the morphism $f$ corresponding to the sheaf $F=(\Id\times\Nm_{\pi})^{\ast}\mathcal{P}_{C,0}$. The universal property of $\overline{\mathcal{P}}_0$ means that $(f\times\Id)^{\ast}\overline{\mathcal{P}}_0$ is isomorphic to $F$ up to the pullback of a line bundle $M$ on the first factor. Our normalisations of $\mathcal{P}_{C,0}$ and $\overline{\mathcal{P}}_0$ imply that $M$ must be trivial.

Similarly by the definition of $(\pi^{\vee})^{\vee}$, we have the following isomorphism of line bundles on $\overline{\Pic}{}^0_{\bar{J}}\times J_C$:
\begin{equation}\label{Eq-piVV}
((\pi^{\vee})^{\vee}\times\Id)^{\ast}\mathcal{P}_{C,0}\cong(\Id\times\pi^{\vee})^{\ast}\overline{\mathcal{P}}_0.
\end{equation}
Note that $\pi^{\vee}(J_C)$ is contained in $J$. Arinkin's theorem shows that if $F$ is any torsion-free rank one sheaf on $\bar{J}$, then its restriction to $J$ is a line bundle. Therefore, $(\Id\times\pi^{\vee})^{\ast}\overline{\mathcal{P}}_0$ is a family of line bundles on $J_C$ parametrised by $\overline{\Pic}{}^0_{\bar{J}}$.

Let $\mathcal{P}_C$ be the coherent sheaf on $J_C\times J_C$ that Theorem \ref{Thm-Ari2AB} gives for the smooth curve $C$. Using (\ref{eq-barPbarP0}), (\ref{CD-PicAJ-XaC}) and (\ref{Eq-NmV}), we deduce that
\begin{equation}\label{Eq-Nm-piV}
(\pi^{\vee}\times\Id)^{\ast}\overline{\mathcal{P}}\cong(\Id\times\Nm_{\pi})^{\ast}\mathcal{P}_C.
\end{equation}
By symmetry, we have
\begin{equation}\label{Eq-Nm-piV2}
(\Id\times\pi^{\vee})^{\ast}\overline{\mathcal{P}}\cong(\Nm_{\pi}\times\Id)^{\ast}\mathcal{P}_C.
\end{equation}
Note that $(\pi^{\vee})^{\vee}$ is the unique morphism satisfying (\ref{Eq-piVV}). Using this characterisation of $(\pi^{\vee})^{\vee}$, we deduce from (\ref{eq-barPbarP0}) and (\ref{Eq-Nm-piV2}) that the following diagram commutes
\begin{equation}\label{CD-piVV-Nm}
\begin{tikzcd}[row sep=2.5em, column sep=2em]
\overline{\Pic}{}^0_{\bar{J}} \arrow[r, "\sim", "\bs\alpha^{\vee}"'] \arrow[d, "(\pi^{\vee})^{\vee}"'] & \overline{\Pic}{}^0_X=\bar{J} \arrow[d, "\Nm_{\pi}"] \\
\Pic^0_{J_C} \arrow[r, "\sim", "\bs\alpha_C^{\vee}"'] & \Pic^0_C=J_C.
\end{tikzcd}
\end{equation}
We also deduce from (\ref{Eq-Nm-piV2}) that 
\begin{equation}\label{Eq-P-triv-barP}
\overline{\mathcal{P}}|_{\bar{P}_{\pi}\times\{\pi^{\vee}(g)\}}\cong\mathcal{O}_{\bar{P}_{\pi}}
\end{equation}
for any $g\in J_C$.
\begin{Lem}\label{Lem-Func-Nm}
Let $\pi:X\rightarrow C_1$ and $\phi:C_1\rightarrow C$ be finite morphisms between integral projective curves. Assume that $C_1$ and $C$ are smooth. Then $\Nm_{\phi\circ\pi}=\Nm_{\phi}\circ\Nm_{\pi}$ as morphisms from $\bar{J}_X$ to $J_C$.
\end{Lem}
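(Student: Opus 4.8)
The plan is to check the identity on $S$-valued points, functorially in the test scheme $S$. Since $\bar{J}_X$ represents $\overline{\Pic}{}^0_X$ and $J_C$ represents $\Pic^0_C$, it suffices to produce, for every scheme $S$ and every relative torsion-free rank one sheaf $F$ on $S\times X$, an isomorphism of line bundles on $S\times C$
$$
\Nm_{(\phi\circ\pi)_S}(F)\;\cong\;\Nm_{\phi_S}\bigl(\Nm_{\pi_S}(F)\bigr)
$$
that is natural in $(S,F)$; equivalently, one carries out this single computation with $S=\bar{J}_X$ and $F=\overline{\mathcal{U}}_X$ the universal sheaf. Writing $\pi_S=\Id_S\times\pi$ and $\phi_S=\Id_S\times\phi$, we have $(\phi\circ\pi)_S=\phi_S\circ\pi_S$ and hence $(\phi\circ\pi)_{S\ast}=\phi_{S\ast}\circ\pi_{S\ast}$, so by definition
$$
\Nm_{(\phi\circ\pi)_S}(F)=\det\bigl(\phi_{S\ast}\pi_{S\ast}F\bigr)\otimes\det\bigl(\phi_{S\ast}\pi_{S\ast}\mathcal{O}_{S\times X}\bigr)^{-1}.
$$

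Two inputs feed into the computation. First, because $C_1$ is smooth, the sheaves $V:=\pi_{S\ast}F$ and $W:=\pi_{S\ast}\mathcal{O}_{S\times X}$ are \emph{locally free} of rank $n$ on $S\times C_1$: they are flat over $S$, and their restriction to each fibre over $S$ is a torsion-free sheaf of generic rank $n$ on the smooth curve $C_1$, hence locally free. Second, there is the determinant formula for a finite flat morphism: $\phi_S$ is finite flat (a finite morphism of smooth curves is flat), and for any locally free sheaf $V$ of rank $r$ on $S\times C_1$ there is a canonical isomorphism
$$
\det\bigl(\phi_{S\ast}V\bigr)\;\cong\;\bigl(\det\phi_{S\ast}\mathcal{O}_{S\times C_1}\bigr)^{\otimes r}\otimes\Nm_{\phi_S}(\det V).
$$
For a line bundle this is, tautologically, the definition of $\Nm_{\phi_S}$; the general case follows by dévissage, and one can be economical here: since the asserted isomorphism of line bundles on $S\times C$ restricts, on each slice $\{s\}\times C$, to the corresponding identity on the smooth curve $C$ (where $K_0(C)=\mathbb{Z}\oplus\Pic(C)$ makes both sides manifestly additive in $[V]$, so that $[V]-(r-1)[\mathcal{O}]-[\det V]=0$ forces the identity), and restricts trivially on $S\times\{c\}$ for an unramified point $c$, the seesaw theorem gives it on $S\times C$. (Alternatively one may cite the ``determinant of the cohomology'' formalism, or \cite[\S 6.5]{EGAII} for the line-bundle case.)

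Granting these, apply the determinant formula to $V=\pi_{S\ast}F$ and to $W=\pi_{S\ast}\mathcal{O}_{S\times X}$, both of rank $n$, and take the quotient: the factors $(\det\phi_{S\ast}\mathcal{O}_{S\times C_1})^{\otimes n}$ cancel, and using that $\Nm_{\phi_S}$ is a homomorphism on line bundles (property (i) of \S\ref{Sub-Prym}) together with $\Nm_{\pi_S}(F)=\det(\pi_{S\ast}F)\otimes\det(\pi_{S\ast}\mathcal{O}_{S\times X})^{-1}$, we obtain
$$
\Nm_{(\phi\circ\pi)_S}(F)\cong\Nm_{\phi_S}\bigl(\det\pi_{S\ast}F\bigr)\otimes\Nm_{\phi_S}\bigl(\det\pi_{S\ast}\mathcal{O}_{S\times X}\bigr)^{-1}\cong\Nm_{\phi_S}\bigl(\Nm_{\pi_S}(F)\bigr).
$$
Every construction involved (pushforward, $\det$, $\Nm$, tensor product) commutes with base change in $S$, so this isomorphism is natural, which is the claim. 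I expect the determinant-of-pushforward formula in the relative setting to be the only delicate point: being an isomorphism of line bundles it is not a Zariski-local statement, so the dévissage must be run via slices and seesaw (or via a flag bundle) rather than locally; everything else — composition of pushforwards, flatness over $S$, local freeness over the smooth curve $C_1$, and the homomorphism property of the norm — is routine. Finally, I would remark that when $X$ has planar singularities — the only case needed for the Hitchin-fibre applications — there is a shortcut: $\bar{J}_X$ is then irreducible (hence $J_X$ is dense) and reduced, both $\Nm_{\phi\circ\pi}$ and $\Nm_\phi\circ\Nm_\pi$ restrict on $J_X$ to the usual norm maps by \cite[Corollary 3.12]{HP}, where functoriality is classical (\cite[\S 6.5]{EGAII}), and since $J_C$ is separated, equality on the dense open $J_X$ forces equality on $\bar{J}_X$.
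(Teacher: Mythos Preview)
Your proof is correct, and it takes a genuinely different route from the paper's.

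The paper's argument is a two-line duality argument: since pullback is functorial, $(\phi\circ\pi)^{\vee}=\pi^{\vee}\circ\phi^{\vee}$, and the commutative square (\ref{CD-piVV-Nm}) established just above the lemma identifies $\Nm_{\pi}$ with $(\pi^{\vee})^{\vee}$ under the Abel--Jacobi isomorphisms $\bs\alpha^{\vee}$ (and likewise for $\phi$ and $\phi\circ\pi$). Stacking the three instances of that square gives $\Nm_{\phi\circ\pi}=\Nm_{\phi}\circ\Nm_{\pi}$ immediately. So the paper leverages the Poincar\'e-sheaf machinery (ultimately Arinkin's Theorem~\ref{Thm-Ari2AB}) to turn the question into the trivial functoriality of pullback.

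Your approach is a direct computation from the determinant-of-pushforward definition, the key input being the classical formula $\det(\phi_{S\ast}V)\cong(\det\phi_{S\ast}\mathcal{O})^{\otimes r}\otimes\Nm_{\phi_S}(\det V)$ for a locally free $V$. This is more elementary in that it does not invoke any of the Poincar\'e-sheaf apparatus; in particular it does not implicitly use the planar-singularities hypothesis that underlies (\ref{CD-piVV-Nm}), so your argument actually proves the lemma in the generality in which it is stated. The trade-off is that you must justify the determinant formula in the relative setting, which you correctly flag as the delicate point (your seesaw sketch is adequate, though one could also cite it directly). Your closing remark---that for planar $X$ one can simply use density of $J_X$ in the irreducible reduced $\bar{J}_X$, classical functoriality of the norm on line bundles, and separatedness of $J_C$---is the quickest argument of all for the cases the paper actually needs.
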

\begin{proof}
We have $(\phi\circ\pi)^{\vee}=\pi^{\vee}\circ\phi^{\vee}$. It follows from (\ref{CD-piVV-Nm}) that $\Nm_{\phi\circ\pi}=\Nm_{\phi}\circ\Nm_{\pi}$.
\end{proof}

Recall that if $G$ is a finite abelian group, $f:V\rightarrow W$ an \'etale $G$-Galois covering of (connected) projective varieties, and $f^{\vee}:\Pic_W\rightarrow\Pic_V$ the dual morphism, then there is a natural isomorphism $\epsilon_G:G^{\vee}\isom\Ker f^{\vee}$. Let $\chi\in G^{\vee}$. Then $f^{\ast}\epsilon_G(\chi)\cong\mathcal{O}_V$ and $G$ naturally acts on $H^0(V,f^{\ast}\epsilon_G(\chi))\cong\mathbb{C}$ via $\chi$ (see \cite[\S 7, Proposition 3]{M}).

\begin{Lem}\label{Lem-G1G2}
Let $\sigma:G_1\rightarrow G_2$ be a homomorphism of finite abelian groups. For $i=1$, $2$, let $f_i:X_i\rightarrow Y_i$ be an \'etale Galois covering of connected projective varieties with Galois group $G_i$. Suppose that there is a commutative diagram
\begin{equation}\label{CD-Lem-G1G2-0}
\begin{tikzcd}[row sep=2.5em, column sep=2em]
X_1 \arrow[d, "f_1"'] \arrow[r, "u"] & X_2 \arrow[d, "f_2"] \\
Y_1 \arrow[r, "v"'] & Y_2,
\end{tikzcd}
\end{equation}
where $u$ is $G_1$-equivariant, with $G_1$ acting on $X_2$ via $\sigma$. Then, the following diagram commutes
$$
\begin{tikzcd}[row sep=2.5em, column sep=2em]
G_2^{\vee} \arrow[d, "\epsilon_{G_2}"'] \arrow[r, "\sigma^{\vee}"] & G_1^{\vee} \arrow[d, "\epsilon_{G_1}"] \\
\Ker f_2^{\vee} \arrow[r, "v^{\vee}"'] & \Ker f_1^{\vee}.
\end{tikzcd}
$$
\end{Lem}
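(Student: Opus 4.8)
The plan is to unwind both descriptions of $\epsilon_{G_i}$ directly from the characterisation recalled just before the lemma: for an étale $G$-Galois cover $f:X\to Y$, the line bundle $\epsilon_G(\chi)$ on $Y$ is the unique element of $\Ker f^\vee$ whose pullback $f^\ast\epsilon_G(\chi)$ is $\mathcal{O}_X$ together with the $G$-linearisation acting on $H^0(X,f^\ast\epsilon_G(\chi))\cong\mathbb{C}$ by $\chi$. So I would fix $\chi\in G_2^\vee$ and compare the two line bundles on $Y_1$ obtained by going around the square: on one side $v^\ast\epsilon_{G_2}(\chi)$, on the other $\epsilon_{G_1}(\sigma^\vee\chi)=\epsilon_{G_1}(\chi\circ\sigma)$. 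Since $\epsilon_{G_1}$ lands in $\Ker f_1^\vee$ and is uniquely pinned down by the $G_1$-equivariant trivialisation of its $f_1$-pullback, it suffices to check that $f_1^\ast v^\ast\epsilon_{G_2}(\chi)$ is canonically trivial and that the induced $G_1$-action on its global sections is $\chi\circ\sigma$.

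The key computation is the commutativity of the square \eqref{CD-Lem-G1G2-0}: $f_1^\ast v^\ast\epsilon_{G_2}(\chi)\cong u^\ast f_2^\ast\epsilon_{G_2}(\chi)\cong u^\ast\mathcal{O}_{X_2}\cong\mathcal{O}_{X_1}$, where the middle isomorphism is the \emph{canonical} trivialisation defining $\epsilon_{G_2}(\chi)$. Now I would track the $G_1$-equivariant structure. The map $u$ is $G_1$-equivariant where $G_1$ acts on $X_2$ through $\sigma$, so pulling back along $u$ intertwines the $G_2$-linearisation of $f_2^\ast\epsilon_{G_2}(\chi)$ (which acts on global sections by $\chi$) with a $G_1$-linearisation of $f_1^\ast v^\ast\epsilon_{G_2}(\chi)$ acting on $H^0(X_1,\mathcal{O}_{X_1})\cong\mathbb{C}$ by $\chi\circ\sigma$. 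This exhibits $v^\ast\epsilon_{G_2}(\chi)$ as an element of $\Ker f_1^\vee$ carrying exactly the data that uniquely characterises $\epsilon_{G_1}(\chi\circ\sigma)=\epsilon_{G_1}(\sigma^\vee\chi)$, so the two agree. Running this for all $\chi\in G_2^\vee$ gives the asserted commuting square.

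One point that needs a little care, and which I expect to be the main (if modest) obstacle, is bookkeeping the equivariance: one must make sure the ``action on global sections'' is computed with respect to compatible trivialisations, i.e. that the canonical isomorphism $f_1^\ast v^\ast\epsilon_{G_2}(\chi)\cong\mathcal{O}_{X_1}$ used to read off the character is the one obtained by pulling back the canonical trivialisation on $X_2$ along $u$, rather than some other trivialisation differing by a scalar (which would not change the character, but one should say so). It is also worth noting that connectedness of $X_1$ (hence $H^0(X_1,\mathcal{O}_{X_1})=\mathbb{C}$) is what makes the character well-defined, and that $v^\ast$ does send $\Ker f_2^\vee$ into $\Ker f_1^\vee$ precisely because of the square \eqref{CD-Lem-G1G2-0}, so the statement is well-posed. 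Beyond that the argument is a direct diagram chase using the universal property of $\epsilon_G$ from \cite[\S 7, Proposition 3]{M}.
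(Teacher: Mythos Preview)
Your proposal is correct and follows essentially the same approach as the paper: both pull back a line bundle $M\in\Ker f_2^{\vee}$ along $v$, use the square \eqref{CD-Lem-G1G2-0} to identify $f_1^{\ast}v^{\ast}M\cong u^{\ast}f_2^{\ast}M\cong\mathcal{O}_{X_1}$, and then track the induced $G_1$-equivariant structure through the $G_1$-equivariance of $u$ (via $\sigma$) to see that the resulting character is $\chi\circ\sigma$. The paper makes the compatibility of trivialisations you flag as the ``point needing care'' explicit by drawing out the commutative square of canonical isomorphisms and evaluating both routes on the constant function $1$, but the content is the same.
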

\begin{proof}
Let $M$ be a line bundle on $Y_2$ such that $f_2^{\ast}M\cong\mathcal{O}_{X_2}$. Write $L=v^{\ast}M$. Then $f_1^{\ast}L\cong\mathcal{O}_{X_1}$. Therefore, the morphism $v^{\vee}$ between Picard varieties restricts to the kernels of $f_1^{\vee}$ and $f_2^{\vee}$. Write $\chi_2=\epsilon_{G_2}^{-1}(M)$ and $\chi_1=\epsilon_{G_1}^{-1}(L)$. For any $g\in G_1$, we have the following commutative diagram
\begin{equation}\label{CD-Lem-G1G2}
\begin{tikzcd}[row sep=2.5em, column sep=2em]
u^{\ast}f_2^{\ast}M \arrow[r, "\sim", "\varphi_2"'] \arrow[d, "\rotatebox{90}{\(\sim\)}"] & u^{\ast}\sigma(g)^{\ast}f_2^{\ast}M \arrow[r, "\sim"] & g^{\ast}u^{\ast}f_2^{\ast}M \arrow[d, "\rotatebox{90}{\(\sim\)}"] \\
f_1^{\ast}v^{\ast}M \arrow[rr, "\sim", "\varphi_1"'] && g^{\ast}f_1^{\ast}v^{\ast}M
\end{tikzcd}
\end{equation}
where all the arrows are the canonical isomorphism. The arrow $\varphi_2$ is the pullback by $u$ of the isomorphism defining the $G_2$-equivariant structure on $f_2^{\ast}M$, and the arrow $\varphi_1$ is the isomorphism defining the $G_1$-equivariant structure on $f_1^{\ast}L$. The right vertical arrow is the pullback of the left vertical arrow. Let $1$ denote the constant function with value 1 on $X_1$. Then $\varphi_2(1)=\chi_2(\sigma(g))$ and $\varphi_1(1)=\chi_1(g)$. Taking the global sections of the sheaves in (\ref{CD-Lem-G1G2}), we see that $\varphi_1(1)=\varphi_2(1)$. Since this holds for any $g\in G_1$, we have $\chi_1=\chi_2\circ\sigma$. This completes the proof.
\end{proof}

We give a reformulation of \cite[Lemma 2.1]{HP} in the case of Jacobian varieties. 
\begin{Lem}\label{Lem-reform-HP2.1}
Let $\pi:X\rightarrow C$ be a finite morphism between smooth projective curves. Let $K$ be the kernel of $\pi^{\vee}:J_C\rightarrow J_X$, and identify $K^{\vee}$ with the kernel of the dual map of the isogeny $J_C\rightarrow\pi^{\vee}(J_C)$. For any $p\in P_{\pi}$, regard $\rho_X(p)$ as a line bundle on $J_X$. Then, the map
\begingroup
\allowdisplaybreaks
\begin{align*}
P_{\pi}&\longrightarrow K^{\vee}\\
p&\longmapsto \rho_X(p)|_{\pi^{\vee}(J_C)}
\end{align*}
\endgroup
is well-defined and induces an isomorphism of groups $\pi_0(P_{\pi})\cong K^{\vee}$.
\end{Lem}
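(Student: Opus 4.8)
The plan is to reinterpret the map of the statement as a homomorphism of commutative algebraic groups, compute its kernel and image, and read off the isomorphism on $\pi_0$. Since $X$ is smooth we have $\bar{J}_X=J_X$, and by Theorem \ref{Thm-Ari2AB}(iii) together with Remark \ref{Rem-Ari2AB}, $\rho_X$ is an isomorphism of abelian varieties $J_X\to\Pic^0_{J_X}$ whose inverse $\bs\alpha^{\vee}$ is a pullback map on degree-zero Picard schemes; in particular $\rho_X$ is a group homomorphism. First I would set up the relevant duals: write $A:=\pi^{\vee}(J_C)\subset J_X$ and factor $\pi^{\vee}$ as $\iota\circ\kappa$, where $\kappa:J_C\twoheadrightarrow A$ is an isogeny with $\Ker\kappa=K$ and $\iota:A\hookrightarrow J_X$ is the inclusion. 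Then $\iota^{\vee}:\Pic^0_{J_X}\to\Pic^0_A$, restriction of line bundles, is surjective with connected kernel; the dual isogeny $\kappa^{\vee}:\Pic^0_A\to\Pic^0_{J_C}$ has kernel the Cartier dual of $K$, which is exactly the copy of $K^{\vee}$ appearing in the statement; and $(\pi^{\vee})^{\vee}=\kappa^{\vee}\circ\iota^{\vee}$. Hence $\phi:p\mapsto\rho_X(p)|_A=\iota^{\vee}(\rho_X(p))$ defines a homomorphism $J_X\to\Pic^0_A$.

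The heart of the argument is the diagram (\ref{CD-piVV-Nm}), which specialised to the smooth curve $X$ reads $\rho_C\circ\Nm_{\pi}=(\pi^{\vee})^{\vee}\circ\rho_X$. From this, for $p\in J_X$ one gets $\kappa^{\vee}(\phi(p))=(\pi^{\vee})^{\vee}(\rho_X(p))=\rho_C(\Nm_{\pi}(p))$, so $\phi(p)$ lies in $K^{\vee}=\Ker\kappa^{\vee}$ if and only if $\Nm_{\pi}(p)=0$, i.e. if and only if $p\in P_{\pi}$; this shows the map of the statement is well defined. The same identity shows that $\phi(p)=0$ forces $\Nm_{\pi}(p)=0$, so $\Ker\phi=\rho_X^{-1}(\Ker\iota^{\vee})\subseteq P_{\pi}$; being the image of a connected group under the isomorphism $\rho_X^{-1}$, it is connected, hence $\Ker\phi\subseteq P_{\pi}^{\circ}$. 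Conversely $P_{\pi}^{\circ}$ is connected and maps into the finite group $K^{\vee}$, so $P_{\pi}^{\circ}\subseteq\Ker\phi$; therefore $\Ker\phi=P_{\pi}^{\circ}$.

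For surjectivity onto $K^{\vee}$ I would argue: since $\rho_X$ is bijective, $\rho_X(P_{\pi})=\Ker(\pi^{\vee})^{\vee}=(\iota^{\vee})^{-1}(K^{\vee})$, and applying the surjection $\iota^{\vee}$ gives $\Ima(\phi|_{P_{\pi}})=K^{\vee}$. Combined with $\Ker\phi=P_{\pi}^{\circ}$, this shows $\phi$ induces an isomorphism $\pi_0(P_{\pi})=P_{\pi}/P_{\pi}^{\circ}\to K^{\vee}$, which is the assertion; this also recovers \cite[Lemma 2.1]{HP} in the present formulation. Alternatively, once $\Ker\phi=P_{\pi}^{\circ}$ is known, surjectivity may be replaced by the cardinality count $|\pi_0(P_{\pi})|=|K^{\vee}|$ of \cite[Theorem 1.1]{HP}. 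I expect the only real obstacle to be the bookkeeping of dualities---carefully matching ``restrict $\rho_X(p)$ to $\pi^{\vee}(J_C)$'' with $\iota^{\vee}\circ\rho_X$, and tracking how (\ref{CD-piVV-Nm}) factors through $A$---since no substantive geometry enters beyond standard properties of isogenies and dual abelian varieties.
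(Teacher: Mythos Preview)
Your proof is correct and follows essentially the same approach as the paper: both use the commutative diagram (\ref{CD-piVV-Nm}) to establish well-definedness, and both observe that the inclusion $i:\pi^{\vee}(J_C)\hookrightarrow J_X$ has $i^{\vee}$ surjective with connected kernel, which forces the map to be constant on connected components. The paper's proof stops at injectivity on $\pi_0$ (implicitly leaving the bijection to \cite[Lemma 2.1]{HP}, of which this lemma is announced as a reformulation), whereas you also supply the surjectivity step directly via $\rho_X(P_{\pi})=(\iota^{\vee})^{-1}(K^{\vee})$ and surjectivity of $\iota^{\vee}$; this is a minor but welcome addition.
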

\begin{proof}
Let us first check that $\rho_X(p)|_{\pi^{\vee}(J_C)}$ lies in $K^{\vee}$. We have the following short exact sequence of abelian groups:
$$
0\longrightarrow K^{\vee}\longrightarrow \Pic^0(\pi^{\vee}(J_C))\stackrel{(\pi^{\vee})^{\vee}}{\longrightarrow}\Pic^0(J_C)\longrightarrow 0.
$$
Take $J_X$ for $\bar{J}$ in the upper right corner of (\ref{CD-piVV-Nm}), and regard $p$ as an element of $\bar{J}$. Since $p\in P_{\pi}$, we have $\Nm_{\pi}(p)=0$. As is explained in Remark \ref{Rem-Ari2AB}, the inverse of $\bs\alpha^{\vee}$ is exactly $\rho_X$. 
Then the commutativity of (\ref{CD-piVV-Nm}) implies that $(\pi^{\vee})^{\vee}(\rho_X(p))=0$, so the map in the statement of the lemma is well-defined. Now $i:\pi^{\vee}(J_C)\hookrightarrow J_X$ is an injective homomorphism of abelian varieties, and so $i^{\vee}$ is surjective and $\Ker(i^{\vee})$ is also an abelian variety, and in particular, connected. This implies that if two elements $p_1$ and $p_2$ of $P_{\pi}$ are such that $\rho_X(p_1)|_{\pi^{\vee}(J_C)}$ coincides with $\rho_X(p_2)|_{\pi^{\vee}(J_C)}$, then $p_1$ and $p_2$ must lie in the same connected component of $P_{\pi}$. We conclude that the homomorphism $\pi_0(P_{\pi})\rightarrow K^{\vee}$ is injective. Surjectivity follows from the surjectivity of $i^{\vee}$.
\end{proof}

\section{Hitchin Fibrations}\label{Sec-HF}

\subsection{Moduli spaces of Higgs bundles}\label{SS-MSHB}\hfill

Let $C$ be a connected smooth projective curve of genus $g>1$ over $\mathbb{C}$, and $D$ a line bundle on $C$ that is either isomorphic to the canonical bundle $\Omega_C$ or has degree $\deg D>2g-2$. A $D$-twisted Higgs bundle (or simply Higgs bundle) on $C$ is a pair $(\mathcal{E},\theta)$, where $\mathcal{E}$ is a vector bundle on $C$ and $\theta:\mathcal{E}\rightarrow\mathcal{E}\otimes D$ is a homomorphism of coherent sheaves. We denote by $M_n(C)$ the coarse moduli space of semi-stable rank $n$ (degree 0) Higgs bundles on $C$. We refer to \cite{dC} for the definition of stability condition. The Hitchin map
$$
h_C:M_n(C)\longrightarrow A_n(C):=\bigoplus_{i=1}^nH^0(C,D^{\otimes i})
$$
is surjective and projective, and sends $(\mathcal{E},\theta)$ to the characteristic polynomial of $\theta$. The affine space $A_n(C)$ is called the Hitchin base.

A semi-stable $\SL_n$-Higgs bundle is a semi-stable Higgs bundle $(\mathcal{E},\theta)$ such that $\det(\mathcal{E})\cong\mathcal{O}_C$ and $\Tr(\theta)=0$. We denote by $\check{M}_n(C)$ the moduli space of semi-stable $\SL_n$-Higgs bundles on $C$. The Hitchin map $h_C$ restricts to the Hitchin map for $\SL_n$:
$$
\check{h}_C:\check{M}_n(C)\longrightarrow\check{A}_n(C):=\bigoplus_{i=2}^nH^0(C,D^{\otimes i}),
$$
that is also surjective and projective. See \cite[\S 2.4]{dC}.

Let $\Gamma=\Pic^0(C)[n]$ be the $n$-torsion subgroup of the Jacobian variety of $C$. Then $\Gamma$ acts on $M_n(C)$ by tensorisation. This action preserves the subvariety $\check{M}_n(C)$ and respects the fibration $h_C$. The moduli space $\hat{M}_n(C)$ of semi-stable $\PGL_n$-Higgs bundles on $C$ is defined to be the quotient stack $[\check{M}_n(C)/\Gamma]$. The Hitchin map $\check{h}_C$ induces the Hitchin map 
$$
\hat{h}_C:\hat{M}_n(C)\longrightarrow\check{A}_n(C).
$$
Note that the Hitchin base in this case is the same as the $\SL_n$-case.

Denote by $|D|$ the total space of the line bundle $D$, which is a smooth surface. Let $\pi_D:|D|\rightarrow C$ be the natural map and let $t$ be the canonical section of $\pi_D^{\ast}D$. For any $a=(a_1,\ldots,a_n)\in A_n(C)$, the spectral curve $X_a\subset|D|$ is the zero set of the section 
\begin{equation}\label{eq-sec-spec-cov}
s_a:=t^n+(\pi_D^{\ast}a_1)t^{n-1}+\cdots+\pi_D^{\ast}a_n\in H^0(|D|,\pi_D^{\ast}D^{\otimes n}).
\end{equation}
Notice that the Hitchin base is identified with the sections of $\pi_D^{\ast}D^{\otimes n}$ of the form $s_a$. We will denote by $\pi_a:X_a\rightarrow C$ the restriction of $\pi_D$ to $X_a$, called a spectral cover, and $X_a$ is called a spectral curve. We denote by $A_n^{ell}(C)\subset A_n(C)$ the open subset corresponding to integral spectral curves. We will write $\check{A}_n^{ell}(C)=\check{A}_n(C)\cap A_n^{ell}(C)$, $M_n^{ell}(C)=h_C^{-1}(A_n^{ell}(C))$ and $\check{M}_n^{ell}(C)=\check{M}_n(C)\cap M^{ell}_n(C)$.

The Hitchin fibre $h_C^{-1}(a)$ for $a\in A_n^{ell}(C)$ admits the following modular description. Let $\bar{J}_a^i$ be the compactified Jacobian of degree $i=\binom{n}{2}\cdot\deg D$ of $X_a$ (see \cite[Eq. (5)]{dC}). Then the pushforward along $\pi_a$ defines an isomorphism of algebraic varieties
\begin{equation}\label{eq-pia-wedge}
\pi_a^{\wedge}:\bar{J}_a^{i}\lisom h_C^{-1}(a).
\end{equation}
This is known as the BNR correspondence (see \cite{BNR}).

\subsection{Some subvarieties of the Hitchin base}\label{SS-SSHB}\hfill

We introduce some closed subvarieties of the Hitchin base generalising the endoscopic loci (see for example \cite[\S 2.5]{dC}). Let $G\subset\Gamma$ be a subgroup such that $|G|$ divides $n$. Write $m=n/|G|$. For any $g\in G$, denote by $L_g$ the corresponding line bundle on $C$. The locally free sheaf $\bigoplus_{g\in G}L_g$ defines an \'etale Galois covering $\psi:\tilde{C}\rightarrow C$ with Galois group isomorphic to the dual group $G^{\vee}$.  Then $\{L_g\}_{g\in G}$ are exactly the line bundles on $C$ that pulls back to the trivial bundle on $\tilde{C}$. Note that $\tilde{C}$ is connected. This can be seen by induction on the number of direct factors of $G$. Suppose that $G=G'\oplus H$ where $H$ is a cyclic group, and that the Galois cover $\tilde{C}_{G'}$ is connected. Then $\tilde{C}$ is the fibre product $\tilde{C}_{G'}\times_C\tilde{C}_H$, where $\tilde{C}_H$ is the Galois cover of $C$ associated to $H$. Since the kernel of the pullback morphism $\Pic^0(C)\rightarrow\Pic^0(\tilde{C}_{G'})$ is exactly $G'$, the image of $H$ under this morphism is isomorphic to $H$, which we denote by $H'$. Now $\tilde{C}$ is the cyclic Galois cover of $\tilde{C}_{G'}$ defined by $H'$, and so is connected.

Write $\tilde{D}=\psi^{\ast}D$ and let $|\tilde{D}|$ denote the total space of $\tilde{D}$. Then $G^{\vee}$ acts on $|\tilde{D}|$ in a way that is compatible with the projection $\pi_{\tilde{D}}:|\tilde{D}|\rightarrow\tilde{C}$. Let $A_m(\tilde{C})$ be the Hitchin base associated to $\tilde{C}$ and $\tilde{D}$; that is,
$$
A_m(\tilde{C})=\bigoplus_{i=1}^mH^0(\tilde{C},\tilde{D}^{\otimes i}).
$$  
Thus $G^{\vee}$ also acts on $A_m(\tilde{C})$. For any $\tilde{a}\in A_m(\tilde{C})$, let $s_{\tilde{a}}\in H^0(|\tilde{D}|,\pi_{\tilde{D}}^{\ast}\tilde{D}^{\otimes m})$ be as in (\ref{eq-sec-spec-cov}). The product $\prod_{\chi\in G^{\vee}}\chi(s_{\tilde{a}})\in H^0(|\tilde{D}|,\pi_{\tilde{D}}^{\ast}\tilde{D}^{\otimes n})$ is $G^{\vee}$ invariant, and so descends to a section $s_a\in H^0(|D|,\pi_D^{\ast}D^{\otimes n})$. This defines a finite morphism
\begin{equation}\label{eq-qG}
q_G:A_m(\tilde{C})\longrightarrow A_n(C)
\end{equation}
sending $\tilde{a}$ to $a$. We will denote by $A_{n,G}(C)$ the image of $q_G$. If $G_1\subset G_2$, then $A_{n,G_2}(C)\subset A_{n,G_1}(C)$, since the $G_2^{\vee}$-Galois covering is the composition of a $(G_2/G_1)^{\vee}$-covering and the $G_1^{\vee}$-covering. 
\begin{Lem}\label{Lem-HP5.1}
Let $H^0(\tilde{C},\tilde{D})_{var}$ denote the direct sum of the isotypic components of $H^0(\tilde{C},\tilde{D})$ on which the Galois group $G^{\vee}$ acts via a nontrivial character. Then,
$$
q_G^{-1}(\check{A}_n(C))=H^0(\tilde{C},\tilde{D})_{var}\oplus\bigoplus_{i=2}^mH^0(\tilde{C},\tilde{D}^{\otimes i}).
$$
\end{Lem}
\begin{proof}
This is an analogue of \cite[Lemma 5.1]{HP} for noncyclic groups. For any $\tilde{a}=(\tilde{a}_1,\ldots,\tilde{a}_m)\in A_m(\tilde{C})$, the first component of $q_G(\tilde{a})$ is equal to $\sum_{\chi\in G^{\vee}}\chi(\tilde{a}_1)$. Let $\tilde{a}_1^0$ be the isotypic component of $\tilde{a}_1$ corresponding to the trivial character of $G^{\vee}$. It follows from the orthogonality of irreducible characters of finite groups that $\tilde{a}_1^0=0$ if and only if $\sum_{\chi\in G^{\vee}}\chi(\tilde{a}_1)=0$.
\end{proof}
Let $a\in A_n(C)$ and denote by $\pi_a:X_a\rightarrow C$ the corresponding spectral cover. By the definition of $A_{n,G}(C)$, the element $a$ lies in $A_{n,G}(C)$ if and only if 
\begin{equation}\label{eq-def-AnG}
X_a\times_C\tilde{C}=\bigcup_{\chi\in G^{\vee}}Z_{\tilde{a}}^{\chi}
\end{equation}
where $Z_{\tilde{a}}$ is a spectral cover of $\tilde{C}$ of degree $m$ and $Z_{\tilde{a}}^{\chi}$ is its image under the action of $\chi$ on $|\tilde{D}|$. When $X_a$ is integral, we have a more precise description of $Z_{\tilde{a}}$:
\begin{Lem}\label{Lem-AnG}
Let $a\in A_{n,G}^{ell}(C)$ for some $G$. 
Let $\pi_a:X_a\rightarrow C$ be the spectral cover associated to $a$, $\nu_a:\tilde{X}_a\rightarrow X_a$ the normalisation map, and $\psi:\tilde{C}\rightarrow C$ the $G^{\vee}$-Galois covering associated to $G$. Then the following assertions hold.
\begin{itemize}
\item[(i)] The morphism $\pi_a\circ\nu_a$ factors through $\tilde{C}$.
\item[(ii)] If we denote by $Z$ the image of morphism $\tilde{X}_a\rightarrow X_a\times_C\tilde{C}$ that results from the universal property of fibre product, then the restriction of $\pr_2$ to $Z\rightarrow\tilde{C}$ is an integral spectral cover of degree $m$.  In particular, $X_a\times_C\tilde{C}$ is equal to $\bigcup_{\chi\in G^{\vee}}Z^{\chi}$.
\item[(iii)] The morphism $\tilde{X}_a\rightarrow Z$ in (ii) is the normalisation map for $Z$, and the restriction of $\pr_1$ to $Z\rightarrow X_a$ is birational.
\end{itemize}
\end{Lem}
\begin{proof}
According to the proof of \cite[Theorem 5.3]{HP}, these assertions are true if $G$ is cyclic. We will prove by induction on the number of direct factors of $G$. 

Write $G=G_1\oplus H$, where $H$ is a nontrivial cyclic subgroup. The $G^{\vee}$-Galois covering $\psi:\tilde{C}\rightarrow C$ decomposes as 
\begin{equation}\label{eq-Lem-AnG}
\tilde{C}\stackrel{\psi_1}{\longrightarrow} C_1\stackrel{\psi_0}{\longrightarrow} C,
\end{equation}
where the Galois group of $\psi_1$ is $G_1^{\vee}\cong(G/H)^{\vee}$ and the Galois group of $\psi_0$ is $H^{\vee}$. We may regard $G_1\cong G/H$ as a subgroup of $\Pic^0(C_1)$ since the kernel of $\psi^{\vee}_0$ is $H$. Note that $G_1\subset\Pic^0(C_1)[m_1]$, with $m_1=n/|H|$, since $|G_1|$ divides $m_1$ (recall that $\Pic^0(C_1)[m_1]$ is the $m_1$-torsion subgroup of $\Pic^0(C_1)$). Apply (i) and (ii) to $H$ and $\psi_0$, we obtain the following commutative diagram
$$
\begin{tikzcd}[row sep=2.5em, column sep=2em]
\tilde{X}_a \arrow[dr, "\nu_a"']
\arrow[r, "\nu_1"] & X_{a_1} \arrow[r, "\pi_{a_1}"] \arrow[d]
& C_1 \arrow[d, "\psi_0"] \\
& X_{a} \arrow[r, "\pi_{a}"]
& C,
\end{tikzcd}
$$
where $\nu_1$ is the normalisation map, and $X_{a_1}$ is a spectral cover over $C_1$ corresponding to some $a_1\in A_{m_1}(C_1)$. Moreover, we have $q_H(a_1)=a$, where $q_H$ is the morphism (\ref{eq-qG}) defined for $H$. By assumption, $a\in A_{n,G}(C)$; that is, there exists $a_2\in A_m(\tilde{C})$ such that $q_G(a_2)=a$. Write $q_G$ as a composition
$$
A_m(\tilde{C})\stackrel{q_{G_1}}{\longrightarrow}A_{m_1}(C_1)\stackrel{q_H}{\longrightarrow}A_n(C).
$$
Since $a_1$ and $q_{G_1}(a_2)$ lie in the same fibre of $q_H$, they must lie in the same orbit under the action of $H^{\vee}$. Now $q_{G_1}$ is $H^{\vee}$-equivariant, so $a_1$ lies in the image of $q_{G_1}$ (i.e., $a_1\in A_{m_1,G_1}(C_1)$). By induction assumption, we see that $\pi_{a_1}\circ\nu_1$ factors through $\tilde{C}$. This implies $(i)$ for $G$; also, there exists some $\tilde{a}\in A_m(\tilde{C})$ such that $X_{\tilde{a}}$ is the image of $\tilde{X}_a$ in $X_{a_1}\times_{C_1}\tilde{C}\subset X_a\times_C\tilde{C}$ and $X_{a_1}\times_{C_1}\tilde{C}\cong\bigcup_{\chi\in G_1^{\vee}}X_{\tilde{a}}^{\chi}$. Combined with $X_{a}\times_{C}C_1\cong\bigcup_{\chi\in H^{\vee}}X_{a_1}^{\chi}$, this shows that $X_{a}\times_{C}\tilde{C}\cong\bigcup_{\chi\in G^{\vee}}X_{\tilde{a}}^{\chi}$; that is, (ii) is true for $G$. Again by induction assumption, the map $\tilde{X}_a\rightarrow X_{\tilde{a}}$ is the normalisation map, and the composition $X_{\tilde{a}}\rightarrow X_{a_1}\rightarrow X_a$ is birational. This completes the proof.
\end{proof}

Denote by $A_{n}^{\heartsuit}(C)\subset A_{n}(C)$ the open subset corresponding to reduced spectral curves. If $G\subset\Gamma$ is a subgroup such that $|G|$ divides $n$, then we write $A_{n,G}^{\heartsuit}(C)=A_{n,G}(C)\cap A_n^{\heartsuit}(C)$. There is an alternative characterisation of the subvarieties $A^{\heartsuit}_{n,G}(C)$. We need to introduce a finite subgroup $K_a\subset\Pic^0(C)$ associated to any $a\in A^{\heartsuit}_n(C)$. We may write 
\begin{equation}\label{X_a}
X_a=\sum_{\lambda}X_{a,\lambda},
\end{equation}
where the $X_{a,\lambda}$'s are distinct irreducible components of $X_a$. Let $$\pi_{a,\lambda}:X_{a,\lambda}\longrightarrow C$$ be the restriction of $\pi_a$, and let $n_{a,\lambda}$ be its degree. Then $n=\sum_{\lambda}n_{a,\lambda}$. For each $\lambda$, let $\nu_{a,\lambda}:\tilde{X}_{a,\lambda}\rightarrow X_{a,\lambda}$ be the normalisation map, and let $\tilde{\pi}_{a,\lambda}=\pi_{a,\lambda}\circ \nu_{a,\lambda}$. Put $K_{a,\lambda}:=\Ker(\tilde{\pi}_{a,\lambda}^{\vee})$. In view of \S \ref{Sub-Prym} (ii), $K_{a,\lambda}$ is contained in the $n_{a,\lambda}$-torsion subgroup of $\Pic^0(C)$. Finally, put $$K_a:=\bigcap_{\lambda}K_{a,\lambda}.$$
\begin{Thm}\label{noncyc}
Let $G\subset\Gamma$ be a subgroup such that $|G|$ divides $n$, and let $a\in A_n^{\heartsuit}(C)$. Then $a$ lies in $A_{n,G}(C)$ if and only if $K_a\supset G$.
\end{Thm}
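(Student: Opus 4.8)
The plan is to prove both implications by carefully matching the two combinatorial descriptions of $A_{n,G}(C)$: the fibre-product splitting criterion (\ref{eq-def-AnG}) and the group-theoretic condition $K_a \supset G$. The key observation is that $\psi:\tilde C \to C$ is an étale $G^{\vee}$-Galois cover, so $G = \Ker(\psi^{\vee}:\Pic^0(C)\to\Pic^0(\tilde C))$; hence for a line bundle $M$ on $C$ one has $\psi^{\ast}M \cong \mathcal O_{\tilde C}$ iff $M \in G$. I will combine this with the functoriality of norm maps established in Lemma \ref{Lem-Func-Nm} and Lemma \ref{Lem-G1G2}.

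First I would treat the implication $a \in A_{n,G}(C) \implies K_a \supset G$. Fix $g \in G$, i.e.\ $L_g$ is one of the line bundles pulled back trivially to $\tilde C$, and I must show $L_g \in (K_{a,\lambda})_{m_{a,\lambda}}$ for every $\lambda$, equivalently $L_g^{\otimes m_{a,\lambda}} \in K_{a,\lambda} = \Ker(\tilde\pi_{a,\lambda}^{\vee})$. Using the splitting $X_a \times_C \tilde C = \bigcup_{\chi} Z_{\tilde a}^{\chi}$ from (\ref{eq-def-AnG}), each integral component $X_{a,\lambda}$ of $X_a$ pulls back over $\tilde C$ to a union of $G^{\vee}$-translates of integral components of $Z_{\tilde a}$; passing to normalisations, $\tilde X_{a,\lambda}\times_C\tilde C$ (normalised) is an étale $G^{\vee}$-cover of $\tilde X_{a,\lambda}$ (possibly disconnected, a union of copies if the component is $\chi$-stable). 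In any case $\tilde\pi_{a,\lambda}$ factors, up to the étale $G^{\vee}$-cover, through $\tilde C \to C$; chasing Lemma \ref{Lem-G1G2} (or directly: $\tilde\pi_{a,\lambda}^{\vee}$ kills $\psi^{\vee}$-trivial bundles, after accounting for the multiplicity $m_{a,\lambda}$ via the degree-$m$ spectral cover $Z_{\tilde a}\to\tilde C$ appearing with multiplicity in each component) yields $L_g^{\otimes m_{a,\lambda}} \in K_{a,\lambda}$. I expect the multiplicity bookkeeping — that a component $X_{a,\lambda}$ occurring with multiplicity $m_{a,\lambda}$ in $X_a$ corresponds, after base change, to the need to divide by $m_{a,\lambda}$ — to be the delicate point; this is exactly why $(K_{a,\lambda})_{m_{a,\lambda}} = [m_{a,\lambda}]^{-1}(K_{a,\lambda})$ is built into the definition of $K_a$.

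For the converse, $K_a \supset G \implies a \in A_{n,G}(C)$, I would show that $K_a \supset G$ forces the spectral curve $X_a$ to become reducible over $\tilde C$ with the $G^{\vee}$-symmetric structure of (\ref{eq-def-AnG}). The idea is that $K_a \supset G$ means every $L_g$ ($g\in G$), appropriately powered, lies in all the $K_{a,\lambda}$; running the argument of the first implication in reverse, each integral component $X_{a,\lambda}$ pulled back to $\tilde C$ must split into $|G^{\vee}|/|\mathrm{stab}|$ pieces permuted by $G^{\vee}$, and assembling these over all $\lambda$ with the correct multiplicities exhibits $X_a \times_C \tilde C$ as $\bigcup_{\chi\in G^{\vee}} Z_{\tilde a}^{\chi}$ for a suitable degree-$m$ spectral cover $Z_{\tilde a}$ of $\tilde C$. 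Concretely one takes $Z_{\tilde a}$ to be (a choice of) one component-orbit-representative of $X_a\times_C\tilde C$, of total degree $n/|G| = m$ over $\tilde C$, and checks that the defining section $s_{\tilde a}$ satisfies $\prod_{\chi\in G^{\vee}}\chi(s_{\tilde a}) = \psi^{\ast}s_a$ up to a unit, so that $a = q_G(\tilde a)$. The main obstacle here, and overall, is verifying that the étale $G^{\vee}$-cover structure on the normalisations is compatible across all components simultaneously and descends correctly — i.e.\ producing a single global $Z_{\tilde a}\subset|\tilde D|$ rather than just a component-by-component splitting; this is where one genuinely uses that $\tilde C$ is connected (proved in \S\ref{SS-SSHB}) and that $|G|\mid n$. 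The cyclic case is handled in \cite[\S 5.1]{HP}, and the strategy is to reduce to it by the induction on the number of cyclic factors of $G$ already used to prove connectedness of $\tilde C$.
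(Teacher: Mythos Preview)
Your overall plan is in the right spirit and broadly follows the Hausel--Pauly argument, but the paper's proof is far more economical and pinpoints a specific issue that your sketch glosses over. The paper simply observes that the proof in \cite[\S 5.2]{HP} (for cyclic $G$) goes through \emph{verbatim} for arbitrary $G\subset\Gamma$, with a single exception: in the case where $X_a$ is irreducible but not reduced, one must show the inclusion $\Stab(\mathcal{I})\subset S:=\Ker[k]$ (in the notation of \cite{HP}, where $k$ is the multiplicity). The fix is elementary: one still knows $|\Stab(\mathcal{I})|$ divides $k$, so every element is killed by $[k]$, hence $\Stab(\mathcal{I})\subset\Ker[k]$. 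This Lagrange-type argument replaces whatever cyclic-specific reasoning \cite{HP} used at that step.

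By contrast, your proposal to reduce to the cyclic case by induction on the number of cyclic factors of $G$ is more roundabout than necessary, and you do not isolate the non-reduced case as the one genuinely requiring attention. Your discussion of ``multiplicity bookkeeping'' correctly senses that this is where the difficulty lies, but the sketch you give for the converse direction (choosing a component-orbit-representative $Z_{\tilde a}$ of degree $m$) implicitly assumes the reduced case and does not explain how the multiplicities $m_{a,\lambda}>1$ interact with the condition $[m_{a,\lambda}](G)\subset K_{a,\lambda}$. This is not fatal --- the HP argument handles it --- but it is precisely the step the paper singles out, and your proposal does not address it. I would recommend reading \cite[\S 5.2]{HP} directly and then checking that the one modification the paper supplies suffices, rather than building the inductive machinery you outline.
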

\begin{proof}
We prove by induction on the number of direct factors of $G$, using the results of Hausel-Pauly for cyclic groups (see \cite[Theorem 5.3]{HP}). Suppose that $G=G_1\oplus H$, where $H$ is a nontrivial cyclic group. If $a\in A_{n,G}(C)$, then $a\in A_{n,H}(C)$ and $a\in A_{n,G_1}(C)$. By induction assumption, we have $K_a\supset H$ and $K_a\supset G_1$; therefore, $K_a\supset G$. Conversely, assume that $K_a\supset G$. Consider the composition of Galois coverings 
(\ref{eq-Lem-AnG}). We have $K_a\supset H$, and so $a\in A_{n,H}(C)$; that is, there exists $a_1\in A_{m_1}(C_1)$ such that $q_H(a_1)=a$. We have $G_1\cong G/H\subset K_a/H$. As in the proof of Lemma \ref{Lem-AnG}, we may regard $G_1$ and $K_a/H$ as subgroups of $\Pic^0(C_1)$. Moreover, $G_1\subset\Pic^0(C_1)[m_1]$. We claim that $K_a/H\subset K_{a_1}$. Then, by induction assumption (for the curve $C_1$), $a_1$ lies in the image of $A_m(\tilde{C})\rightarrow A_{m_1}(C_1)$; therefore $a\in A_{n,G}(C)$. 

Now we prove the claim. Let $X_a=\cup_{\lambda}X_{a,\lambda}$ be the decomposition of the reduced spectral curve $X_a$ into its irreducible components. By Lemma \ref{Lem-AnG}, for each $\lambda$, there is a spectral curve $Z_{\lambda}$ over $C_1$ such that $X_{a,\lambda}\times_CC_1\cong\bigcup_{\chi\in H^{\vee}}Z_{\lambda}^{\chi}$. Moreover, the normalisation $\tilde{X}_{a,\lambda}$ is also the normalisation of $Z_{\lambda}$. We have the following commutative diagram:
$$
\begin{tikzcd}[row sep=2.5em, column sep=2em]
\tilde{X}_{a,\lambda} \arrow[dr, "\nu_{\lambda}"']
\arrow[r, "\nu_{1,\lambda}"] & Z_{\lambda} \arrow[r, "\pi_{a_1,\lambda}"] \arrow[d]
& C_1 \arrow[d, "\psi_0"] \\
& X_{a,\lambda} \arrow[r, "\pi_{a,\lambda}"]
& C,
\end{tikzcd}
$$
where $\nu_{\lambda}$ and $\nu_{1,\lambda}$ are normalisation maps. We may take $\cup_{\lambda}Z_{\lambda}$ as the spectral curve over $C_1$ corresponding to $a_1$, and so $K_{a_1}=\cap_{\lambda}K_{a_1,\lambda}$. Recall that $K_{a,\lambda}$ is the kernel of $\nu_{\lambda}^{\vee}\circ\pi_{a,\lambda}^{\vee}$, that $K_{a_1,\lambda}$ is the kernel of $\nu_{1,\lambda}^{\vee}\circ\pi_{a_1,\lambda}^{\vee}$, and that $H$ is the kernel of $\psi_0^{\vee}$. We conclude that $K_{a,\lambda}/H\subset K_{a_1,\lambda}$; therefore, $K_a/H\subset K_{a_1}$.
\end{proof}
\begin{Rem}
Alternatively, we may check that the arguments of \cite[Theorem 5.3]{HP} also work for noncyclic $G$. However, if $a$ corresponds to a nonreduced spectral curve, then we cannot show that $K_a\supset G$ implies $a\in A_{n,G}(C)$. In fact, according to the proof of \cite[Theorem 5.3]{HP}, if $G=H$ is cyclic, $|G|=d$, and $X_a$ is irreducible and nonreduced with $X_a=kX_{a,red}$, then $a_1$ corresponds to a possibly nonreduced spectral curve $X_{a_1}$ over $C_1$ with $X_{a_1}=\frac{k}{\gcd(k,d)}X_{a_1,\red}$. When $G$ is not cyclic, the factor $\gcd(k,d)$ should be replaced by the cardinality of the kernel of the multiplication map $[k]$ on $G$. But this number may not divide $k$, in which case $X_{a_1}$ is not well-defined.
\end{Rem}
Write $\check{A}_{n,G}(C)=A_{n,G}(C)\cap\check{A}_n(C)$.
\begin{Cor}\label{Cor-noncyc}
Let $G\subset\Gamma$ be a subgroup such that $|G|$ divides $n$, and let $a\in \check{A}_n^{\heartsuit}(C)$. Then $a$ lies in $\check{A}_{n,G}(C)$ if and only if $K_a\supset G$.
\end{Cor}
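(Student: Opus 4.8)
The plan is to deduce Corollary \ref{Cor-noncyc} directly from Theorem \ref{noncyc} by intersecting with the $\SL_n$-Hitchin base. The statement is essentially a restriction of the equivalence ``$a\in A_{n,G}(C)\iff K_a\supset G$'' to the subspace $\check A_n(C)\subset A_n(C)$, so the only thing that needs checking is that the two conditions are compatible with passing to this subspace. Since $\check A_{n,G}(C)$ is defined as $A_{n,G}(C)\cap\check A_n(C)$, and the subgroup $K_a$ is defined purely in terms of the spectral curve $X_a$ (via the decomposition \eqref{X_a} and the kernels $K_{a,\lambda}$ of the pullback maps on Jacobians), the group $K_a$ does not depend on whether one regards $a$ as an element of $A_n(C)$ or of $\check A_n(C)$.

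First I would observe that for $a\in\check A_n(C)\subset A_n(C)$, Theorem \ref{noncyc} applies and gives: $a\in A_{n,G}(C)$ if and only if $K_a\supset G$. Then I would simply intersect the left-hand condition with membership in $\check A_n(C)$: since $a\in\check A_n(C)$ already holds by hypothesis, we have $a\in A_{n,G}(C)$ if and only if $a\in A_{n,G}(C)\cap\check A_n(C)=\check A_{n,G}(C)$. Combining the two equivalences yields $a\in\check A_{n,G}(C)\iff K_a\supset G$, which is exactly the claim.

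There is no real obstacle here; the corollary is a formal consequence of the theorem together with the definition of $\check A_{n,G}(C)$. The only point worth a sentence is that the construction of $K_a$ is intrinsic to the spectral curve and hence unaffected by the extra conditions $a_1=0$ (traceless) and trivial determinant that cut out $\check A_n(C)$ inside $A_n(C)$; in particular the integrality hypothesis needed to invoke the Prym-theoretic description in \S\ref{Sub-Prym}(ii) is the same on both sides. I would therefore present the proof in one or two lines.

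\begin{proof}
Let $a\in\check A_n(C)$. Since $\check A_n(C)\subset A_n(C)$, Theorem \ref{noncyc} applies to $a$ and gives $a\in A_{n,G}(C)$ if and only if $K_a\supset G$. As $a\in\check A_n(C)$ by assumption, the condition $a\in A_{n,G}(C)$ is equivalent to $a\in A_{n,G}(C)\cap\check A_n(C)=\check A_{n,G}(C)$. Combining the two equivalences, we conclude that $a\in\check A_{n,G}(C)$ if and only if $K_a\supset G$.
\end{proof}
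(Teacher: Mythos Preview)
Your proof is correct and is exactly the approach the paper takes: the corollary is stated without proof immediately after the definition $\check{A}_{n,G}(C)=A_{n,G}(C)\cap\check{A}_n(C)$, making clear it is the obvious restriction of Theorem~\ref{noncyc} to $\check{A}_n(C)$.
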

If $D=\Omega_C$, we have $$\dim(\check{A}_{n,G}(C))=(n^2/|G|-1)(g-1).$$ Indeed, Lemma \ref{Lem-HP5.1} reduces the computation to $d_1:=\dim A_m(\tilde{C})$ and $d_2:=\dim H^0(\tilde{C},\tilde{D})^{G^{\vee}}$, since $q_G$ is a finite morphism. It is known that $d_1=m^2(g(\tilde{C})-1)+1$ and $d_2=\dim H^0(C,\Omega_C)=g$, when $D=\Omega_C$. We obtain the desired formula as in \cite[Lemma 7.1]{HP}. If $\deg D>2g-2$, we have $$\dim(\check{A}_{n,G}(C))=d_1-d_2=\frac{1}{2}(\frac{n^2}{|G|^2}+\frac{n}{|G|})\deg D-(n-1)(g-1)-\deg D.$$The formula for $d_1$ is given in \cite[\S 6.1]{dC}, while $d_2$ follows from Riemann-Roch.

Write $A_{n,G}^{ell}(C)=A_{n,G}(C)\cap A_n^{ell}(C)$, and $\check{A}_{n,G}^{ell}(C)=A_{n,G}^{ell}(C)\cap\check{A}_{n}(C)$.
\begin{Lem}\label{Lem-q-1subs-Aell}
The inverse image $q_G^{-1}(A_n^{ell}(C))$ is contained in $A_m^{ell}(\tilde{C})$. Moreover, the action of $G^{\vee}$ on $q_G^{-1}(A_n^{ell}(C))$ is free. In particular, the action of $G^{\vee}$ on $M_m^{ell}(\tilde{C},G):=h_{\tilde{C}}^{-1}(q_G^{-1}(A_n^{ell}(C)))$ is free.
\end{Lem}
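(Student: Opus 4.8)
The plan is to reduce everything to a bookkeeping of degrees over $\tilde C$, exploiting that an integral $X_a$ forces the fibre product $Y:=X_a\times_C\tilde C$ to be reduced with its irreducible components permuted simply transitively by $G^{\vee}$. So first I would fix $\tilde a\in q_G^{-1}(A_n^{ell}(C))$, set $a=q_G(\tilde a)$ so that $X_a$ is integral, and form $Y:=X_a\times_C\tilde C$ inside $|\tilde D|=|D|\times_C\tilde C$. Since $\psi$ is finite étale, $Y\to X_a$ is finite étale, and being the base change of the $G^{\vee}$-Galois cover $\tilde C\to C$ it is a $G^{\vee}$-torsor; as $X_a$ is reduced, $Y$ is reduced. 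By (\ref{eq-def-AnG}), $Y=\bigcup_{\chi\in G^{\vee}}Z_{\tilde a}^{\chi}$, and in fact $Y=V\bigl(\prod_{\chi\in G^{\vee}}\chi(s_{\tilde a})\bigr)$ as a divisor in the smooth surface $|\tilde D|$. Reducedness of $Y$ then forces each local equation $\chi(s_{\tilde a})$ to be squarefree, so $Z_{\tilde a}=V(s_{\tilde a})$ is reduced, and (working in the UFD local rings of $|\tilde D|$) every irreducible component of $Z_{\tilde a}$ is an irreducible component of $Y$.

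The core step is the following. Over the smooth locus $U\subset X_a$, which is dense and connected because $X_a$ is integral, $Y|_U\to U$ is a $G^{\vee}$-torsor over a connected scheme, so $G^{\vee}$ acts transitively on $\pi_0(Y|_U)$; and since $Y|_U$ is a smooth curve its connected components are its irreducible components. A standard argument---each irreducible component of $Y$ dominates the irreducible curve $X_a$, hence meets $Y|_U$ in a dense open subset which is an irreducible component of $Y|_U$---identifies the set $\mathcal{O}$ of irreducible components of $Y$ with $\pi_0(Y|_U)$, so $G^{\vee}$ acts transitively on $\mathcal{O}$. Because the $G^{\vee}$-action on $Y$ covers the $G^{\vee}$-action on $\tilde C$, translation by $\chi\in G^{\vee}$ sends a component $W$ to a component of the same degree over $\tilde C$; by transitivity all members of $\mathcal{O}$ share a common degree $d$ over $\tilde C$. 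Since $Y\to\tilde C$ is finite flat of degree $n$ and $Y$ is reduced, $n=|\mathcal{O}|\cdot d$; writing $|\mathcal{O}|=|G|/s$ with $s=|\Stab_{G^{\vee}}(W)|$ gives $d=ms$. On the other hand $Z_{\tilde a}\to\tilde C$ is finite flat of degree $m$ and $Z_{\tilde a}$ is reduced with, say, $k$ irreducible components, each lying in $\mathcal{O}$ and hence of degree $d=ms$; therefore $m=k\cdot ms$, i.e. $ks=1$, forcing $k=s=1$. Thus $Z_{\tilde a}$ is integral, which is precisely $\tilde a\in A_m^{ell}(\tilde C)$, and moreover $G^{\vee}$ acts freely on $\mathcal{O}$.

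It then remains to collect the consequences. For freeness of the $G^{\vee}$-action on $q_G^{-1}(A_n^{ell}(C))$: the assignment $\tilde a\mapsto Z_{\tilde a}$ is $G^{\vee}$-equivariant and injective (a monic spectral section is determined by its zero divisor in $|\tilde D|$, whose only invertible global functions are constants), so $\Stab_{G^{\vee}}(\tilde a)=\Stab_{G^{\vee}}(Z_{\tilde a})$, which is trivial since $Z_{\tilde a}\in\mathcal{O}$ and $G^{\vee}$ acts freely on $\mathcal{O}$. For the ``in particular'': the Hitchin map $h_{\tilde C}$ is $G^{\vee}$-equivariant (forming characteristic polynomials commutes with pullback along automorphisms of $\tilde C$), so $M_m^{ell}(\tilde C,G)=h_{\tilde C}^{-1}(q_G^{-1}(A_n^{ell}(C)))$ is $G^{\vee}$-stable and maps $G^{\vee}$-equivariantly to $q_G^{-1}(A_n^{ell}(C))$; a group that acts freely on the target of an equivariant map acts freely on the source.

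I expect the main obstacle to be the core step: one must pin down not just that $Y$ is reduced but that $\mathcal{O}$ is a single free $G^{\vee}$-orbit of curves of equal degree over $\tilde C$. The delicate points are the identification of $\mathcal{O}$ with $\pi_0(Y|_U)$, which is what lets one transfer transitivity of the Galois action from the torsor over the smooth locus to the components of $Y$, and the additivity of degrees $n=|\mathcal{O}|\,d$ together with $m=\sum_{W\subset Z_{\tilde a}}d$, which relies on $Y$ and $Z_{\tilde a}$ being reduced. Once these are in place the numerical identity $ks=1$ does the remaining work, and the two freeness statements are formal.
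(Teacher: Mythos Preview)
Your proof is correct. The paper's own proof is a one-line citation, asserting that the argument of \cite[Theorem~5.3]{HP} (stated there for cyclic $G$) carries over verbatim to arbitrary $G$; it records only the conclusions that $Z_{\tilde a}$ is integral and that $Z_{\tilde a}^{\chi}\ne Z_{\tilde a}^{\chi'}$ for $\chi\ne\chi'$. Your argument is essentially a self-contained reconstruction of that cited proof: you exploit that $Y=X_a\times_C\tilde C$ is a $G^{\vee}$-torsor over the integral curve $X_a$ to get transitivity of $G^{\vee}$ on the irreducible components of $Y$, then run the degree count $n=|\mathcal{O}|\,d$, $m=k\,d$, $d=ms$ to force $ks=1$. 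This simultaneous extraction of integrality ($k=1$) and freeness ($s=1$) from a single numerical identity is clean and makes the logic transparent in a way the citation does not. The small technical points you flag as delicate (identifying $\mathcal{O}$ with $\pi_0(Y|_U)$, additivity of degrees for reduced curves, injectivity of $\tilde a\mapsto Z_{\tilde a}$ via monicity of $s_{\tilde a}$ and $H^0(|\tilde D|,\mathcal{O})=\mathbb{C}$) are all handled correctly.
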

\begin{proof}
By Lemma \ref{Lem-AnG}, for any $a\in A_{n,G}^{ell}(C)$, the spectral curve $Z_{\tilde{a}}$ over $\tilde{C}$ as in (\ref{eq-def-AnG}) is integral, and $Z_{\tilde{a}}^{\chi}\ne Z_{\tilde{a}}^{\chi'}$ if $\chi\ne\chi'$. This means that $\tilde{a}\in A_m^{ell}(\tilde{C})$ and the action of $G^{\vee}$ is free.
\end{proof}

Let $\phi:Z\rightarrow X_a$ be the birational morphism above.  Let $\phi^{\vee}:\Pic_{X_a}\longrightarrow \Pic_Z$ be the morphism defined by pulling back line bundles.
\begin{Lem}\label{Lem-phiaV-J}
$\phi^{\vee}$ is surjective.
\end{Lem}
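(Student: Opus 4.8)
\emph{Plan.} The strategy is to read off the surjectivity of $\phi^{\vee}$ from the conductor (Milnor) square attached to the finite birational morphism $\phi\colon Z\to X_a$. Recall from Lemma \ref{Lem-AnG} that $Z$ is an integral projective curve, that $\phi=\pr_1|_Z$ is finite (being a base change of the finite morphism $\tilde C\to C$), and that $\phi$ is birational. Since $\Pic_{X_a}$ and $\Pic_Z$ are schemes locally of finite type over $\mathbb{C}$ and a morphism of such schemes is surjective as soon as it is surjective on $\mathbb{C}$-points, it is enough to show that the pullback map $\phi^{\ast}\colon\Pic(X_a)\to\Pic(Z)$ is surjective.

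To do this, let $\mathcal{C}=\SHom_{\mathcal{O}_{X_a}}(\phi_{\ast}\mathcal{O}_Z,\mathcal{O}_{X_a})$ be the conductor, regarded simultaneously as an ideal sheaf of $\mathcal{O}_{X_a}$ and of $\phi_{\ast}\mathcal{O}_Z$. Since $\phi$ is an isomorphism over a dense open subset, the closed subschemes $W\hookrightarrow X_a$ and $W'\hookrightarrow Z$ defined by $\mathcal{C}$ are supported on the finite set $S\subset X_a$ where $\phi$ fails to be an isomorphism; in particular $W$ and $W'$ are $0$-dimensional, hence finite disjoint unions of spectra of Artinian local rings. Working on the \'etale site of $X_a$ and writing $\mathbb{G}_m$ for the unit sheaves, the conductor square produces a short exact sequence
\begin{equation*}
1\longrightarrow\mathbb{G}_{m,X_a}\longrightarrow\phi_{\ast}\mathbb{G}_{m,Z}\times\iota_{\ast}\mathbb{G}_{m,W}\longrightarrow\mathcal{Q}\longrightarrow1,
\end{equation*}
where $\iota\colon W\hookrightarrow X_a$ and the quotient sheaf $\mathcal{Q}$ is supported on $S$, so that $H^q_{\mathrm{et}}(X_a,\mathcal{Q})=0$ for all $q>0$. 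As $\phi$ is finite it is acyclic for \'etale cohomology, so the Leray spectral sequence gives $H^1_{\mathrm{et}}(X_a,\phi_{\ast}\mathbb{G}_{m,Z})=H^1_{\mathrm{et}}(Z,\mathbb{G}_m)=\Pic(Z)$, and likewise $H^1_{\mathrm{et}}(X_a,\iota_{\ast}\mathbb{G}_{m,W})=\Pic(W)$. The long exact cohomology sequence therefore contains
\begin{equation*}
\Pic(X_a)\longrightarrow\Pic(Z)\times\Pic(W)\longrightarrow H^1_{\mathrm{et}}(X_a,\mathcal{Q})=0,
\end{equation*}
hence $\Pic(X_a)\twoheadrightarrow\Pic(Z)\times\Pic(W)$. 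Since $\Pic(W)=0$ and the first component of this surjection is precisely $\phi^{\ast}$, we conclude that $\phi^{\ast}\colon\Pic(X_a)\to\Pic(Z)$ is onto, and so $\phi^{\vee}$ is surjective.

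\emph{Where the work is.} I do not expect a genuine obstacle; the argument is essentially formal once the conductor square is set up. The only points needing a little care are the verification that the conductor subschemes $W$ and $W'$ are $0$-dimensional — which is exactly where the birationality of $\phi$ from Lemma \ref{Lem-AnG}(iii) is used — and the standard reduction from surjectivity on $\mathbb{C}$-points to surjectivity of the morphism of Picard schemes. As an alternative that avoids the conductor square, one can argue by transporting divisors: given $M\in\Pic(Z)$, write $M\cong\mathcal{O}_Z(D_1'-D_2')$ with $D_i'$ effective Cartier divisors supported at smooth points of $Z$ lying over the locus where $\phi$ is an isomorphism (possible since a twist of $M$ by an ample bundle is very ample and a generic section avoids any prescribed finite set), and push the $D_i'$ forward through the isomorphism $Z\setminus W'\xrightarrow{\sim}X_a\setminus W$ to Cartier divisors $D_i$ on $X_a$ with $\phi^{\ast}\mathcal{O}_{X_a}(D_1-D_2)\cong M$.
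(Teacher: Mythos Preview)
Your proof is correct and is essentially an unpacking of the paper's citation: the paper's proof consists of the single sentence that $\phi$ is finite and birational, so the argument of \cite[\S 9.2, Proposition~9]{BLR} applies. That proposition in BLR establishes exactly the conductor-square exact sequence you wrote down (an exact sequence of fppf sheaves with affine kernel supported on the conductor locus), so your main argument is the content behind the paper's reference rather than a different route.

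One small remark: your reduction to $\mathbb{C}$-points is fine, but it is worth noting that the conductor argument actually gives more, namely an exact sequence of representable group functors, so surjectivity of $\phi^{\vee}$ holds at the level of schemes directly without the detour through closed points. Your divisor-transport alternative is also valid and is the more down-to-earth way to see the same fact; it trades the sheaf formalism for an explicit moving lemma, at the cost of only giving surjectivity on points.
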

\begin{proof}
Since $\phi$ is a finite morphism and is birational according to the proof of \cite[Theorem 5.3]{HP}, the arguments in the proof of \cite[\S 9.2, Proposition 9]{BLR} apply.
\end{proof}

\subsection{Fixed point locus in the moduli space of Higgs bundles}\label{subsec-FPLMH}\hfill

Let $G$, $m$ and $\psi$ be as in the previous subsection. Let 
$$
\mathbf{q}_G:M_m(\tilde{C})\longrightarrow M_n(C)
$$
be the morphism defined by pushing forward Higgs bundles along $\psi$. Here we use the fact the the pushforward of a semi-stable Higgs bundle along a Galois cover is semi-stable \cite[Lemma 3.2]{MS2}. Note that $M^{ell}_m(\tilde{C},G)$ as defined in Lemma \ref{Lem-q-1subs-Aell} is equal to $\mathbf{q}_G^{-1}(M_n^{ell}(C))$. Let $\check{M}_m(\tilde{C},G)=\mathbf{q}_G^{-1}(\check{M}_n(C))$, and let $
\check{\mathbf{q}}_G$ be the restriction of $\mathbf{q}_G$ to $\check{M}_m(\tilde{C},G)$. Let $\check{A}_m(\tilde{C},G)=q_G^{-1}(\check{A}_n(C))$, and let $\check{q}_G$ be the restriction of $q_G$ to $\check{A}_m(\tilde{C},G)$. We have a commutative diagram
\begin{equation}\label{CD-h-q}
\begin{tikzcd}[row sep=2.5em, column sep=2em]
\check{M}_m(\tilde{C},G) \arrow[r, "\check{\mathbf{q}}_G"] \arrow[d, "\check{h}_{\tilde{C}}", swap] & \check{M}_n(C) \arrow[d, "\check{h}_{C}"] \\
\check{A}_m(\tilde{C},G) \arrow[r, "\check{q}_G"'] & \check{A}_n(C).
\end{tikzcd}
\end{equation}
The Galois group $G^{\vee}$ of $\psi$ acts on $\check{M}_m(\tilde{C},G)$ by pulling back Higgs bundles, and on $\check{A}_m(\tilde{C},G)$ by pulling back sections of powers of $\tilde{D}$. Obviously the Hitchin map $\check{h}_{\tilde{C}}$ is $G^{\vee}$-equivariant. The group $\Gamma$ acts on $\check{M}_m(\tilde{C},G)$ fibrewise with respect to $\check{h}_{\tilde{C}}$, since its action does not affect the Higgs fields. These two actions commute, since an element of $\Gamma$ is first pulled back to $\tilde{C}$ before tensored with Higgs bundles, and the pullback is not affected by covering transformations. 

Let $\check{M}_m^{ell}(\tilde{C},G)=\mathbf{q}_G^{-1}(\check{M}_n^{ell}(C))$. We have $\check{M}_m^{ell}(\tilde{C},G)=\check{M}_m(\tilde{C},G)\cap M^{ell}_m(\tilde{C},G)$. Note that $\check{M}_m^{ell}(\tilde{C},G)\ne \check{M}_m(\tilde{C},G)\cap h_{\tilde{C}}^{-1}(A_m^{ell}(\tilde{C}))$.

\begin{Prop}\label{Prop-NR}
The morphism $\check{\mathbf{q}}_G$ is $\Gamma$-equivariant, and induces an isomorphism 
\begin{equation}\label{eq-Prop-NR}
\check{M}_m^{ell}(\tilde{C},G)/G^{\vee}\cong\check{M}_n^{ell}(C)^G,
\end{equation}
where $\check{M}_n^{ell}(C)^G\subset\check{M}_n^{ell}(C)$ is the subvariety of $G$-fixed points.
\end{Prop}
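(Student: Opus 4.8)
The plan is to establish the three ingredients in turn — that $\check{\mathbf{q}}_G$ is $\Gamma$-equivariant, that it factors through the $G^{\vee}$-quotient, and that the resulting map to the fixed locus is an isomorphism — the first two being formal. Writing $\psi_S=\Id_S\times\psi$, the projection formula gives a functorial isomorphism $\psi_{S\ast}(\mathcal{F}\otimes\psi_S^{\ast}L)\cong\psi_{S\ast}\mathcal{F}\otimes L$ for a line bundle $L$ on $S\times C$; since the $\Gamma$-action on $\check{M}_m(\tilde{C},G)$ is $(\tilde{\mathcal{E}},\tilde\theta)\mapsto(\tilde{\mathcal{E}}\otimes\psi^{\ast}L,\tilde\theta)$, this shows $\check{\mathbf{q}}_G$ intertwines the $\Gamma$-actions. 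As $\psi\circ\chi=\psi$ for every deck transformation $\chi\in G^{\vee}$ we have $\psi_{\ast}\circ\chi^{\ast}=\psi_{\ast}$, so $\check{\mathbf{q}}_G$ is constant on $G^{\vee}$-orbits; since the $G^{\vee}$-action on $\check{M}_m^{ell}(\tilde{C},G)$ is free (Lemma \ref{Lem-q-1subs-Aell}) the quotient is a variety and $\check{\mathbf{q}}_G$ descends to $\bar{\mathbf{q}}_G\colon\check{M}_m^{ell}(\tilde{C},G)/G^{\vee}\to\check{M}_n^{ell}(C)$. Finally $\psi^{\ast}L_g\cong\mathcal{O}_{\tilde{C}}$ for $g\in G$ by the definition of $\psi$, so the projection formula gives $\psi_{\ast}\tilde{\mathcal{E}}\otimes L_g\cong\psi_{\ast}\tilde{\mathcal{E}}$ compatibly with the Higgs field, whence $\bar{\mathbf{q}}_G$ lands in $\check{M}_n^{ell}(C)^G$; the content of the proposition is that $\bar{\mathbf{q}}_G\colon\check{M}_m^{ell}(\tilde{C},G)/G^{\vee}\to\check{M}_n^{ell}(C)^G$ is an isomorphism.

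For injectivity and finiteness: over the elliptic locus every Higgs bundle in sight is stable, hence a simple object of the abelian category of slope-$0$ semistable Higgs bundles. For the \'etale Galois cover $\psi$ one has $\psi^{\ast}\psi_{\ast}(-)\cong\bigoplus_{\chi\in G^{\vee}}\chi^{\ast}(-)$, so $\psi_{\ast}(\tilde{\mathcal{E}}_1,\tilde\theta_1)\cong\psi_{\ast}(\tilde{\mathcal{E}}_2,\tilde\theta_2)$ yields $\bigoplus_{\chi}\chi^{\ast}(\tilde{\mathcal{E}}_1,\tilde\theta_1)\cong\bigoplus_{\chi}\chi^{\ast}(\tilde{\mathcal{E}}_2,\tilde\theta_2)$, and Krull--Schmidt forces $(\tilde{\mathcal{E}}_2,\tilde\theta_2)\cong\chi^{\ast}(\tilde{\mathcal{E}}_1,\tilde\theta_1)$ for some $\chi$; thus $\bar{\mathbf{q}}_G$ is injective on points. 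Moreover $\check{\mathbf{q}}_G$ is proper (it lies over the finite morphism $\check{q}_G$ of Hitchin bases, and both Hitchin maps are proper), hence so is $\bar{\mathbf{q}}_G$, and a proper injective morphism is finite.

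For surjectivity onto $\check{M}_n^{ell}(C)^G$, fix $[\mathcal{E},\theta]\in\check{M}_n^{ell}(C)^G$ over $a\in\check{A}_n^{ell}(C)$ and let $F$ be the rank-one torsion-free sheaf on $X_a$ with $\pi_{a\ast}F\cong(\mathcal{E},\theta)$ (BNR, (\ref{eq-pia-wedge})). Then $G$-fixedness is equivalent to $F\cong F\otimes\pi_a^{\ast}L_g$ for all $g\in G$; pulling back along the normalisation $\nu_a$ and quotienting by torsion gives $\nu_a^{\ast}\pi_a^{\ast}L_g\cong\mathcal{O}_{\tilde{X}_a}$, i.e. $g\in K_a$, so $G\subseteq K_a$ and $a\in\check{A}_{n,G}^{ell}(C)$ by Corollary \ref{Cor-noncyc}. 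Now Lemma \ref{Lem-AnG} applies: $X_a\times_C\tilde{C}=\bigcup_{\chi\in G^{\vee}}Z^{\chi}$ with $\phi\colon Z\to X_a$ finite and birational, and each $\pi_a^{\ast}L_g$ pulls back trivially to every $Z^{\chi}$. The isomorphisms $F\otimes\pi_a^{\ast}L_g\cong F$, which are unique up to scalar because $\End(F)=\mathbb{C}$, should then be rigidified into a module structure on $F$ over the sheaf of algebras $(\pr_1)_{\ast}\mathcal{O}_{X_a\times_C\tilde{C}}\cong\bigoplus_{g\in G}\pi_a^{\ast}L_g$; being of rank one over $X_a$, such a module is supported on a single component $Z^{\chi}$, so $F\cong\phi_{\chi\ast}F'$ for a rank-one torsion-free sheaf $F'$ on $Z^{\chi}$, and pushing $F'$ through BNR on $\tilde{C}$ produces $(\tilde{\mathcal{E}},\tilde\theta)$, automatically in $\check{M}_m^{ell}(\tilde{C},G)=\mathbf{q}_G^{-1}(\check{M}_n^{ell}(C))$, with $\mathbf{q}_G(\tilde{\mathcal{E}},\tilde\theta)\cong(\mathcal{E},\theta)$.

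The rigidification in the previous paragraph is where I expect the genuine difficulty: for non-cyclic $G$ the obstruction to promoting the scalar-ambiguous isomorphisms to an honest module structure a priori lives in $H^2(G,\mathbb{C}^{\times})$. The way out is either to show this obstruction vanishes in the situation at hand — here one really uses that $X_a\times_C\tilde{C}$ is reduced and that the $\pi_a^{\ast}L_g$ are trivialised on each $Z^{\chi}$, as in the cyclic case \cite[Theorem 5.3]{HP} — or to sidestep it entirely: since $\bar{\mathbf{q}}_G$ is already finite and injective it suffices to check that $\Ima\bar{\mathbf{q}}_G$ and $\check{M}_n^{ell}(C)^G$ have the same dimension along every component, which is a fibrewise computation over $\check{A}_{n,G}^{ell}(C)$ matching $\dim\check{M}_m^{ell}(\tilde{C},G)$ against the dimension of the $G$-fixed part of the tangent space of $\check{M}_n^{ell}(C)$ (in the spirit of \cite[\S 7]{HP}); as $\Ima\bar{\mathbf{q}}_G$ is closed this forces $\Ima\bar{\mathbf{q}}_G=\check{M}_n^{ell}(C)^G$. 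To conclude, $\check{M}_n^{ell}(C)^G$ is smooth, being the fixed locus of the finite group $G$ acting on the smooth variety $\check{M}_n^{ell}(C)$ in characteristic zero, hence normal; a finite morphism of varieties over $\mathbb{C}$ that is bijective on closed points and maps onto a normal variety is birational onto its image and therefore an isomorphism by Zariski's main theorem, giving $\check{M}_m^{ell}(\tilde{C},G)/G^{\vee}\cong\check{M}_n^{ell}(C)^G$.
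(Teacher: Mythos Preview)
Your overall strategy matches the paper's: the projection formula for $\Gamma$-equivariance and for the image landing in the $G$-fixed locus, the identity $\psi^*\psi_*\cong\bigoplus_\chi\chi^*$ together with Krull--Schmidt for injectivity on closed points, and Zariski's Main Theorem (using smoothness of the fixed locus) to promote a bijection on points to an isomorphism. The one substantive difference is surjectivity. The paper does not pass through BNR at all; it argues directly on $C$, asserting in one line that a stable Higgs bundle $(E,\theta)$ with $(E,\theta)\cong(E\otimes L_g,\theta)$ for every $g\in G$ is a module over $\mathcal{A}=\psi_*\mathcal{O}_{\tilde{C}}=\bigoplus_g L_g$, hence the pushforward of a Higgs bundle on $\tilde{C}$. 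That is precisely the rigidification step you flag as carrying an $H^2(G,\mathbb{C}^\times)$ obstruction, and the paper gives no further justification---so your proof and the paper's are incomplete at the same point; you are simply more explicit about it.

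The obstruction does vanish over the elliptic locus, and your first suggested workaround is the right one. Once $G\subset K_a$, write the $G$-fixed torsion-free sheaf as $F=\mu_*L'$ for a line bundle $L'$ on a partial normalisation $\mu\colon X'\to X_a$ (every rank-one torsion-free sheaf on a planar curve arises this way); the condition $F\otimes\pi_a^*L_g\cong F$ forces $(\pi_a\mu)^*L_g\cong\mathcal{O}_{X'}$ for all $g\in G$, so by the character criterion for abelian \'etale covers the map $X'\to C$ factors through $\tilde{C}$. The trivialisations $\psi^*L_g\cong\mathcal{O}_{\tilde{C}}$ are the summands of the counit $\psi^*\mathcal{A}\to\mathcal{O}_{\tilde{C}}$ and are therefore \emph{canonically} compatible with the multiplication on $\mathcal{A}$; pulling them back along $X'\to\tilde{C}$ rigidifies the isomorphisms $F\cong F\otimes\pi_a^*L_g$ with trivial $2$-cocycle, giving the $\mathcal{A}$-module structure. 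With this made explicit your argument is complete (and in fact supplies the detail the paper omits); the dimension-counting alternative is unnecessary.
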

This is an analogue of \cite[Proposition 7.1]{HT} and \cite[Proposition 3.3]{NR}. Since the author is unable to access \cite{NR}, we give the details of the proof.
\begin{proof}
We will show that $\mathbf{q}_G$ induces a bijection 
\begin{equation}\label{eq-Prop-NR-1}
\tilde{\mathbf{q}}_G:M_m^{ell}(\tilde{C},G)/G^{\vee}\cong M_n^{ell}(C)^G
\end{equation}
at the level of $\mathbb{C}$-points. Since $M_n^{ell}(C)^G$ is smooth as the fixed point locus of a finite group in a smooth variety, it follows from the Zariski Main Theorem that $\tilde{\mathbf{q}}_G$ is an isomorphism. Since $\check{M}_m(\tilde{C},G)$ is by definition the inverse image of $\check{M}_n(C)$ under $\mathbf{q}_G$ and $\check{\mathbf{q}}_G$ is the restriction of $\mathbf{q}_G$, the desired isomorphism (\ref{eq-Prop-NR}) is simply the restriction of $\tilde{\mathbf{q}}_G$. It follows from the projection formula that $\mathbf{q}_G$ is $\Gamma$-equivariant.

Let $S$ be a $\mathbb{C}$-scheme and let $(\tilde{E},\tilde{\theta})$ be a family of semi-stable Higgs bundles on $S\times\tilde{C}$, then $(\Id_S\times\psi)_{\ast}(\tilde{E},\tilde{\theta})$ is a family of Higgs bundles on $S\times C$. Let $g\in G$ and denote by $L_g$ the corresponding line bundle on $C$. Since $\psi^{\ast}L_g\cong\mathcal{O}_{\tilde{C}}$, it follows from the projection formula that tensorisation by $\pr_2^{\ast}L_g$ leaves $(\Id_S\times\psi)_{\ast}(\tilde{E},\tilde{\theta})$ unchanged. This shows that the morphism $\mathbf{q}_G$ factors through $M_n(C)^G$. 

Then we show that $\mathbf{q}_G$ descends to a morphism $\bar{\mathbf{q}}_G:M_m(\tilde{C},G)/G^{\vee}\rightarrow M_n(C)^G$. The Galois group $G^{\vee}$ acts on $\tilde{C}$, and so pulling back $(\tilde{E},\tilde{\theta})$ along $\Id_S\times\tau$, for $\tau\in G^{\vee}$, gives a new Higgs bundle $(\tilde{E}^{\tau},\tilde{\theta}^{\tau})$ on $S\times\tilde{C}$. This defines an automorphism $\varphi_{\tau}$ of $M_m(\tilde{C},G)$. That $\mathbf{q}_G\circ\varphi_{\tau}=\mathbf{q}_G$ is equivalent to saying that pushing forward $(\tilde{E},\tilde{\theta})$ and $(\tilde{E}^{\tau},\tilde{\theta}^{\tau})$ along $\Id\times\psi$ gives isomorphic Higgs bundles on $S\times C$. But this is obvious.

Now we restrict $\bar{\mathbf{q}}_G$ to $M_m^{ell}(\tilde{C},G)/G^{\vee}$ and show that it induces a bijection of $\mathbb{C}$-points. Note that if the characteristic polynomial of a Higgs bundle lies in the elliptic locus, then the Higgs bundle is necessarily stable. Let $(E_1,\theta_1)$ and $(E_2,\theta_2)$ be stable Higgs bundles on $\tilde{C}$ such that $(\psi_{\ast}E_1,\psi_{\ast}\theta_1)$ is isomorphic to $(\psi_{\ast}E_2,\psi_{\ast}\theta_2)$. Then $$\bigoplus_{\tau\in G^{\vee}}(\tau^{\ast}E_1,\tau^{\ast}\theta_1)\cong(\psi^{\ast}\psi_{\ast}E_1,\psi^{\ast}\psi_{\ast}\theta_1)\cong(\psi^{\ast}\psi_{\ast}E_2,\psi^{\ast}\psi_{\ast}\theta_2)\cong\bigoplus_{\tau\in G^{\vee}}(\tau^{\ast}E_2,\tau^{\ast}\theta_2).$$By Krull-Schmidt, there exists some $\tau\in G^{\vee}$ such that $(E_1,\theta_1)\cong(\tau^{\ast}E_2,\tau^{\ast}\theta_2)$. Therefore $\bar{\mathbf{q}}_G$ is injective. Write $\mathcal{A}=\bigoplus_{g\in G}L_g$. It is the sheaf of $\mathcal{O}_C$-algebra that defines the affine morphism $\psi$. If $(E,\theta)$ is a Higgs bundle on $C$ such that $(E,\theta)\cong(E\otimes L_g,\theta\otimes\Id)$ for any $g\in G$, then $E$ is an $\mathcal{A}$-module and $\theta$ is a homomorphism of $\mathcal{A}$-modules. Therefore, there exists a unique Higgs bundle $(\tilde{E},\tilde{\theta})$ on $\tilde{C}$ whose pushforward along $\psi$ is isomorphic to $(E,\theta)$, i.e. $\bar{\mathbf{q}}_G$ is surjective.
\end{proof}

A consequence of the above proposition is  Lemma \ref{Lem-barP'=barPG} below, which will only be used in the proof of Theorem \ref{Thm-B-proof}. We first prepare a lemma:
\begin{Lem}\label{Lem-P'=PG}
Let $\tilde{a}\in\check{A}_m(\tilde{C},G)$ be such that $a:=\check{q}_G(\tilde{a})\in \check{A}^{ell}_n(C)$. Then $\check{\mathbf{q}}_G$ restricts to an isomorphism: $$\check{h}_{\tilde{C}}^{-1}(\tilde{a})\lisom\check{h}_C^{-1}(a)^G,$$where $\check{h}_C^{-1}(a)^G$ is the subvariety of $G$-fixed points. Consequently, we have an isomorphism of quotient stacks: $$[\check{h}_{\tilde{C}}^{-1}(\tilde{a})/\Gamma]\lisom[\check{h}_C^{-1}(a)^G/\Gamma].$$
\end{Lem}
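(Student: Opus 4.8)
The plan is to obtain both isomorphisms by restricting Proposition \ref{Prop-NR} to the Hitchin fibre over $a$, the main input being the freeness of the $G^{\vee}$-action supplied by Lemma \ref{Lem-q-1subs-Aell}. First I would pin down the structure of $\check{q}_G^{-1}(a)$. Since $a\in\check{A}_n^{ell}(C)\subset A_n^{ell}(C)$, Lemma \ref{Lem-q-1subs-Aell} gives $\check{q}_G^{-1}(a)\subset q_G^{-1}(A_n^{ell}(C))\subset A_m^{ell}(\tilde{C})$ with $G^{\vee}$ acting freely, and Lemma \ref{Lem-AnG}(ii) — combined with the fact that $\tilde{C}\to C$ is \'etale, so that $X_a\times_C\tilde{C}$ is reduced with a unique decomposition into integral components — shows that $\check{q}_G^{-1}(a)$ is exactly the single free orbit $\{\tilde{a}^{\chi}\}_{\chi\in G^{\vee}}$ of $\tilde{a}$. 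Using the commutative square (\ref{CD-h-q}) and the $G^{\vee}$-equivariance of $\check{h}_{\tilde{C}}$, we get $\check{h}_{\tilde{C}}^{-1}(\check{q}_G^{-1}(a))=\bigsqcup_{\chi\in G^{\vee}}\check{h}_{\tilde{C}}^{-1}(\tilde{a}^{\chi})$, with $G^{\vee}$ permuting the summands simply transitively; hence $\check{h}_{\tilde{C}}^{-1}(\tilde{a})$ is a section of the free quotient, i.e. the quotient map restricts to an isomorphism $\check{h}_{\tilde{C}}^{-1}(\tilde{a})\lisom\check{h}_{\tilde{C}}^{-1}(\check{q}_G^{-1}(a))/G^{\vee}$.

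Next I would restrict the isomorphism (\ref{eq-Prop-NR}) of Proposition \ref{Prop-NR} over $a$. First note $\check{h}_{\tilde{C}}^{-1}(\check{q}_G^{-1}(a))\subset\check{M}_m^{ell}(\tilde{C},G)$, because $\check{\mathbf{q}}_G$ maps it into $\check{h}_C^{-1}(a)\subset\check{M}_n^{ell}(C)$ and $\check{M}_m^{ell}(\tilde{C},G)=\mathbf{q}_G^{-1}(\check{M}_n^{ell}(C))$; so Proposition \ref{Prop-NR} applies. Compatibility with the Hitchin maps via (\ref{CD-h-q}) then turns (\ref{eq-Prop-NR}) into an isomorphism $\check{h}_{\tilde{C}}^{-1}(\check{q}_G^{-1}(a))/G^{\vee}\cong\check{h}_C^{-1}(a)\cap\check{M}_n^{ell}(C)^G$. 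Since $\Gamma$, and in particular $G\subset\Gamma$, acts on $\check{M}_n^{ell}(C)$ fibrewise over the Hitchin base (tensoring by a line bundle from $C$ does not alter the characteristic polynomial), the right-hand side equals $\check{h}_C^{-1}(a)^G$. Composing with the isomorphism from the first paragraph yields $\check{\mathbf{q}}_G:\check{h}_{\tilde{C}}^{-1}(\tilde{a})\lisom\check{h}_C^{-1}(a)^G$.

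For the quotient-stack statement I would use the $\Gamma$-equivariance of $\check{\mathbf{q}}_G$ established in Proposition \ref{Prop-NR}. The action of $\Gamma$ preserves $\check{h}_{\tilde{C}}^{-1}(\tilde{a})$, since it commutes with covering transformations and is fibrewise over $\check{A}_m(\tilde{C},G)$, and it preserves $\check{h}_C^{-1}(a)^G$, since $\Gamma$ is abelian and $G\subset\Gamma$, so the $G$-fixed locus is $\Gamma$-stable. A $\Gamma$-equivariant isomorphism of varieties descends to an isomorphism of the associated quotient stacks, which gives $[\check{h}_{\tilde{C}}^{-1}(\tilde{a})/\Gamma]\lisom[\check{h}_C^{-1}(a)^G/\Gamma]$.

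I expect the one point needing genuine care to be the bookkeeping with the elliptic loci: that $\check{q}_G^{-1}(a)$ is a single $G^{\vee}$-orbit rather than merely a union of orbits, and that $\check{h}_{\tilde{C}}^{-1}(\tilde{a})$ indeed sits inside $\check{M}_m^{ell}(\tilde{C},G)$ — recall this is defined as $\mathbf{q}_G^{-1}(\check{M}_n^{ell}(C))$ and is strictly smaller than $\check{M}_m(\tilde{C},G)\cap h_{\tilde{C}}^{-1}(A_m^{ell}(\tilde{C}))$, so one cannot simply argue on $\tilde{C}$. Everything else is a formal restriction of Proposition \ref{Prop-NR} along (\ref{CD-h-q}).
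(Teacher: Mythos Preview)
Your proposal is correct and follows essentially the same route as the paper: restrict Proposition~\ref{Prop-NR} to the fibre over $a$, use Lemma~\ref{Lem-q-1subs-Aell} to see that $\check{q}_G^{-1}(a)$ is a single free $G^{\vee}$-orbit so that the $G^{\vee}$-quotient of $\check{h}_{\tilde{C}}^{-1}(\check{q}_G^{-1}(a))$ is $\check{h}_{\tilde{C}}^{-1}(\tilde{a})$, and then pass to $\Gamma$-quotients by equivariance. Your extra care in pinning down $\check{q}_G^{-1}(a)$ via Lemma~\ref{Lem-AnG}(ii) and in checking membership in $\check{M}_m^{ell}(\tilde{C},G)$ is welcome but not a departure from the paper's argument.
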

\begin{proof}
It follows from Proposition \ref{Prop-NR} that $\check{\mathbf{q}}_G$ restricts to an isomorphism
$$
\check{\mathbf{q}}_G^{-1}(\check{h}_C^{-1}(a))/G^{\vee}\cong\check{h}_C^{-1}(a)^G.
$$
By Lemma \ref{Lem-q-1subs-Aell}, the action of the Galois group $G^{\vee}$ on $\check{q}_G^{-1}(\check{A}^{ell}_n(C))$ is free. Since $\check{h}_{\tilde{C}}$ is $G^{\vee}$-equivariant, we have $$\check{\mathbf{q}}_G^{-1}(\check{h}_C^{-1}(a))\cong\check{h}_{\tilde{C}}^{-1}(\check{q}_G^{-1}(a))=\bigsqcup_{g\in G^{\vee}}\check{h}_{\tilde{C}}^{-1}(\tilde{a})^g,$$where $\check{h}_{\tilde{C}}^{-1}(\tilde{a})^g$ is the image of $\check{h}_{\tilde{C}}^{-1}(\tilde{a})$ under the action of $g$. The quotient of this variety by $G^{\vee}$ is isomorphic to $\check{h}_{\tilde{C}}^{-1}(\tilde{a})$. 
\end{proof}

By Lemma \ref{Lem-AnG}, we have a commutative diagram
$$
\begin{tikzcd}[row sep=2.5em, column sep=2em]
Z_{\tilde{a}} \arrow[r, "\pi_{\tilde{a}}"] \arrow[d, "\phi", swap] & \tilde{C} \arrow[d, "\psi"] \\
X_a \arrow[r, "\pi_a"'] & C,
\end{tikzcd}
$$ 
where $\pi_{\tilde{a}}:Z_{\tilde{a}}\rightarrow\tilde{C}$ is the spectral cover corresponding to some $\tilde{a}\in A_m(\tilde{C})$. Since $\phi$ is birational, the pushforward along $\phi$ defines a morphism $\phi^{\wedge}_0:\bar{J}^j_{Z_{\tilde{a}}}\rightarrow\bar{J}^i_{X_a}$, such that the following diagram commutes
\begin{equation}\label{CD-Jj-Ji-h}
\begin{tikzcd}[row sep=2.5em, column sep=2em]
\bar{J}_{Z_{\tilde{a}}}^j \arrow[r, "\sim", "\pi_{\tilde{a}}^{\wedge}"'] \arrow[d, "\phi^{\wedge}_0", swap] & h_{\tilde{C}}^{-1}(\tilde{a}) \arrow[d, "\mathbf{q}_G"] \\
\bar{J}_{X_a}^i \arrow[r, "\sim", "\pi^{\wedge}_a"'] & h_C^{-1}(a),
\end{tikzcd}
\end{equation}
where $j=\binom{m}{2}\cdot\deg\psi^{\ast}D$ and $i$ is as in (\ref{eq-pia-wedge}). By the surjectivity of norm maps, there exists line bundles $L$ and $\tilde{L}$ on $X_a$ and $Z_{\tilde{a}}$ respectively such that
\begin{equation}\label{eq-NmL=detpi}
\Nm_{\pi}(L)\cong\det(\pi_{a\ast}\mathcal{O}_{X_a})^{-1},\text{ and }\Nm_{\psi\circ\pi_{\tilde{a}}}(\tilde{L})\cong\det(\psi_{\ast}\pi_{\tilde{a}\ast}\mathcal{O}_{Z_{\tilde{a}}})^{-1}.
\end{equation}
Note that $\deg L=i$ and $\deg \tilde{L}=j$. Then tensorisations by $L$ and $\tilde{L}$ define isomorphisms $\tau_{L}:\bar{J}_{X_a}\isom\bar{J}^i_{X_a}$ and $\tau_{\tilde{L}}:\bar{J}_{Z_{\tilde{a}}}\isom\bar{J}^j_{Z_{\tilde{a}}}$. By Lemma \ref{Lem-phiaV-J}, there exists a line bundle $L'$ on $X_a$ such that $\phi^{\ast}L'\cong \tilde{L}$. Put 
\begin{equation}\label{eq-K}
K=L^{-1}\otimes L',
\end{equation} and define a morphism $\phi^{\wedge}:\bar{J}_{Z_{\tilde{a}}}\rightarrow \bar{J}_{X_a}$, sending $F\in \bar{J}_{Z_{\tilde{a}}}$ to $\phi_{\ast}F\otimes K$. Then the following diagram commutes
\begin{equation}\label{CD-JJJjJi}
\begin{tikzcd}[row sep=2.5em, column sep=2em]
\bar{J}_{Z_{\tilde{a}}} \arrow[r, "\sim", "\tau_{\tilde{L}}"'] \arrow[d, "\phi^{\wedge}", swap] & \bar{J}^j_{Z_{\tilde{a}}} \arrow[d, "\phi^{\wedge}_0"] \\
\bar{J}_{X_a}\arrow[r, "\sim", "\tau_{L}"'] & \bar{J}_{X_a}^i.
\end{tikzcd}
\end{equation}
\begin{Lem}\label{Lem-barP'=barPG}
$\phi^{\wedge}$ induces an isomorphism
$$
\phi^{\wedge}:\bar{P}_{\psi\circ\pi_{\tilde{a}}}\lisom\bar{P}_{\pi_a}^G.
$$
\end{Lem}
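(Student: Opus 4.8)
The plan is to reduce the statement to Lemma \ref{Lem-P'=PG} by transporting it through the BNR correspondence. Gluing the commutative squares (\ref{CD-Jj-Ji-h}) and (\ref{CD-JJJjJi}) along $\phi^{\wedge}_0$ yields the relation
$$
\mathbf{q}_G\circ\pi_{\tilde{a}}^{\wedge}\circ\tau_{\tilde{L}}=\pi^{\wedge}\circ\tau_L\circ\phi^{\wedge},
$$
in which $\pi_{\tilde{a}}^{\wedge}\circ\tau_{\tilde{L}}:\bar{J}_{Z_{\tilde{a}}}\lisom h_{\tilde{C}}^{-1}(\tilde{a})$ and $\pi^{\wedge}\circ\tau_L:\bar{J}_{X_a}\lisom h_C^{-1}(a)$ are isomorphisms. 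Hence $\phi^{\wedge}=(\pi^{\wedge}\circ\tau_L)^{-1}\circ\mathbf{q}_G\circ(\pi_{\tilde{a}}^{\wedge}\circ\tau_{\tilde{L}})$, and it suffices to establish: (a) $\pi_{\tilde{a}}^{\wedge}\circ\tau_{\tilde{L}}$ carries $\bar{P}_{\psi\circ\pi_{\tilde{a}}}$ onto $\check{h}_{\tilde{C}}^{-1}(\tilde{a})$; (b) $\pi^{\wedge}\circ\tau_L$ is $G$-equivariant and carries $\bar{P}_{\pi}$ onto $\check{h}_C^{-1}(a)$, hence $\bar{P}_{\pi}^G$ onto $\check{h}_C^{-1}(a)^G$; and (c) Lemma \ref{Lem-P'=PG}. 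Granting (a)--(c), the restriction of $\phi^{\wedge}$ to $\bar{P}_{\psi\circ\pi_{\tilde{a}}}$ is a composite of three isomorphisms onto $\bar{P}_{\pi}^G$, which is the assertion.

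For (b): by the BNR description (\ref{eq-pia-wedge}) of $h_C^{-1}(a)$, a sheaf $F\in\bar{J}_{X_a}^i$ lies in the $\SL_n$-fibre $\check{h}_C^{-1}(a)$ exactly when $\det(\pi_{a\ast}F)\cong\mathcal{O}_C$ (the traceless condition being automatic since $a_1=0$), i.e. when $\Nm_{\pi}(F)\cong\det(\pi_{a\ast}\mathcal{O}_{X_a})^{-1}$. I will use the identity $\Nm_{\pi}(F\otimes N)\cong\Nm_{\pi}(F)\otimes\Nm_{\pi}(N)$ for $F\in\bar{J}_{X_a}$ and $N$ a line bundle: both sides are morphisms $\bar{J}_{X_a}\to\Pic_C$ that agree on the dense open $J_{X_a}$ by \S\ref{Sub-Prym}(i), and $\bar{J}_{X_a}$ is reduced and irreducible because $X_a\subset|D|$ has planar singularities. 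Combined with $\Nm_{\pi}(L)\cong\det(\pi_{a\ast}\mathcal{O}_{X_a})^{-1}$ from (\ref{eq-NmL=detpi}), this shows $\tau_L$ sends $\bar{P}_{\pi}=\Nm_{\pi}^{-1}(0)$ onto the above locus. Equivariance is immediate, as the $G$-action on $\bar{J}_{X_a}$ is $F\mapsto F\otimes\pi^{\ast}L_g$, which commutes with $\tau_L$ and preserves $\bar{P}_{\pi}$ by \S\ref{Sub-Prym}(ii) (since $L_g\in\Gamma$).

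For (a): under $\pi_{\tilde{a}}^{\wedge}$, the fibre $\check{h}_{\tilde{C}}^{-1}(\tilde{a})=h_{\tilde{C}}^{-1}(\tilde{a})\cap\check{M}_m(\tilde{C},G)$ consists of those $F\in\bar{J}_{Z_{\tilde{a}}}^j$ whose pushforward to $C$ has trivial determinant, i.e. $\det(\psi_{\ast}\pi_{\tilde{a}\ast}F)\cong\mathcal{O}_C$ (again the trace condition downstairs is automatic). Writing $\Nm_{\psi\circ\pi_{\tilde{a}}}=\Nm_{\psi}\circ\Nm_{\pi_{\tilde{a}}}$ by Lemma \ref{Lem-Func-Nm}, this is the condition $\Nm_{\psi\circ\pi_{\tilde{a}}}(F)\cong\det((\psi\circ\pi_{\tilde{a}})_{\ast}\mathcal{O}_{Z_{\tilde{a}}})^{-1}$; the choice (\ref{eq-NmL=detpi}) of $\tilde{L}$ together with the same norm-multiplicativity identity on $\bar{J}_{Z_{\tilde{a}}}$ then show that $\tau_{\tilde{L}}$ sends $\bar{P}_{\psi\circ\pi_{\tilde{a}}}=\Nm_{\psi\circ\pi_{\tilde{a}}}^{-1}(0)$ onto it. The point to watch is that $\check{M}_m(\tilde{C},G)$ is cut out inside the moduli of $\GL_m$-Higgs bundles on $\tilde{C}$ by a condition on the norm all the way down to $J_C$; this is why $\tilde{L}$ is normalised against $\Nm_{\psi\circ\pi_{\tilde{a}}}$ rather than $\Nm_{\pi_{\tilde{a}}}$, and why the relevant Prym is $\bar{P}_{\psi\circ\pi_{\tilde{a}}}$ and not $\bar{P}_{\pi_{\tilde{a}}}$.

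I expect the only genuine work to be the norm bookkeeping in (a) and (b) — keeping straight over which curve each determinant and norm is formed, and extending the multiplicativity of $\Nm_{\pi}$ from line bundles to torsion-free rank one sheaves via the density argument above. Once that is in place the lemma follows formally from (\ref{CD-Jj-Ji-h}), (\ref{CD-JJJjJi}) and Lemma \ref{Lem-P'=PG}; in particular, surjectivity of $\phi^{\wedge}$ onto the \emph{entire} fixed locus $\bar{P}_{\pi}^G$ is obtained for free, since Lemma \ref{Lem-P'=PG} already identifies $\mathbf{q}_G$ with an isomorphism onto $\check{h}_C^{-1}(a)^G$.
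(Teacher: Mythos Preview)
Your proposal is correct and follows essentially the same route as the paper's proof: combine (\ref{CD-Jj-Ji-h}) and (\ref{CD-JJJjJi}), restrict to Prym varieties via the norm normalisations (\ref{eq-NmL=detpi}), check $G$-equivariance of the bottom arrow, and invoke Lemma~\ref{Lem-P'=PG}. The only cosmetic difference is that the paper cites \cite[Proposition 3.10]{HP} for the identity $\Nm_{\pi}(F\otimes N)\cong\Nm_{\pi}(F)\otimes\Nm_{\pi}(N)$ with $F$ torsion-free and $N$ a line bundle on $X_a$, whereas you supply a density argument; both are fine.
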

\begin{proof}
Combining (\ref{CD-Jj-Ji-h}) and (\ref{CD-JJJjJi}), we obtain the following commutative diagram
$$
\begin{tikzcd}[row sep=2.5em, column sep=2em]
\bar{J}_{Z_{\tilde{a}}} \arrow[r, "\sim", "\pi_{\tilde{a}}^{\wedge}\circ\tau_{\tilde{L}}"'] \arrow[d, "\phi^{\wedge}", swap] & h^{-1}_{\tilde{C}}(\tilde{a}) \arrow[d, "\mathbf{q}_G"] \\
\bar{J}_{X_a} \arrow[r, "\sim", "\pi_a^{\wedge}\circ\tau_{L}"'] & h^{-1}_{C}(a).
\end{tikzcd}
$$
Using (\ref{eq-NmL=detpi}) and \cite[Proposition 3.10]{HP}, we deduce that the above diagram restricts to the following diagram
\begin{equation}\label{CD-PP-hV}
\begin{tikzcd}[row sep=2.5em, column sep=2em]
\bar{P}_{\psi\circ\pi_{\tilde{a}}} \arrow[r, "\sim", "\pi_{\tilde{a}}^{\wedge}\circ\tau_{\tilde{L}}"'] \arrow[d, "\phi^{\wedge}", swap] & \check{h}^{-1}_{\tilde{C}}(\tilde{a}) \arrow[d, "\check{\mathbf{q}}_G"] \\
\bar{P}_{\pi} \arrow[r, "\sim", "\pi_a^{\wedge}\circ\tau_{L}"'] & \check{h}^{-1}_{C}(a).
\end{tikzcd}
\end{equation}
The bottom arrow respects the action of $G$ by the projection formula. By Lemma \ref{Lem-P'=PG}, we get the desired isomorphism.
\end{proof}

\subsection{Orbifold Singularities and Reducible Hitchin Fibres}\label{subsec-OSRHF}\hfill

We prove in this subsection Theorem \ref{M-T}, our first main result.

\begin{Prop}
Let $G$ be a subgroup of $\Gamma$ such that $|G|$ divides $n$, and let $a\in A_{n,G}^{\heartsuit}(C)$. Suppose that for any subgroup $H\subset\Gamma$ strictly containing $G$ such that $|H|$ divides $n$, we have $a\notin A_{n,H}(C)$. Then $K_a=G$.
\end{Prop}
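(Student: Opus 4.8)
The plan is to show the two inclusions $G \subset K_a$ and $K_a \subset G$ separately, exploiting Theorem \ref{noncyc} as the bridge between the combinatorial condition ``$a \in A_{n,G}(C)$'' and the group-theoretic data $K_a$. First, since $a \in A_{n,G}^{\heartsuit}(C) \subset A_{n,G}(C)$, Theorem \ref{noncyc} immediately gives $K_a \supset G$. This is the easy half and requires no further argument. The content is therefore in proving the reverse inclusion $K_a \subset G$, and here the maximality hypothesis on $G$ must be used in an essential way: if we had $K_a \supsetneq G$, we want to produce a subgroup $H$ with $G \subsetneq H \subset \Gamma$, $|H|$ dividing $n$, and $a \in A_{n,H}(C)$, contradicting the hypothesis.

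The natural candidate is $H := K_a$ itself, so the crux is to verify the two required properties of $K_a$: that $|K_a|$ divides $n$, and that $a \in A_{n,K_a}(C)$. The second follows tautologically from Theorem \ref{noncyc} applied with $H = K_a$ (since trivially $K_a \supset K_a$), \emph{provided} $|K_a|$ divides $n$; so the whole argument reduces to the divisibility statement $|K_a| \mid n$. To establish this I would return to the definition $K_a = \bigcap_\lambda (K_{a,\lambda})_{m_{a,\lambda}}$ and use the hypothesis that $a$ lies in the reduced locus $A_n^{\heartsuit}(C)$: reducedness forces every multiplicity $m_{a,\lambda} = 1$, so $(K_{a,\lambda})_{m_{a,\lambda}} = K_{a,\lambda} = \Ker(\tilde\pi_{a,\lambda}^\vee)$, and hence $K_a = \bigcap_\lambda K_{a,\lambda}$. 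By the remark in \S\ref{Sub-Prym}(ii), each $K_{a,\lambda}$ is contained in the $n_{a,\lambda}$-torsion subgroup of $\Pic^0(C)$, where $n_{a,\lambda} = \deg \pi_{a,\lambda}$. Since $n = \sum_\lambda n_{a,\lambda}$ (all multiplicities being $1$), and $K_a \subset K_{a,\lambda}$ sits inside the $n_{a,\lambda}$-torsion for \emph{every} $\lambda$, the order of any element of $K_a$ divides each $n_{a,\lambda}$, hence divides $\gcd_\lambda n_{a,\lambda}$, which divides $n$. One then needs that the exponent of the finite abelian group $K_a$ equals its order or at least that $|K_a| \mid n$; this requires a little care, but it follows because $K_a \subset \Gamma = \Pic^0(C)[n]$ is already $n$-torsion, and more sharply $K_a$ is annihilated by $\gcd_\lambda n_{a,\lambda}$, so $K_a \subset \Pic^0(C)[d]$ with $d = \gcd_\lambda n_{a,\lambda} \mid n$; since $\Pic^0(C)[d] \cong (\mathbb{Z}/d)^{2g}$ and any subgroup of it has order dividing $d^{2g}$ — this is not quite enough, so instead one argues directly that $K_a$, being a subgroup of $\Gamma$ with exponent dividing $d$, and with $d \mid n$, has order dividing $n$ only after a further structural input.

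The main obstacle, then, is precisely this divisibility $|K_a| \mid n$: the naive bound from the exponent gives $|K_a| \mid d^{2g}$, not $|K_a|\mid n$. I expect the correct route is not through exponents at all but through the geometry already assembled in \S\ref{SS-SSHB}: by Corollary \ref{Cor-noncyc} (or its $A_n$ analogue, Theorem \ref{noncyc}) the very existence of the $K_a$-cover means $a \in A_{n,K_a}(C)$ lies in the image of $q_{K_a}: A_m(\tilde C) \to A_n(C)$ with $m = n/|K_a|$ — but this presupposes $|K_a| \mid n$, so that is circular. The non-circular argument must instead run: take $G' \subset K_a$ to be \emph{any} subgroup with $|G'| \mid n$ and $G' \supsetneq G$ (if one exists, we already contradict maximality); and if \emph{no} such $G'$ exists, deduce directly that $K_a = G$. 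Concretely, since $G \subset K_a$ and every cyclic subgroup of $K_a$ has order dividing some $n_{a,\lambda}$ hence dividing $n$, one can enlarge $G$ inside $K_a$ one cyclic factor at a time, at each stage keeping the order a divisor of $n$, using that $\Gamma = \Pic^0(C)[n] \cong (\mathbb{Z}/n)^{2g}$ and that a subgroup of $K_a \subset \Gamma$ generated by $G$ together with one more element of order dividing $n$ still has order dividing $n$ when... — this is the delicate point and is exactly where the structure of $p$-components of $\Gamma$ and of $K_a$ enters. I would carry it out by reducing to the $p$-primary components: for each prime $p$, $K_{a,(p)}$ is a subgroup of $(\mathbb{Z}/p^{v_p(n)})^{2g}$ by the torsion bound, and one shows $|K_{a,(p)}| \mid p^{v_p(n)}$ by exhibiting $K_{a,(p)}$ as cyclic-by-cyclic of the right size, or more robustly, invoking that $K_a$ acts freely on the relevant étale cover (Lemma \ref{Lem-q-1subs-Aell}-type freeness) which pins $|K_a|$ to be a genuine divisor of $n$. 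Once $|K_a| \mid n$ is in hand, Theorem \ref{noncyc} gives $a \in A_{n,K_a}(C)$, and maximality of $G$ forces $K_a = G$, completing the proof.
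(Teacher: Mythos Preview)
Your proposal correctly isolates the crux — the divisibility $|K_a|\mid n$ — but does not resolve it, and the closing paragraph (``$p$-primary components'', ``freeness pins $|K_a|$ to a divisor of $n$'') is hand-waving rather than an argument. So as written there is a genuine gap.

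The paper sidesteps the question of whether $|K_a|\mid n$ altogether. Assuming $K_a\supsetneq G$, it passes to the $G^{\vee}$-Galois cover $\psi:\tilde{C}\to C$ (so $\Ker\psi^{\vee}=G$); then $\psi^{\vee}(K_a)\subset\Pic^0(\tilde{C})$ is nontrivial. One chooses a \emph{cyclic} subgroup $G'\subset\psi^{\vee}(K_a)$ and sets $H=\psi^{\vee-1}(G')$. Over $\tilde{C}$ the normalised components $\tilde{X}_{a,\lambda}\to\tilde{C}$ have degree $n'_{a,\lambda}=n_{a,\lambda}/|G|$, and $G'$ lies in the kernel of each $\Pic^0(\tilde{C})\to\Pic^0(\tilde{X}_{a,\lambda})$, hence in $\Pic^0(\tilde{C})[n'_{a,\lambda}]$. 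Because $G'$ is cyclic, ``contained in the $n'_{a,\lambda}$-torsion'' upgrades for free to $|G'|\mid n'_{a,\lambda}$. Thus $|H|=|G|\cdot|G'|$ divides every $n_{a,\lambda}$, hence divides $n=\sum_\lambda n_{a,\lambda}$; and $G\subsetneq H\subset K_a$ gives $a\in A_{n,H}(C)$ by Theorem~\ref{noncyc}, contradicting the maximality hypothesis. The trick, in one line: do not take $H=K_a$; take $H$ generated by $G$ and one cyclic piece of $K_a/G$.

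Incidentally, your instinct to take $H=K_a$ can be made to work, but not via the exponent bound you wrote down. The missing observation is that the \emph{order} (not merely the exponent) of $K_{a,\lambda}$ divides $n_{a,\lambda}$: by the same factorisation criterion \cite[Proposition~11.4.3]{BL} applied with $G=K_{a,\lambda}$, the map $\tilde{\pi}_{a,\lambda}$ factors through the $K_{a,\lambda}^{\vee}$-Galois cover of $C$, which has degree $|K_{a,\lambda}|$, so $|K_{a,\lambda}|\mid n_{a,\lambda}$. Then $|K_a|\mid|K_{a,\lambda}|\mid n_{a,\lambda}$ for every $\lambda$, hence $|K_a|\mid n$, and your original argument goes through. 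The paper's cyclic-subgroup route and this order-divides route are essentially the same idea (factor through an \'etale cover to get divisibility of degrees), packaged differently.
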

\begin{proof}
Let $a\in A_{n,G}^{\heartsuit}(C)$ and let $X_{a}=\cup_{\lambda}X_{a,\lambda}$ be the decomposition of $X_a$ into its irreducible components, each $X_{a,\lambda}$ being integral. We use the notations of \S \ref{SS-SSHB}. Suppose that $K_a$ strictly contains $G$, and so each $K_{a,\lambda}$ strictly contains $G$. Let $\psi:\tilde{C}\rightarrow C$ be the Galois covering corresponding to $G$. We have $G=\Ker(\psi^{\vee})$. According to the proof of \cite[Proposition 11.4.3]{BL}, $K_{a,\lambda}\supset G$ if and only if $\tilde{\pi}_{a,\lambda}:\tilde{X}_{a,\lambda}\rightarrow C$ factors as $\smash{\tilde{X}_{a,\lambda}\rightarrow \tilde{C}\stackrel{\psi}{\rightarrow}C}$, so by definition $K_{a,\lambda}$ is the kernel of $$\Pic^0(C)\stackrel{\psi^{\vee}}{\longrightarrow}\Pic^0(\tilde{C})\longrightarrow\Pic^0(\tilde{X}_{a,\lambda}).$$ Now $\psi^{\vee}(K_a)$ is a nontrivial finite subgroup of $\Pic^0(\tilde{C})$. Let $G'\subset\psi^{\vee}(K_a)$ be a cyclic subgroup and let $H=\psi^{\vee -1}(G')$. Write $n'_{a,\lambda}=n_{a,\lambda}/|G|$ for each $\lambda$. Since $G'$ is contained in the kernel of $\Pic^0(\tilde{C})\rightarrow\Pic^0(\tilde{X}_{a,\lambda})$, we have $G'\subset\Pic^0(\tilde{C})[n'_{a,\lambda}]$. Therefore $|G'|$ divides $n'_{a,\lambda}$. We deduce that $|H|$ divides $n_{a,\lambda}$. Now $|H|$ divides $n$ since $n=\sum_{\lambda}n_{a,\lambda}$. Since $H\subset K_a$, we have $a\in A_{n,H}(C)$.
\end{proof}
Recall a notation in \cite{HP}. For any integer $i$, let 
\begin{equation}\label{eq-[n]}
[i]: \Pic^0(C)\longrightarrow\Pic^0(C)
\end{equation}
denote the map of multiplication by $i$ (i.e., taking the tensor of $i$ copies of a given line bundle).
\begin{Rem}
The above proof does not work if $a$ corresponds to a non reduced spectral curve. For simplicity, suppose that $X_a$ is irreducible and denote by $X_a^{\red}$ the underlying integral spectral curve (See \cite[Lemma 2.4]{HP}). Denote by $k$ the multiplicity of $X_{a}^{\red}$ in $X_a$ and by $l$ the degree of the spectral cover $X_a^{\red}\rightarrow C$, so that $n=kl$. The spectral curve $X_a^{\red}$ corresponds to a unique element $a_{\red}\in A_{l}(C)$. According to the proof of \cite[Theorem 5.3]{HP}, we have $G\subset K_a$ if and only if $G_{\red}\subset K_{a_{\red}}$, where $G_{\red}$ is the image of $G$ under $[k]$. However, even if $G$ is strictly contained in $K_a$, there is no guarantee that $G_{\red}$ is strictly contained in $K_{a_{\red}}$. We are unable to control which subgroups $K_a$ might contain.
\end{Rem}

Write $\check{A}_{n,G}^{\heartsuit}(C)=\check{A}_{n,G}(C)\cap A_n^{\heartsuit}(C)$.
\begin{Cor}\label{K=G}
Let $G\subset\Gamma$ be a subgroup such that $|G|$ divides $n$, and let $a\in \check{A}_{n,G}^{\heartsuit}(C)$. Suppose that for any subgroup $H\subset\Gamma$ strictly containing $G$ such that $|H|$ divides $n$, we have $a\notin \check{A}_{n,H}(C)$. Then $K_a=G$.
\end{Cor}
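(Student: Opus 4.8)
The plan is to reduce the corollary to the Proposition above, via a short unwinding of the definitions of the loci $\check{A}_{n,\bullet}(C)$ in terms of the loci $A_{n,\bullet}(C)$. An essentially equivalent route would be to repeat the argument of the Proposition verbatim with $\check{A}$ in place of $A$, invoking Corollary \ref{Cor-noncyc} where the Proposition's proof invokes Theorem \ref{noncyc}; but since the Proposition already packages the subtle part (which needs the $\heartsuit$-hypothesis), a direct reduction is cleanest.

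First I would record the two inclusions that are immediate from the definitions. Since $\check{A}_{n,G}^{\heartsuit}(C)=\check{A}_{n,G}(C)\cap A_n^{\heartsuit}(C)$ and $\check{A}_{n,G}(C)=A_{n,G}(C)\cap\check{A}_n(C)$, any element $a\in\check{A}_{n,G}^{\heartsuit}(C)$ automatically satisfies $a\in A_{n,G}^{\heartsuit}(C)$ and $a\in\check{A}_n(C)$. In particular the first hypothesis of the Proposition above, namely membership in $A_{n,G}^{\heartsuit}(C)$, holds for such an $a$.

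Next I would verify the second hypothesis of the Proposition. Let $H\subset\Gamma$ be a subgroup strictly containing $G$ with $|H|$ dividing $n$. The hypothesis of the corollary gives $a\notin\check{A}_{n,H}(C)$; I claim this forces $a\notin A_{n,H}(C)$. Indeed, if $a\in A_{n,H}(C)$, then since also $a\in\check{A}_n(C)$ we would obtain $a\in A_{n,H}(C)\cap\check{A}_n(C)=\check{A}_{n,H}(C)$, a contradiction. Hence $a\notin A_{n,H}(C)$ for every such $H$, which is exactly what the Proposition requires. Applying the Proposition to $a$ then yields $K_a=G$.

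There is no real obstacle here. The finite group $K_a$ is built intrinsically from the spectral curve $X_a$ (via the normalisations of its integral components), so it is irrelevant whether $a$ is viewed inside $A_n(C)$ or inside $\check{A}_n(C)$, and nothing needs to be reconciled. The only point deserving a moment's care is the implication $a\notin\check{A}_{n,H}(C)\Rightarrow a\notin A_{n,H}(C)$, which, as noted above, genuinely uses that $a$ already lies in $\check{A}_n(C)$.
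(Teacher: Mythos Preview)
Your proof is correct and is exactly the intended reduction: the paper states the corollary without proof because it follows from the preceding Proposition by precisely the definition-chasing you supply, using $\check{A}_{n,\bullet}(C)=A_{n,\bullet}(C)\cap\check{A}_n(C)$ together with $a\in\check{A}_n(C)$.
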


\begin{Prop}\label{GsubSt}
Let $G\subset\Gamma$ be a subgroup such that $|G|$ divides $n$. For any $a\in \check{A}_{n,G}(C)$, there exists $x\in \check{h}_C^{-1}(a)$ such that $G\subset\Stab_{\Gamma}(x)$.
\end{Prop}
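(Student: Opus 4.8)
The plan is to produce the required point by pushing a Higgs bundle forward along the \'etale Galois cover $\psi\colon\tilde C\to C$ attached to $G$, exactly as in the proof of Proposition~\ref{Prop-NR}; the one genuinely new ingredient is that one first adjusts the determinant by means of the norm map, which is what allows us to land in the $\SL_n$‑moduli space.

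Since $a\in\check A_{n,G}(C)=A_{n,G}(C)\cap\check A_n(C)$ and $A_{n,G}(C)$ is by definition the image of $q_G$, I would first write $a=q_G(\tilde a)$ for some $\tilde a\in A_m(\tilde C)$. As the Hitchin map $h_{\tilde C}\colon M_m(\tilde C)\to A_m(\tilde C)$ is surjective, there is a semi-stable rank $m$, degree $0$ Higgs bundle $(\tilde E_0,\tilde\theta_0)$ on $\tilde C$ with characteristic polynomial $\tilde a$. For $N\in\Pic^0(\tilde C)$ I would then replace it by the twist $(\tilde E,\tilde\theta):=(\tilde E_0\otimes N,\tilde\theta_0\otimes\Id_N)$: tensoring by a line bundle affects neither semi-stability nor the characteristic polynomial of $\tilde\theta_0$ (whose coefficients are traces of exterior powers, insensitive to such a twist), so $(\tilde E,\tilde\theta)$ still lies over $\tilde a$, while $\det\tilde E=\det\tilde E_0\otimes N^{\otimes m}$ and hence $\det(\psi_\ast\tilde E)\cong\det(\psi_\ast\tilde E_0)\otimes\Nm_\psi(N)^{\otimes m}$. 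Since $\psi$ is \'etale we have $\deg(\psi_\ast\tilde E_0)=\deg\tilde E_0=0$, and since the map $N\mapsto\Nm_\psi(N)^{\otimes m}$ is surjective on $\Pic^0$, I can choose $N$ so that $\det(\psi_\ast\tilde E)\cong\mathcal O_C$.

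Now put $(E,\theta):=\psi_\ast(\tilde E,\tilde\theta)=\mathbf q_G(\tilde E,\tilde\theta)$. It is semi-stable by \cite[Lemma 3.2]{MS2}, has trivial determinant by the previous step, and—since $\mathbf q_G$ intertwines the Hitchin maps $h_{\tilde C}$ and $h_C$ with $q_G$ (the unchecked analogue of the square \eqref{CD-h-q}, which holds by construction of $q_G$ in \S\ref{SS-SSHB})—has characteristic polynomial $q_G(\tilde a)=a\in\check A_n(C)$, so in particular $\Tr\theta=0$. Thus $(E,\theta)$ is a semi-stable $\SL_n$‑Higgs bundle and defines a point $x\in\check h_C^{-1}(a)$. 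Finally, for $g\in G$ the line bundle $L_g$ satisfies $\psi^\ast L_g\cong\mathcal O_{\tilde C}$, so the projection formula gives $(E,\theta)\otimes L_g\cong\psi_\ast\big((\tilde E,\tilde\theta)\otimes\psi^\ast L_g\big)\cong(E,\theta)$; this is precisely the computation used in the proof of Proposition~\ref{Prop-NR} to see that $\mathbf q_G$ factors through $M_n(C)^G$. Hence $g\in\Stab_\Gamma(x)$ for every $g\in G$, i.e.\ $G\subset\Stab_\Gamma(x)$.

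The only step requiring real care is the determinant adjustment: one must be sure that translating $\det\tilde E$ by $N^{\otimes m}$ leaves the Hitchin fibre on $\tilde C$ undisturbed while $\det(\psi_\ast\tilde E)$ sweeps out all of $\Pic^0(C)$, which is where surjectivity of the norm map $\Nm_\psi$ enters; everything else is a direct application of the pushforward construction and the projection-formula identity already appearing in Proposition~\ref{Prop-NR}. Note that, in contrast to Proposition~\ref{Prop-NR}, no ellipticity hypothesis on $a$ is used anywhere.
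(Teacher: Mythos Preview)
Your proof is correct and follows essentially the same strategy as the paper: lift $a$ to $\tilde a$, pick a Higgs bundle over it, adjust so that the pushforward has trivial determinant, and invoke the projection formula for $G$-invariance. The only cosmetic difference is that the paper twists by $\psi^\ast L$ with $L\in\Pic^0(C)$ (so that the projection formula gives $\det(\psi_\ast E)\mapsto\det(\psi_\ast E)\otimes L^{\otimes n}$ directly, and surjectivity of $[n]$ suffices) rather than by an arbitrary $N\in\Pic^0(\tilde C)$, which spares it the determinant--norm identity $\det(\psi_\ast(\tilde E_0\otimes N))\cong\det(\psi_\ast\tilde E_0)\otimes\Nm_\psi(N)^{\otimes m}$ that you use.
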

\begin{proof}
Denote by $\check{M}_{n,G}(C)$ the fixed-point locus of $G$ in $\check{M}_n(C)$. Then we have the following commutative diagram
\begin{equation}\label{SLdiag}
\begin{tikzcd}[row sep=2.5em, column sep=2em]
\check{M}_m(\tilde{C},G) \arrow[r, "\check{\mathbf{q}}_G"] \arrow[d, "\check{h}_{\tilde{C},G}"'] & \check{M}_{n,G}(C) \arrow[r, hookrightarrow] & \check{M}_n(C) \arrow[d, "\check{h}_{C}"] \\
\check{A}_m(\tilde{C},G) \arrow[r, "\check{q}_G"'] & \check{A}_{n,G}(C) \arrow[r, hookrightarrow] & \check{A}_n(C)
\end{tikzcd}
\end{equation}
where $\check{h}_{\tilde{C},G}$ is the restriction of $h_{\tilde{C}}$. This is essentially (\ref{CD-h-q}). We only need to notice that $\Ima\check{\mathbf{q}}_G\subset\check{M}_{n,G}(C)$, which follows from the projection formula.

Let $a\in \check{A}_{n,G}(C)$. By definition, there exists $a'\in\check{A}_{m}(\tilde{C},G)$ that maps to $a$. We will show below that the Hitchin map $\check{h}_{\tilde{C},G}$ is surjective. Let $x'\in\check{h}_{\tilde{C},G}^{-1}(a')$. Put $x=\check{\mathbf{q}}_G(x')$. By the commutativity of the above diagram, $\check{h}_{C}$ maps $x$ to $a$. By the definition of $\check{M}_{n,G}(C)$, we have $G\subset\Stab_{\Gamma}(x)$, and so we are done.

Finally, let us show that $\check{h}_{\tilde{C},G}$ is surjective. We follow the argument of \cite[Proposition 2.4.9]{dC}. The restriction of $h_{\tilde{C}}$ to $\check{A}_m(\tilde{C},G)$ is a surjective morphism, whose domain consists of Higgs bundles $(E,\Phi)$ on $\tilde{C}$ such that $\psi_{\ast}\Phi$ has zero trace. We only need to show that each fibre contains some Higgs bundle  satisfying $\det(\psi_{\ast}E)\cong\mathcal{O}_C$. We consider the action of $\Pic^0(C)$ on $M_m(\tilde{C})$. If $L\in\Pic^0(C)$, then it sends $(E,\Phi)$ to $(E\otimes\psi^{\ast}L,\Phi\otimes\Id)$. This action preserves the fibres. The natural morphism $M_m(\tilde{C})\rightarrow\Pic^0(C)$, sending $(E,\Phi)$ to $\det(\psi_{\ast}E)$ is equivariant with respect to the actions of $\Pic^0(C)$, where $L\in\Pic^0(C)$ sends $M\in\Pic^0(C)$ to $M\otimes L^{n}$. Since $[n]$ is surjective onto $\Pic^0(C)$ (see (\ref{eq-[n]}) for the notation), we can always modify $E$ by some $\psi^{\ast}L$ with $L\in\Pic^0(C)$ in such a way that $E$ has the required determinant.
\end{proof}

\begin{Thm}\label{M-T}
The following assertions hold:
\begin{itemize}
\item[(i)] For any $a\in \check{A}_n^{\heartsuit}(C)$, and any $x\in 
\check{h}_C^{-1}(a)$, we have $\Stab_{\Gamma}(x)\subset K_a$.
\item[(ii)] For any $a\in \check{A}^{\heartsuit}_n(C)$, there exists $x\in 
\check{h}_C^{-1}(a)$ such that $\Stab_{\Gamma}(x)=K_a$.
\end{itemize}
\end{Thm}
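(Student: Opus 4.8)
The plan is to extract (ii) from (i) together with Proposition~\ref{GsubSt}, and to put the real work into (i). For (ii), fix $a\in\check{A}_n^{\heartsuit}(C)$. Since $X_a$ is reduced, every multiplicity $m_{a,\lambda}$ in \eqref{X_a} equals $1$, so $K_a=\bigcap_{\lambda}K_{a,\lambda}$; using that $|K_a|$ divides $n$ (part of the structure theory of \cite{HP}; in the reduced case it follows from $K_a=\bigcap_\lambda K_{a,\lambda}$ with $|K_{a,\lambda}|$ dividing $n_{a,\lambda}$, hence $|K_a|\mid\gcd_\lambda n_{a,\lambda}\mid\sum_\lambda n_{a,\lambda}=n$), Corollary~\ref{Cor-noncyc} applies with $G=K_a$ and, since trivially $K_a\supseteq K_a$, yields $a\in\check{A}_{n,K_a}(C)$. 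Then Proposition~\ref{GsubSt} produces $x\in\check{h}_C^{-1}(a)$ with $K_a\subseteq\Stab_{\Gamma}(x)$, and (i) supplies the reverse inclusion, so $\Stab_{\Gamma}(x)=K_a$.

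It remains to prove (i). Let $a\in\check{A}_n^{\heartsuit}(C)$ and $x\in\check{h}_C^{-1}(a)$. Via the spectral correspondence over the reduced spectral curve $X_a$ (cf.\ \cite{BNR}, \cite{dC}), $x$ is represented by a Higgs bundle $(\mathcal{E},\theta)$ whose spectral data is a torsion-free rank one sheaf $F$ on $X_a$ in the sense of \S\ref{Sub-Jacob}, with $\mathcal{E}=\pi_{a\ast}F$. For $\gamma\in\Gamma$ the projection formula gives $\pi_{a\ast}(F\otimes\pi_a^{\ast}\gamma)\cong\mathcal{E}\otimes\gamma$, so $\gamma\in\Stab_{\Gamma}(x)$ forces $F\otimes\pi_a^{\ast}\gamma\cong F$ as $\mathcal{O}_{X_a}$-modules. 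Fix such a $\gamma$ and an irreducible component $X_{a,\lambda}$, with normalisation $\nu_{a,\lambda}\colon\tilde{X}_{a,\lambda}\to X_{a,\lambda}\hookrightarrow X_a$ and $\tilde{\pi}_{a,\lambda}=\pi_a\circ\nu_{a,\lambda}$. Restricting $F$ to $X_{a,\lambda}$, quotienting by torsion, pulling back along $\nu_{a,\lambda}$, and quotienting by torsion once more produces a coherent sheaf $\tilde{F}_{\lambda}$ on the smooth connected curve $\tilde{X}_{a,\lambda}$; because $X_a$ is reduced, $F|_{X_{a,\lambda}}$ is generically of rank one, so $\tilde{F}_{\lambda}$ is invertible. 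Each of these operations is functorial and commutes with tensoring by a line bundle pulled back from $C$, and $(\pi_a^{\ast}\gamma)|_{X_{a,\lambda}}$ pulls back to $\tilde{\pi}_{a,\lambda}^{\ast}\gamma$ on $\tilde{X}_{a,\lambda}$; hence $F\otimes\pi_a^{\ast}\gamma\cong F$ gives $\tilde{F}_{\lambda}\otimes\tilde{\pi}_{a,\lambda}^{\ast}\gamma\cong\tilde{F}_{\lambda}$, and tensoring by $\tilde{F}_{\lambda}^{-1}$ gives $\tilde{\pi}_{a,\lambda}^{\ast}\gamma\cong\mathcal{O}_{\tilde{X}_{a,\lambda}}$, i.e.\ $\gamma\in K_{a,\lambda}$. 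Since $\lambda$ was arbitrary and all $m_{a,\lambda}=1$, we conclude $\gamma\in\bigcap_{\lambda}K_{a,\lambda}=K_a$.

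I expect the main obstacle to be making precise the modular description of $\check{h}_C^{-1}(a)$ when the reduced spectral curve $X_a$ is reducible --- including whatever semistability or $S$-equivalence bookkeeping is needed so that $\Stab_{\Gamma}(x)$ is genuinely computed by the condition $F\otimes\pi_a^{\ast}\gamma\cong F$ --- together with the care required in the ``restrict and de-torsion'' step, in particular in verifying that $\tilde{F}_{\lambda}$ is truly invertible and that the chain of functors transports the given isomorphism. The hypothesis $a\in\check{A}_n^{\heartsuit}(C)$ is used exactly here: with a component of multiplicity $m_{a,\lambda}>1$ one would only obtain a sheaf of some generic rank $r\leq m_{a,\lambda}$ on that component and, after taking determinants, only the relation $(\tilde{\pi}_{a,\lambda}^{\ast}\gamma)^{\otimes r}\cong\mathcal{O}$, which need not place $\gamma$ in $(K_{a,\lambda})_{m_{a,\lambda}}$ --- the phenomenon flagged in the Remark on non-reduced spectral curves preceding Corollary~\ref{K=G}. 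Finally, (i) and (ii) are the $\SL_n$-side forms of Theorem~\ref{Thm-FW}, which follows by passing to the quotient stack $\hat{M}_n(C)=[\check{M}_n(C)/\Gamma]$, since the automorphism group of a point of $\hat{h}_C^{-1}(a)$ is the $\Gamma$-stabiliser of any of its preimages.
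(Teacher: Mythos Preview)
Your argument for (i) is essentially identical to the paper's: represent $x$ by a torsion-free rank one sheaf on the reduced curve $X_a$, pull back to the normalisation, quotient by torsion to obtain an invertible sheaf, and cancel. The paper does this in one step (pulling back to $\tilde{X}_a=\bigsqcup_\lambda\tilde{X}_{a,\lambda}$) rather than component by component, but the content is the same.

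For (ii) your route is genuinely more direct than the paper's. The paper stratifies $\check{A}_n^{\heartsuit}(C)=\bigsqcup_G\check{A}^{\circ}_{n,G}(C)$ by the maximal $G$ with $a\in\check{A}_{n,G}(C)$, invokes Corollary~\ref{K=G} to identify $K_a=G$ on each stratum, and only then applies Proposition~\ref{GsubSt}. You instead apply Proposition~\ref{GsubSt} immediately with $G=K_a$, having first checked $|K_a|\mid n$. This bypasses Corollary~\ref{K=G} (and the proposition preceding it) entirely. One point to make explicit: your parenthetical claim that $|K_{a,\lambda}|$ divides $n_{a,\lambda}$ is correct but is stronger than what the paper records at that stage (only $K_{a,\lambda}\subset\Pic^0(C)[n_{a,\lambda}]$, which bounds element orders, not the group order). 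The justification is that $\tilde{\pi}_{a,\lambda}$ factors through the \'etale Galois cover of $C$ determined by $K_{a,\lambda}$, of degree $|K_{a,\lambda}|$, so $|K_{a,\lambda}|\mid n_{a,\lambda}$; this is exactly the mechanism of \cite[Proposition~11.4.3]{BL} that the paper uses elsewhere. With that supplied, your chain $|K_a|\mid|K_{a,\lambda}|\mid n_{a,\lambda}$ for each $\lambda$, hence $|K_a|\mid\sum_\lambda n_{a,\lambda}=n$, is clean. Note also that the paper records $|K_a|\mid n$ only as a \emph{corollary} of Theorem~\ref{M-T}, whereas you obtain it beforehand and use it as input; this is a nice economy.
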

\begin{proof}
(i). Let $a\in\check{A}_n^{\heartsuit}(C)$, and let $\pi:X_a\rightarrow C$ be the corresponding spectral curve. By assumption, $X_a$ is reduced. Let $X_a=\bigcup_{\lambda} X_{a,\lambda}$ be the decomposition of $X_a$ into irreducible components. For each $\lambda$, let $\nu_{\lambda}:\tilde{X}_{a,\lambda}\rightarrow X_{a,\lambda}$ be the normalisation map. Let $\tilde{X}_a=\bigsqcup_{\lambda}\tilde{X}_{a,\lambda}$ be the disjoint union. Let $\nu:\tilde{X}_a\rightarrow X_a$ be the union of the maps $\nu_{\lambda}$. Write $\tilde{\pi}=\pi\circ \nu$. Recall that $\check{h}^{-1}(a)$ consists of torsion-free sheaves on $X_a$. It suffices to show that for any torsion-free sheaf $M$ of generic rank 1 on $X_a$ and any invertible sheaf $L$ on $C$ such that $M\otimes\pi^{\ast}L\cong M$, we have $\tilde{\pi}^{\ast}L\cong \mathcal{O}_{\tilde{X}_a}$, i.e. $L\in K_a$.

We have $\nu^{\ast}M\otimes\tilde{\pi}^{\ast}L\cong \nu^{\ast}M$. In general, $\nu^{\ast}M$ is not torsion-free, since $\nu$ may not be flat. Denote by $(\nu^{\ast}M)_{tor}$ the torsion subsheaf. We have $(\nu^{\ast}M)_{tor}\otimes\tilde{\pi}^{\ast}L\cong (\nu^{\ast}M)_{tor}$, and so $(\nu^{\ast}M/(\nu^{\ast}M)_{tor})\otimes\tilde{\pi}^{\ast}L\cong \nu^{\ast}M/(\nu^{\ast}M)_{tor}$. Now $\nu^{\ast}M/(\nu^{\ast}M)_{tor}$ is invertible since $\tilde{X}_a$ is smooth. We deduce that $\tilde{\pi}^{\ast}L\cong\mathcal{O}_{\tilde{X}_a}$.

(ii). Let $G\subset\Gamma$ be a subgroup such that $|G|$ divides $n$. Put $$\check{A}^{\circ}_{n,G}(C)=\check{A}_{n,G}^{\heartsuit}(C)\setminus\big(\bigcup_H\check{A}_{n,H}^{\heartsuit}(C)\big),$$where $H\subset\Gamma$ runs over subgroups strictly containing $G$ such that $|H|$ divides $n$. Then $\check{A}^{\circ}_{n,G}(C)$ is nonempty for dimension reason. In view of Corollary \ref{K=G}, we have a disjoint union $$\check{A}^{\heartsuit}_n(C)=\bigsqcup_{G}\check{A}^{\circ}_{n,G}(C),$$where $G\subset\Gamma$ runs over subgroups such that $|G|$ divides $n$. By Proposition \ref{GsubSt} and part (i) of the theorem, for any $a\in\check{A}^{\circ}_{n,G}(C)$, there exists $x\in\check{h}^{-1}_C(a)$ such that $$G\subset\Stab_{\Gamma}(x)\subset K_a,$$while $G=K_a$ according to Corollary \ref{K=G}.
\end{proof}
\begin{Cor}\label{Cor-M-T}
Let $a\in \check{A}_n^{\heartsuit}(C)$, then $|K_a|$ divides $n$. For any $x\in 
\check{h}_C^{-1}(a)$, $|\Stab_{\Gamma}(x)|$ divides $n$.
\end{Cor}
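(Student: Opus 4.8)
The plan is as follows. The second assertion is an immediate consequence of the first together with Theorem~\ref{M-T}: for any $x\in\check{h}_C^{-1}(a)$ we have $\Stab_{\Gamma}(x)\subseteq K_a$ by Theorem~\ref{M-T}(i), so by Lagrange's theorem $|\Stab_{\Gamma}(x)|$ divides $|K_a|$, which in turn divides $n$ once the first assertion is established. Thus the only real content is the divisibility $|K_a|\mid n$.

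To prove this I would exploit the description of $K_a$ as an intersection of kernels of pullback maps. Since $a\in\check{A}_n^{\heartsuit}(C)$, the spectral curve $X_a$ is reduced, so every multiplicity in (\ref{X_a}) equals $1$; hence $n=\sum_{\lambda}n_{a,\lambda}$ and $K_a=\bigcap_{\lambda}K_{a,\lambda}$, so in particular $K_a\subseteq K_{a,\lambda}$ for each $\lambda$. As $K_a\subseteq\Gamma$, \S\ref{SS-SSHB} attaches to $K_a$ a connected \'etale Galois covering $\psi_{K_a}:C_{K_a}\to C$ of degree $|K_a|$. Now the factorisation criterion recalled earlier in the paper (from \cite[Proposition 11.4.3]{BL}) says that the inclusion $K_{a,\lambda}\supseteq K_a$ is equivalent to the normalisation map $\tilde{\pi}_{a,\lambda}:\tilde{X}_{a,\lambda}\to C$ factoring as $\tilde{X}_{a,\lambda}\to C_{K_a}\xrightarrow{\psi_{K_a}}C$. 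Because $\tilde{X}_{a,\lambda}$ is integral and $C_{K_a}$ is connected, the first map is finite of some positive degree, whence $|K_a|=\deg\psi_{K_a}$ divides $\deg\tilde{\pi}_{a,\lambda}=n_{a,\lambda}$. Since this holds for every $\lambda$, $|K_a|$ divides $\sum_{\lambda}n_{a,\lambda}=n$, as wanted.

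The step deserving the most attention is the factorisation criterion: one should make sure it applies to the subgroup $K_a=\bigcap_{\lambda}K_{a,\lambda}$ itself, not merely to the subgroups featured in \S\ref{SS-SSHB}; since both the covering $C_{K_a}\to C$ and the criterion make sense for an arbitrary finite subgroup of $\Gamma$, no obstruction arises, but this is the point where I would be careful. As an alternative route avoiding \'etale covers altogether, one could represent $x$ by a torsion-free rank one sheaf $M$ on $X_a$, push forward to $E=\pi_{a\ast}M$, a rank $n$ bundle on $C$ satisfying $E\otimes L\cong E$ for every $L\in\Stab_{\Gamma}(x)$ by the projection formula, and then argue directly that such a stabiliser has order dividing $\rk E=n$; the covering argument is preferable because it reduces the whole claim to the additive identity $n=\sum_{\lambda}n_{a,\lambda}$.
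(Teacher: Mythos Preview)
Your proof is correct. The paper gives no explicit proof, but its implicit route is different: the proof of Theorem~\ref{M-T}(ii) establishes the stratification $\check{A}^{\heartsuit}_n(C)=\bigsqcup_{G}\check{A}^{\circ}_{n,G}(C)$ over subgroups $G\subset\Gamma$ with $|G|\mid n$, and for $a\in\check{A}^{\circ}_{n,G}(C)$ Corollary~\ref{K=G} gives $K_a=G$, whence $|K_a|\mid n$; the second assertion then follows from Theorem~\ref{M-T}(i) exactly as you say. You instead bypass the stratification and prove the sharper statement that $|K_a|$ divides each $n_{a,\lambda}$, by factoring $\tilde{\pi}_{a,\lambda}$ through the cover $C_{K_a}\to C$; this is essentially the mechanism already appearing inside the paper's proof of the Proposition preceding Corollary~\ref{K=G}, extracted and applied directly to $K_a$ rather than to an auxiliary subgroup. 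The paper's approach buys the exact identification $K_a=G$; yours is more self-contained for the bare divisibility. Your caveat about the factorisation criterion is well placed but harmless: both the construction of the connected cover and the criterion from \cite[Proposition 11.4.3]{BL} are valid for an arbitrary finite subgroup $K\subset\Pic^0(C)$; the hypothesis $|G|\mid n$ in \S\ref{SS-SSHB} is only used to define the map $q_G$ on Hitchin bases, which you do not invoke.
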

\begin{Rem}
The proof of Theorem \ref{M-T} (ii) shows more than what is stated in the theorem: it shows that $K_a$ (and so $\Stab_{\Gamma}(x)$) comes from some \'etale Galois covering of $C$ whose degree divides $n$. This is necessary for our approach to the fractional Hecke eigenproperty in later sections. In fact, there is a short proof without giving much information on $K_a$. Consider $J_a:=\Pic^0(X_a)$, the variety of line bundles that have degree 0 on every irreducible components of $X_a$. It is a commutative connected group variety, and acts on the $\GL_n$-Hitchin fibre $h_C^{-1}(a)$ by tensorisation. There is a short exact sequence of group varieties
$$
0\longrightarrow J_a^{\text{aff}}\longrightarrow J_a\longrightarrow J_a^{\text{proj}}\longrightarrow 0,
$$
where $J_a^{\text{aff}}$ is an affine subgroup and $J_a^{\text{proj}}$ is an abelian variety. By the Borel fixed point theorem, there is a $J_a^{\text{aff}}$-fixed point, say $x$, in $h_C^{-1}(a)$. Modifying $x$ by some element of $J_a$ if necessary, we may assume that $x$ lies in the $\SL_n$-Hitchin fibre $\check{h}_C^{-1}(a)$. Let us show that $\Stab_{\Gamma}(x)=K_a$. The pullback via $\nu$ induces an isomorphism $J_a^{\text{proj}}\cong\Pic^0(\tilde{X_a})$. In particular, the kernel of $\nu^{\vee}$ is exactly $J_a^{\text{aff}}$. By definition, $K_a$ is the subgroup of $\Gamma$ that is mapped into $J_a^{\text{aff}}$ via $\pi^{\vee}$; therefore, $K_a$ fixes $x$.
\end{Rem}
\begin{Rem}
When $x\in\check{h}^{-1}(a)$ corresponds to a stable $\SL_n$-Higgs bundle, the automorphism group of the $\PGL_n$-Higgs bundle obtained from $x$ by extension of structure group, is isomorphic to $\Stab_{\Gamma}(x)$. It is known that $\check{M}_n^{ell}(C)$ is contained in $\check{M}_n^s(C)$, the open subset of stable $\SL_n$-Higgs bundles. Therefore, the above theorem implies Theorem \ref{Thm-FW}. We may also consider the moduli space of $\SL_n$-Higgs bundles with coprime degree, then Theorem \ref{Thm-FW} holds in the larger open subset $\check{M}_n^{\heartsuit}(C)$.\footnote{I thank the referee for pointing this out.}
\end{Rem}
\begin{Rem}
\cite[Corollary 1.3]{Ho} does not seem to be correct, since it says that there are Hitchin fibres $\check{h}^{-1}_C(a)$ for $a\in \check{A}_n^{ell}(C)$ which has four irreducible components, in contradiction with the corollary above. Besides, in \cite[\S 5.2.1, \S 5.2.2]{FW} Frenkel and Witten have considered spectral covers associated to quadratic differentials that only have double zeros, and the corresponding Hitchin fibres have two irreducible components. This also contradicts \cite[Corollary 1.3]{Ho}.
\end{Rem}

\section{Compactified Prym Varieties}\label{CPV}
\subsection{Prym Varieties}\label{subsec-prym}\hfill

We give the setup of the rest of this article. We fix an integral spectral cover $\pi:X\rightarrow C$ of degree $n$. Let $\nu:\tilde{X}\rightarrow X$ be the normalisation. Write $\tilde{\pi}:=\pi\circ\nu$. Let $G=\Ker\tilde{\pi}^{\vee}$, where $\tilde{\pi}^{\vee}:J_C\rightarrow J_{\tilde{X}}$ is the pullback morphism. Let $\psi_G:C''\rightarrow C$ be the Galois covering defined by $G$, then we have $G=\Ker\psi_G^{\vee}$. By Lemma \ref{Lem-AnG}, the map $\tilde{\pi}$ factors through $C''$. The Galois group of $C''\rightarrow C$ is isomorphic to the dual group $G^{\vee}$. 

Let $H\subset G$ be a subgroup. Then $H^{\vee}$ corresponds to a Galois covering $\psi:C'\rightarrow C$ such that $\psi_G$ factors as $C''\stackrel{\psi'}{\rightarrow} C'\stackrel{\psi}{\rightarrow} C$. We have $\psi=\psi_G$ if $H=G$. By Lemma \ref{Lem-AnG}, we have a Cartesian diagram
\begin{equation}\label{CD-Za'-1}
\begin{tikzcd}[row sep=2.5em, column sep=2em]
\bigcup_{\xi\in H^{\vee}}Y^{\xi} \arrow[r] \arrow[d, swap, "\phi"] \arrow[dr, phantom, "\square"] & C' \arrow[d, "\psi"] \\
X \arrow[r, swap, "\pi"] & C,
\end{tikzcd}
\end{equation}
where $Y$ is a spectral curve over $C'$ and $\phi$ is defined to be the base change of $\psi$. By abuse of notation, the restriction of $\phi$ to $Y$ is also denoted by $\phi$. Similarly, we have $C''\times_{C'}Y\cong\bigcup_{\chi\in K^{\vee}}Z^{\chi}$, where $K:=G/H$, and $Z$ is a spectral curve over $C''$. By Lemma \ref{Lem-AnG}, the normalisation map $\nu$ factors as 
$$
\tilde{X}\stackrel{\nu''}{\longrightarrow}Z\longrightarrow X.
$$
We have a commutative diagram
\begin{equation}\label{CD-Za'-Ya''-Xa}
\begin{tikzcd}[row sep=2.5em, column sep=2em]
\tilde{X} \arrow[dr, "\nu''"] \arrow[drr, "\tilde{\pi}_G", bend left] \arrow[dddr, "\nu"', bend right] \arrow[ddr, "\tilde{\phi}'", bend right] & & \\
&Z \arrow[r, "\pi_G"] \arrow[d, "\phi'"'] & C'' \arrow[d, "\psi'"] \\
&Y \arrow[r, "\pi_H"] \arrow[d,"\phi"'] & C' \arrow[d, "\psi"]\\
&X \arrow[r, "\pi"'] & C,
\end{tikzcd}
\end{equation}
where $\phi'$, $\pi_G$ and $\pi_H$ are the restrictions of the projection maps from the fibre products, and $\tilde{\phi}'=\phi'\circ\nu''$, $\tilde{\pi}_G=\pi_G\circ\nu''$. We write:
\begin{itemize}
\item $\tilde{\pi}_H=\pi_H\circ\tilde{\phi}'$, $\pi'=\psi\circ\pi_H$, and $\pi''=\psi_G\circ\pi_G$
\end{itemize}

We also make the following simplifications of notations:
\begin{itemize}
\item For $X$ and $\pi$, write $\bar{J}=\bar{J}_X$, $J=J_X$, $\bar{P}=\bar{P}_{\pi}$, and $P=P_{\pi}$;
\item For $Y$ and $\pi'$, write $\bar{J}'=\bar{J}_Y$, $J'=J_Y$, $\bar{P}'=\bar{P}_{\pi'}$, and $P'=P_{\pi'}$;
\item For $Z$ and $\pi''$, write $\bar{J}''=\bar{J}_Z$, $J''=J_Z$,  $\bar{P}''=\bar{P}_{\pi''}$, and $P''=P_{\pi''}$.
\end{itemize}
And write
\begin{itemize}
\item $P_{C'}:=P_{\psi}$ and $P_{C''}:=P_{\psi_G}$.
\end{itemize}
These notations will be used in the rest of this article. The objects associated to $X$ and $Z$ will be thought to be fixed, while those intermediate objects associated to $Y$ depend on the choice of the subgroup $H$. Many of our results will be stated for $Y$ and $\pi'$. Of course, they include the particular case when $H=0$ and $H=G$.

The open subset $J'\subset\bar{J}'$ consisting of line bundles on $Y$ acts on $\bar{J}'$ by tensorisation. Then $\Gamma$ acts on $\bar{J}'$ via the homomorphism $\pi^{\prime\vee}:J_C\rightarrow J'$. The subvariety $\bar{P}'$ is preserved under this action, since $\Nm_{\pi'}(F\otimes\pi^{\prime\ast}L)\cong \Nm_{\pi'}(F)\otimes L^{\otimes n}\cong\Nm_{\pi'}(F)$ for any torsion-free rank one sheaf $F$ on $Y$ and any $L\in\Gamma$. The group $\Gamma$ also acts on $J_C$ by multiplication. For any $\gamma\in\Gamma$, we let $\gamma$ acts on $\bar{P}'\times J_C$ as $\gamma\times\gamma^{-1}$.
\begin{Lem}\label{Lem-JCP=barJ}
The action of $\Gamma$ on $\bar{P}'\times J_C$ is free, and there is an isomorphism of algebraic varieties:
$$
(\bar{P}'\times J_C)/\Gamma\lisom\bar{J}'.
$$
\end{Lem}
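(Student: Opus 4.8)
The plan is to realise the ``tensor by pullback'' map
$$
m\colon\bar{P}'\times J_C\longrightarrow\bar{J}',\qquad (F,L)\longmapsto F\otimes\pi^{\prime\ast}L,
$$
as a $\Gamma$-torsor over $\bar{J}'$. Once this is done, freeness of the $\Gamma$-action and the identification $(\bar{P}'\times J_C)/\Gamma\cong\bar{J}'$ both come out for free, and — crucially — one never has to decide whether the compactified Jacobian $\bar{J}'$ is normal, which is the point where a naive ``bijective morphism'' argument would get stuck.

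First I would recall that multiplication $[n]\colon J_C\to J_C$ is a finite \'etale isogeny with kernel $J_C[n]=\Gamma$, hence a $\Gamma$-torsor for the translation action of $\Gamma$ on the source (I will use the translation $\gamma\colon L\mapsto\gamma^{-1}\otimes L$, which is the convention that matches the $\gamma\times\gamma^{-1}$ action of the statement). Since $\Nm_{\pi'}\colon\bar{J}'\to J_C$ is a morphism of varieties (\S\ref{Sub-Prym}), I then form the fibre product $W:=\bar{J}'\times_{J_C,[n]}J_C$; by base change $\pr_1\colon W\to\bar{J}'$ is again a $\Gamma$-torsor, with $\Gamma$ acting on $W$ through its second factor, so $W/\Gamma\cong\bar{J}'$ tautologically.

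Next I would identify $W$ with $\bar{P}'\times J_C$ over $\bar{J}'$. Define $v\colon\bar{P}'\times J_C\to W$ by $(F,L)\mapsto(F\otimes\pi^{\prime\ast}L,L)$; this lands in $W$ because for $F\in\bar{P}'$ property (ii) of the norm map in \S\ref{Sub-Prym} gives $\Nm_{\pi'}(F\otimes\pi^{\prime\ast}L)\cong\Nm_{\pi'}(F)\otimes L^{\otimes n}\cong L^{\otimes n}=[n](L)$. The inverse should be $w\colon W\to\bar{P}'\times J_C$, $(E,L)\mapsto(E\otimes\pi^{\prime\ast}L^{-1},L)$, which lands in $\bar{P}'\times J_C$ since $\Nm_{\pi'}(E\otimes\pi^{\prime\ast}L^{-1})\cong\Nm_{\pi'}(E)\otimes(L^{\otimes n})^{-1}\cong[n](L)\otimes[n](L)^{-1}\cong\mathcal{O}_C$, using the defining relation $\Nm_{\pi'}(E)=[n](L)$ of $W$. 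Both $v$ and $w$ are morphisms of schemes because tensoring a relative torsion-free rank one sheaf by the pullback of a line bundle is functorial in the base; they are manifestly mutually inverse, and $\pr_1\circ v=m$, so $v$ is an isomorphism over $\bar{J}'$. (Concretely, the surjectivity of $[n]$ is exactly what lets one write any $E\in\bar{J}'$ as $F\otimes\pi^{\prime\ast}L$ with $F\in\bar{P}'$, and a norm computation shows the fibres of $m$ are single $\Gamma$-orbits; the torsor statement packages both facts.)

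Finally I would transport the torsor structure through $v$: the translation action of $\gamma\in\Gamma$ on the second factor of $W$ corresponds to $(F,L)\mapsto(F\otimes\pi^{\prime\ast}\gamma,\gamma^{-1}\otimes L)$ on $\bar{P}'\times J_C$, i.e.\ exactly the action $\gamma\times\gamma^{-1}$ of the statement (with the opposite translation convention one gets instead this action precomposed with the automorphism $\gamma\mapsto\gamma^{-1}$ of $\Gamma$, which has the same orbits, hence the same quotient). A torsor action is free, so the $\Gamma$-action on $\bar{P}'\times J_C$ is free, and $(\bar{P}'\times J_C)/\Gamma\cong W/\Gamma\cong\bar{J}'$. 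The only steps requiring genuine care are the functoriality of $v$ and $w$ in families (standard, but worth recording) and keeping the two translation conventions for $\Gamma$ straight; I do not expect any real obstacle here.
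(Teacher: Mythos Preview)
Your proof is correct and follows essentially the same strategy as the paper: both establish that $\bar{P}'\times J_C\to\bar{J}'$ is a $\Gamma$-torsor in the \'etale topology, whence freeness and the quotient identification follow immediately. The paper simply asserts this and refers to \cite[Lemma~2.2]{HT} for the $\mathbb{C}$-point argument, noting that the modular descriptions in \S\ref{Sub-Jacob}--\S\ref{Sub-Prym} let it carry over to arbitrary $S$-points; it omits the details.

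Your presentation via the fibre product $W=\bar{J}'\times_{J_C,[n]}J_C$ is a clean and explicit way to organise exactly those omitted details: rather than verifying the torsor axioms by hand, you inherit them from the \'etale isogeny $[n]$ by base change, and then the only content is the mutually inverse pair $v,w$ identifying $W$ with $\bar{P}'\times J_C$ over $\bar{J}'$. This is the same argument, just packaged so that the torsor structure is automatic. Your remark that one may need to precompose with $\gamma\mapsto\gamma^{-1}$ depending on the translation convention is correct and harmless, as you note.
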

\begin{proof}
For simplicity, we only consider the $\mathbb{C}$-points of these varieties. The lemma will follow once we notice that the arguments work in families (of torsion-free sheaves), using the modular descriptions of (compactified) Jacobians and compactified Prym varieties as in \S \ref{Sub-Jacob} and \S \ref{Sub-Prym}.

Let $F$ be a torsion-free sheaf on $Y$ representing a point of $\bar{P}'$ and $L$ a degree 0 line bundle on $C$. Then $F\otimes\pi^{\prime\ast}L$ defines a point of $\bar{J}'$. This defines a map $\bar{P}'\times J_C\rightarrow\bar{J}'$. We first check that it is surjective. Let $F\in\bar{J}'$ be a torsion-free sheaf on $Y$. Since the multiplication map $[n]$ is surjective on $J_C$, we may find a line bundle $L$ such that $L^{\otimes n}\cong\Nm_{\pi'}(F)^{-1}$. Then $F_1:=F\otimes\pi^{\prime\ast}L$ lies in $\bar{P}'$. Therefore, $(F_1,L^{-1})$ maps to $F$. Now we check injectivity. Let $(F_1,L_1)$ and $(F_2,L_2)$ be elements of $\bar{P}'\times J_C$, such that $F_1\otimes\pi^{\prime\ast}L_1\cong F_2\otimes\pi^{\prime\ast}L_2$. Put $L=L_2L_1^{-1}$. Then $F_1\cong F_2\otimes\pi^{\prime\ast}L$. Applying $\Nm_{\pi'}$ to both sides, we deduce that $L^{\otimes n}\cong\mathcal{O}_C$ (i.e., $L\in\Gamma$). We see that $(F_1,L_1)$ and $(F_2,L_2)$ lie in the same $\Gamma$-orbit.
\end{proof}

\subsection{Connected components of compactified Prym varieties}\label{SS-CCPV}\hfill

\begin{Lem}\label{Lem-conn-comp-P'}
There are natural isomorphisms of groups:
\begin{itemize}
\item[(i)] $\pi_0(P_{C''})\cong G^{\vee}$.
\item[(ii)] $\pi_0(P')\cong\pi_0(P_{\tilde{\pi}})\cong G^{\vee}$.
\end{itemize}
\end{Lem}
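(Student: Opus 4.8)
The plan is to reduce both parts to the description of the component group of a Prym variety given by \cite[Theorem 1.1]{HP}: for a finite morphism $\sigma\colon Y\to C$ from an integral projective curve to the smooth curve $C$, there is a natural isomorphism of groups $\pi_0(P_{\sigma})\cong K_{\sigma}^{\vee}$, where $K_{\sigma}=\Ker\big(\tilde\sigma^{\vee}\colon J_C\to J_{\tilde Y}\big)$, with $\tilde Y\to Y$ the normalisation and $\tilde\sigma$ its composite with $\sigma$; when $Y$ is already smooth this is Lemma \ref{Lem-reform-HP2.1}. Both statements then follow once we show that $K_{\sigma}=G$ for each of the three relevant covers $\psi_G\colon C''\to C$, $\tilde\pi\colon\tilde X\to C$ and $\pi'=\psi\circ\pi_H\colon Y\to C$.

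For part (i): the curves $C''$ and $\tilde X$ are smooth and connected ($C''$ is an \'etale cover of $C$, and $\tilde X$ is the normalisation of the integral curve $X$), hence each is its own normalisation. Therefore $K_{\psi_G}=\Ker\psi_G^{\vee}$, which is $G$ by the definition of the covering $\psi_G$, and $K_{\tilde\pi}=\Ker\tilde\pi^{\vee}$, which is $G$ by the definition of $G$. Invoking the statement above---here only \cite[Lemma 2.1]{HP}, i.e.\ Lemma \ref{Lem-reform-HP2.1}---gives $\pi_0(P_{C''})=\pi_0(P_{\psi_G})\cong G^{\vee}$, which is (i), and likewise $\pi_0(P_{\tilde\pi})\cong G^{\vee}$.

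For part (ii): since $X$ is integral we have $K_a=\Ker\tilde\pi^{\vee}=G$, so $a\in\check{A}_{n,H}^{ell}(C)$ for the subgroup $H\subset G$ by Corollary \ref{Cor-noncyc}, and Lemma \ref{Lem-AnG} applies with $H$ in place of $G$. By parts (ii) and (iii) of that lemma, $Y$ is an integral curve, the morphism $\phi\colon Y\to X$ is birational, and the induced morphism $\nu'\colon\tilde X\to Y$ is the normalisation of $Y$; by uniqueness of the normalisation the composite $\phi\circ\nu'\colon\tilde X\to X$ is the normalisation $\nu$ of $X$. Using the commutative square $\psi\circ\pi_H=\pi\circ\phi$ of diagram (\ref{CD-Za'-Ya''-Xa}), the composite $\pi'\circ\nu'$ equals $\psi\circ\pi_H\circ\nu'=\pi\circ\phi\circ\nu'=\pi\circ\nu=\tilde\pi$. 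Hence $K_{\pi'}=\Ker\tilde\pi^{\vee}=G$, and \cite[Theorem 1.1]{HP} gives $\pi_0(P')=\pi_0(P_{\pi'})\cong G^{\vee}$; combined with part (i) this yields $\pi_0(P')\cong\pi_0(P_{\tilde\pi})\cong G^{\vee}$. (Alternatively, the isomorphism $\pi_0(P')\cong\pi_0(P_{\tilde\pi})$ can be realised directly: pullback of line bundles along $\nu'$ carries $P'$ onto $P_{\tilde\pi}$ with connected kernel, hence induces an isomorphism on component groups.)

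The invocations of \cite[Theorem 1.1]{HP}, and the bookkeeping for part (i), are routine; the point that genuinely needs care is the geometric input to part (ii)---that the normalisation of $Y$ is precisely $\tilde X$ and that the resulting finite map to $C$ is exactly $\tilde\pi$, rather than some other cover sharing the same underlying curve---which is what Lemma \ref{Lem-AnG}(i)--(iii) supplies. A secondary point, relevant only if these identifications are to be used compatibly elsewhere, is that the pullback maps relating $P'$, $P_{\tilde\pi}$ and $P_{C''}$ intertwine their identifications with $G^{\vee}$; this is handled by the functoriality of $\epsilon_G$, i.e.\ Lemma \ref{Lem-G1G2}.
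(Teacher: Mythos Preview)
Your proof is correct and follows essentially the same strategy as the paper: both rely on Lemma~\ref{Lem-AnG} to identify $\tilde X$ as the normalisation of $Y$, and then invoke the Hausel--Pauly results to read off the component group. The only cosmetic difference is that the paper obtains the isomorphism $\pi_0(P')\cong\pi_0(P_{\tilde\pi})$ in one step by citing \cite[Lemma~4.1~(4)]{HP} (i.e.\ the pullback along the normalisation induces the isomorphism of component groups), whereas your primary route computes $K_{\pi'}=G$ directly and applies \cite[Theorem~1.1]{HP} to $\pi'$; your parenthetical alternative is exactly the paper's argument.
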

\begin{proof}
The first part follows from Lemma \ref{Lem-reform-HP2.1}. By Lemma \ref{Lem-AnG}, the normalisation $\tilde{X}$ of $X$ is also the normalisation of $Y$. By \cite[Lemma 4.1 (4)]{HP}, we have $\pi_0(P')\cong \pi_0(P_{\tilde{\pi}})$,
which is identified with $G^{\vee}$ by Lemma \ref{Lem-reform-HP2.1}.
\end{proof}

\begin{Lem}\label{Lem-conn-comp-barP'}
There are natural isomorphisms of groups: 
\begin{itemize}
\item[(i)] $\pi_0(P_{C'})\cong H^{\vee}$.
\item[(ii)] $\pi_0(\bar{P}')\cong H^{\vee}$.
\end{itemize}
\end{Lem}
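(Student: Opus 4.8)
The plan is to prove the two parts separately: (i) comes from the smooth case, and (ii) from a fibration argument over $P_{C'}$. For (i), I would argue exactly as in the proof of Lemma \ref{Lem-conn-comp-P'}(i): the covering $\psi\colon C'\to C$ is \'etale Galois with $\Ker(\psi^{\vee}\colon J_C\to J_{C'})=H$, so \cite[Lemma 2.1]{HP} (in the form of Lemma \ref{Lem-reform-HP2.1}) yields a natural isomorphism $\pi_0(P_{C'})=\pi_0(P_{\psi})\cong H^{\vee}$. There is nothing to add there.

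For (ii), I would first reduce to a statement about a norm map. Let $\pi_H\colon Y\to C'$ be the spectral cover in (\ref{CD-Za'-Ya''-Xa}), so that $\pi'=\psi\circ\pi_H$. By Lemma \ref{Lem-Func-Nm}, $\Nm_{\pi'}=\Nm_{\psi}\circ\Nm_{\pi_H}$ as morphisms $\bar{J}'\to J_{C'}\to J_C$, and hence
\[
\bar{P}'=\Nm_{\pi'}^{-1}(0)=\Nm_{\pi_H}^{-1}\bigl(\Nm_{\psi}^{-1}(0)\bigr)=\Nm_{\pi_H}^{-1}(P_{C'}).
\]
So $\bar P'$ fibres over $P_{C'}$ via $\Nm_{\pi_H}$, and the problem becomes one of understanding $\pi_0$ of the total space of this fibration.

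Next I would show that every closed fibre of the proper morphism $\Nm_{\pi_H}\colon\bar{J}'\to J_{C'}$ is nonempty and connected. For a closed point $x\in J_{C'}$, surjectivity of the norm map provides a line bundle $N_x$ on $Y$ with $\Nm_{\pi_H}(N_x)\cong x$; tensoring by $N_x$ is an automorphism of $\bar{J}'$ lying over translation by $x$ on $J_{C'}$ (using multiplicativity $\Nm_{\pi_H}(F\otimes N_x)\cong\Nm_{\pi_H}(F)\otimes x$, cf.\ \S\ref{Sub-Prym}), so it restricts to an isomorphism $\bar{P}_{\pi_H}=\Nm_{\pi_H}^{-1}(0)\lisom\Nm_{\pi_H}^{-1}(x)$. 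Since $Y\to C'$ is an integral spectral cover (Lemma \ref{Lem-AnG}), $\bar{P}_{\pi_H}$ is the compactified Prym of an integral spectral cover over the smooth curve $C'$, hence connected; this is the connectedness of $\SL$-Hitchin fibres over the elliptic locus recalled in the introduction (see e.g.\ \cite{dC}).

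With this in hand I would conclude as follows. Write $P_{C'}=\bigsqcup_{\xi\in H^{\vee}}P_{C',\xi}$ for the decomposition into connected components (translates of the neutral Prym, hence smooth and connected); this induces a decomposition $\bar{P}'=\bigsqcup_{\xi}\Nm_{\pi_H}^{-1}(P_{C',\xi})$ into open and closed subschemes. For each $\xi$, the morphism $\Nm_{\pi_H}^{-1}(P_{C',\xi})\to P_{C',\xi}$ is proper (both $\bar{J}'$ and $J_{C'}$ are proper), surjective, and has connected fibres over the connected base $P_{C',\xi}$; such a morphism has connected total space --- if it split as $V_1\sqcup V_2$ with both pieces nonempty, each fibre would lie entirely in $V_1$ or in $V_2$, so $\Nm_{\pi_H}(V_1)$ and $\Nm_{\pi_H}(V_2)$ would be disjoint nonempty closed subsets covering the connected $P_{C',\xi}$, a contradiction. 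Hence $\pi_0(\bar{P}')\to\pi_0(P_{C'})$ is a bijection, and combining with (i) gives $\pi_0(\bar{P}')\cong H^{\vee}$. The only non-formal ingredient --- and the step I expect to be the real content --- is the connectedness of $\bar{P}_{\pi_H}$; once that is granted, the rest is bookkeeping with the functoriality of the norm (Lemma \ref{Lem-Func-Nm}), its surjectivity, and the elementary fact about proper surjections with connected fibres just used.
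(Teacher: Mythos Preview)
Your proof is correct and follows essentially the same route as the paper's own proof: factor $\Nm_{\pi'}=\Nm_{\psi}\circ\Nm_{\pi_H}$ via Lemma \ref{Lem-Func-Nm}, identify each fibre of $\Nm_{\pi_H}$ with $\bar P_{\pi_H}$ (an $\SL_m$-Hitchin fibre over $C'$, hence connected by \cite[Proposition 2.4.9]{dC}), and conclude $\pi_0(\bar P')\cong\pi_0(P_{C'})\cong H^{\vee}$. Your write-up is in fact slightly more explicit than the paper's, spelling out the translation argument for identifying all fibres with $\bar P_{\pi_H}$ and the elementary topological fact about proper surjections with connected fibres; the paper leaves both of these implicit.
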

\begin{proof}
The first part is given by Lemma \ref{Lem-reform-HP2.1}. By definition, $\bar{P}'$ is the inverse image of $0$ under $\Nm_{\pi'}$. By Lemma \ref{Lem-Func-Nm}, $\Nm_{\pi'}$ is equal to the following map $$\bar{J}'\stackrel{\Nm_{\pi_H}}{\longrightarrow} J_{C'}\stackrel{\Nm_{\psi}}{\longrightarrow} J_C.$$Since $Y\rightarrow C'$ is a spectral cover, the fibre of $\Nm_{\pi_H}$ is isomorphic to a Hitchin fibre of an $\SL_m$-Hitchin fibration, with $m=n/|H|$. By \cite[Proposition 2.4.9]{dC}, such a Hitchin fibre is connected. Now $\Nm_{\psi}^{-1}(0)=P_{C'}$, whose connected components are parametrised by $H^{\vee}$.
\end{proof}
\begin{Rem}\label{Rem-piH-inj}
The fact that the fibre of $\Nm_{\pi_H}$ is connected implies that $\pi_H^{\vee}$ is injective. If $\pi_H^{\vee}$ is not injective, then the arguments of \cite[Proposition 11.4.3]{BL} show that $\pi_H$ must factor through some nontrivial \'etale Galois covering of $C'$. But then the fibre of $\Nm_{\pi_H}$ can not be connected. Contradiction.
\end{Rem}
\begin{Rem}
It follows from Lemma \ref{Lem-barP'=barPG} and Lemma \ref{Lem-conn-comp-barP'} (ii) that there is an isomorphism $\pi_0(\bar{P}^H)\cong H^{\vee}$.
\end{Rem}

The Poincar\'e bundle $\mathcal{P}_C$ on $J_C\times J_C$ induces an isomorphism $\rho:J_C\isom\Pic^0_{J_C}$, which restricts to an isomorphism between the $n$-torsion points $\rho:\Gamma=J_C[n]\isom\Pic^0_{J_C}[n]$. Denote by $\Gamma^{\vee}$ the group of multiplicative characters of $\Gamma$, then we have a natural identification $\Pic^0_{J_C}[n]\cong\Gamma^{\vee}$. Indeed, the multiplication by $n$ defines a Galois covering $[n]:J_C\rightarrow J_C$ with Galois group $\Gamma$, and the kernel of the dual $[n]^{\vee}:\Pic^0_{J_C}\rightarrow\Pic^0_{J_C}$ is identified with the isotypic components of $[n]_{\ast}\mathcal{O}_{J_C}$, which are in bijection with the irreducible characters of $\Gamma$.

Since $[n]=\Nm_{\pi'}\circ\pi^{\prime\vee}$ according to \S \ref{Sub-Prym} (ii), we have $\pi^{\prime\vee}(\Gamma)\subset \bar{P}'$. By abuse of notation, the induced morphism $\Gamma\rightarrow\pi_0(\bar{P}')$ is also denoted by $\pi^{\prime\vee}$.
\begin{Lem}\label{Lem-GaV-HV}
The following diagram commutes
\begin{equation}\label{CD-Lem-GaV-HV}
\begin{tikzcd}[row sep=2.5em, column sep=2em]
\Gamma \arrow[r, "\pi^{\prime\vee}"] \arrow[d, "\rotatebox{90}{\(\sim\)}", "\rho"'] & \pi_0(\bar{P}') \arrow[d, "\rotatebox{90}{\(\sim\)}"] \\
\Gamma^{\vee} \arrow[r] & H^{\vee},
\end{tikzcd}
\end{equation}
where the right vertical arrow is given by Lemma \ref{Lem-conn-comp-barP'} (ii), and the bottom arrow is the restriction of a character of $\Gamma$ to $H$. 
\end{Lem}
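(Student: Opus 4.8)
The plan is to trace a fixed $L\in\Gamma$ around the square and compare the two composites. First I would make the right-hand vertical isomorphism explicit. By the proof of Lemma \ref{Lem-conn-comp-barP'} (ii), the identification $\pi_0(\bar P')\cong H^\vee$ factors as $\pi_0(\bar P')\xrightarrow{\ \Nm_{\pi_H}\ }\pi_0(P_{C'})\cong H^\vee$, the first arrow being an isomorphism because $\bar P'=\Nm_{\pi_H}^{-1}(P_{C'})$ and $\Nm_{\pi_H}$ has connected fibres, and the second being the isomorphism of Lemma \ref{Lem-reform-HP2.1} applied to $\psi\colon C'\to C$ (for which $\Ker\psi^\vee=H$). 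Writing $\pi'=\psi\circ\pi_H$ and applying \S\ref{Sub-Prym} (ii) to $\pi_H$ (of degree $m$), one gets $\Nm_{\pi_H}(\pi^{\prime\ast}L)=(\psi^\ast L)^{\otimes m}=\psi^\vee(mL)$; since $|H|\cdot mL=nL=0$, this lies in $P_{C'}=P_\psi$, so Lemma \ref{Lem-reform-HP2.1} tells us that the image of $L$ under the top-then-right composite is $\rho_{C'}(\psi^\vee(mL))|_{\psi^\vee(J_C)}\in\Ker q^\vee\cong H^\vee$, where $q\colon J_C\twoheadrightarrow\psi^\vee(J_C)\cong J_C/H$.

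Next I would rewrite this invariant in terms of the Poincaré bundle on $J_C$. Applying the Poincaré--norm compatibility (\ref{Eq-Nm-piV2}) to the morphism $\psi$ between the smooth curves $C'$ and $C$ gives $\rho_{C'}\circ\psi^\vee=\Nm_\psi^\vee\circ\rho$, whence the invariant equals $(\Nm_\psi\circ i)^\ast(m\rho(L))$ with $i\colon\psi^\vee(J_C)\hookrightarrow J_{C'}$. Since $\Nm_\psi\circ\psi^\vee=[|H|]_{J_C}$, the composite $\Nm_\psi\circ i$ is the isogeny $w\colon J_C/H\to J_C$, $\bar x\mapsto|H|x$, so the invariant is $(w^\ast\rho(L))^{\otimes m}$.

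For the bottom-left composite I would use Lemma \ref{Lem-G1G2}. The bottom arrow is the restriction $\Gamma^\vee\to H^\vee$, i.e. $\sigma^\vee$ for $\sigma\colon H\hookrightarrow\Gamma$, and under the identification $\Pic^0_{J_C}[n]\cong\Gamma^\vee$ given by $\epsilon_\Gamma$ (attached to the \'etale $\Gamma$-Galois cover $[n]\colon J_C\to J_C$) the left arrow sends $L$ to $\rho(L)\in\Pic^0_{J_C}[n]$. Apply Lemma \ref{Lem-G1G2} to the commutative square with $f_1=q\colon J_C\to J_C/H$ (Galois group $H$), $f_2=[n]\colon J_C\to J_C$ (Galois group $\Gamma$), $u=\mathrm{id}_{J_C}$ (which is $H$-equivariant for the translation actions) and $v\colon J_C/H\to J_C$, $\bar x\mapsto nx$, so that $v\circ q=[n]$; this yields $\epsilon_H(\rho(L)|_H)=v^\vee(\rho(L))$. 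Finally $v=w\circ[m]_{J_C/H}$, hence $v^\vee(\rho(L))=[m]^\ast(w^\ast\rho(L))=(w^\ast\rho(L))^{\otimes m}$ because $[m]^\ast$ acts as $\otimes m$ on $\Pic^0(J_C/H)$. Comparing with the previous step, the two composites agree, which is the assertion.

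The one genuinely non-formal point --- and the main obstacle --- is this last matching: one must see that the factor $m=n/|H|$ produced by $\Nm_{\pi_H}$ is exactly absorbed by the discrepancy $[m]^\ast$ between the two isogenies $w$ and $v$ from $J_C/H$ to $J_C$, which comes down to $n=m|H|$ together with $L\in J_C[n]$. Everything else is careful bookkeeping, chiefly checking that the identification $\pi_0(\bar P')\cong H^\vee$ of Lemma \ref{Lem-conn-comp-barP'} is fed into Lemma \ref{Lem-reform-HP2.1} in the right way, and that the normalisations of $\mathcal P_C$ and $\mathcal P_{C'}$ used in (\ref{Eq-Nm-piV2}) and in Lemma \ref{Lem-reform-HP2.1} are compatible.
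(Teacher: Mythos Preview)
Your proof is correct and follows essentially the same route as the paper's: both unpack the right vertical arrow via $\Nm_{\pi_H}$ and Lemma \ref{Lem-reform-HP2.1}, both invoke Lemma \ref{Lem-G1G2} with the very same square (your $q:J_C\to J_C/H$ and $v:\bar x\mapsto nx$ are precisely the paper's $\psi^\vee:J_C\to\psi^\vee(J_C)$ and $j=\Nm_{\pi'}\circ\pi_H^\vee\circ i$, since $j(\bar x)=\Nm_{\pi'}(\pi'^\vee x)=nx$), and both reduce the remaining verification to the Poincar\'e--norm/pullback compatibilities. The only organisational difference is that you isolate the factor $m$ explicitly via $\Nm_{\pi_H}\circ\pi'^\vee=[m]\circ\psi^\vee$ and then cancel it against the factorisation $v=w\circ[m]$, whereas the paper absorbs this into the single map $j$ and uses (\ref{CD-PicAJ-XaC}) for $\pi'$ together with (\ref{CD-piVV-Nm}) for $\pi_H$.

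One small correction of attribution: the identity $\rho_{C'}\circ\psi^\vee=\Nm_\psi^\vee\circ\rho_C$ that you use is the content of (\ref{CD-PicAJ-XaC}) applied to $\psi$, not of (\ref{Eq-Nm-piV2}); the latter unwinds instead to $(\psi^\vee)^\vee\circ\rho_{C'}=\rho_C\circ\Nm_\psi$, which is (\ref{CD-piVV-Nm}). This does not affect the argument.
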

\begin{proof}
Note that $[n]=\Nm_{\pi'}\circ\pi^{\prime\vee}$, and that $\pi^{\prime\vee}:J_C\rightarrow\bar{J}'$ factors as
$$
J_C\stackrel{\psi^{\vee}}{\longrightarrow}\psi^{\vee}(J_C)\stackrel{i}{\hookrightarrow} J_{C'}\stackrel{\pi_H^{\vee}}{\longrightarrow}\bar{J}'.
$$
Therefore $[n]$ factors as 
$$
J_C\stackrel{\psi^{\vee}}{\longrightarrow}\psi^{\vee}(J_C)\stackrel{j}{\longrightarrow} J_C,
$$
where $j=\Nm_{\pi'}\circ\pi_H^{\vee}\circ i$. Note that $\psi^{\vee}$ is an $H$-Galois covering. Applying Lemma \ref{Lem-G1G2}, taking 
$$
\begin{tikzcd}[row sep=2.5em, column sep=2em]
J_C \arrow[d, "\psi^{\vee}"'] \arrow[r, "\Id"] & J_C \arrow[d, "\lbrack n\rbrack"] \\
\psi^{\vee}(J_C) \arrow[r, "j"'] & J_C,
\end{tikzcd}
$$
for (\ref{CD-Lem-G1G2-0}), we see that the restriction map $\Gamma^{\vee}\rightarrow H^{\vee}$ is the same as $j^{\vee}$. It remains to show that the diagram (\ref{CD-Lem-GaV-HV}) commutes if we replace the bottom arrow by $j^{\vee}$.

The right hand side vertical arrow in (\ref{CD-Lem-GaV-HV}) is the composition $\pi_0(\bar{P}')\cong \pi_0(P_{C'})\cong H^{\vee}$, where the first isomorphism is induced by $\Nm_{\pi_H}$ and the second isomorphism is given by Lemma \ref{Lem-conn-comp-barP'} (i). Therefore, for any $\gamma\in\Gamma$, the image of $\pi^{\prime\vee}(\gamma)$ in $H^{\vee}$ is $i^{\vee}\circ\rho_{C'}\circ\Nm_{\pi_H}\circ\pi^{\prime\vee}(\gamma)$. We must show that 
\begin{equation}\label{eq-Lem-GaV-HV}
j^{\vee}\circ\rho_C(\gamma)=i^{\vee}\circ\rho_{C'}\circ\Nm_{\pi_H}\circ\pi^{\prime\vee}(\gamma).
\end{equation}
The left hand side of the equation is equal to $i^{\vee}\circ(\pi_H^{\vee})^{\vee}\circ\Nm_{\pi'}^{\vee}\circ\rho_C(\gamma)$. Applying (\ref{CD-PicAJ-XaC}) to $\pi':Y\rightarrow C$, we get 
$$
\Nm_{\pi'}^{\vee}\circ\rho_C=\rho'\circ\pi^{\prime\vee},
$$
where $\rho'$ is the isomorphism (\ref{Eq-rho}) for $\bar{J}'$. Applying (\ref{CD-piVV-Nm}) to $\pi_H:Y\rightarrow C'$, we get
$$
(\pi_H^{\vee})^{\vee}\circ\rho'=\rho_{C'}\circ\Nm_{\pi_H}.
$$
The above two equations combined give the desired equation (\ref{eq-Lem-GaV-HV}).
\end{proof}

For any $\xi\in H^{\vee}$, we will denote by $\bar{P}'_{\xi}$ the corresponding connected component of $\bar{P}'$. In particular, the identity component is $\bar{P}'_0$. For any $\xi\in H^{\vee}$, we will denote by $\Gamma_{\xi}\subset\Gamma$ the subset that is mapped into $\bar{P}'_{\xi}$ under $\pi^{\prime\vee}$. In particular $\Gamma_0$ is the subgroup that is mapped into $\bar{P}'_0$. For any $\chi\in G^{\vee}$, we will denote by $P'_{\chi}$ the corresponding connected component of $P'$.
\begin{Rem}\label{Rem-JCP=barJ-0}
It follows from Lemma \ref{Lem-JCP=barJ} that there is an isomorphism of algebraic varieties
$$
(\bar{P}'_0\times J_C)/\Gamma_0\lisom\bar{J}'.
$$
\end{Rem}
\begin{Rem}\label{Rem-gamma_chi}
It follows from Lemma \ref{Lem-GaV-HV} that if $\gamma\in\Gamma_{\xi}$, then $\rho(\gamma)$ is a character of $\Gamma$ that restricts to $\xi$ on $H$.
\end{Rem}

\begin{Prop}\label{Prop-IndGH}
Let $\xi\in H^{\vee}$ and $\chi\in G^{\vee}$. Then $P'_{\chi}$ is contained in $\bar{P}'_{\xi}$ if and only if $\chi\in\Ind^G_H\xi$ (i.e., $\chi$ is an irreducible component of the induced character $\Ind^G_H\xi$).
\end{Prop}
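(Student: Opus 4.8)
The plan is to identify which connected component of $\bar P'$ each component $P'_{\chi}$ of $P'$ lands in, using the homomorphism $\pi^{\prime\vee}\colon\Gamma\to P'$. First recall the elementary fact about finite abelian groups: the constituents of $\Ind^G_H\xi$ are precisely the $[G:H]$ characters $\chi\in G^{\vee}$ with $\Res^G_H\chi=\xi$, each occurring once; so the proposition amounts to the assertion that $P'_{\chi}\subset\bar P'_{\xi}$ if and only if $\chi|_H=\xi$. Now $\pi^{\prime\vee}$ does take values in $P'$, since $\pi^{\prime\vee}$ pulls back line bundles and $[n]=\Nm_{\pi'}\circ\pi^{\prime\vee}$ annihilates $\Gamma=J_C[n]$, so $\pi^{\prime\vee}(\Gamma)\subset\bar P'\cap J'=P'$. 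Composing $\pi^{\prime\vee}$ with $P'\twoheadrightarrow\pi_0(P')$ and with $P'\hookrightarrow\bar P'\twoheadrightarrow\pi_0(\bar P')$ yields two homomorphisms out of $\Gamma$ recording, for a given $\gamma$, which component $P'_{\chi}$ and which component $\bar P'_{\xi}$ the point $\pi^{\prime\vee}(\gamma)$ lies in.

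By Lemma \ref{Lem-GaV-HV}, the second composite is $\gamma\mapsto\Res^{\Gamma}_H\rho(\gamma)$ under the identification $\pi_0(\bar P')\cong H^{\vee}$. I claim that, under the identification $\pi_0(P')\cong G^{\vee}$ of Lemma \ref{Lem-conn-comp-P'}(ii), the first composite is $\gamma\mapsto\Res^{\Gamma}_G\rho(\gamma)$; granting this it is surjective, because $\rho\colon\Gamma\xrightarrow{\sim}\Gamma^{\vee}$ and restriction $\Gamma^{\vee}\to G^{\vee}$ is onto. Since $\Res^{\Gamma}_H=\Res^G_H\circ\Res^{\Gamma}_G$, both implications follow at once: if $\chi|_H=\xi$, choose $\gamma$ with $\rho(\gamma)|_G=\chi$, so that $\pi^{\prime\vee}(\gamma)\in P'_{\chi}$ while Lemma \ref{Lem-GaV-HV} puts $\pi^{\prime\vee}(\gamma)\in\bar P'_{\chi|_H}=\bar P'_{\xi}$, and connectedness forces $P'_{\chi}\subset\bar P'_{\xi}$; conversely, if $P'_{\chi}\subset\bar P'_{\xi}$, choose $\gamma$ with $\rho(\gamma)|_G=\chi$, so $\pi^{\prime\vee}(\gamma)\in P'_{\chi}\subset\bar P'_{\xi}$, i.e. $\gamma\in\Gamma_{\xi}$, and Remark \ref{Rem-gamma_chi} gives $\xi=\rho(\gamma)|_H=\chi|_H$.

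The substantive point, and what I expect to be the main obstacle, is the claim in the previous paragraph: that the identification $\pi_0(P')\cong G^{\vee}$ of Lemma \ref{Lem-conn-comp-P'}(ii) sends the class of $\pi^{\prime\vee}(\gamma)$ to $\rho(\gamma)|_G$. This is the exact analogue of Lemma \ref{Lem-GaV-HV} with $H$ replaced by $G$, and I would establish it the same way: the isomorphism $\pi_0(P')\cong\pi_0(P_{\tilde\pi})$ of \cite[Lemma 4.1(4)]{HP} arises from the fact that $\tilde X$ is the common normalisation of $X$ and $Y$ (Lemma \ref{Lem-AnG}), under which $\pi^{\prime\vee}(\gamma)$ corresponds to $\tilde\pi^{\vee}(\gamma)$; the isomorphism $\pi_0(P_{\tilde\pi})\cong G^{\vee}$ of Lemma \ref{Lem-reform-HP2.1} (=\cite[Lemma 2.1]{HP}) sends $\tilde\pi^{\vee}(\gamma)$ to $\rho_{\tilde X}(\tilde\pi^{\vee}(\gamma))|_{\tilde\pi^{\vee}(J_C)}$; and one identifies this character with $\rho(\gamma)|_G$ by applying Lemma \ref{Lem-G1G2} to the square formed by $[n]\colon J_C\to J_C$ and $\psi_G^{\vee}\colon J_C\to\psi_G^{\vee}(J_C)$ with $\sigma$ the inclusion $G\hookrightarrow\Gamma$, using the diagrams (\ref{CD-PicAJ-XaC}) and (\ref{CD-piVV-Nm}) for $\tilde\pi\colon\tilde X\to C$, exactly as in the proof of Lemma \ref{Lem-GaV-HV}. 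Once that bookkeeping is in place the proposition is immediate.
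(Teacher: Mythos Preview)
Your argument is correct and takes a genuinely different route from the paper. The paper proceeds by passing to the Prym varieties of the intermediate curves $C''$ and $C'$: it shows first that $P'_{\chi}\subset\bar P'_{\xi}$ if and only if $\Nm_{\psi'}(P_{C'',\chi})\subset P_{C',\xi}$, via the commutative diagram relating $P'$, $P_{\tilde\pi}$, $P_{C''}$ and $P_{C'}$ through pullbacks and norm maps, and then checks the latter condition directly by unwinding the equivariant-structure description of line bundles on $\psi^{\vee}(J_C)$. Your approach bypasses the curves $C''$ and $C'$ entirely: you exploit the homomorphism $\pi'^{\vee}\colon\Gamma\to P'$, observe that its image meets every component (since $\Gamma\xrightarrow{\rho}\Gamma^{\vee}\twoheadrightarrow G^{\vee}$ is onto), and then read off both the $G^{\vee}$-label and the $H^{\vee}$-label of the component containing $\pi'^{\vee}(\gamma)$ from the single character $\rho(\gamma)$, using Lemma~\ref{Lem-GaV-HV} for the $H$-side and its $G$-analogue for the $P'$-side.

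Your claim about the $G$-analogue is the only nontrivial step, and your sketch is essentially right; to be precise you should work with $\tilde\pi^{\vee}\colon J_C\to\tilde\pi^{\vee}(J_C)$ rather than $\psi_G^{\vee}$ (since Lemma~\ref{Lem-reform-HP2.1} identifies $G^{\vee}$ with line bundles on $\tilde\pi^{\vee}(J_C)$), though the two are isomorphic via $\tilde\pi_G^{\vee}$ so nothing is lost. One thing to be aware of: the paper's longer proof establishes, as a byproduct, that $\Nm_{\tilde\pi_G}$ induces the identity on component groups under the standard identifications, and this auxiliary fact is cited again in the proof of Proposition~\ref{Prop-T-pi0}. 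Your shortcut does not supply that statement, so if you adopt your argument you would need to prove it separately (it is not hard, and the ingredients are already in your sketch).
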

\begin{proof}
Denote by $P_{C'',\chi}$ the connected component of $P_{C''}$ corresponding to $\chi$, and by $P_{C',\xi}$ the connected component of $P_{C'}$ corresponding to $\xi$. The proposition follows from the following two assertions:
\begin{itemize}
\item[(1)] $P'_{\chi}\subset\bar{P}'_{\xi}$ if and only if $\Nm_{\psi'}(P_{C'',\chi})\subset P_{C',\xi}$.
\item[(2)] $\Nm_{\psi'}(P_{C'',\chi})\subset P_{C',\xi}$ if and only if $\Res^G_H\chi=\xi$. 
\end{itemize}
Note that $\Res^G_H\chi=\xi$ if and only if $\chi\in\Ind^G_H\xi$ by Frobenius reciprocity.

Recall the arrows in (\ref{CD-Za'-Ya''-Xa}). We claim that the following diagram 
\begin{equation}\label{CD-phia''-NmNm}
\begin{tikzcd}[row sep=2.5em, column sep=2em]
P' \arrow[r, "\tilde{\phi}^{\prime\vee}"] \arrow[dr, swap, "\Nm_{\pi_H}"] & P_{\tilde{\pi}} \arrow[r, "\Nm_{\tilde{\pi}_G}"] \arrow[d] & P_{C''} \arrow[dl, "\Nm_{\psi'}"] \\
&P_{C'} &
\end{tikzcd}
\end{equation}
commutes, and that if we identify the component groups of $P'$, $P_{\tilde{\pi}}$ and $P_{C''}$ with $G^{\vee}$ using the isomorphisms in Lemma \ref{Lem-conn-comp-P'}, then $\tilde{\phi}^{\prime\vee}$ and $\Nm_{\tilde{\pi}_G}$ induce the identity maps between them. Then the assertion (1) follows. Indeed, let $p\in P'$, then $p\in P'_{\chi}$ if and only if $\Nm_{\tilde{\pi}_G}\circ\tilde{\phi}^{\prime\vee}(p)\in P_{C'',\chi}$, while $p\in \bar{P}'_{\xi}$ if and only if $\Nm_{\pi_H}(p)\in P_{C',\xi}$.

To get the commutative diagram (\ref{CD-phia''-NmNm}), we consider the following diagram 
\begin{equation}\label{CD-phia''-NmNm-Jac}
\begin{tikzcd}[row sep=2.5em, column sep=2em]
J' \arrow[r, "\tilde{\phi}^{\prime\vee}"] \arrow[dr, swap, "\Nm_{\pi_H}"] & J_{\tilde{\pi}} \arrow[r, "\Nm_{\tilde{\pi}_G}"] \arrow[d] & J_{C''} \arrow[dl, "\Nm_{\psi'}"] \\
&J_{C'} &,
\end{tikzcd}
\end{equation}
where the vertical arrow is $\Nm_{\tilde{\pi}_H}$ (the map $\tilde{\pi}_H$ is defined after the diagram (\ref{CD-Za'-Ya''-Xa})). Note that the morphism $\tilde{\pi}:\tilde{X}\rightarrow C$ factors as
$$
\tilde{X}\stackrel{\tilde{\pi}_G}{\longrightarrow}C''\stackrel{\psi_G}{\longrightarrow}C.
$$
We have $\tilde{\pi}_H=\psi'\circ\tilde{\pi}_G$. By \cite[Lemma 3.4]{HP}, the left hand side of (\ref{CD-phia''-NmNm-Jac}) commutes.  By Lemma \ref{Lem-Func-Nm}, the right hand side of (\ref{CD-phia''-NmNm-Jac}) commutes. Now all of the Jacobians in (\ref{CD-phia''-NmNm-Jac}) admit norm maps to $J_C$. We can then apply Lemma \ref{Lem-Func-Nm} and \cite[Lemma 3.4]{HP} to these maps, and conclude that (\ref{CD-phia''-NmNm-Jac}) restricts to the commutative diagram (\ref{CD-phia''-NmNm}). 

Since $\tilde{X}$ is also the normalisation of $Y$, we have that $\tilde{\phi}^{\prime\vee}$ induces a natural identification of the connected components by \cite[Lemma 4.1]{HP}. To show that $\Nm_{\tilde{\pi}_G}$ induces the identity map between the component groups of Prym varieties, we consider the following commutative diagram
\begin{equation}\label{CD-Jpi-JC''-Vee}
\begin{tikzcd}[row sep=2.5em, column sep=2em]
J_{C''} \arrow[r, "\tilde{\pi}_G^{\vee}"] & J_{\tilde{\pi}} \\
\psi_G^{\vee}(J_C) \arrow[u, hook] \arrow[r, "\tilde{\pi}_G^{\vee}"', "\sim"] & \tilde{\pi}^{\vee}(J_C) \arrow[u, hook] \\
J_C \arrow[u, "\psi_G^{\vee}"] \arrow[r, "="'] & J_C \arrow[u, "\tilde{\pi}^{\vee}"'].
\end{tikzcd}
\end{equation}
The map $\tilde{\pi}_G^{\vee}$ restricts to an isomorphism in the second row because $\psi_G^{\vee}$ and $\tilde{\pi}^{\vee}$ have the same kernel. Note that the two vertical arrows in the lower half of the diagram are $G$-Galois coverings. Let $p\in P_{\tilde{\pi}}$. Let $\chi\in G^{\vee}$ and denote by $L_{\chi}$ the line bundle on $\tilde{\pi}^{\vee}(J_C)$ corresponding to $\chi$. By abuse of notation, the pullback of $L_{\chi}$ to $\psi_G^{\vee}(J_C)$ is also denoted by $L_{\chi}$. By Lemma \ref{Lem-reform-HP2.1}, $p$ lies in $P_{\tilde{\pi},\chi}$, the connected component of $P_{\tilde{\pi}}$ corresponding to $\chi$, if and only if $\rho_{\tilde{X}}(p)|_{\tilde{\pi}^{\vee}(J_C)}\cong L_{\chi}$. Applying (\ref{CD-piVV-Nm}) to $\tilde{X}\rightarrow C''$, we see that $\rho_{C''}(\Nm_{\tilde{\pi}_G}(p))\cong\tilde{\pi}_G^{\vee\ast}\rho_{\tilde{X}}(p)$. The commutativity of the upper half of (\ref{CD-Jpi-JC''-Vee}) shows that $\rho_{C''}(\Nm_{\tilde{\pi}_G}(p))|_{\psi_G^{\vee}(J_C)}\cong L_{\chi}$. Using Lemma \ref{Lem-reform-HP2.1} again, we see that $\Nm_{\tilde{\pi}_G}$ induces the identity map between the component groups. 

Now we prove (2). Since $\Ker(\psi^{\vee})=H$, we have that $J_C\rightarrow\psi^{\vee}(J_C)$ is an $H$-Galois covering. The line bundles on $\psi^{\vee}(J_C)$ that pulls back to the trivial bundle are parametrised by $H^{\vee}$. We denote by $M_{\xi}$ the line bundle on $\psi^{\vee}(J_C)$ corresponding to $\xi\in H^{\vee}$. The $G$-Galois covering $J_C\rightarrow\psi_G^{\vee}(J_C)$ factors as
$$
J_C\stackrel{\psi^{\vee}}{\longrightarrow}\psi^{\vee}(J_C)\stackrel{\psi^{\prime\vee}}{\longrightarrow}\psi_G^{\vee}(J_C).
$$ 
Let $p\in P_{C''}$. By Lemma \ref{Lem-reform-HP2.1}, $p$ lies in $P_{C'',\chi}$ if and only if $\rho_{C''}(p)|_{\psi_G^{\vee}(J_C)}\cong L_{\chi}$, and $\Nm_{\psi'}(p)$ lies in $P_{C',\xi}$ if and only if $\rho_{C'}(\Nm_{\psi'}(p))|_{\psi^{\vee}(J_C)}\cong M_{\xi}$. But $\rho_{C'}(\Nm_{\psi'}(p))$ is isomorphic to $(\psi^{\prime\vee})^{\ast}\rho_{C''}(p)$ by (\ref{CD-piVV-Nm}), which in turn is isomorphic to $(\psi^{\prime\vee})^{\ast}L_{\chi}$. We see that $\Nm_{\psi'}(p)\in P_{C',\xi}$ if and only if $(\psi^{\prime\vee})^{\ast}L_{\chi}\cong M_{\xi}$. The line bundle $L_{\chi}$ is equivalent to $\mathcal{O}_{J_C}$ equipped with a $G$-equivariant structure, and $G$ acts on the global sections of  $\mathcal{O}_{J_C}$ via $\chi$. Since the pullback of $(\psi^{\prime\vee})^{\ast}L_{\chi}$ to $J_C$ is the trivial line bundle, the line bundle $(\psi^{\prime\vee})^{\ast}L_{\chi}$ is equivalent to $\mathcal{O}_{J_C}$ equipped with an $H$-equivariant structure, and the $H$-action on the global sections of  $\mathcal{O}_{J_C}$ is simply restricting $\chi$ to $H$. We conclude that $(\psi^{\prime\vee})^{\ast}L_{\chi}\cong M_{\xi}$ if and only if $\xi=\Res^G_H\chi$.
\end{proof}

\begin{Lem}\label{Lem-phiaV}
The morphism $\phi^{\vee}$ restricts to a surjective morphism
$$
\phi^{\vee}:P\longrightarrow P'.
$$
Moreover, if we identify $\pi_0(P)$ and $\pi_0(P')$ with $G^{\vee}$ using the isomorphisms in Lemma \ref{Lem-conn-comp-P'} (ii), then $\phi^{\vee}$ induces the identity map between them.
\end{Lem}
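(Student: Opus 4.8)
The plan is to reduce the entire statement to a single identity, namely $\Nm_{\pi'}\circ\phi^{\vee}=\Nm_{\pi}$ as morphisms $J\to J_C$. To obtain it, recall that $\pi'=\pi\circ\phi$ by (\ref{CD-Za'-Ya''-Xa}) and that $\phi\colon Y\to X$ is finite and birational by Lemma \ref{Lem-AnG}, so the functoriality of the norm map (\cite[Lemma 3.4]{HP}, together with Lemma \ref{Lem-Func-Nm}) gives $\Nm_{\pi'}=\Nm_{\pi}\circ\Nm_{\phi}$. For any line bundle $L$ on $X$, the projection formula yields $\phi_{\ast}\phi^{\ast}L\cong L\otimes\phi_{\ast}\mathcal{O}_Y$, and since $\phi_{\ast}\mathcal{O}_Y$ has generic rank one this gives $\Nm_{\phi}(\phi^{\ast}L)\cong L$; hence $\Nm_{\pi'}(\phi^{\ast}L)\cong\Nm_{\pi}(L)$. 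It follows that $(\phi^{\vee})^{-1}(P')=P$ as closed subschemes of $J$, where $\phi^{\vee}\colon\Pic_X\to\Pic_Y$ denotes pullback; in particular $\phi^{\vee}$ sends $P$ into $P'$.

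For surjectivity I would use that $\phi$ is a finite birational morphism of integral projective curves, so the proof of Lemma \ref{Lem-phiaV-J} (which rests on \cite[\S 9.2, Proposition 9]{BLR}) applies without change and shows that $\phi^{\vee}\colon\Pic_X\to\Pic_Y$ is surjective; as $\phi^{\vee}$ preserves degrees, this forces $\phi^{\vee}\colon J\to J'$ to be surjective. A surjective morphism restricts to a surjection onto the preimage of any closed subscheme, and $(\phi^{\vee})^{-1}(P')=P$, so $\phi^{\vee}\colon P\to P'$ is surjective.

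For the statement on $\pi_0$, recall (Lemma \ref{Lem-AnG}, as in the proof of Proposition \ref{Prop-IndGH}) that the normalisation $\nu\colon\tilde{X}\to X$ factors through $Y$ as $\nu=\phi\circ\tilde{\phi}'$, where $\tilde{\phi}'\colon\tilde{X}\to Y$ is the normalisation of $Y$; hence $\nu^{\vee}=\tilde{\phi}^{\prime\vee}\circ\phi^{\vee}$ as morphisms $J\to J_{\tilde{X}}$. Since $\tilde{\pi}=\pi\circ\nu=\pi'\circ\tilde{\phi}'$, the computation above shows that $\nu^{\vee}$ and $\tilde{\phi}^{\prime\vee}$ restrict to morphisms $P\to P_{\tilde{\pi}}$ and $P'\to P_{\tilde{\pi}}$ respectively, and by \cite[Lemma 4.1]{HP} these restrictions are precisely the isomorphisms $\pi_0(P)\cong\pi_0(P_{\tilde{\pi}})$ and $\pi_0(P')\cong\pi_0(P_{\tilde{\pi}})$ through which $\pi_0(P)$ and $\pi_0(P')$ are identified with $G^{\vee}$ as in Lemma \ref{Lem-conn-comp-P'}(ii) (for $P=P_{\pi}$ this is \cite[Theorem 1.1]{HP}), the final step in each case being \cite[Lemma 2.1]{HP} applied to the smooth cover $\tilde{\pi}$. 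Passing to $\pi_0$ in $\nu^{\vee}=\tilde{\phi}^{\prime\vee}\circ\phi^{\vee}$ then shows that $\phi^{\vee}$ induces the identity map of $G^{\vee}$.

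The step that requires the most care is the norm functoriality $\Nm_{\pi\circ\phi}=\Nm_{\pi}\circ\Nm_{\phi}$ through the singular intermediate curve $X$, which is not covered by Lemma \ref{Lem-Func-Nm}; should \cite[Lemma 3.4]{HP} not be stated in sufficient generality, I would instead derive it from the standard identity $\det(\pi_{\ast}\mathcal{G})\cong\Nm_{\pi}(\det\mathcal{G})\otimes(\det\pi_{\ast}\mathcal{O}_X)^{\otimes\rk\mathcal{G}}$ applied to $\mathcal{G}=\phi_{\ast}\phi^{\ast}L$ and $\mathcal{G}=\phi_{\ast}\mathcal{O}_Y$.
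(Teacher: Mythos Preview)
Your proof is correct and follows essentially the same strategy as the paper: establish the key identity $\Nm_{\pi'}\circ\phi^{\vee}=\Nm_{\pi}$ on $J$, deduce $(\phi^{\vee})^{-1}(P')=P$, use Lemma~\ref{Lem-phiaV-J} for surjectivity, and for $\pi_0$ invoke the factorisation $\nu^{\vee}=\tilde{\phi}^{\prime\vee}\circ\phi^{\vee}$ through the common normalisation $\tilde{X}$.

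The one genuine difference is in how the key identity is obtained. You attempt a direct factorisation $\Nm_{\pi'}=\Nm_{\pi}\circ\Nm_{\phi}$ through the singular curve $X$ and then observe $\Nm_{\phi}\circ\phi^{\vee}=\Id$; as you correctly flag, this is precisely where the paper's Lemma~\ref{Lem-Func-Nm} does not apply (it assumes the intermediate curve is smooth), and \cite[Lemma~3.4]{HP} in fact states the relation $\Nm_{\tilde\pi}\circ\nu^{\vee}=\Nm_{\pi}$ for a normalisation, not the composition identity you want. The paper sidesteps the issue entirely by routing through the smooth curve $\tilde{X}$: applying \cite[Lemma~3.4]{HP} once to $\nu:\tilde{X}\to X$ and once to $\tilde{\phi}':\tilde{X}\to Y$ gives $\Nm_{\pi}=\Nm_{\tilde\pi}\circ\nu^{\vee}$ and $\Nm_{\pi'}=\Nm_{\tilde\pi}\circ\tilde{\phi}^{\prime\vee}$, whence $\Nm_{\pi'}\circ\phi^{\vee}=\Nm_{\tilde\pi}\circ\tilde{\phi}^{\prime\vee}\circ\phi^{\vee}=\Nm_{\tilde\pi}\circ\nu^{\vee}=\Nm_{\pi}$. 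This is cleaner than your fallback determinant computation and uses only facts already cited in the paper.
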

\begin{proof}
Since $\tilde{X}$ is the normalisation of both $X$ and $Y$, we can use \cite[Lemma 3.4]{HP} twice and show that $\phi^{\vee}$ maps $P$ to $P'$. By Lemma \ref{Lem-phiaV-J}, the morphism $\phi^{\vee}:J\rightarrow J'$ is surjective. Now let $L'$ be a line bundle on $Y$ such that $\Nm_{\pi'}(L')\cong\mathcal{O}_C$ and let $L\in J$ be such that $\phi^{\ast}L\cong L'$. The same arguments as above show that $\Nm_{\pi}(L)\cong\mathcal{O}_C$. Therefore, $\phi^{\vee}$ is also surjective between Prym varieties. Finally, $\phi^{\vee}$ induces the identity map between the component groups because it is compatible with the pullback morphisms $P\rightarrow P_{\tilde{\pi}}$ and $P'\rightarrow P_{\tilde{\pi}}$.
\end{proof}

We have in fact proved that $P\subset J$ is the inverse image of $P'$ under $\phi^{\vee}$.

\subsection{Connected components of Prym varieties via Galois actions}\hfill

Now we may slightly relax the assumption by allowing $G\subset\Gamma$ to be any subgroup. The goal is to prove Theorem \ref{Thm-Conn-Prym-New}, which will be used in \S \ref{subsec-TOE}. Lemma \ref{Lem-reform-HP2.1} gives a parametrisation of the connected components of $P_{C''}=P_{\psi_G}$.  Theorem \ref{Thm-Conn-Prym-New} will give an alternative parametrisation. For simplicity, we will write $P_0=P_{C'',0}$.

Recall that $J^i_{C''}$ is the space of degree $i$ line bundles on $C''$, and $J_{C''}=J^0_{C''}$. Let $\chi\in G^{\vee}$, regarded as a covering transformation of $\psi_G$. Consider the morphism $f_{\chi}^{(i)}:J^i_{C''}\rightarrow J_{C''}$ sending $L$ to $L^{-1}\otimes\chi^{\ast}L$. Note that $\Nm_{\psi_G}(L)=\Nm_{\psi_G}(\chi^{\ast}L)$ and so the image of $f_{\chi}^{(i)}$ is contained in $P_{C''}$. Moreover, the image of $f_{\chi}^{(i)}$ is connected since $J^i_{C''}$ is connected. Therefore $\Ima f_{\chi}^{(i)}$ is contained in $P_{C'',\chi'}$ for some $\chi'\in G^{\vee}$.
\begin{Lem}\label{Lem-auto-sigma-0}
Suppose that $i=0$. Then $\chi'$ is the trivial character for any $\chi\in G^{\vee}$.
\end{Lem}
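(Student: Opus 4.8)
The plan is to exploit the fact that when $i=0$ the morphism $f_\chi^{(0)}$ is not merely a morphism of varieties but a homomorphism of abelian varieties, so that its image is a connected subgroup containing the identity element $\mathcal{O}_{C''}$, while the identity component of $P_{C''}$ is by definition the connected component containing $\mathcal{O}_{C''}$. First I would record that $f_\chi^{(0)}\colon J_{C''}\to J_{C''}$, $L\mapsto L^{-1}\otimes\chi^{\ast}L$, is a group homomorphism: for line bundles $L_1,L_2$ of degree $0$ one has $f_\chi^{(0)}(L_1\otimes L_2)\cong f_\chi^{(0)}(L_1)\otimes f_\chi^{(0)}(L_2)$, since pullback along the covering transformation $\chi$ commutes with tensor product. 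In particular $f_\chi^{(0)}(\mathcal{O}_{C''})\cong\mathcal{O}_{C''}^{-1}\otimes\chi^{\ast}\mathcal{O}_{C''}\cong\mathcal{O}_{C''}$, so the trivial line bundle lies in $\Ima f_\chi^{(0)}$.

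Next I would use that $J_{C''}$ is connected (indeed irreducible), hence $\Ima f_\chi^{(0)}$ is connected; as already observed in the excerpt it is contained in $P_{C''}$, and since it also contains $\mathcal{O}_{C''}$ it must lie in the connected component of $\mathcal{O}_{C''}$ in $P_{C''}$. Under the identification $\pi_0(P_{C''})\cong G^{\vee}$ of \cite[Lemma 2.1]{HP}, this is precisely the component indexed by the trivial character, so $\chi'$ is the trivial character for every $\chi\in G^{\vee}$, as claimed.

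There is essentially no obstacle here; the statement is immediate once one notices the homomorphism property, and the specialisation to $i=0$ is exactly what provides a canonical base point. The only point requiring care is the bookkeeping: one must check that the component of $P_{C''}$ containing the identity line bundle is the one labelled by the trivial character in the chosen identification $\pi_0(P_{C''})\cong G^{\vee}$, which is part of the conventions fixed through \cite[Lemma 2.1]{HP}. This same lemma will reappear in the sequel when determining $\chi'$ for general degree $i$, which is the genuinely substantive case.
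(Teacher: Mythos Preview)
Your proof is correct and follows essentially the same approach as the paper: both rely on the observation that $\mathcal{O}_{C''}$ lies in $\Ima f_\chi^{(0)}$, which together with the already-established connectedness of the image forces $\chi'$ to be trivial. Your additional remark that $f_\chi^{(0)}$ is a group homomorphism is true but not needed; the paper simply evaluates at the single point $\mathcal{O}_{C''}$.
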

\begin{proof}
For any $\chi$, the trivial line bundle is contained in $\Ima f^{(0)}_{\chi}$.
\end{proof}
\begin{Lem}\label{Lem-auto-sigma-1}
Suppose that $i=-1$. Then the map $\chi\mapsto\chi'$ defines a homomorphism $\sigma:G^{\vee}\rightarrow G^{\vee}$.
\end{Lem}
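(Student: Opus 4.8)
The plan is to show that $\chi \mapsto \chi'$ is a group homomorphism by a direct additivity argument on the level of connected components, using the fact that $\pi_0(P_{C''})$ is a group and that the assignment is built from the cocycle-like map $f_\chi^{(-1)}$. First I would fix a reference line bundle $L_0 \in J^{-1}_{C''}$, which identifies $J^{-1}_{C''}$ with $J_{C''}$ via $L \mapsto L \otimes L_0^{-1}$; under this identification the key computation is the behaviour of $f_\chi^{(-1)}$ on a product. Given $\chi_1, \chi_2 \in G^\vee$ and $L \in J^{-1}_{C''}$, I would compute
\begin{equation*}
f^{(-1)}_{\chi_1\chi_2}(L) = L^{-1}\otimes(\chi_1\chi_2)^{\ast}L = \bigl(L^{-1}\otimes\chi_2^{\ast}L\bigr)\otimes\bigl(\chi_2^{\ast}L\bigr)^{-1}\otimes\chi_1^{\ast}(\chi_2^{\ast}L),
\end{equation*}
so that $f^{(-1)}_{\chi_1\chi_2}(L) = f^{(-1)}_{\chi_2}(L)\otimes f^{(-1)}_{\chi_1}(\chi_2^{\ast}L)$. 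Here $\chi_2^\ast L$ is again a degree $-1$ line bundle on $C''$, so the second factor lies in $\Ima f^{(-1)}_{\chi_1}$, hence in $P_{\chi_1'}$; the first factor lies in $P_{\chi_2'}$. Since tensor product of line bundles corresponds to the group law on $J_{C''}$, and since the norm map $\Nm_{\psi_G}$ is a homomorphism whose kernel components form the group $\pi_0(P_{C''}) \cong G^\vee$, the product of an element of $P_{\chi_1'}$ with an element of $P_{\chi_2'}$ lies in $P_{\chi_1'\chi_2'}$.

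Now the left-hand side $f^{(-1)}_{\chi_1\chi_2}(L)$ lies in $P_{(\chi_1\chi_2)'}$ by definition of the assignment $\chi \mapsto \chi'$. Comparing, we get $(\chi_1\chi_2)' = \chi_1'\chi_2'$, which is exactly the statement that $\sigma:\chi\mapsto\chi'$ is a homomorphism $G^\vee\to G^\vee$. The one point that needs a word of justification is that the connected component containing $\Ima f^{(-1)}_{\chi_1\chi_2}$ is well-defined and equals the component containing the product above: this follows because $J^{-1}_{C''}$ is connected (so the image is connected and lies in a single component), and because $\pi_0$ of an abelian variety mapping to $\pi_0$ of another is a group homomorphism, so the pointwise identity of line bundles above forces the identity of components.

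I do not expect a serious obstacle here; the lemma is essentially a formal consequence of the cocycle identity for $f_\chi^{(-1)}$ together with the group structure on $\pi_0(P_{C''})$ established via \cite[Lemma 2.1]{HP}. The only mild subtlety is bookkeeping: one must be careful that $\chi_2^\ast L$ still has degree $-1$ (it does, since $\chi_2$ is an automorphism of $C''$ over $C$, hence degree-preserving), so that $f^{(-1)}_{\chi_1}$ may legitimately be applied to it. Everything else is a direct translation of the tensor-product identity into a statement about connected components.
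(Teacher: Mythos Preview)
Your proof is correct and follows essentially the same cocycle computation as the paper. The only minor difference is in how you identify the component of the second factor: you observe directly that $f_{\chi_1}^{(-1)}(\chi_2^{\ast}L)\in\Ima f_{\chi_1}^{(-1)}\subset P_{\chi_1'}$ since $\chi_2^{\ast}L$ still has degree $-1$, whereas the paper rewrites that same factor as $\chi_2^{\ast}\bigl(f_{\chi_1}^{(-1)}(L)\bigr)$ and invokes Lemma~\ref{Lem-auto-sigma-0} to conclude it lies in the same component as $f_{\chi_1}^{(-1)}(L)$.
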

\begin{proof}
Let $\chi_1$, $\chi_2\in G^{\vee}$ and $L\in J^{-1}_{C''}$, then $L^{-1}\otimes\chi_1^{\ast}L$ and $L^{-1}\otimes\chi_2^{\ast}L$ lie in $P_{C'',\chi_1'}$ and $P_{C'',\chi_2'}$ respectively. We have 
$$
L^{-1}\otimes(\chi_1\chi_2)^{\ast}L=L^{-1}\otimes\chi_2^{\ast}L\otimes\chi_2^{\ast}(L^{-1}\otimes\chi_1^{\ast}L).
$$
By Lemma \ref{Lem-auto-sigma-0}, $F^{-1}\otimes\chi^{\ast}F$ lies in $P_{0}$ for any $\chi\in G^{\vee}$ and any $F\in J_{C''}$. We deduce that $\chi_2^{\ast}(L^{-1}\otimes\chi_1^{\ast}L)$ and $L^{-1}\otimes\chi_1^{\ast}L$ lie in the same connected component of $P_{C''}$. Therefore $L^{-1}\otimes(\chi_1\chi_2)^{\ast}L$ lies in $P_{C'',\chi_1'\chi_2'}$. 
\end{proof}

We will show that the homomorphism $\sigma$ as in the above lemma is in fact the identity map on $G^{\vee}$. To this end, we will construct a Galois covering on which $\pi_0(P_{C''})$ naturally acts, and then apply Lemma \ref{Lem-G1G2}. The Galois group $G^{\vee}$ acts naturally on $J^{-1}_{C''}$ by pulling back line bundles. It also acts on $J_{C''}/P_0$ by multiplication. Indeed, we may identify $\pi_0(P_{C''})\cong P_{C''}/P_0$ with $G^{\vee}$, and thus regard $G^{\vee}$ as a subgroup of $J_{C''}/P_0$.  Let $\chi\in G^{\vee}$ and let $L_0\in J^{-1}_{C''}$. Then the translation of $J_{C''}$ by $L_0^{-1}\otimes\chi^{\ast}(L_0)$ induces a morphism $T_{\sigma(\chi)}:J_{C''}/P_0\rightarrow J_{C''}/P_0$, which is none other than the translation by $\sigma(\chi)\in\pi_0(P_{C''})$.

Fix $M\in J^{1}_{C''}$. We have an isomorphism $\tau_M:J^{-1}_{C''}\isom J_{C''}$ sending $F\in J^{-1}_{C''}$ to $F\otimes M$. Its composition with the natural projection $J_{C''}\rightarrow J_{C''}/P_0$ will be denoted by $\bar{\tau}_{M}$. Recall that $\bs\alpha_{C'',-1}:C''\rightarrow J_{C''}^{-1}$ is the Abel-Jacobi map of degree -1.
\begin{Lem}\label{Lem-tauM-equiv}
For any $\chi\in G^{\vee}$, we have a commutative diagram
\begin{equation}
\begin{tikzcd}[row sep=2.5em, column sep=4em]
C'' \arrow[r, "\bs\alpha_{C'',-1}"] \arrow[d, "\chi^{-1}"'] & J^{-1}_{C''} \arrow[d, "\chi^{\vee}"] \arrow[r, "\bar{\tau}_M"] & J_{C''}/P_0 \arrow[d, "T_{\sigma(\chi)}"]\\
C'' \arrow[r, "\bs\alpha_{C'',-1}"'] & J^{-1}_{C''} \arrow[r, "\bar{\tau}_M"'] & J_{C''}/P_0,
\end{tikzcd}
\end{equation}
where $\chi$ is regarded as a covering transformation of $C''$, and $\chi^{\vee}$ denotes the pullback morphism.
\end{Lem}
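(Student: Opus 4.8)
The statement is a compatibility of three maps: the degree $-1$ Abel–Jacobi map $\bs\alpha_{C'',-1}$, the covering transformation $\chi$ acting on $C''$ (resp. its pullback $\chi^\vee$ on $J^{-1}_{C''}$), and the translation $T_{\sigma(\chi)}$ on $J_{C''}/P_0$. I would prove commutativity of the two squares separately.

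\emph{Left square.} This is the standard functoriality of the Abel–Jacobi map. For a covering transformation $\chi:C''\to C''$ and a point $q\in C''$, the ideal sheaf of the diagonal pulls back compatibly: $\chi^\vee(\bs\alpha_{C'',-1}(q))$ is the class of $\mathcal{I}_{\{q'\}}$ where $q'$ is characterised by $\chi(q')=q$, i.e. $q'=\chi^{-1}(q)$. Concretely, $\bs\alpha_{C'',-1}$ is defined by $\mathcal I_\Delta$ on $C''\times C''$ (see \S\ref{Sub-Jacob}), and $(\Id\times\chi^{-1})^\ast\mathcal I_\Delta\cong(\chi\times\Id)^\ast\mathcal I_\Delta$ up to the pullback of a line bundle from the second factor, which is exactly the statement that $\bs\alpha_{C'',-1}\circ\chi^{-1}=\chi^\vee\circ\bs\alpha_{C'',-1}$. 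I would just check this on closed points, since both sides are morphisms of varieties and the equality of closed points determines the morphism.

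\emph{Right square.} Here I unwind the definitions. Fix $L_0\in J^{-1}_{C''}$; write $M$ for the chosen element of $J^1_{C''}$, so $\tau_M(F)=F\otimes M$. For $F\in J^{-1}_{C''}$ we must compare $\bar\tau_M(\chi^\vee F)=\overline{\chi^\ast F\otimes M}$ and $T_{\sigma(\chi)}\bar\tau_M(F)=\overline{F\otimes M}$ translated by $\sigma(\chi)\in\pi_0(P_{C''})\subset J_{C''}/P_0$. By the very definition of $T_{\sigma(\chi)}$ recalled just above the lemma, the translation of $J_{C''}$ by $L_0^{-1}\otimes\chi^\ast L_0$ induces $T_{\sigma(\chi)}$ on $J_{C''}/P_0$; so it suffices to show that the difference $(\chi^\ast F\otimes M)\otimes(F\otimes M)^{-1}=\chi^\ast F\otimes F^{-1}$ differs from $L_0^{-1}\otimes\chi^\ast L_0$ by an element of $P_0$. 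But $(\chi^\ast F\otimes F^{-1})\otimes(L_0^{-1}\otimes\chi^\ast L_0)^{-1}=\chi^\ast(F\otimes L_0^{-1})\otimes(F\otimes L_0^{-1})^{-1}$, and $F\otimes L_0^{-1}\in J_{C''}$ has degree $0$, so this element lies in $P_0$ by Lemma \ref{Lem-auto-sigma-0}. Hence the two classes in $J_{C''}/P_0$ agree, and the right square commutes.

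\emph{Expected obstacle.} There is essentially no deep obstacle here; the content is bookkeeping with Abel–Jacobi maps and the definition of $\sigma$ and $T_{\sigma(\chi)}$. The one point requiring slight care is the \emph{choice of base points}: $\bar\tau_M$ depends on $M$, $\sigma$ was defined using degree $-1$ line bundles, and $T_{\sigma(\chi)}$ was defined via a choice of $L_0$; one must make sure that these choices are compatible (or, as the argument above shows, that the final comparison is independent of them because the relevant difference always lands in $P_0$ by Lemma \ref{Lem-auto-sigma-0}). Once that is observed, both squares follow by comparing closed points.
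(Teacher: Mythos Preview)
Your proposal is correct and matches the paper's argument almost exactly: the paper dismisses the left square as obvious and, for the right square, computes $\bar\tau_M\circ\chi^{\vee}(L)$ and $T_{\sigma(\chi)}\circ\bar\tau_M(L)$ and notes that the difference $L^{-1}\otimes\chi^\ast L$ versus $L_0^{-1}\otimes\chi^\ast L_0$ lies in $P_0$. Your only addition is spelling out the Abel--Jacobi functoriality for the left square and making the appeal to Lemma~\ref{Lem-auto-sigma-0} explicit, both of which are fine.
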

\begin{proof}
The left hand side of the diagram obviously commutes. Let $L\in J^{-1}_{C''}$. Then
\begingroup
\allowdisplaybreaks
\begin{align*}
\bar{\tau}_M\circ\chi^{\vee}(L)&=\chi^{\ast}L\otimes M\text{ $\mod P_0$,}\\
T_{\sigma(\chi)}\circ\bar{\tau}_M&= L\otimes (L_0^{-1}\otimes\chi^{\ast}L_0)\otimes M\text{ $\mod P_0$.}
\end{align*}
\endgroup 
But the difference between $L^{-1}\otimes\chi^{\ast}L$ and $L_0^{-1}\otimes\chi^{\ast}L_0$ lies in $P_0$, so the right hand side also commutes.
\end{proof}

The norm map $\Nm_{\psi_G}:J_{C''}\rightarrow J_C$ factors through $J_{C''}/P_0$, and we denote by $\overline{\Nm}_{\psi_G}:J_{C''}/P_0\rightarrow J_C$ the induced morphism. Note that $\overline{\Nm}_{\psi_G}$ is a $G^{\vee}$-Galois covering. Write $M_C=\Nm_{\psi_G}(M)$, which defines the translation morphism $\tau_{M_C}:J^{-1}_C\rightarrow J_C$.
\begin{Lem}\label{Lem-CD-phi-tauM-Nm}
The following diagram commutes
\begin{equation}\label{CD-Lem-CD-phi-tauM-Nm}
\begin{tikzcd}[row sep=2.5em, column sep=4em]
C'' \arrow[r, "\bs\alpha_{C'',-1}"] \arrow[d, "\psi_G"'] & J^{-1}_{C''} \arrow[r, "\bar{\tau}_M"] \arrow[d, "\Nm_{\psi_G}"'] & J_{C''}/P_0 \arrow[d, "\overline{\Nm}_{\psi_G}"]\\
C \arrow[r, "\bs\alpha_{C,-1}"'] & J_C^{-1} \arrow[r, "\tau_{M_C}"'] & J_C.
\end{tikzcd}
\end{equation}
\end{Lem}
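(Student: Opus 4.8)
The plan is to verify the two squares in (\ref{CD-Lem-CD-phi-tauM-Nm}) separately. All the objects occurring are reduced schemes of finite type over $\mathbb{C}$ and all the arrows are morphisms of such, so it is enough to check the claimed commutativities on $\mathbb{C}$-points, where they become elementary manipulations with line bundles.

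For the left square, I would use the description of the degree $-1$ Abel-Jacobi maps from \S\ref{Sub-Jacob}: $\bs\alpha_{C'',-1}$ sends a closed point $p\in C''$ to $\mathcal{O}_{C''}(-p)$, and $\bs\alpha_{C,-1}$ sends $q\in C$ to $\mathcal{O}_C(-q)$ (the possible twist appearing in (\ref{eq-AJ-uni}) comes from the ambiguity of the universal sheaf and does not affect the value of $\bs\alpha$, which is defined directly by $\mathcal{I}_\Delta$). Identifying $\Nm_{\psi_G}$ on line bundles with the classical norm via \cite[Corollary 3.12]{HP} (and recalling that the classical norm preserves degrees, so that $\Nm_{\psi_G}$ does send $J^{-1}_{C''}$ into $J^{-1}_C$ and the diagram is well-posed), the norm of the reduced point $p$ along the \'etale cover $\psi_G$ is the reduced point $\psi_G(p)$, hence $\Nm_{\psi_G}\bigl(\mathcal{O}_{C''}(-p)\bigr)\cong\mathcal{O}_C(-\psi_G(p))=\bs\alpha_{C,-1}\bigl(\psi_G(p)\bigr)$. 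This is exactly $\Nm_{\psi_G}\circ\bs\alpha_{C'',-1}=\bs\alpha_{C,-1}\circ\psi_G$, i.e. the degree $-1$ counterpart of (\ref{CD-AJ-XaC}).

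For the right square, I would fix $L\in J^{-1}_{C''}$ and chase it both ways. Going right then down, $\bar\tau_M(L)=L\otimes M \bmod P_0$, and since $\overline{\Nm}_{\psi_G}$ is by construction the morphism induced by $\Nm_{\psi_G}\colon J_{C''}\to J_C$ on the quotient by $P_0$ (which is legitimate because $P_0\subset\Nm_{\psi_G}^{-1}(0)$), this equals $\Nm_{\psi_G}(L\otimes M)$; by multiplicativity of the norm on line bundles (property (i) listed in \S\ref{Sub-Prym}) and the definition $M_C=\Nm_{\psi_G}(M)$, this is $\Nm_{\psi_G}(L)\otimes M_C$. Going down then right gives $\tau_{M_C}\bigl(\Nm_{\psi_G}(L)\bigr)=\Nm_{\psi_G}(L)\otimes M_C$ by the definition of $\tau_{M_C}$. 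The two agree, so the right square commutes, and hence so does the full diagram.

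The argument is essentially bookkeeping, so I do not expect a genuine obstacle; the only points deserving a moment's care are that $\Nm_{\psi_G}$ is defined on and preserves the degree of line bundles of degree $-1$ — needed merely for the diagram to make sense — and that $\Nm_{\psi_G}$ really does descend to $J_{C''}/P_0$, both of which are immediate from the construction of the norm map and from $P_0$ being the identity component of $\Nm_{\psi_G}^{-1}(0)\cap J_{C''}$.
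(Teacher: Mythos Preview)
Your proof is correct and follows essentially the same approach as the paper: verify the two squares separately, the left one via the compatibility of Abel--Jacobi with the norm (which the paper cites as \cite[Proposition 3.1]{HP} while you spell it out on points), and the right one via multiplicativity of the norm map (the paper says ``the norm map is a group homomorphism,'' which is your property (i) from \S\ref{Sub-Prym}).
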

\begin{proof}
The left hand side of the diagram commutes by \cite[Proposition 3.1]{HP}. The right hand side commutes because the norm map is a group homomorphism.
\end{proof}
 
\begin{Lem}\label{Lem=sigmachi(g)}
Let $\chi\in G^{\vee}$ and regard it as a covering transformation of the $G^{\vee}$-Galois covering $\overline{\Nm}_{\psi_G}:J_{C''}/P_0
\rightarrow J_C$. Then, for any $g\in G$, $\rho_C(g)$ is a line bundle on $J_C$ such that $\overline{\Nm}{}_{\psi_G}^{\ast}\rho_C(g)\cong\mathcal{O}_{J_{C''}/P_0}$. Moreover, if $\mathcal{O}_{J_{C''}/P_0}$ is equipped with a $G^{\vee}$-equivariant structure so that it descends to $\rho_C(g)$, then $\chi$ acts as the multiplication by $\chi(g)^{-1}$ on $H^0(J_{C''}/P_0,\mathcal{O}_{J_{C''}/P_0})$.
\end{Lem}
The dual of the morphism $\Gamma\rightarrow\pi_0(P_{C''})\cong G^{\vee}$ identifies $(G^{\vee})^{\vee}$ with a subgroup of $\Gamma^{\vee}$. The lemma says that the map $G\hookrightarrow\Gamma\stackrel{\rho_C}{\rightarrow}\Gamma^{\vee}$ induces a map $G\rightarrow(G^{\vee})^{\vee}$, and the latter one sends $g\in G$ to the character of $G^{\vee}$ that takes the value $\chi(g)^{-1}$ at every $\chi\in G^{\vee}$.
\begin{proof}
Since $P_0$ is an abelian subvariety of $J_{C''}$, the map $\Pic^0(J_{C''}/P_0)\rightarrow\Pic^0(J_{C''})$ induced by the quotient morphism is injective. Therefore $\Ker\Nm_{\psi_G}^{\vee}=\Ker\overline{\Nm}{}^{\vee}_{\psi_G}$. Then (\ref{CD-PicAJ-XaC}) implies that $\overline{\Nm}{}_{\psi_G}^{\ast}\rho_C(g)\cong\mathcal{O}_{J_{C''}/P_0}$. We can therefore regard $\rho_C(g)$ as a character $\theta_g$ of $G^{\vee}$. The goal is to show that 
\begin{equation}\label{eq-Lem=sigmachi(g)}
\theta_g(\chi)=\chi(g)^{-1}.
\end{equation}

Denote by $\bar{\psi}_G^{\vee}:J_C\rightarrow J_{C''}/P_0$ the composition of $\psi_G^{\vee}$ and the quotient morphism $J_{C''}\rightarrow J_{C''}/P_0$. Denote by $\Gamma_1\subset\Gamma$ the subgroup that is mapped into $P_0$ by $\psi_G^{\vee}$. Obviously, $[n]=\overline{\Nm}{}_{\psi_G}\circ\bar{\psi}_G^{\vee}$. That is, the $\Gamma$-Galois covering $[n]$ is the composition of the $\Gamma_1$-Galois covering $\bar{\psi}_G^{\vee}$ and the $G^{\vee}$-Galois covering $\overline{\Nm}{}_{\psi_G}$. Since $[n]^{\ast}\rho_C(g)\cong\mathcal{O}_{J_C}$, we can regard $\rho_C(g)$ as a character $\tilde{\theta}_g$ of $\Gamma$. We apply Lemma \ref{Lem-G1G2} to the commutative diagram of Galois coverings
$$
\begin{tikzcd}
J_C \arrow[d, "\lbrack n\rbrack"'] \arrow[r, "\bar{\psi}_G^{\vee}"] & J_{C''}/P_0 \arrow[d, "\overline{\Nm}{}_{\psi_G}"] \\
J_C \arrow[r, "\Id"'] & J_C,
\end{tikzcd}
$$
and obtain
$$
\begin{tikzcd}
(G^{\vee})^{\vee} \arrow[d, "\rotatebox{90}{\(\sim\)}"] \arrow[r] & \Gamma^{\vee} \arrow[d, "\rotatebox{90}{\(\sim\)}"] \\
\Ker\overline{\Nm}{}_{\psi_G}^{\vee} \arrow[r, hook] & \Ker[n]^{\vee},
\end{tikzcd}
$$
so that $\theta_g\in(G^{\vee})^{\vee}$ is mapped to $\tilde{\theta}_g$ under the top arrow. This implies that for any $\gamma\in\Gamma$ that is mapped to $\chi\in G^{\vee}$ under the map $\Gamma\rightarrow \pi_0(P_{C''})\cong G^{\vee}$, we have $\tilde{\theta}_g(\gamma)=\theta_g(\chi)$. Now $\tilde{\theta}_g(\gamma)=\tilde{\theta}_{\gamma}(g)^{-1}$ by the skew-symmetry of the Weil pairing
\begingroup
\allowdisplaybreaks
\begin{align*}
\Gamma\times\Gamma&\longrightarrow\mathbb{C}^{\ast}\\
(g,\gamma)&\longmapsto \tilde{\theta}_{\gamma}(g)
\end{align*}
\endgroup
(see \cite[Corollary 11.22]{EGM} in the preliminary version of the book of Edixhoven-van der Geer-Moonen; see also \cite[\S 23]{M}, though the pairing there is formulated differently.)
That $\gamma$ is mapped to $\chi$ means that $\tilde{\theta}_{\gamma}|_G=\chi$, by Lemma \ref{Lem-GaV-HV}. The desired equality (\ref{eq-Lem=sigmachi(g)}) follows.
\end{proof}

The following theorem gives a characterisation of the connected components of Prym varieties in terms of the image of $f^{(-1)}_{\chi}$.
\begin{Thm}\label{Thm-Conn-Prym-New}
The homomorphism $\sigma$ is the identity on $G^{\vee}$, i.e., for any $\chi\in G^{\vee}$, the image of $f^{(-1)}_{\chi}$ lies in $P_{C'',\chi}$.
\end{Thm}
\begin{proof}
Consider the commutative diagram (\ref{CD-Lem-CD-phi-tauM-Nm}), where $\psi_G$ and $\overline{\Nm}_{\psi_G}$ are both $G^{\vee}$-coverings. By Lemma \ref{Lem-tauM-equiv}, the top arrow $\bar{\tau}_M\circ\bs\alpha_{C'',-1}$ in (\ref{CD-Lem-CD-phi-tauM-Nm}) is $G^{\vee}$-equivariant, with $\chi\in G^{\vee}$ acting on $C''$ as the covering transformation and on $J_{C''}/P_0$ as the multiplication by $\sigma(\chi^{-1})$. We apply Lemma \ref{Lem-G1G2} to (\ref{CD-Lem-CD-phi-tauM-Nm}) and obtain the following commutative diagram
$$
\begin{tikzcd}
(G^{\vee})^{\vee} \arrow[d, "\rotatebox{90}{\(\sim\)}"] \arrow[r] & (G^{\vee})^{\vee} \arrow[d, "\rotatebox{90}{\(\sim\)}"] \\
\Ker\overline{\Nm}{}_{\psi_G}^{\vee} \arrow[r] & \Ker\psi_G^{\vee}.
\end{tikzcd}
$$
The top arrow sends a character of $G^{\vee}$ to its composition with $\sigma$ and the inversion map. The bottom arrow is the restriction of the isomorphism $\rho_C$, and sends $\rho_C(g)$ to $g$, for any $g\in \Ker\psi_G^{\vee}$. Under the vertical arrows, $g$ is identified with the character of $G^{\vee}$ whose value at $\chi\in G^{\vee}$ is equal to $\chi(g)$, and $\rho_C(g)$ is identified with the character whose value at $\chi$ is equal to $\chi(g)^{-1}$ by Lemma \ref{Lem=sigmachi(g)}. We conclude that $\chi(g)=\sigma(\chi)(g)$ for any $g\in G$.
\end{proof}

\section{Computations of Determinant of Cohomology}\label{S-CDC}
\subsection{Review of determinant of cohomology}\label{SS-DoC}\hfill

We recall some facts about \textit{determinant of cohomology}. Let $f:X\rightarrow S$ be a flat projective morphism of schemes whose geometric fibres are curves. Let $F$ be a coherent sheaf on $X$ that is flat over $S$. According to \cite[Expos\'e III, Proposition 4.8]{ILL}, the complex $R^{\bullet}f_{\ast}F$ is perfect, i.e. there is an open covering $S=\bigcup_iU_i$ such that $R^{\bullet}f_{\ast}F|_{U_i}$ is quasi-isomorphic to a complex of locally free sheaves for every $i$. Suppose that $R^{\bullet}f_{\ast}F|_{U_i}$ is quasi-isomorphic to $E_i^{\bullet}=[E^0_i\rightarrow E^1_i]$ for some complex of locally free sheaves $E_i^{\bullet}$ (see \cite[Observation 43]{E} for the construction of $E_i^{\bullet}$). Then $\det E_i^{\bullet}:=\det E_i^1\otimes(\det E_i^0)^{-1}$ is a line bundle on $U_i$. The quasi-isomorphisms $R^{\bullet}f_{\ast}F|_{U_i}\cong E^{\bullet}_i$ and $R^{\bullet}f_{\ast}F|_{U_j}\cong E^{\bullet}_j$ induce an isomorphism $\det E^{\bullet}_i\cong\det E^{\bullet}_j$ on $U_i\cap U_j$. These data define a line bundle $\mathbf{D}_f(F)$ on $S$, which is called the determinant of cohomology. 

For the convenience of the reader, we collect below some properties of the determinant of cohomology. 

(i). If $f$ factorises as $X\stackrel{\phi}{\rightarrow}Y\stackrel{g}{\rightarrow} S$ where $\phi$ is a finite morphism, then $R^{\bullet}f_{\ast}F\cong R^{\bullet}g_{\ast}\phi_{\ast}F$ for any coherent sheaf $F$ on $X$. Therefore $\mathbf{D}_f(F)\cong\mathbf{D}_g(\phi_{\ast}F)$.

(ii). (Base change property) The determinant of cohomology is compatible with any base change. Let $g:T\rightarrow S$ be a morphism of schemes so that we have a Cartesian diagram
\begin{equation}
\begin{tikzcd}[row sep=2.5em, column sep=2em]
X\times_ST \arrow[r, "\pr_1"] \arrow[d, swap, "\pr_2"] \arrow[dr, phantom, "\square"] & X \arrow[d, "f"] \\
T \arrow[r, swap, "g"] & S.
\end{tikzcd}
\end{equation}
Then there is an isomorphism $g^{\ast}\mathbf{D}_f(F)\cong\mathbf{D}_{\pr_2}(\pr_1^{\ast}F)$ (see \cite[Proposition 44 (1)]{E}).

(iii). (Projection property) For any line bundle $L$ on $S$, there is an isomorphism
$$
\mathbf{D}_f(F\otimes f^{\ast}L)\cong\mathbf{D}_f(F)\otimes L^{\chi(F/S)}
$$
(see \cite[Proposition 44 (3)]{E}).

(iv). (Additive property) For every short exact sequence 
$$
0\longrightarrow F_1\longrightarrow F_2\longrightarrow F_3\longrightarrow 0
$$
of coherent sheaves on $X$ that are flat over $S$, there is an isomorphism
$$
\mathbf{D}_f(F_2)\cong\mathbf{D}_f(F_1)\otimes\mathbf{D}_f(F_3)
$$
(see \cite[Proposition 44 (4)]{E}).

Let $X$ be a curve with planar singularities, and $J$ (resp. $\bar{J}$) the Jacobian (resp. compactified Jacobian) of $X$. Let $\overline{\mathcal{U}}$ be the universal sheaf on $X\times\bar{J}$. Its fibre over any geometric point of $\bar{J}$ is a rank one torsion-free sheaf on $X$. Let $\mathcal{U}$ be the restriction of $\overline{\mathcal{U}}$ to $X\times J$. It is a line bundle. The Poincar\'e bundle on $\bar{J}\times J$ is defined to be
\begin{equation}\label{eq-Poin-sh}
\mathcal{P}:=\mathbf{D}_{\pr_{23}}(\pr_{12}^{\ast}\overline{\mathcal{U}}\otimes \pr_{13}^{\ast}\mathcal{U})^{-1}\otimes\mathbf{D}_{\pr_{23}}(\pr_{12}^{\ast}\overline{\mathcal{U}})\otimes\mathbf{D}_{\pr_{23}}(\pr_{13}^{\ast}\mathcal{U}),
\end{equation}
where $\pr_{ij}$ is the projection from $X\times\bar{J}\times J$ to the $i$'th and $j$'th factors. We may similarly define a line bundle $\mathcal{P}'$ on $J\times\bar{J}$, reversing the roles of $J$ and $\bar{J}$ in the definition of $\mathcal{P}$. Then $\mathcal{P}$ coincides with $\mathcal{P}'$ on $J\times J\subset\bar{J}\times\bar{J}$. Let $U=J\times\bar{J}\cup\bar{J}\times J$, and let $i:U\hookrightarrow\bar{J}\times\bar{J}$ be the inclusion. Then there is a line bundle $\mathcal{P}''$ on $U$ which restricts to $\mathcal{P}$ (resp. $\mathcal{P}'$) on $\bar{J}\times J$ (resp. $J\times\bar{J}$). The Poincar\'e sheaf $\overline{\mathcal{P}}$ on $\bar{J}\times\bar{J}$ given by Theorem \ref{Thm-Ari2AB} is the extension $i_{\ast}\mathcal{P}''$.

\subsection{A Commutative Diagram of Jacobians}\hfill

Now we go back to the setting of \S \ref{subsec-prym}, where $X\rightarrow C$ is an integral spectral curve, $C'\rightarrow C$ an $H^{\vee}$-Galois covering defined by some subgroup $H\subset G\subset\Pic^0(C)[n]$, and $Y$ some integral spectral curve over $C'$ that fits into the commutative diagram (\ref{CD-Za'-Ya''-Xa}). In particular, we have a finite birational morphism $\phi:Y\rightarrow X$. If $F$ is a rank one torsion-free sheaf of degree 0 on $Y$, then we introduce a line bundle $K$ on $X$ as in (\ref{eq-K}), so that $\phi_{\ast}F\otimes K$ has degree 0. This defines a morphism 
\begin{equation}\label{eq-phi-W}
\phi^{\wedge}:\bar{J}'\longrightarrow\bar{J}.
\end{equation}
It is easy to see that $\phi^{\wedge}$ factors through the fixed point locus $\bar{J}^H\subset\bar{J}$. Let $\overline{\mathcal{U}}{}'$ be the universal sheaf on $Y\times\bar{J}'$, then $(\phi\times\Id)_{\ast}\overline{\mathcal{U}}{}'\otimes\pr_1^{\ast}K$ is a family of rank one torsion-free sheaves of degree 0 on $X$. Let $\overline{\mathcal{U}}$ be the universal sheaf on $X\times\bar{J}$. We have
\begin{equation}\label{eq-U-phiW}
(\phi\times\Id)_{\ast}\overline{\mathcal{U}}{}'\otimes\pr_1^{\ast}K\cong(\Id\times\phi^{\wedge})^{\ast}\overline{\mathcal{U}}\otimes \pr_2^{\ast}M
\end{equation}
for some invertible sheaf $M$ on $\bar{J}'$. Recall that $\phi^{\vee}:J\longrightarrow J'$ is the morphism defined by pulling back line bundles.
\begin{Prop}\label{Prop-phiWV-phiV}
The following diagram commutes
\begin{equation}
\begin{tikzcd}[row sep=2.5em, column sep=2em]
\Pic^0_{\bar{J}} \arrow[r, "\sim", "\bs\alpha^{\vee}"'] \arrow[d, swap, "(\phi^{\wedge})^{\vee}"] & J \arrow[d, "\phi^{\vee}"]\\
\Pic^0_{\bar{J}'} \arrow[r, "\sim", "\bs\alpha^{\prime\vee}"'] & J',
\end{tikzcd}
\end{equation}
where the horizontal arrows are the pullback morphisms induced by the Abel-Jacobi maps associated to $X$ and $Y$ respectively. In particular, the morphism $(\phi^{\wedge})^{\vee}$ is independent of the choice of $K$.
\end{Prop}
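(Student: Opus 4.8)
The plan is to reduce the commutativity of the square to an isomorphism of line bundles on $\bar{J}'$ and then to establish that isomorphism by a computation with determinants of cohomology.

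First I would carry out the reduction. By Remark~\ref{Rem-Ari2AB} the maps $\bs\alpha^{\vee}$ and $\bs\alpha^{\prime\vee}$ are the pullback morphisms attached to the Abel--Jacobi maps of $X$ and $Y$ and are the inverses of $\rho$ and $\rho'$; since pullback sends a line bundle to a line bundle, $\bs\alpha^{\vee}$ does carry $\Pic^0_{\bar{J}}$ into $J$ and $(\phi^{\wedge})^{\vee}$ carries $\Pic^0_{\bar{J}}$ into $\Pic^0_{\bar{J}'}$. Writing an element of $\Pic^0_{\bar{J}}$ as $\rho(L)$ with $L=\bs\alpha^{\vee}(\rho(L))\in J$, so that $\phi^{\vee}(\bs\alpha^{\vee}(\rho(L)))=\phi^{\ast}L$, the commutativity of the square becomes the assertion
$$
\rho'(\phi^{\ast}L)\;\cong\;(\phi^{\wedge})^{\ast}\rho(L)\qquad\text{for every }L\in J,
$$
which is what remains to prove.

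For the computation I would first express both sides through the determinant-of-cohomology description \eqref{eq-Poin-sh}: restricting the Poincar\'e bundles yields $\rho(L)\cong\mathbf{D}_{\pr_2}(\overline{\mathcal{U}}\otimes\pr_1^{\ast}L)^{-1}\otimes\mathbf{D}_{\pr_2}(\overline{\mathcal{U}})$ on $\bar{J}$ and the analogous formula for $\rho'(\phi^{\ast}L)$ on $\bar{J}'$, the leftover $\mathbf{D}$ of a bundle pulled back from the curve being trivial by the base-change property (ii). Applying (ii) once more to move $(\phi^{\wedge})^{\ast}$ inside the determinant, rewriting $(\Id\times\phi^{\wedge})^{\ast}\overline{\mathcal{U}}$ by means of \eqref{eq-U-phiW} as $(\phi\times\Id)_{\ast}\overline{\mathcal{U}}{}'\otimes\pr_1^{\ast}K\otimes\pr_2^{\ast}M^{-1}$, and then using property (i) for the finite morphism $\phi\times\Id$, the projection formula and the projection property (iii), one brings everything down to $\bar{J}'$. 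The two surviving determinant terms acquire the same power of $M$, the relative Euler characteristic being unaffected by the twist by the degree-zero bundle $\pr_1^{\ast}\phi^{\ast}L$; thus $M$ cancels and one obtains
$$
(\phi^{\wedge})^{\ast}\rho(L)\;\cong\;\mathbf{D}_{\pr_2}\bigl(\overline{\mathcal{U}}{}'\otimes\pr_1^{\ast}\phi^{\ast}(K\otimes L)\bigr)^{-1}\otimes\mathbf{D}_{\pr_2}\bigl(\overline{\mathcal{U}}{}'\otimes\pr_1^{\ast}\phi^{\ast}K\bigr).
$$
Comparing this with the formula for $\rho'(\phi^{\ast}L)$, the proposition reduces to the bilinearity identity
$$
\mathbf{D}_{\pr_2}(\overline{\mathcal{U}}{}'\otimes\pr_1^{\ast}N_1)\otimes\mathbf{D}_{\pr_2}(\overline{\mathcal{U}}{}'\otimes\pr_1^{\ast}N_2)\;\cong\;\mathbf{D}_{\pr_2}\bigl(\overline{\mathcal{U}}{}'\otimes\pr_1^{\ast}(N_1\otimes N_2)\bigr)\otimes\mathbf{D}_{\pr_2}(\overline{\mathcal{U}}{}')
$$
applied with $N_1=\phi^{\ast}K$ and $N_2=\phi^{\ast}L$. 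I would deduce it from the additive property (iv) alone: since $\phi$ is an isomorphism over the smooth locus of $X$, both $\phi^{\ast}K$ and $\phi^{\ast}L$ are represented by divisors supported on $Y_{\mathrm{sm}}$, and each elementary modification $N\mapsto N(p)$ at a smooth point $p$ contributes, via (iv) and the fact that $\mathbf{D}_{\pr_2}$ of a sheaf supported on the section $\{p\}\times\bar{J}'$ is the corresponding sheaf on $\bar{J}'$, the $N$-independent factor $\overline{\mathcal{U}}{}'|_{\{p\}\times\bar{J}'}$; hence $N\mapsto\mathbf{D}_{\pr_2}(\overline{\mathcal{U}}{}'\otimes\pr_1^{\ast}N)\otimes\mathbf{D}_{\pr_2}(\overline{\mathcal{U}}{}')^{-1}$ is a group homomorphism $\Pic(Y)\to\Pic(\bar{J}')$, which is precisely the identity sought. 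The independence of $(\phi^{\wedge})^{\vee}$ from the choice of $K$ is then immediate, the remaining three arrows of the square not involving $K$.

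The hard part will be the bookkeeping in the previous paragraph: keeping the auxiliary bundle $M$ of \eqref{eq-U-phiW} and the twist $K$ of \eqref{eq-K} under control and checking that the Euler-characteristic exponents match, so that $M$ disappears and only the $K$-contribution survives to be absorbed by the bilinearity identity. Beyond that, the argument is a formal manipulation of the four properties of the determinant of cohomology recalled in \S\ref{SS-DoC}, together with Arinkin's description of $\rho$ and $\rho'$.
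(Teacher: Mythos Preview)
Your proposal is correct. The initial reduction and the determinant-of-cohomology manipulation you outline coincide with the paper's argument: both arrive at
\[
(\phi^{\wedge})^{\ast}\rho(L)\;\cong\;\mathbf{D}_{\pr_2}\bigl(\overline{\mathcal{U}}{}'\otimes\pr_1^{\ast}\phi^{\ast}(K\otimes L)\bigr)^{-1}\otimes\mathbf{D}_{\pr_2}\bigl(\overline{\mathcal{U}}{}'\otimes\pr_1^{\ast}\phi^{\ast}K\bigr)
\]
via exactly the same use of base change, \eqref{eq-U-phiW}, the projection formula, property (i) for $\phi\times\Id$, and property (iii). The divergence is only in the final step. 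The paper does not stay on $\bar{J}'$: it composes further with the Abel--Jacobi map $\bs\alpha':Y\to\bar{J}'$, then uses the short exact sequence $0\to\mathcal{I}_{\Delta}\to\mathcal{O}_{Y\times Y}\to\mathcal{O}_{\Delta}\to 0$ and additivity to evaluate each $\mathbf{D}$-term explicitly as a line bundle on $Y$; the unwanted twist $\phi^{\ast}K$ cancels visibly between \eqref{eq-Prop-phiWV-phiV-5} and \eqref{eq-Prop-phiWV-phiV-6}. Your route instead establishes the bilinearity of $N\mapsto\mathbf{D}_{\pr_2}(\overline{\mathcal{U}}{}'\otimes\pr_1^{\ast}N)$ directly on $\bar{J}'$ by elementary modifications at smooth points, which removes the $K$-dependence without ever invoking $\bs\alpha'$. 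This is a cleaner and more conceptual endgame: the bilinearity identity is a general fact about the Deligne pairing and would be reusable. The paper's approach, by contrast, is more hands-on but needs no auxiliary statement. One small inaccuracy in your write-up: the fact that $\phi^{\ast}K$ and $\phi^{\ast}L$ are represented by divisors on $Y_{\mathrm{sm}}$ has nothing to do with $\phi$ being an isomorphism over $X_{\mathrm{sm}}$; it is just the standard fact that every line bundle on an integral curve is $\mathcal{O}(D)$ for some Cartier divisor supported on the smooth locus.
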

\begin{Rem}
The above commutative diagram seems to be the dual of a commutative diagram like
\begin{equation}
\begin{tikzcd}[row sep=2.5em, column sep=3em]
Y \arrow[r, "\bs\alpha'"] \arrow[d, "\phi"'] & \bar{J}' \arrow[d, "\phi^{\wedge}"] \\
X \arrow[r, "\bs\alpha"'] & \bar{J}.
\end{tikzcd}
\end{equation}
However, this diagram can not be commutative. Indeed, for a smooth point of $X$, its image under $\bs\alpha$ represents an invertible sheaf on $X$. But every point in the image of $\phi^{\wedge}$ has non trivial stabiliser under the action of $\Gamma$, and so does not lie in $J$. It is surprising that after applying the Picard functor we get a commutative diagram.
\end{Rem}
\begin{proof}
We will show that for any $L\in J$, we have
\begin{equation}\label{eq-Prop-phiWV-phiV-0}
\phi^{\ast}L\cong(\phi^{\wedge}\circ\bs\alpha')^{\ast}\mathcal{P}|_{\bar{J}\times\{L\}}.
\end{equation}
We will use the idea of the proof of \cite[Proposition 2.2]{EGK}.

Now
\begin{equation}\label{eq-Prop-phiWV-phiV-1}
\mathcal{P}_{\bar{J}\times\{L\}}\cong\mathbf{D}_{\pr_2}(\overline{\mathcal{U}}\otimes\pr_1^{\ast}L)^{-1}\otimes\mathbf{D}_{\pr_2}(\overline{\mathcal{U}})\otimes\mathbf{D}_{\pr_{2}}(\pr_{1}^{\ast}L).
\end{equation}
Let us compute
\begingroup
\allowdisplaybreaks
\begin{align}
\nonumber
&(\phi^{\wedge})^{\ast}\mathbf{D}_{\pr_{2}}(\overline{\mathcal{U}}\otimes \pr_1^{\ast}L)\\\nonumber
\stackrel{\circled{1}}{\cong}&\mathbf{D}_{\pr_{2}}\big((\Id\times\phi^{\wedge})^{\ast}\overline{\mathcal{U}}\otimes\pr_1^{\ast}L\big)\\
\nonumber
\stackrel{\circled{2}}{\cong}&\mathbf{D}_{\pr_{2}}\big((\phi\times\Id)_{\ast}\overline{\mathcal{U}}{}'\otimes \pr_1^{\ast}K\otimes \pr_1^{\ast}L\big)\otimes M_1\\
\nonumber
\stackrel{\circled{3}}{\cong}&\mathbf{D}_{\pr_{2}}\big((\phi\times\Id)_{\ast}(\overline{\mathcal{U}}{}'\otimes \pr_1^{\ast}\phi^{\ast}K\otimes \pr_{1}^{\ast}\phi^{\ast}L)\big)\otimes M_1\\
\label{eq-Prop-phiWV-phiV-2}
\stackrel{\circled{4}}{\cong}&\mathbf{D}_{\pr_{2}}\big(\overline{\mathcal{U}}{}'\otimes \pr_1^{\ast}\phi^{\ast}K\otimes \pr_{1}^{\ast}\phi^{\ast}L\big)\otimes M_1.
\end{align}
\endgroup
In the above equations, the domains of the projections $\pr_i$ can vary. In equality \circled{1}, we have used the base-change property of the determinant of cohomology. In equality \circled{2}, we have used (\ref{eq-U-phiW}) and the projection property of the determinant of cohomology, and $M_1$ is some line bundle on $\bar{J}'$. In equality \circled{3}, we have used the projection formula. In equality \circled{4}, we have used the fact that determinant of cohomology is preserved under finite morphisms. Similarly, we have
\begingroup
\allowdisplaybreaks
\begin{align}
\label{eq-Prop-phiWV-phiV-3}
(\phi^{\wedge})^{\ast}\mathbf{D}_{\pr_{2}}(\overline{\mathcal{U}})&\cong\mathbf{D}_{\pr_{2}}\big(\overline{\mathcal{U}}{}'\otimes \pr_1^{\ast}\phi^{\ast}K\big)\otimes M_1,\\
\label{eq-Prop-phiWV-phiV-4}
(\phi^{\wedge})^{\ast}\mathbf{D}_{\pr_{2}}(\pr_{1}^{\ast}L)&\cong\mathcal{O}_Y.
\end{align}
\endgroup
Then we compute
\begingroup
\allowdisplaybreaks
\begin{align*}
&\bs\alpha^{\prime\ast}\mathbf{D}_{\pr_{2}}\big(\overline{\mathcal{U}}{}'\otimes \pr_1^{\ast}\phi^{\ast}K\otimes \pr_{1}^{\ast}\phi^{\ast}L\big)\\
\stackrel{\circled{1}}{\cong}&\mathbf{D}_{\pr_{2}}\big((\Id\times\bs\alpha')^{\ast}\overline{\mathcal{U}}{}'\otimes \pr_1^{\ast}\phi^{\ast}K\otimes \pr_{1}^{\ast}\phi^{\ast}L\big)\\
\stackrel{\circled{2}}{\cong}&\mathbf{D}_{\pr_{2}}\big(\mathcal{I}_{\Delta}\otimes \pr_1^{\ast}L_0\otimes \pr_1^{\ast}\phi^{\ast}K\otimes \pr_{1}^{\ast}\phi^{\ast}L\big)\otimes M_2\\
\end{align*}
\endgroup
where $M_2$ is some line bundle on $Y$. In equality \circled{1}, we have used the base-change property of the determinant of cohomology. In equality \circled{2}, we have used (\ref{eq-AJ-uni}) and the projection property.

Recall that $\mathcal{I}_{\Delta}$ is the ideal sheaf of the diagonal $\Delta:Y\rightarrow Y\times Y$. We take the tensor product of the exact sequence
$$
0\longrightarrow\mathcal{I}_{\Delta}\longrightarrow\mathcal{O}_{Y\times Y}\longrightarrow\mathcal{O}_{\Delta}\longrightarrow 0,
$$
with the line bundle $\pr_1^{\ast}L_0\otimes \pr_1^{\ast}\phi^{\ast}K\otimes \pr_{1}^{\ast}\phi^{\ast}L$, and use the additive property of the determinant of cohomology to obtain
\begingroup
\allowdisplaybreaks
\begin{align*}
&\mathbf{D}_{\pr_{2}}\big(\mathcal{I}_{\Delta}\otimes \pr_1^{\ast}L_0\otimes \pr_1^{\ast}\phi^{\ast}K\otimes \pr_{1}^{\ast}\phi^{\ast}L\big)\\
\cong&\mathbf{D}_{\pr_{2}}\big(\pr_1^{\ast}L_0\otimes \pr_1^{\ast}\phi^{\ast}K\otimes \pr_{1}^{\ast}\phi^{\ast}L\big)\otimes (L_0\otimes \phi^{\ast}K\otimes \phi^{\ast}L)^{-1}\\
\cong&(L_0\otimes \phi^{\ast}K\otimes \phi^{\ast}L)^{-1}.
\end{align*}
\endgroup

Therefore, we have
\begingroup
\allowdisplaybreaks
\begin{align}
\label{eq-Prop-phiWV-phiV-5}
\bs\alpha^{\prime\ast}\mathbf{D}_{\pr_{2}}\big(\overline{\mathcal{U}}{}'\otimes \pr_1^{\ast}\phi^{\ast}K\otimes \pr_{1}^{\ast}\phi^{\ast}L\big)\cong (L_0\otimes \phi^{\ast}K\otimes \phi^{\ast}L)^{-1}\otimes M_2.
\end{align}
\endgroup
Similarly, we have
\begingroup
\allowdisplaybreaks
\begin{align}
\label{eq-Prop-phiWV-phiV-6}
\bs\alpha^{\prime\ast}\mathbf{D}_{\pr_{2}}\big(\overline{\mathcal{U}}{}'\otimes \pr_1^{\ast}\phi^{\ast}K\big)&\cong (L_0\otimes \phi^{\ast}K)^{-1}\otimes M_2.
\end{align}
\endgroup
Combining (\ref{eq-Prop-phiWV-phiV-1}), (\ref{eq-Prop-phiWV-phiV-2}), (\ref{eq-Prop-phiWV-phiV-3}), (\ref{eq-Prop-phiWV-phiV-4}), (\ref{eq-Prop-phiWV-phiV-5}) and (\ref{eq-Prop-phiWV-phiV-6}), we obtain (\ref{eq-Prop-phiWV-phiV-0}).
\end{proof}

\section{Poincar\'e Sheaves}\label{Sec-PS}
\subsection{Equivariant Structures on Poincar\'e Sheaves}\label{subsec-ESPS}\hfill

As before, the fixed spectral curve $X$ determines $G\subset\Gamma$, and $H$ is an arbitrary subgroup of $G$. Up to the action of $H^{\vee}$, this gives us a spectral curve $Y$ over $C'$, and in particular the compactified Jacobian variety $\bar{J}'$ and the compactified Prym variety $\bar{P}'$ as in \S \ref{subsec-prym}. We will define the Poincar\'e sheaf $\overline{\mathcal{Q}}{}'$ on $\bar{P}'\times[\bar{P}'/\Gamma]$.

We begin with a lemma relating various equivariant structures. Recall that $\Gamma_0\subset\Gamma$ is the subgroup that is mapped into the connected component $\bar{P}'_0$. The group $\Gamma_0$ acts on $\bar{J}'$ naturally. It also acts on $\bar{P}'_0\times J_C$ in three different ways. For any $\gamma\in\Gamma_0$, we let it act on $\bar{P}'_0\times J_C$ as $\gamma\times\gamma^{-1}$, then $r:\bar{P}'_0\times J_C\rightarrow\bar{J}'$ denotes the quotient morphism for this action. See Remark \ref{Rem-JCP=barJ-0}. Besides, the group $\Gamma_0$ may also act on one of the two factors of $\bar{P}'_0\times J_C$ in the natural way, and acts trivially on the other.
\begin{Lem}\label{Lem-3-equiv}
We have an equivalence between the following three sets of data:
\begin{itemize}
\item[(i)] A coherent sheaf $L$ on $\bar{J}'$ with a $\Gamma_0$-equivariant structure.
\item[(ii)] A coherent sheaf $M$ on $\bar{P}'_0\times J_C$ with 
\begin{itemize}
\item[(a)] a $\Gamma_0$-equivariant structure $$\theta_{\gamma}:M\lisom(\gamma\times\gamma^{-1})^{\ast}M,\text{ for any $\gamma\in\Gamma_0$, and}$$
\item[(b)] a $\Gamma_0$-equivariant structure $$\zeta_{J,\gamma}:M\lisom(\Id\times\gamma)^{\ast}M,\text{ for any $\gamma\in\Gamma_0$}$$ such that the two equivariant structures are compatible, i.e. for any $\gamma_1$ and $\gamma_2\in\Gamma_0$, we have a commutative diagram
\begin{equation}\label{eq-Lem-3-equiv-comp}
\begin{tikzcd}[row sep=2.5em, column sep=3em]
M \arrow[r, "\zeta_{J,\gamma_2}"] \arrow[d, "\theta_{\gamma_1}"'] & (\Id\times\gamma_2)^{\ast}M \arrow[d, "(\Id\times\gamma_2)^{\ast}\theta_{\gamma_1}"]\\
(\gamma_1\times\gamma_1^{-1})^{\ast}M \arrow[r, "(\gamma_1\times\gamma_1^{-1})^{\ast}\zeta_{J,\gamma_2}"'] & (\gamma_1\times\gamma_1^{-1})^{\ast}(\Id\times\gamma_2)^{\ast}M,
\end{tikzcd}
\end{equation}
where we have used the canonical isomorphism $$(\Id\times\gamma_2)^{\ast}(\gamma_1\times\gamma_1^{-1})^{\ast}M\cong(\gamma_1\times\gamma_1^{-1})^{\ast}(\Id\times\gamma_2)^{\ast}M.$$
\end{itemize}
\item[(iii)] A coherent sheaf $M$ on $\bar{P}'_0\times J_C$ with 
\begin{itemize}
\item[(a)] a $\Gamma_0$-equivariant structure $$\theta_{\gamma}:M\lisom(\gamma\times\gamma^{-1})^{\ast}M,\text{ for any $\gamma\in\Gamma_0$, and}$$
\item[(b)] a $\Gamma_0$-equivariant structure $$\zeta_{P,\gamma}:M\lisom(\gamma\times\Id)^{\ast}M,\text{ for any $\gamma\in\Gamma_0$}$$ such that the two equivariant structures are compatible, i.e. a similar diagram like (\ref{eq-Lem-3-equiv-comp}) commutes.
\end{itemize}
\end{itemize}
Moreover, under the equivalence between (ii) and (iii), we have 
\begingroup
\allowdisplaybreaks
\begin{align}
\label{Lem-3-equiv-eq01}
(\Id\times\gamma)^{\ast}\theta_{\gamma}\circ\zeta_{J,\gamma}&=\zeta_{P,\gamma}\\
\label{Lem-3-equiv-eq02}
(\gamma\times\Id)^{\ast}\theta_{\gamma^{-1}}\circ\zeta_{P,\gamma}&=\zeta_{J,\gamma}.
\end{align}
\endgroup
\end{Lem}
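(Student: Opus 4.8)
The plan is to recognize all three sets of data as the data of an equivariant sheaf on the finite étale $\Gamma_0$-torsor $r\colon\bar P'_0\times J_C\to\bar J'$ of Remark \ref{Rem-JCP=barJ-0} (from Lemma \ref{Lem-JCP=barJ}), and then to match them up by descent. Write $V=\bar P'_0\times J_C$ and $W=\bar J'$. On $V$ there are three commuting $\Gamma_0$-actions: the \emph{deck action} $\delta_\gamma=\gamma\times\gamma^{-1}$ defining the torsor $r$; the action $\epsilon^P_\gamma=\gamma\times\Id$ on the first factor; and the action $\epsilon^J_\gamma=\Id\times\gamma$ on the second factor. These commute pairwise because $\Gamma_0$ is abelian. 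Since $r$ sends $(F,L)$ to $F\otimes\pi'^{\vee}(L)$, a direct check gives $r\circ\epsilon^P_\gamma=r\circ\epsilon^J_\gamma=t_\gamma\circ r$, where $t_\gamma$ denotes the natural translation action of $\Gamma_0$ on $W=\bar J'$. Finally, one records the two identities of automorphisms of $V$
\[
\epsilon^P_\gamma=\delta_\gamma\circ\epsilon^J_\gamma,\qquad \epsilon^J_\gamma=\delta_{\gamma^{-1}}\circ\epsilon^P_\gamma ,
\]
which will be responsible for the two displayed formulas at the end of the statement.

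First I would prove the equivalence (i)$\Leftrightarrow$(ii). By (étale) descent along the finite étale $\Gamma_0$-torsor $r$, the functor $r^{\ast}$ is an equivalence between $\Coh(W)$ and the category of coherent sheaves on $V$ equipped with a $\delta$-equivariant structure. Both sides carry an additional $\Gamma_0$-action: by $t$-pullback on $\Coh(W)$, and by $\epsilon^J$-pullback on the $\delta$-equivariant category (which is well defined since $\epsilon^J$ commutes with $\delta$, hence preserves $\delta$-equivariant objects). The equivalence $r^{\ast}$ intertwines these two actions because $r\circ\epsilon^J_\gamma=t_\gamma\circ r$. Passing to equivariant objects for these extra actions, $r^{\ast}$ identifies a sheaf $L$ on $W$ carrying a $t$-equivariant structure — that is, datum (i) — with a sheaf $M$ on $V$ carrying a $\delta$-equivariant structure $\theta$ together with a \emph{commuting} $\epsilon^J$-equivariant structure $\zeta_J$; unwinding what ``commuting'' means is exactly the content of diagram \eqref{eq-Lem-3-equiv-comp}, so this is precisely datum (ii). Running the same argument with $\epsilon^P$ in place of $\epsilon^J$ (legitimate since $r\circ\epsilon^P_\gamma=t_\gamma\circ r$ as well) yields the equivalence (i)$\Leftrightarrow$(iii).

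Composing the two equivalences gives (ii)$\Leftrightarrow$(iii). To read off \eqref{Lem-3-equiv-eq01} and \eqref{Lem-3-equiv-eq02}, note that under (i)$\Leftrightarrow$(ii) one has $M=r^{\ast}L$, with $\theta$ the canonical descent datum and $\zeta_J$ the $\epsilon^J$-pullback of the $t$-equivariant structure of $L$; likewise under (i)$\Leftrightarrow$(iii), $\zeta_P$ is the $\epsilon^P$-pullback of the same equivariant structure of $L$. Substituting $\epsilon^P_\gamma=\delta_\gamma\circ\epsilon^J_\gamma$ into the functoriality of pullback ($(\epsilon^P_\gamma)^{\ast}=(\Id\times\gamma)^{\ast}(\gamma\times\gamma^{-1})^{\ast}$) expresses $\zeta_{P,\gamma}$ as the composite $(\Id\times\gamma)^{\ast}\theta_\gamma\circ\zeta_{J,\gamma}$, which is \eqref{Lem-3-equiv-eq01}; the identity $\epsilon^J_\gamma=\delta_{\gamma^{-1}}\circ\epsilon^P_\gamma$ gives \eqref{Lem-3-equiv-eq02} symmetrically.

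The step I expect to be the main obstacle is the bookkeeping inside the descent argument: verifying that the single compatibility square \eqref{eq-Lem-3-equiv-comp} is equivalent to the assertion that $\theta$ and $\zeta_J$ assemble into a genuine $\Gamma_0\times\Gamma_0$-equivariant structure (equivalently, that $\zeta_J$ descends through $r$ to a well-defined $t$-equivariant structure on $L$), and doing so while tracking the canonical isomorphisms $(\Id\times\gamma_2)^{\ast}(\gamma_1\times\gamma_1^{-1})^{\ast}M\cong(\gamma_1\times\gamma_1^{-1})^{\ast}(\Id\times\gamma_2)^{\ast}M$ consistently, so that the cocycle conditions for the separate equivariant structures are respected. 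None of this is deep, but it has to be carried out with care; everything else is formal consequence of descent along the torsor $r$.
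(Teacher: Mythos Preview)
Your proposal is correct and follows essentially the same approach as the paper: both arguments use descent along the $\Gamma_0$-torsor $r\colon\bar{P}'_0\times J_C\to\bar{J}'$ to identify (i) with $\delta$-equivariant sheaves on $V$, then transport the extra $\Gamma_0$-equivariant structure on $L$ to the $\epsilon^J$- (resp.\ $\epsilon^P$-) equivariant structure $\zeta_J$ (resp.\ $\zeta_P$) using $r\circ\epsilon^J_\gamma=t_\gamma\circ r$, and derive \eqref{Lem-3-equiv-eq01}--\eqref{Lem-3-equiv-eq02} from the identity $\epsilon^P_\gamma=\delta_\gamma\circ\epsilon^J_\gamma$. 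The paper's version is slightly more concrete (writing $L=(r_\ast M)^{\Gamma_0}$ for the inverse and displaying $\zeta_{J,\gamma}$, $\zeta_{P,\gamma}$ explicitly as composites $r^\ast\Phi_\gamma$ followed by the canonical isomorphism), but the underlying argument is the same.
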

\begin{proof}
Since $r:\bar{P}'_0\times J_C\rightarrow\bar{J}'$ is a $\Gamma_0$-Galois covering, a coherent sheaf $M$ on $\bar{P}'_0\times J_C$ with an equivariant structure $(\theta_{\gamma})_{\gamma\in\Gamma_0}$ is equivalent to a coherent sheaf $L$ on $\bar{J}'$, and $M\cong r^{\ast}L$. If $\{\Phi_{\gamma}:L\isom\gamma^{\ast}L\}_{\gamma\in\Gamma_0}$ is an equivariant structure on $L$. Then we define $\zeta_{J,\gamma}$ to be
\begin{equation}\label{Lem-3-equiv-eq1}
r^{\ast}L\stackrel{r^{\ast}\Phi_{\gamma}}{\longrightarrow}r^{\ast}\gamma^{\ast}L\lisom(\Id\times\gamma)^{\ast}r^{\ast}L
\end{equation}
where the second isomorphism is the canonical one. This defines an equivariant structure on $r^{\ast}L$. And it is routine to verify the compatibility with the equivariant structure $\{\theta_{\gamma}\}_{\gamma\in\Gamma_0}$. Conversely, given $M$ as in (ii). We let $L$ be the invariant subsheaf $(r_{\ast}M)^{\Gamma_0}$ with the $\Gamma_0$-action defined by $\{\theta_{\gamma}\}_{\gamma\in\Gamma_0}$. Then $\{\zeta_{J,\gamma}\}_{\gamma\in\Gamma_0}$ defines an equivariant structure $\{\Phi_{\gamma}\}_{\gamma\in\Gamma_0}$ on $r_{\ast}M$. The compatibility of the two equivariant structures implies that $\Phi_{\gamma}$ preserves the invariant subsheaf $L\subset r_{\ast}M$. The two constructions are inverse to each other. In (iii) we put $\zeta_{P,\gamma}$ to be
\begin{equation}\label{Lem-3-equiv-eq2}
r^{\ast}L\stackrel{r^{\ast}\Phi_{\gamma}}{\longrightarrow}r^{\ast}\gamma^{\ast}L\lisom(\gamma\times\Id)^{\ast}r^{\ast}L
\end{equation}
and the proof is completely analogous. Finally, it is easy to check (\ref{Lem-3-equiv-eq01}) and (\ref{Lem-3-equiv-eq02}), using the fact that $M\cong r^{\ast}L$.
\end{proof}

\begin{Prop}\label{Prop-equiv-barP-a'}
Let $\xi\in H^{\vee}$. Regard $\overline{\mathcal{P}}{}'$ as a coherent sheaf on $\bar{P}'_{\xi}\times\bar{J}'$ by restriction. Consider the $\Gamma_0$-action on $\bar{P}'_{\xi}\times\bar{J}'$ by $\Id\times\gamma$, for any $\gamma\in\Gamma_0$. Then $\overline{\mathcal{P}}{}'$ admits a $\Gamma_0$-equivariant structure.
\end{Prop}
Note that $H\subset\Gamma_0$ acts trivially on $\bar{J}'$. By \cite[\S 4.1]{GS}, there exists an isomorphism $\Phi_{\gamma}:\overline{\mathcal{P}}{}'\isom(\Id\times\gamma)^{\ast}\overline{\mathcal{P}}{}'$ on $\bar{P}'_{\xi}\times \bar{J}'$ for every $\gamma\in\Gamma_0$. For purpose of computation in \S \ref{subsec-TS}, we would like to be specific about the choice of the isomorphisms $\Phi_{\gamma}$.

\begin{Lem}\label{Lem-HomP=C}
Regard $\overline{\mathcal{P}}{}'$ as a coherent sheaf on $\bar{P}'_{\xi}\times \bar{J}'$ by restriction. Then $\Hom(\overline{\mathcal{P}}{}',\overline{\mathcal{P}}{}')=\mathbb{C}$.
\end{Lem}
\begin{proof}
By \cite[Lemma 4.3 (a)]{GS}, the coherent sheaf $\overline{\mathcal{P}}{}'$ is Cohen-Macaulay, in particular saturated in codimension 2, and is isomorphic to $i_{\ast}\mathcal{P}'$, where $\mathcal{P}'$ is an invertible sheaf on an open subset $i:U\rightarrow\bar{P}'_{\xi}\times \bar{J}'$ whose complement has codimension larger than 2. According to \cite[Lemma 30.11.3]{S-P} the sheaf of local homomorphisms $\SHom(\overline{\mathcal{P}}{}',\overline{\mathcal{P}}{}')$ is also saturated in codimension 2. The restriction of this sheaf to $U$ is then $\mathcal{O}_U$, and so itself must be isomorphic to $\mathcal{O}_{\bar{P}'_{\xi}\times \bar{J}'}$. Since $\bar{P}'_{\xi}\times \bar{J}'$ is connected and projective, the global sections are the constant functions.
\end{proof}

\begin{proof}[Proof of Proposition \ref{Prop-equiv-barP-a'}]
We must choose the isomorphisms $\{\Phi_{\gamma}\}_{\gamma\in\Gamma_0}$ properly so that they are compatible.
The above lemma shows that each $\Phi_{\gamma}$ can only be modified by a scalar. Consider the $\Gamma_0$-equivariant morphism 
$$
\nu:\bar{P}'_{\xi}\times J_C\stackrel{\Id\times \pi^{\prime\vee}}{\longrightarrow} \bar{P}'_{\xi}\times\bar{J}',
$$
where $\Gamma_0$ acts by multiplication on $J_C$ and on $\bar{J}'$, and acts trivially on $\bar{P}'_{\xi}$. For any $p\in \bar{P}'_{\xi}$, we have $\nu^{\ast}\overline{\mathcal{P}}{}'|_{\{p\}\times J_C}\cong\mathcal{O}_{J_C}$ in view of (\ref{CD-piVV-Nm}). Note that $\nu^{\ast}\overline{\mathcal{P}}{}'$ is a line bundle on $\bar{P}'_{\xi}\times J_C$, since the image of $J_C$ under $\pi^{\prime\vee}$ is contained in $J'$. Therefore, $\nu^{\ast}\overline{\mathcal{P}}{}'\cong\pr_1^{\ast}M$ for some line bundle $M$ on $\bar{P}'_{\xi}$. There is a canonical isomorphism $\Psi_{\gamma}:\pr_1^{\ast}M\isom(\Id\times\gamma)^{\ast}\pr_1^{\ast}M$ for every $\gamma\in\Gamma_0$. And these isomorphisms define a $\Gamma_0$-equivariant structure on $\pr_1^{\ast}M$. Note that an automorphism of $\nu^{\ast}\overline{\mathcal{P}}{}'$ is simply a non zero scalar, and so we can always modify $\Phi_{\gamma}$ by a scalar so that the induced isomorphism
$$
\Phi'_{\gamma}:\nu^{\ast}\overline{\mathcal{P}}{}'\stackrel{\nu^{\ast}\Phi_{\gamma}}{\longrightarrow}\nu^{\ast}(\Id\times\gamma)^{\ast}\overline{\mathcal{P}}{}'\lisom(\Id\times\gamma)^{\ast}\nu^{\ast}\overline{\mathcal{P}}{}'
$$ 
coincides with $\Psi_{\gamma}$, for any $\gamma$. This uniquely determines $\Phi_{\gamma}$ since any difference by a scalar must induce the multiplication by the same scalar on $\nu^{\ast}\overline{\mathcal{P}}{}'$. Then the isomorphism
$$
\overline{\mathcal{P}}{}'\stackrel{\Phi_{\gamma_2}}{\longrightarrow}(\Id\times\gamma_2)^{\ast}\overline{\mathcal{P}}{}'\stackrel{(\Id\times\gamma_2)^{\ast}\Phi_{\gamma_1}}{\longrightarrow}(\Id\times\gamma_2)^{\ast}(\Id\times\gamma_1)^{\ast}\overline{\mathcal{P}}{}'
$$
must agree with $\Phi_{\gamma_1\gamma_2}$, since these two isomorphisms induce the same (canonical) isomorphism on $\bar{P}'_{\xi}\times J_C$. Therefore, the isomorphisms $\{\Phi_{\gamma}\}_{\gamma\in\Gamma_0}$ indeed define a $\Gamma_0$-equivariant structure on $\overline{\mathcal{P}}{}'$.
\end{proof}
\begin{Rem}\label{Rem-equiv-barP-a'}
Fix some $p\in \bar{P}'_{\xi}$. We have seen that $\nu^{\ast}\overline{\mathcal{P}}{}'|_{\{p\}\times J_C}\cong \mathcal{O}_{J_C}$. The equivariant structure $\{\Psi_{\gamma}\}_{\gamma\in\Gamma_0}$ induces an equivariant structure $\{\psi_{\gamma}\}_{\gamma\in\Gamma_0}$ on $\mathcal{O}_{J_C}$. For any $\gamma\in\Gamma_0$, the isomorphism $\psi_{\gamma}:\mathcal{O}_{J_C}\isom\gamma^{\ast}\mathcal{O}_{J_C}$ sends a function $f$ to $f\circ \gamma^{-1}$, thus $\Gamma_0$ acts trivially on the global sections.
\end{Rem}

Consequently, for any $\xi\in H^{\vee}$, the restriction of $\overline{\mathcal{P}}{}'$ to $\bar{P}'_{\xi}\times\bar{P}'_0$ is $\Gamma_0$-equivariant, and so descends to a coherent sheaf $\overline{\mathcal{Q}}{}'$ on $\bar{P}'_{\xi}\times[\bar{P}'/\Gamma]$. For $H=0$, and so $\bar{P}'=\bar{P}$, the sheaf $\overline{\mathcal{Q}}{}=\overline{\mathcal{Q}}{}'$ will be the kernel of the Fourier-Mukai transform between $\bar{P}$ and $[\bar{P}/\Gamma]$. 

\begin{Rem}\label{Rem-p'VQ'=P'}
In \cite[\S 4.1]{GS}, the authors have defined a $J_C$-equivariant structure on $\overline{\mathcal{P}}{}'$ regarded as a sheaf on $\bar{P}'_{\xi}\times\bar{J}'$, and $\overline{\mathcal{Q}}{}'$ is equivalently defined as the descending of $\overline{\mathcal{P}}{}'$. Then we have $(\Id\times p')^{\ast}\overline{\mathcal{Q}}{}'\cong\overline{\mathcal{P}}{}'$ as sheaves on $\bar{P}'_{\xi}\times\bar{J}'$, where $p':\bar{J}'\rightarrow[\bar{P}'/\Gamma]$ is the projection given by Lemma \ref{Lem-JCP=barJ}. Lemma \ref{Inj-pV} below shows that $\overline{\mathcal{Q}}{}'$ is the unique coherent sheaf on $\bar{P}'_{\xi}\times[\bar{P}'/\Gamma]$ such that $(\Id\times p')^{\ast}\overline{\mathcal{Q}}{}'\cong\overline{\mathcal{P}}{}'$.
\end{Rem}

\subsection{Representability of the Picard Functor of $[\bar{P}'/\Gamma]$}\label{subsec-RPF}\hfill
 
Let $p':\bar{J}'\rightarrow[\bar{P}'/\Gamma]$ be the morphism as in Remark \ref{Rem-p'VQ'=P'}. We also denote by $r_P:\bar{P}'_0\rightarrow[\bar{P}'_0/\Gamma_0]$ the quotient morphism. Note that there is a natural isomorphism $[\bar{P}'/\Gamma]\cong[\bar{P}'_0/\Gamma_0]$.
\begin{Lem}\label{Inj-pV}
The homomorphism of \'etale sheaves on $\Sch_{\mathbb{C}}$ $$p^{\prime\vee}:\Pic_{[\bar{P}'/\Gamma]}\rightarrow\Pic_{\bar{J}'}$$ is injective.
\end{Lem}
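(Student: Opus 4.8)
The plan is to show that $p'$ behaves, after base change to a smooth atlas of the stack $[\bar{P}'/\Gamma]$, like a trivial family of abelian varieties, and then to run the usual argument that pullback along a proper morphism $f$ with $f_{\ast}\mathcal{O}=\mathcal{O}$ is injective on Picard sheaves.

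First I would unwind the definition of $p'$. By the identity–component version of Lemma \ref{Lem-JCP=barJ} recorded in Remark \ref{Rem-JCP=barJ-0}, there is an isomorphism $\bar{J}'\cong(\bar{P}'_0\times J_C)/\Gamma_0$ with $\Gamma_0$ acting freely by $\gamma\cdot(p,l)=(\gamma p,\gamma^{-1}l)$, and under the natural identification $[\bar{P}'/\Gamma]\cong[\bar{P}'_0/\Gamma_0]$ the morphism $p'$ is the one induced by the ($\Gamma_0$-equivariant) first projection $\mathrm{pr}_1\colon\bar{P}'_0\times J_C\to\bar{P}'_0$. Writing $q\colon\bar{P}'_0\times J_C\to\bar{J}'$ and $r\colon\bar{P}'_0\to[\bar{P}'_0/\Gamma_0]$ for the two $\Gamma_0$-torsors, I would check that the square formed by $q$, $r$, $\mathrm{pr}_1$ and $p'$ is $2$-cartesian: the morphism $(q,\mathrm{pr}_1)\colon\bar{P}'_0\times J_C\to\bar{J}'\times_{[\bar{P}'_0/\Gamma_0]}\bar{P}'_0$ is a $\Gamma_0$-equivariant map of $\Gamma_0$-torsors over $\bar{J}'$, hence an isomorphism. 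Consequently $p'$ is proper and smooth, and its pullback along the atlas $r$ is $\mathrm{pr}_1\colon\bar{P}'_0\times J_C\to\bar{P}'_0$, all of whose geometric fibres are isomorphic to the abelian variety $J_C$, in particular reduced and connected. It follows that $p'_{\ast}\mathcal{O}_{\bar{J}'}=\mathcal{O}_{[\bar{P}'/\Gamma]}$ (check this after pulling back along $r$, where it is $(\mathrm{pr}_1)_{\ast}\mathcal{O}_{\bar{P}'_0\times J_C}=\mathcal{O}_{\bar{P}'_0}$, and descend), and that, because the geometric fibres are reduced and connected, this identification is preserved by arbitrary base change; in particular for every $\mathbb{C}$-scheme $S$ the base change $p_S\colon S\times\bar{J}'\to S\times[\bar{P}'/\Gamma]$ of $p'$ satisfies $(p_S)_{\ast}\mathcal{O}_{S\times\bar{J}'}=\mathcal{O}_{S\times[\bar{P}'/\Gamma]}$.

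Now the injectivity. Let $S$ be a $\mathbb{C}$-scheme and $\mathcal{L}\in\Pic_{[\bar{P}'/\Gamma]}(S)$ with $p^{\prime\vee}(\mathcal{L})=0$ in $\Pic_{\bar{J}'}(S)$. Since $\Pic_{\bar{J}'}$ is the \'etale sheafification of the Picard presheaf, after replacing $S$ by an \'etale cover we may assume that $p_S^{\ast}\mathcal{L}\cong a^{\ast}M$ on $S\times\bar{J}'$ for some line bundle $M$ on $S$, where $a\colon S\times\bar{J}'\to S$ is the projection. Let $b\colon S\times[\bar{P}'/\Gamma]\to S$ be the projection, so that $a=b\circ p_S$ and hence $a^{\ast}M=p_S^{\ast}(b^{\ast}M)$. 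Applying $(p_S)_{\ast}$ and the projection formula, together with $(p_S)_{\ast}\mathcal{O}_{S\times\bar{J}'}=\mathcal{O}_{S\times[\bar{P}'/\Gamma]}$, we get on the one hand $(p_S)_{\ast}p_S^{\ast}\mathcal{L}\cong\mathcal{L}\otimes(p_S)_{\ast}\mathcal{O}_{S\times\bar{J}'}\cong\mathcal{L}$, and on the other hand $(p_S)_{\ast}(a^{\ast}M)=(p_S)_{\ast}p_S^{\ast}(b^{\ast}M)\cong b^{\ast}M\otimes(p_S)_{\ast}\mathcal{O}_{S\times\bar{J}'}\cong b^{\ast}M$. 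Therefore $\mathcal{L}\cong b^{\ast}M$, i.e. $\mathcal{L}=0$ in $\Pic_{[\bar{P}'/\Gamma]}$ over the chosen \'etale cover of $S$; since $\Pic_{[\bar{P}'/\Gamma]}$ is a sheaf, $\mathcal{L}=0$ in $\Pic_{[\bar{P}'/\Gamma]}(S)$. This proves that $p^{\prime\vee}$ is injective.

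I expect the only delicate point to be the first step: correctly transporting the description of $\bar{J}'$ from Lemma \ref{Lem-JCP=barJ} and Remark \ref{Rem-JCP=barJ-0} into a statement about the morphism $p'$ and the genuinely stacky target $[\bar{P}'/\Gamma]$ (genuinely stacky since $\Gamma$ need not act freely on $\bar{P}'$), and verifying the cartesian square and the base-change compatibility of $p'_{\ast}\mathcal{O}$ in this stacky setting. Everything after that is the standard "$f_{\ast}\mathcal{O}=\mathcal{O}$ implies $f^{\ast}$ injective on $\Pic$" mechanism, applied in families.
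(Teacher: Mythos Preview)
Your proof is correct and takes a genuinely different route from the paper's. Both arguments start from the same commutative square relating $p'$, $r$ (or $r_P$), $q$ (the paper's $r$), and $\pr_1$. You go further and observe that this square is $2$-cartesian, whence $p'$ is smooth and proper with geometrically connected reduced fibres (all $\cong J_C$), so $(p_S)_{\ast}\mathcal{O}=\mathcal{O}$ universally; the injectivity then drops out of the projection formula. The paper, by contrast, uses only the commutativity of the square and argues by hand with $\Gamma_0$-equivariant structures: for each affine $\Spec R$ it identifies both $\Ker r_P^{\vee}(R)$ and $\Ker r^{\vee}(R)$ with $\Hom(\Gamma_0, R^{\ast})$, checks that $p^{\prime\vee}$ restricts to an isomorphism between these kernels, and combines this with the injectivity of $\pr_1^{\vee}$ (coming from a section) to conclude. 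Your approach is cleaner and isolates the conceptual reason; the paper's is more elementary in that it avoids stacky base-change and properness/smoothness assertions about $p'$, and it yields as a byproduct the explicit identification of $\Ker r_P^{\vee}$ with characters of $\Gamma_0$, which resonates with the twisted-sheaf calculation in the next subsection.
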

\begin{proof}
It suffices to prove the injectivity at the level of Picard functors (i.e. the presheaves on $\Sch_{\mathbb{C}}$ that associate to any $S\in\Sch_{\mathbb{C}}$ the groups $\Pic(S\times[\bar{P}'/\Gamma])$ and $\Pic(S\times\bar{J}')$ respectively). And it suffices to check the injectivity on the subcategory of affine schemes. We have a commutative diagram
\begin{equation}\label{CD-rPrJ-0}
\begin{tikzcd}[row sep=2.5em, column sep=2em]
\bar{P}'_0\times J_C \arrow[r, "\pr_1"] \arrow[d, "r"'] & \bar{P}'_0 \arrow[d, "r_P"] \\
\bar{J}' \arrow[r, "p'"'] & \lbrack\bar{P}'_0/\Gamma_0\rbrack,
\end{tikzcd}
\end{equation}
where in the bottom arrow we have identified $[\bar{P}'/\Gamma]$ with $[\bar{P}'_0/\Gamma_0]$. The above diagram induces a commutative diagram of abelian groups for every commutative $\mathbb{C}$-algebra $R$:
\begin{equation}\label{CD-rPrJ}
\begin{tikzcd}[row sep=2.5em, column sep=2em]
\Pic(\lbrack\bar{P}'_0/\Gamma_0\rbrack_R) \arrow[r, "p^{\prime\vee}(R)"] \arrow[d, "r_P^{\vee}(R)"'] & \Pic(\bar{J}'_R) \arrow[d, "r^{\vee}(R)"] \\
\Pic(\bar{P}'_{0,R}) \arrow[r, "\pr_1^{\vee}(R)"'] & \Pic(\bar{P}'_{0,R}\times_R J_{C,R}),
\end{tikzcd}
\end{equation}
where the subscript $R$ indicates the base change to $\Spec R$. Since $r_P$ is the quotient morphism for the $\Gamma_0$-action, the kernel of $r_P^{\vee}(R)$ is identified with the set of $\Gamma_0$-equivariant structures on $\mathcal{O}_{\bar{P}'_{0,R}}$. And similarly for $r$. The proof given below is essentially Lemma \ref{Lem-G1G2} over $\Spec R$.

Note that $H^0(\bar{P}'_{0,R},\mathcal{O}_{\bar{P}'_{0,R}})\cong R$. Suppose that the isomorphisms $\theta_{\gamma}:\mathcal{O}_{\bar{P}'_{0,R}}\isom\gamma^{\ast}\mathcal{O}_{\bar{P}'_{0,R}}$, $\gamma\in\Gamma_0$, define an equivariant structure. Each $\theta_{\gamma}$ sends the unit of $H^0(\bar{P}'_{0,R},\mathcal{O}_{\bar{P}'_{0,R}})$ to an element of $R^{\ast}$, the subset of invertible elements of $R$. Clearly, this defines a group homomorphism $\Gamma_0\rightarrow R^{\ast}$. Suppose that $\{\theta'_{\gamma}\}_{\gamma\in\Gamma_0}$ is a different equivariant structure. There is some $\gamma\in\Gamma_0$ such that $\theta_{\gamma}\ne\theta'_{\gamma}$. Their difference is an automorphism of $\mathcal{O}_{\bar{P}'_{0,R}}$, which is given by an element of $R^{\ast}$ not equal to $1$. Thus we get a different homomorphism $\Gamma_0\rightarrow R^{\ast}$. This shows that the map from the set of equivariant structures to $\Hom(\Gamma_0,R^{\ast})$ is injective. Given a homomorphism $\chi:\Gamma_0\rightarrow R^{\ast}$, we define for each $\gamma$ an isomorphism $\theta_{\gamma}:\mathcal{O}_{\bar{P}'_{0,R}}\isom\gamma^{\ast}\mathcal{O}_{\bar{P}'_{0,R}}$ sending a function $s$ on some open subset to $\chi(\gamma)\cdot s\circ \gamma^{-1}$. This defines a $\Gamma_0$-equivariant structure on $\mathcal{O}_{\bar{P}'_{0,R}}$. Thus we have a bijection between the set of $\Gamma_0$-equivariant structures on $\mathcal{O}_{\bar{P}'_{0,R}}$ and $\Hom(\Gamma_0,R^{\ast})$. Similarly, the set of $\Gamma_0$-equivariant structures on $\mathcal{O}_{\bar{P}'_{0,R}\times_RJ_{C,R}}$ is in bijection with $\Hom(\Gamma_0,R^{\ast})$. The projection $\bar{P}'_{0,R}\times_RJ_{C,R}\rightarrow\bar{P}'_{0,R}$ induces a $\Gamma_0$-equivariant isomorphism of global sections of structure sheaves. As $r$ and $r_P$ are quotient morphisms by $\Gamma_0$, $p^{\prime\vee}(R)$ restricts to an isomorphism $\Ker(r_P^{\vee})(R)\isom\Ker(r^{\vee})(R)$.

For any $R$-point of $J_C$, the composition $\bar{P}'_{0,R}\times_R\Spec R\hookrightarrow \bar{P}'_{0,R}\times_RJ_{C,R}\stackrel{\pr_1}{\rightarrow}\bar{P}'_{0,R}$ is the identity. Therefore, $\pr_1^{\vee}(R)$ is injective. Let $L\in\Pic([\bar{P}'_0/\Gamma_0]_R)$ be such that $p^{\prime\vee}(R)(L)=\mathcal{O}_{\bar{J}'_R}$. By the commutativity of (\ref{CD-rPrJ}), we have $L\in\Ker(r_P^{\vee})(R)$. However, the restriction of $p^{\prime\vee}(R)$ to  $\Ker(r_P^{\vee})(R)$ is an isomorphism, we conclude that $L$ must be the trivial line bundle.
\end{proof}

Recall the map $\pi'=\psi\circ\pi_H$ (see \ref{CD-Za'-Ya''-Xa}). We have a commutative diagram
\begin{equation}\label{CD-PrymAJ}
\begin{tikzcd}[row sep=2.5em, column sep=2em]
\Pic'_{\bar{P}'_0} \arrow[r, "\pr_1^{\vee}"] \arrow[dr, phantom, "\circled{1}"] & \Pic'_{\bar{P}'_0\times J_C} \arrow[r, "i_C^{\vee}"] & \Pic^0_{J_C} \arrow[r, "\sim" inner sep=.3mm, "\bs\alpha_C^{\vee}"'] \arrow[dr, phantom, "\circled{3}"] & J_C \\
\Pic'_{[\bar{P}'_0/\Gamma_0]} \arrow[r, "p^{\prime\vee}"'] \arrow[u, "r_P^{\vee}"] & \Pic^0_{\bar{J}'} \arrow[rr, "\sim", "\bs\alpha^{\prime\vee}"'] \arrow[u, "r^{\vee}"] \arrow[ur, "\circled{2}", "(\pi^{\prime\vee})^{\vee}"'] & & J' \arrow[u, "\Nm_{\pi'}"'],
\end{tikzcd}
\end{equation}
where $\Pic'_{\bar{P}'_0\times J_C}\subset\Pic_{\bar{P}'_0\times J_C}$ is defined to be the inverse image of $\Pic^0_{J_C}\subset\Pic_{J_C}$ under $i_C^{\vee}$, the subsheaf $\Pic'_{\bar{P}'_0}\subset\Pic_{\bar{P}'_0}$ is defined to be the inverse image of $\Pic'_{\bar{P}'_0\times J_C}\subset\Pic_{\bar{P}'_0\times J_C}$ under $\pr_1^{\vee}$, and $\Pic'_{[\bar{P}'_0/\Gamma_0]}\subset\Pic_{[\bar{P}'_0/\Gamma_0]}$ is defined to be the inverse image of $\Pic^0_{\bar{J}'}\subset\Pic_{\bar{J}'}$ under $p^{\prime\vee}$.
Diagram \circled{1} commutes because (\ref{CD-rPrJ-0}) commutes. Diagram \circled{2} is obtained by taking the Picard sheaves of the following commutative diagram
\begin{equation}\label{CD-JaPJCJC}
\begin{tikzcd}[row sep=2.5em, column sep=1.5em]
J_C \arrow[rr, "i_C"] \arrow[dr, swap, "\pi^{\prime\vee}"] && \bar{P}'_0\times J_C \arrow[dl, "r"]\\
& \bar{J}', &
\end{tikzcd}
\end{equation}
where $i_C$ is the inclusion identifying $J_C$ with $\{0\}\times J_C$. Note that the image of $\Pic^0_{\bar{J}'}$ under $r^{\vee}$ necessarily lies in the subsheaf $\Pic'_{\bar{P}'_0\times J_C}$, since $(\pi^{\prime\vee})^{\vee}$ maps $\Pic^0_{\bar{J}'}$ into $\Pic^0_{J_C}$. Diagram \circled{3} is the restriction of (\ref{CD-piVV-Nm}) to the smooth loci.

\begin{Thm}\label{Thm-Pic'-repre-P}
The \'etale sheaf $\Pic'_{[\bar{P}'/\Gamma]}$ is representable by $P'$.
\end{Thm}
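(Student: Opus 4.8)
The plan is to show that the map
$$
\Phi:=\bs\alpha^{\prime\vee}\circ p^{\prime\vee}\colon\Pic'_{[\bar{P}'/\Gamma]}\longrightarrow\Pic^0_{\bar{J}'}\stackrel{\sim}{\longrightarrow}J'
$$
--- the composite along the bottom row of (\ref{CD-PrymAJ}), which is well defined because $\Pic'_{[\bar{P}'/\Gamma]}$ is by definition $(p^{\prime\vee})^{-1}(\Pic^0_{\bar{J}'})$ and $\bs\alpha^{\prime\vee}$ is an isomorphism --- factors through $P'\subseteq J'$ and induces an isomorphism of \'etale sheaves $\Pic'_{[\bar{P}'/\Gamma]}\cong P'$; since $P'$ is a scheme, this gives the representability.

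\emph{That $\Phi$ lands in $P'$} is a diagram chase in (\ref{CD-PrymAJ}). Square \circled{3} gives $\Nm_{\pi'}\circ\bs\alpha^{\prime\vee}=\bs\alpha_C^{\vee}\circ(\pi^{\prime\vee})^{\vee}$ on $\Pic^0_{\bar{J}'}$, while \circled{2} and \circled{1} give $(\pi^{\prime\vee})^{\vee}\circ p^{\prime\vee}=i_C^{\vee}\circ\pr_1^{\vee}\circ r_P^{\vee}$; and since $\pr_1\circ i_C\colon J_C\to\bar{P}'_{0}$ is the constant map to $0$, the composite $i_C^{\vee}\circ\pr_1^{\vee}$ is trivial on Picard sheaves. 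Hence $\Nm_{\pi'}\circ\Phi=0$, so $\Phi$ factors through $\ker(\Nm_{\pi'}\colon J'\to J_C)=P'$. \emph{Injectivity of $\Phi$} is then immediate: $p^{\prime\vee}$ is a monomorphism of \'etale sheaves by Lemma \ref{Inj-pV}, and $\bs\alpha^{\prime\vee}$ is an isomorphism.

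\emph{The main point is surjectivity onto $P'$}, where the Poincar\'e sheaf $\overline{\mathcal{Q}}{}'$ enters. Let $S$ be a scheme and $L\in P'(S)$, classified by a morphism $f\colon S\to J'$ (such $f$ exists since $\bar{J}'=\bar{J}_Y$ carries the universal sheaf $\overline{\mathcal{U}}{}'$). The connected components $\{P'_{\chi}\}_{\chi\in G^{\vee}}$ of $P'$ are open and closed, so after replacing $S$ by the members of the clopen decomposition $S=\bigsqcup_{\chi}f^{-1}(P'_{\chi})$ we may assume $f$ factors through a single $P'_{\chi}$; put $\xi=\Res^G_H\chi$, so that $P'_{\chi}\subseteq\bar{P}'_{\xi}$ by Proposition \ref{Prop-IndGH}. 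Viewing $\overline{\mathcal{Q}}{}'$ on $\bar{P}'_{\xi}\times[\bar{P}'/\Gamma]$ (this sheaf exists thanks to Proposition \ref{Prop-equiv-barP-a'}), set $N_L:=(f\times\Id)^{\ast}\overline{\mathcal{Q}}{}'$ on $S\times[\bar{P}'/\Gamma]$. By Remark \ref{Rem-p'VQ'=P'} we have $(\Id\times p')^{\ast}\overline{\mathcal{Q}}{}'\cong\overline{\mathcal{P}}{}'$, and $\overline{\mathcal{P}}{}'$ is a line bundle on $P'_{\chi}\times\bar{J}'\subseteq J'\times\bar{J}'$ by Theorem \ref{Thm-Ari2AB}(i); since $\Id_S\times p'$ is fppf, descent of invertibility shows $N_L$ is a line bundle on $S\times[\bar{P}'/\Gamma]$ with $(\Id_S\times p')^{\ast}N_L\cong(f\times\Id)^{\ast}\mathcal{P}'$, which represents $\rho'\circ f\in\Pic^0_{\bar{J}'}(S)$. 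Therefore $N_L\in\Pic'_{[\bar{P}'/\Gamma]}(S)$, and since $\bs\alpha^{\prime\vee}$ is inverse to $\rho'$ (Remark \ref{Rem-Ari2AB}) we get $\Phi(N_L)=L$. Hence $\Phi$ is an epimorphism of \'etale sheaves, and being also a monomorphism with image $P'$, it is the desired isomorphism.

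I expect the surjectivity step to be the real work: one has to keep careful track of the connected components of $P'$ in order to invoke the correct component-indexed Poincar\'e sheaf $\overline{\mathcal{Q}}{}'$ on $\bar{P}'_{\xi}\times[\bar{P}'/\Gamma]$ (whose construction on each component is Proposition \ref{Prop-equiv-barP-a'}), and to check that $p'\colon\bar{J}'\to[\bar{P}'/\Gamma]$ is fppf so that descent of invertibility applies; by contrast the containment $\Phi(\Pic'_{[\bar{P}'/\Gamma]})\subseteq P'$ and the injectivity are formal once the diagram (\ref{CD-PrymAJ}) and Lemma \ref{Inj-pV} are in hand.
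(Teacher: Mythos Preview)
Your argument is correct and follows the same strategy as the paper's proof: both factor $\bs\alpha^{\prime\vee}\circ p^{\prime\vee}$ through $P'$ via the diagram chase in (\ref{CD-PrymAJ}), invoke Lemma \ref{Inj-pV} for injectivity, and then exhibit the inverse using the Poincar\'e sheaf $\mathcal{Q}'$ on $P'\times[\bar{P}'/\Gamma]$ together with Remark \ref{Rem-p'VQ'=P'}. The paper phrases the surjectivity step more concisely by defining a single morphism $\tilde{\rho}'\colon P'\to\Pic_{[\bar{P}'/\Gamma]}$ (rather than working $S$-point by $S$-point and component by component) and checking directly that $p^{\prime\vee}\circ\tilde{\rho}'=\rho'|_{P'}$; your decomposition into the $P'_{\chi}$ and the appeal to Proposition \ref{Prop-equiv-barP-a'} simply make explicit how $\mathcal{Q}'$ is assembled. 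One small simplification: to verify that $N_L$ is invertible you do not need to check that $p'$ is fppf, since $\overline{\mathcal{Q}}{}'$ is defined by descent along $\Id\times r_P$ with $r_P$ finite \'etale, and $\overline{\mathcal{P}}{}'|_{P'_{\chi}\times\bar{P}'_0}$ is already a line bundle by Theorem \ref{Thm-Ari2AB}(i).
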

\begin{proof}
We will show that the image of the injective morphism $p^{\prime\vee}$ coincides with $\rho'(P')$. Since $\bs\alpha^{\prime\vee-1}=\rho':J'\rightarrow\Pic^0_{\bar{J}'}$ is an isomorphism, this implies that $\Pic'_{[\bar{P}'/\Gamma]}$ is isomorphic to $P'$. From (\ref{CD-PrymAJ}) we deduce that $\Nm_{\pi'}\circ\bs\alpha^{\prime\vee}\circ p^{\prime\vee}$ is the zero morphism, and so $\bs\alpha^{\prime\vee}\circ p^{\prime\vee}$ factors through $P'\hookrightarrow J'$, or equivalently, the image of $p^{\prime\vee}$ is contained in $\rho'(P')$. Let $\tilde{\rho}':P'\rightarrow\Pic_{[\bar{P}'/\Gamma]}$ be the morphism defined by the Poincar\'e line bundle $\mathcal{Q}'$ on $P'\times[\bar{P}'/\Gamma]$ (on each connected component, the sheaf $\mathcal{Q}'$ is the restriction of the sheaf $\overline{\mathcal{Q}}{}'$ defined in \S \ref{subsec-ESPS}). A priori, we do not know that the image of $\tilde{\rho}'$ is contained in $\Pic'_{[\bar{P}'/\Gamma]}$. But note that $p^{\prime\vee}$ is defined on the entire Picard sheaf $\Pic_{[\bar{P}'/\Gamma]}$. The composition $p^{\prime\vee}\circ\tilde{\rho}'$ is represented by the line bundle $(\Id\times p')^{\ast}\mathcal{Q}'$, which is isomorphic to $\mathcal{P}'$ by Remark \ref{Rem-p'VQ'=P'}. Since $\mathcal{P}'$ is a family of degree zero line bundles, the image of the morphism $p^{\prime\vee}\circ\tilde{\rho}'$ is contained in $\Pic^0_{\bar{J}'}$, and so the image of $\tilde{\rho}'$ is contained in the subsheaf $\Pic'_{[\bar{P}'/\Gamma]}$. Now the morphism $P'\rightarrow\Pic^0_{\bar{J}'}$ induced by $\mathcal{P}'$ is exactly the restriction of $\rho'$ to $P'$. We conclude that $p^{\prime\vee}\circ\tilde{\rho}'=\rho'|_{P'}$, therefore $p^{\prime\vee}$ surjects onto $\rho'(P')$.
\end{proof}

Let $\Pic^0_{[\bar{P}'/\Gamma]}$ be the subsheaf of $\Pic'_{[\bar{P}'/\Gamma]}$ representable by $P'_0$. Define the following subsheaf of $\Pic_{[\bar{P}'/\Gamma]}$ 
$$\Pic^{\tau}_{[\bar{P}'/\Gamma]}:=\{s\in\Pic_{[\bar{P}'/\Gamma]}\mid ks\in\Pic^0_{[\bar{P}'/\Gamma]}\text{ for some $0\ne k\in\mathbb{Z}$}\}.$$ 
\begin{Cor}
We have an equality of sheaves $\Pic^{\tau}_{[\bar{P}'/\Gamma]}=\Pic'_{[\bar{P}'/\Gamma]}$.
\end{Cor}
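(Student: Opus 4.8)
The plan is to prove the two inclusions separately, the essential inputs being Theorem~\ref{Thm-Pic'-repre-P} and Arinkin's description of the torsion components of the Picard scheme of a compactified Jacobian.

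\emph{Inclusion $\Pic'_{[\bar{P}'/\Gamma]}\subset\Pic^{\tau}_{[\bar{P}'/\Gamma]}$.} By Theorem~\ref{Thm-Pic'-repre-P} the sheaf $\Pic'_{[\bar{P}'/\Gamma]}$ is representable by $P'$, and under this identification $\Pic^0_{[\bar{P}'/\Gamma]}$ is the identity component $P'_0$. The component group $\pi_0(P')$ is finite --- by Lemma~\ref{Lem-conn-comp-P'}~(ii) it is isomorphic to $G^{\vee}$ --- say of order $N$. Since multiplication by $N$ on the commutative algebraic group $P'$ annihilates $\pi_0(P')$, it maps $P'$ into $P'_0$. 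Hence for any $\mathbb{C}$-scheme $S$ and any $s\in\Pic'_{[\bar{P}'/\Gamma]}(S)$ we have $Ns\in\Pic^0_{[\bar{P}'/\Gamma]}(S)$, so $s$ is a section of $\Pic^{\tau}_{[\bar{P}'/\Gamma]}$.

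\emph{Inclusion $\Pic^{\tau}_{[\bar{P}'/\Gamma]}\subset\Pic'_{[\bar{P}'/\Gamma]}$.} Let $s$ be a local section of $\Pic^{\tau}_{[\bar{P}'/\Gamma]}$ over some $S$, so that \'etale-locally on $S$ there is a nonzero integer $k$ with $ks\in\Pic^0_{[\bar{P}'/\Gamma]}$. Apply the homomorphism $p^{\prime\vee}\colon\Pic_{[\bar{P}'/\Gamma]}\to\Pic_{\bar{J}'}$. Since $\Pic^0_{[\bar{P}'/\Gamma]}\subset\Pic'_{[\bar{P}'/\Gamma]}$ and $\Pic'_{[\bar{P}'/\Gamma]}$ is by construction the preimage of $\Pic^0_{\bar{J}'}$ under $p^{\prime\vee}$, we get $k\,p^{\prime\vee}(s)=p^{\prime\vee}(ks)\in\Pic^0_{\bar{J}'}$; thus $p^{\prime\vee}(s)$ lies in the union $\Pic^{\tau}_{\bar{J}'}$ of the torsion components of $\Pic_{\bar{J}'}$. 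Now $Y$ is an integral projective curve with planar singularities, so $\bar{J}'=\bar{J}_Y$ is irreducible (see \S\ref{Sub-Poincare}), and by \cite[Proposition~6.2]{Ari1} one has $\Pic^{\tau}_{\bar{J}'}=\Pic^0_{\bar{J}'}$. Therefore $p^{\prime\vee}(s)\in\Pic^0_{\bar{J}'}$, which by the definition of $\Pic'_{[\bar{P}'/\Gamma]}$ gives $s\in\Pic'_{[\bar{P}'/\Gamma]}$.

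Combining the two inclusions yields the claimed equality of sheaves. The real content lies in Theorem~\ref{Thm-Pic'-repre-P}, which is already established; the remaining argument is formal, so I do not expect a genuine obstacle. The only points deserving a word of care are that the relation ``$ks\in\Pic^0$ for some $k$'' genuinely defines a subsheaf --- which one checks by passing to the quotient sheaf $\Pic_{[\bar{P}'/\Gamma]}/\Pic^0_{[\bar{P}'/\Gamma]}$ and working over a connected base, where the image of a section is locally constant --- and that \cite[Proposition~6.2]{Ari1} applies to $\bar{J}_Y$, which rests on $Y$ being integral with planar singularities; both are in place by the time this corollary is reached.
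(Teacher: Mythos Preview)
Your proof is correct and follows essentially the same route as the paper's own proof: both inclusions are handled identically, using that $P'$ has finitely many connected components (via Theorem~\ref{Thm-Pic'-repre-P}) for $\Pic'\subset\Pic^{\tau}$, and using $\Pic^{\tau}_{\bar{J}'}=\Pic^0_{\bar{J}'}$ (which is \cite[Proposition~6.2]{Ari1}) together with the definition of $\Pic'$ as the preimage under $p^{\prime\vee}$ for the reverse inclusion. The only difference is that you are slightly more explicit about the references and the sheaf-theoretic bookkeeping.
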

\begin{proof}
Since $\Pic'_{[\bar{P}'/\Gamma]}$ is representable by $P'$ which has finitely many connected components, we have $\Pic'_{[\bar{P}'/\Gamma]}\subset\Pic^{\tau}_{[\bar{P}'/\Gamma]}$. Now suppose that there is a section $s$ of $\Pic^{\tau}_{[\bar{P}'/\Gamma]}$ that does not lie in $\Pic'_{[\bar{P}'/\Gamma]}$. Then by definition $p^{\prime\vee}(s)\notin\Pic^0_{\bar{J}'}$. But $\Pic^0_{\bar{J}'}=\Pic^{\tau}_{\bar{J}'}$, and so no multiple of $p^{\prime\vee}(s)$ lies in $\Pic^0_{\bar{J}'}$, which contradicts the assumption that a multiple of $s$ lies in $\Pic^0_{[\bar{P}'/\Gamma]}$.
\end{proof}

If $\bar{P}'=\bar{P}''$ (see \S \ref{subsec-prym} for the definitions of these compactified Prym varieties) and so the subgroup $G\subset\Gamma$ acts trivially on $\bar{P}'$, then Theorem \ref{Thm-Pic'-repre-P} is similar to the fact that the dual of $\mathbf{B}G$ is isomorphic to $G^{\vee}$. However, this theorem shows that even if $\bar{P}'=\bar{P}$ and so $[\bar{P}/\Gamma]$ is generically a scheme, its dual has nevertheless connected components parametrised by $G^{\vee}$.

\subsection{Compatibility of Poincar\'e Sheaves}\label{SS-Comp-Poinc-sh}\hfill

By the proof of Lemma \ref{Lem-barP'=barPG}, the morphism $\phi^{\wedge}:\bar{J}'\rightarrow\bar{J}$ restricts to a morphism $\bar{P}'\rightarrow\bar{P}$. Obviously, it is $\Gamma$-equivariant, and so descends to a morphism
\begin{equation}\label{eq-Def-iotaH}
\iota_H:[\bar{P}'/\Gamma]\longrightarrow[\bar{P}/\Gamma].
\end{equation}
Then the following diagram commutes
\begin{equation}\label{CD-phi-iota-p}
\begin{tikzcd}[row sep=2.5em, column sep=2em]
\bar{J}' \arrow[r, "\phi^{\wedge}"] \arrow[d, "p'"'] & \bar{J} \arrow[d, "p"]\\
\lbrack\bar{P}'/\Gamma\rbrack \arrow[r, "\iota_H"'] & \lbrack\bar{P}/\Gamma\rbrack.
\end{tikzcd}
\end{equation}

\begin{Prop}\label{Prop-phiV-ptil}
The following diagram commutes
\begin{equation}\label{Prop-CD-phiV-ptil}
\begin{tikzcd}[row sep=2.5em, column sep=2em]
\Pic^{\tau}_{\lbrack\bar{P}/\Gamma\rbrack} \arrow[r, "\sim", "\tilde{p}"'] \arrow[d, "\iota_H^{\vee}"'] & P \arrow[d, "\phi^{\vee}"]\\
\Pic^{\tau}_{\lbrack\bar{P}'/\Gamma\rbrack} \arrow[r, "\sim", "\tilde{p}'"'] & P',
\end{tikzcd}
\end{equation}
where $\phi^{\vee}$ is defined as in Lemma \ref{Lem-phiaV}, and the horizontal arrows are the isomorphisms given by Theorem \ref{Thm-Pic'-repre-P}.
\end{Prop}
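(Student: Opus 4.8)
The plan is to transport the whole square to the smooth loci and there invoke Proposition~\ref{Prop-phiWV-phiV}. Write $\tilde{\rho}=\tilde{p}^{-1}$ and $\tilde{\rho}'=\tilde{p}'^{-1}$ for the inverses of the isomorphisms supplied by Theorem~\ref{Thm-Pic'-repre-P}, applied both to $\bar{P}'$ and to $\bar{P}$ itself (the $\bar{P}$ case is Theorem~\ref{Thm-Pic'-repre-P} for the trivial subgroup $H=0$, where $\phi^{\wedge}$, $\iota_H$ and $\phi^{\vee}$ all become the identity). Then the square (\ref{Prop-CD-phiV-ptil}) commutes if and only if $\tilde{\rho}'\circ\phi^{\vee}=\iota_H^{\vee}\circ\tilde{\rho}$ as morphisms $P\to\Pic^{\tau}_{[\bar{P}'/\Gamma]}$. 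Since $p^{\prime\vee}\colon\Pic_{[\bar{P}'/\Gamma]}\to\Pic_{\bar{J}'}$ is injective by Lemma~\ref{Inj-pV}, it suffices to verify this identity after composing with $p^{\prime\vee}$.

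Next I would unwind both composites. The proof of Theorem~\ref{Thm-Pic'-repre-P} shows that $p^{\prime\vee}\circ\tilde{\rho}'=\rho'|_{P'}$, where $\rho'=(\bs\alpha^{\prime\vee})^{-1}$ is the isomorphism attached to $\bar{J}'$ by Theorem~\ref{Thm-Ari2AB}; hence $p^{\prime\vee}\circ\tilde{\rho}'\circ\phi^{\vee}=\rho'\circ\phi^{\vee}$. For the other composite, applying the (contravariant) Picard functor to the commutative square (\ref{CD-phi-iota-p}) gives $p^{\prime\vee}\circ\iota_H^{\vee}=(\phi^{\wedge})^{\vee}\circ p^{\vee}$, and combining this with $p^{\vee}\circ\tilde{\rho}=\rho|_{P}$ (Theorem~\ref{Thm-Pic'-repre-P} for $\bar{P}$, with $\rho=(\bs\alpha^{\vee})^{-1}$) yields $p^{\prime\vee}\circ\iota_H^{\vee}\circ\tilde{\rho}=(\phi^{\wedge})^{\vee}\circ\rho$. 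Thus everything reduces to the single equality $\rho'\circ\phi^{\vee}=(\phi^{\wedge})^{\vee}\circ\rho$ of morphisms $P\to\Pic^0_{\bar{J}'}$.

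Finally, this last equality is exactly Proposition~\ref{Prop-phiWV-phiV} read through the identifications $\rho=(\bs\alpha^{\vee})^{-1}$ and $\rho'=(\bs\alpha^{\prime\vee})^{-1}$ of Remark~\ref{Rem-Ari2AB}: the commutative square there, $\phi^{\vee}\circ\bs\alpha^{\vee}=\bs\alpha^{\prime\vee}\circ(\phi^{\wedge})^{\vee}$, is equivalent on the smooth loci to $(\phi^{\wedge})^{\vee}\circ\rho|_{J}=\rho'|_{J'}\circ\phi^{\vee}$, and restricting to $P\subset J$ — legitimate because $\phi^{\vee}(P)\subset P'$ by Lemma~\ref{Lem-phiaV} — gives precisely what is needed, completing the reduction.

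The step I expect to be the main obstacle is not conceptual but bookkeeping: one must make sure that all the Poincaré sheaves involved ($\overline{\mathcal{P}}$, $\overline{\mathcal{P}}'$, $\overline{\mathcal{Q}}$, $\overline{\mathcal{Q}}'$, $\mathcal{P}_C$) are normalised consistently to be trivial along the relevant zero sections, so that $\tilde{\rho}$, $\tilde{\rho}'$, $\rho$, $\rho'$ genuinely are mutually inverse to the corresponding $\bs\alpha$-dual maps without a stray twist, and one must check that $\iota_H^{\vee}$ really does carry $\Pic^{\tau}_{[\bar{P}/\Gamma]}$ into $\Pic^{\tau}_{[\bar{P}'/\Gamma]}$ (which follows from $p^{\prime\vee}\circ\iota_H^{\vee}=(\phi^{\wedge})^{\vee}\circ p^{\vee}$ together with the fact that $(\phi^{\wedge})^{\vee}$ sends $\Pic^0_{\bar{J}}$ to $\Pic^0_{\bar{J}'}$ and $\Pic^0_{\bar{J}}=\Pic^{\tau}_{\bar{J}}$). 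None of this is deep, but it has to be handled carefully to avoid introducing a spurious sign or line-bundle twist.
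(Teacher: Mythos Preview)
Your argument is correct and follows essentially the same route as the paper: apply the Picard functor to the square (\ref{CD-phi-iota-p}), use the identification $p^{\prime\vee}\circ\tilde{\rho}'=\rho'|_{P'}$ (and its analogue for $\bar{P}$) extracted from the proof of Theorem~\ref{Thm-Pic'-repre-P}, and then reduce to Proposition~\ref{Prop-phiWV-phiV}. The paper packages these steps as the three commutative diagrams (\ref{CD-phiV-ptil-1})--(\ref{CD-phiV-ptil-3}) and uses the injectivity of $P'\hookrightarrow J'$ rather than invoking Lemma~\ref{Inj-pV} directly, but the content is the same.
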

\begin{proof}
Applying $\Pic^{\tau}(-)$ to the diagram (\ref{CD-phi-iota-p}) and noticing that $\Pic^{\tau}=\Pic^0$ for compactified Jacobians, we have
\begin{equation}\label{CD-phiV-ptil-1}
\begin{tikzcd}[row sep=2.5em, column sep=2em]
\Pic^{\tau}_{\lbrack\bar{P}/\Gamma\rbrack} \arrow[r, "\iota_H^{\vee}"] \arrow[d, "p^{\vee}"'] & \Pic^{\tau}_{\lbrack\bar{P}'/\Gamma\rbrack} \arrow[d, "p^{\prime\vee}"] \\
\Pic^0_{\bar{J}} \arrow[r, "(\phi^{\wedge})^{\vee}"'] & \Pic^0_{\bar{J}'}.
\end{tikzcd}
\end{equation}
By (\ref{CD-PrymAJ}) and the proof of Theorem \ref{Thm-Pic'-repre-P}, we have a commutative diagram
\begin{equation}\label{CD-phiV-ptil-2}
\begin{tikzcd}[row sep=2.5em, column sep=2em]
\Pic^{\tau}_{\lbrack\bar{P}'/\Gamma\rbrack} \arrow[r, "\sim", "\tilde{p}'"'] \arrow[d, "p^{\prime\vee}"'] & P' \arrow[d, hook] \\
\Pic^0_{\bar{J}'} \arrow[r, "\sim", "\bs\alpha^{\prime\vee}"'] & J'.
\end{tikzcd}
\end{equation}
We have a similar commutative diagram for $\bar{P}$ and $\bar{J}$, which together with (\ref{CD-phiV-ptil-2}) and (\ref{CD-phiV-ptil-1}) gives a commutative diagram
\begin{equation}\label{CD-phiV-ptil-3}
\begin{tikzcd}[row sep=2.5em, column sep=4em]
P \arrow[r, "\tilde{p}'\circ\iota_H^{\vee}\circ\tilde{p}^{-1}"] \arrow[d, hook] & P' \arrow[d, hook]\\
J \arrow[r, "\bs\alpha^{\prime\vee}\circ(\phi^{\wedge})^{\vee}\circ\bs\alpha^{\vee -1}"'] & J'.
\end{tikzcd}
\end{equation}
By Proposition \ref{Prop-phiWV-phiV}, we have $\bs\alpha^{\prime\vee}\circ(\phi^{\wedge})^{\vee}\circ\bs\alpha^{\vee -1}=\phi^{\vee}$. Therefore, the top arrow is the restriction of $\phi^{\vee}$; that is, we have the desired commutative diagram.
\end{proof}

Recall that $\mathcal{Q}$ and $\mathcal{Q}'$ are the Poincar\'e line bundles on $P\times[\bar{P}/\Gamma]$ and $P'\times[\bar{P}'/\Gamma]$ respectively.
\begin{Cor}\label{Cor-Comp-Poinc-sh}
We have an isomorphism of line bundles on $P\times[\bar{P}'/\Gamma]$:
$$
(\Id\times\iota_H)^{\ast}\mathcal{Q}\cong(\phi^{\vee}\times\Id)^{\ast}\mathcal{Q}'\otimes\pr_1^{\ast}M,
$$
where $M$ is some line bundle on $P$.
\end{Cor}
\begin{proof}
The sheaf $\mathcal{Q}$ defines the inverse of $\tilde{p}$ in (\ref{Prop-CD-phiV-ptil}). The pullback $(\Id\times\iota_H)^{\ast}\mathcal{Q}$ defines $\iota_H^{\vee}\circ\tilde{p}^{-1}$. By Proposition \ref{Prop-phiV-ptil}, we have $\iota_H^{\vee}\circ\tilde{p}^{-1}=\tilde{p}^{\prime -1}\circ\phi^{\vee}$. But $\tilde{p}^{\prime -1}\circ\phi^{\vee}$ is defined by the sheaf $(\phi^{\vee}\times\Id)^{\ast}\mathcal{Q}'$. Therefore, the two sheaves $(\Id\times\iota_H)^{\ast}\mathcal{Q}$ and $(\phi^{\vee}\times\Id)^{\ast}\mathcal{Q}'$ represent the same element of $\Pic^{\tau}_{[\bar{P}'/\Gamma]}(P)$, and so they differ by the pullback of a line bundle on $P$.
\end{proof}

\subsection{Twisted Sheaves}\label{subsec-TS}\hfill

\begin{Lem}\label{Lem-NmGa=Ga1V}
Regard $\Gamma^{\vee}$ as a subgroup of $\Pic^0(J_C)$. Then, we have a natural identification $\Nm_{\pi'}^{\vee}(\Gamma^{\vee})\cong\Gamma_0^{\vee}$. Moreover, under this identification, the norm map $\Nm_{\pi'}^{\vee}:\Gamma^{\vee}\rightarrow\Gamma_0^{\vee}$ is restricting a character of $\Gamma$ to $\Gamma_0$ composed with the inversion on $\Gamma_0$.
\end{Lem}
\begin{proof}
Recall that we have a $\Gamma_0$-covering $\bar{P}'_0\times J_C\rightarrow \bar{J}'$. We can therefore identify $\Gamma_0^{\vee}$ with the kernel of $\Pic^0_{\bar{J}'}\rightarrow\Pic^0_{\bar{P}'_0\times J_C}$. According to \cite[\S 5, Corollary 6]{M}, a line bundle $L$ on $\bar{P}'_0\times J_C$ is trivial if and only if 
\begin{itemize}
\item[(i)] for every $x\in J_C$, the restriction $L|_{\bar{P}'_0\times\{x\}}$ is trivial, and
\item[(ii)] the restriction $L_{\{0\}\times J_C}$ is trivial.
\end{itemize}
Note that for every $x\in J_C$, the image of $\bar{P}'_0\times\{x\}$ in $\bar{J}'$ is contained in $\Nm^{-1}_{\pi'}(nx)$. Therefore, a line bundle $L$ on $\bar{J}'$ pulls back to the trivial bundle on $\bar{P}'_0\times J_C$ if
\begin{itemize}
\item[(i')] for every $x\in J_C$, the restriction $L|_{\Nm_{\pi'}^{-1}(nx)}$ is trivial.
\item[(ii')] $(\pi^{\prime\vee})^{\ast}L\cong\mathcal{O}_{J_C}$.
\end{itemize}
Let us verify (i') and (ii') for $L=\Nm_{\pi'}^{\ast}M$ with $M\in\Pic^0_{J_C}[n]=\Gamma^{\vee}$. The pullback $\Nm_{\pi'}^{\ast}M$ is obviously trivial on each fibre $\Nm_{\pi'}^{-1}(nx)$. This verifies (i'). Since $(\pi^{\prime\vee})^{\vee}\circ\Nm_{\pi'}^{\vee}=[n]^{\vee}$, and $M$ is $n$-torsion, we have $(\pi^{\prime\vee})^{\ast}(\Nm_{\pi'}^{\ast}M)\cong\mathcal{O}_{J_C}$. This verifies (ii'). 

We have shown that $\Nm_{\pi'}^{\vee}(\Gamma^{\vee})\subset\Gamma_0^{\vee}$. By Remark \ref{Rem-piH-inj}, the morphism $\pi_H^{\vee}$ is injective, so the kernel of $\pi^{\prime\vee}:J_C\rightarrow\bar{J}'$ is equal to $H$. We see from the commutative diagram (\ref{CD-PicAJ-XaC}) that the cardinality of $\Nm_{\pi'}^{\vee}(\Gamma^{\vee})$ is equal to $|\Gamma|/|H|=|\Gamma_0|$; therefore, $\Nm_{\pi'}^{\vee}(\Gamma^{\vee})=\Gamma_0^{\vee}$.

To show the last statement, we apply Lemma \ref{Lem-G1G2} to the following commutative diagram
$$
\begin{tikzcd}[row sep=2.5em, column sep=2em]
\bar{P}'_0\times J_C \arrow[r, "\pr_2"] \arrow[d, "r"'] & J_C \arrow[d, "\lbrack n\rbrack"]\\
\bar{J}' \arrow[r, "\Nm_{\pi'}"'] & J_C.
\end{tikzcd}
$$
Note that $\pr_2$ is $\Gamma_0$-equivariant if $\gamma\in\Gamma_0$ acts on $\bar{P}'_0\times J_C$ as $\gamma^{-1}\times\gamma$, the inverse of the usual action.
\end{proof}

Recall that in Proposition \ref{Prop-equiv-barP-a'}, we have defined a $\Gamma_0$-equivariant structure $\{\Phi_{\gamma}\}_{\gamma\in\Gamma_0}$ on $\overline{\mathcal{P}}{}'$, regarded as a sheaf on $\bar{P}'_{\xi}\times\bar{J}'$ where $\Gamma_0$ acts on the first factor trivially. The Poincar\'e sheaf $\overline{\mathcal{Q}}{}'$ is obtained by first restricting $\overline{\mathcal{P}}{}'$ to $\bar{P}'_{\xi}\times\bar{P}'_0$ and then making it descend to the $\Gamma_0$-quotient. 

Since $H$ acts trivially on $\bar{P}'_0$, there is a natural morphism $[\bar{P}'_0/\Gamma_0]\rightarrow[\bar{P}'_0/(\Gamma_0/H)]$ defined by sending a $\Gamma_0$-torsor $\mathcal{E}$ on any test scheme $T$ to the extension of structure group $\mathcal{E}\times^{\Gamma_0}\Gamma_0/H$. There is a Cartesian diagram
$$
\begin{tikzcd}[row sep=2.5em, column sep=2em]
\lbrack\bar{P}'_0/H\rbrack \arrow[d] \arrow[r] & \lbrack\bar{P}'_0/\Gamma_0\rbrack \arrow[d]\\
\bar{P}'_0 \arrow[r] & \lbrack\bar{P}'_0/(\Gamma_0/H)\rbrack,
\end{tikzcd}
$$ 
where the bottom arrow is an \'etale atlas. Note that $[\bar{P}'_0/H]\cong\bar{P}'_0\times\mathbf{B}H$. In this situation, we say that $[\bar{P}'_0/\Gamma_0]$ is an $H$-gerbe over $[\bar{P}'_0/(\Gamma_0/H)]$, or simply an $H$-gerbe. If $F$ is a coherent sheaf on $[\bar{P}'_0/\Gamma_0]$, then it decomposes according to the action of $H$. If $F$ is equal to its $\xi$-isotypic component for some $\xi\in H^{\vee}$, then we say that $F$ is a $\xi$-twisted sheaf. This is equivalent to saying that the pullback of $F$ to $\bar{P}'_0$ has such an $H$-equivariant structure that $H$ acts on its sections via $\xi$. We are now ready to prove the following theorem.

\begin{Thm}\label{Thm-Twisted-Sheaf}
Let $\xi\in H^{\vee}$ and let $\bar{P}'_{\xi}$ be the corresponding connected component of $\bar{P}'$. Then, the Poincar\'e sheaf $\overline{\mathcal{Q}}{}'$ on the $H$-gerbe $\bar{P}'_{\xi}\times[\bar{P}'_0/\Gamma_0]$ is a $\xi^{-1}$-twisted sheaf.
\end{Thm}
\begin{proof}
By Lemma \ref{Lem-HomP=C}, for any $g\in H$, the isomorphism $\Phi_g:\overline{\mathcal{P}}{}'\isom\overline{\mathcal{P}}{}'$ is the multiplication by a scalar. We need to show that these isomorphisms define an action of $H$ on $\overline{\mathcal{P}}{}'$ via the character $\xi$. It suffices to show that for some $p\in\bar{P}'_{\xi}$, the restrictions of the isomorphisms $\Phi_g$ to $\{p\}\times\bar{J}'$ define an action of $H$ on $\overline{\mathcal{P}}{}'|_{\{p\}\times\bar{J}'}$ via $\xi$.

We have seen in the proof of the above lemma that the kernel of $\pi^{\prime\vee}:J_C\rightarrow\bar{J}'$ is equal to $H$. The commutative diagram (\ref{CD-PicAJ-XaC}) gives rise to a commutative diagram 
\begin{equation}
\begin{tikzcd}[row sep=2.5em, column sep=3em]
H \arrow[r, hook] \arrow[d, "\rotatebox{90}{\(\sim\)}"] & J_C \arrow[r, "\pi^{\prime\vee}"] \arrow[d, "\rotatebox{90}{\(\sim\)}", "\rho_C"'] & J' \arrow[d, "\rotatebox{90}{\(\sim\)}", "\rho'"']\\
K \arrow[r, hook] & \Pic^0_{J_C} \arrow[r, "\Nm^{\vee}_{\pi'}"'] & \Pic^0_{\bar{J}'}
\end{tikzcd}
\end{equation}
where $K$ is defined to be the kernel of $\Nm_{\pi'}^{\vee}$ and the middle vertical arrow restricts to $H\isom K$. This diagram restricts to the following commutative diagram
\begin{equation}\label{CD-GaGaG-GaVGa1V}
\begin{tikzcd}[row sep=2.5em, column sep=3em]
0 \arrow[r] & H \arrow[r] \arrow[d, "\rotatebox{90}{\(\sim\)}"] & \Gamma \arrow[r] \arrow[d, "\rotatebox{90}{\(\sim\)}", "\rho_C"'] & \Gamma_H \arrow[d, "\rotatebox{90}{\(\sim\)}", "\rho'"'] \arrow[r] & 0\\
0 \arrow[r] & K \arrow[r] & \Gamma^{\vee} \arrow[r, "\Nm_{\pi'}^{\vee}"'] & \Gamma_0^{\vee} \arrow[r] & 0
\end{tikzcd}
\end{equation}
where $\Gamma_H:=\Gamma/H$, and we have used Lemma \ref{Lem-NmGa=Ga1V}. 

Let $\bar{\gamma}\in\Gamma_{\xi}$ and so $\gamma:=\pi^{\prime\vee}(\bar{\gamma})$ lies in $\bar{P}'_{\xi}$ (the notation $\Gamma_{\xi}$ was introduced before Remark \ref{Rem-JCP=barJ-0}). We take $p=\gamma$. It follows from (\ref{CD-GaGaG-GaVGa1V}) that $\rho'(\Gamma_H)=\Gamma_0^{\vee}$. This allows us to regard $\rho'(\gamma)$ as a character $\kappa\in\Gamma_0^{\vee}$. Consequently, the line bundle $\rho'(\gamma)=\mathcal{P}'|_{\{\gamma\}\times\bar{J}'}$ is equivalent to $\mathcal{O}_{\bar{P}'_0\times J_C}$ equipped with a $\Gamma_0$-equivariant structure $\{\theta_g\}_{g\in\Gamma_0}$, so that $\Gamma_0$ acts on $H^0(\bar{P}'_0\times J_C,\mathcal{O}_{\bar{P}'_0\times J_C})$ via $\kappa$. By Remark \ref{Rem-gamma_chi} and Lemma \ref{Lem-NmGa=Ga1V}, we have $\Nm_{\pi'}^{\vee}(\rho_C(\gamma))|_{H}=\xi^{-1}$; therefore, $\kappa|_H=\xi^{-1}$. In other words, we have $\theta_g(1)=\xi(g)^{-1}1$, for any $g\in H$, where 1 is the constant function with value 1 on $\bar{P}'_0\times J_C$.

We need to determine the isomorphisms $\Phi_g$ on $\{p\}\times\bar{J}'$ from the isomorphisms $\theta_g$, for $g\in H$. Applying Lemma \ref{Lem-3-equiv} to $M=\mathcal{O}_{\bar{P}'_0\times J_C}$ and $L=\rho'(\gamma)$, we see that the equivariant structure $\{\Phi_g\}_{g\in H}$ on $L$ is equivalent to the equivariant structure $\{\zeta_{P,g}\}_{g\in H}$ on $M$, where $\zeta_{P,g}:M\rightarrow(g\times\Id)^{\ast}M$ are the isomorphisms defined by (\ref{Lem-3-equiv-eq2}). Now, the actions of $g$ on $\bar{P}'_0$ and on $\bar{J}'$ are both trivial so $(g\times\Id)^{\ast}M=M$ and $g^{\ast}L=L$. The second (canonical) isomorphism in (\ref{Lem-3-equiv-eq2}) becomes the identity morphism, and we have $\zeta_{P,g}=r^{\ast}\Phi_g$. However, $\zeta_{P,g}=(\Id\times g)^{\ast}\theta_{g}\circ\zeta_{J,g}$ by (\ref{Lem-3-equiv-eq01}). We have already seen that $\theta_g(1)=\xi(g)^{-1}1$. Recall that the equivariant structure $\{\Phi_{\gamma}\}_{\gamma\in\Gamma_0}$ on $L$ is defined in such a way that its pullback to $J_C$ consists of the canonical isomorphisms $\mathcal{O}_{J_C}\isom g^{\ast}\mathcal{O}_{J_C}$ (see the proof of Proposition \ref{Prop-equiv-barP-a'} and Remark \ref{Rem-equiv-barP-a'}). By definition, the restrictions of $\zeta_{J,g}$, $g\in H$, to $\{0\}\times J_C$ are exactly the pullbacks of $\{\Phi_{\gamma}\}$ to $J_C$. We deduce that $\zeta_{J,g}(1)=1$, and so $\zeta_{P,g}(1)=\xi(g)^{-1}1$, for any $g\in H$. We conclude that $\Phi_g$ is also the multiplication by $\xi(g)^{-1}$.
\end{proof}
\begin{Rem}
While $\Phi_g$ multiplies sections of $\rho'(\gamma)$ by $\xi(g)$, the isomorphism $\mathcal{O}_{J_C}\rightarrow g^{\ast}\mathcal{O}_{J_C}$ induced by $\Phi_g$ multiplies global sections simply by 1. This is not a contradiction. In fact, since $g$ acts trivially on $\bar{P}'_0$ and $\bar{J}'$, the second (canonical) isomorphism in (\ref{Lem-3-equiv-eq1}) coincides with $$\theta_{g^{-1}}:\mathcal{O}_{\bar{P}'_0\times J_C}\lisom(g^{-1}\times g)^{\ast}\mathcal{O}_{\bar{P}'_0\times J_C}=(\Id\times g)^{\ast}\mathcal{O}_{\bar{P}'_0\times J_C}.$$ And $\theta_{g^{-1}}$ acts as $\xi(g)$ on global sections, so we get the correct action of $H$.
\end{Rem}

\section{Fibrewise Fractional Hecke Eigenproperties}\label{FHC}

\subsection{Support of the mirror}\label{SS-Supp-Mirr}\hfill

As in \S \ref{subsec-prym}, we have a fixed integral spectral cover $\pi:X\rightarrow C$, which determines a subgroup $G\subset \Gamma$, and we have the associated compactified Prym variety $\bar{P}$ and its quotient $[\bar{P}/\Gamma]$. Since $\bar{P}$ is isomorphic to a Hitchin fibre, by \cite[Fact 2.5.1]{dC}, it is connected, and its irreducible components are in bijection with the connected components of $P$, which are in turn parametrised by $G^{\vee}$. According to Theorem \ref{M-T}, the action of $\Gamma$ on $\bar{P}$ is not free, as long as $G$ is nontrivial. A coherent sheaf $\bar{Q}$ on $\bar{P}\times[\bar{P}/\Gamma]$ has been defined in \S \ref{subsec-ESPS}. The Fourier-Mukai transform with kernel $\overline{\mathcal{Q}}$ is the functor
\begingroup
\allowdisplaybreaks
\begin{align*}
\mathscr{FM}_{\overline{\mathcal{Q}}}:\mathcal{D}([\bar{P}/\Gamma])&\longrightarrow\mathcal{D}(\bar{P})\\
K^{\bullet}&\longmapsto R\pr_{1\ast}(\overline{\mathcal{Q}}\otimes^L\pr_2^{\ast}K^{\bullet}),
\end{align*}
\endgroup
where $\pr_i$ is the projection from $\bar{P}\times[\bar{P}/\Gamma]$ to the $i$-th factor.

Let $x\in\bar{P}$ be a closed point. Suppose that $\Stab_{\Gamma}(x)=H$. Then there is a closed immersion $i_x:\mathbf{B}H\longrightarrow[\bar{P}/\Gamma]$ that fits into the following commutative diagram
\begin{equation}\label{CD-ix}
\begin{tikzcd}[row sep=2.5em, column sep=4em]
\Spec\mathbb{C} \arrow[r, "e"] \arrow[d, "x"'] & \mathbf{B}H \arrow[d, "i_x"] \\
\bar{P} \arrow[r, "r_P"'] & \lbrack\bar{P}/\Gamma\rbrack,
\end{tikzcd}
\end{equation}
where $e$ is the quotient by $H$. The line bundles on $\mathbf{B}H$ are identified with $H^{\vee}$. For any $\xi\in H^{\vee}$, we denote by $F_{\xi}$ the line bundle on $\mathbf{B}H$ with $H$ acting on its stalk via $\xi$. We will write $$\mathbf{A}_x(\xi)=\mathscr{FM}_{\overline{\mathcal{Q}}}(i_{x\ast}F_{\xi}).$$ We will denote by $\mathbf{A}_x=\overline{\mathcal{P}}|_{\bar{P}\times\{x\}}$ the torsion-free sheaf on $\bar{P}$ corresponding to $x$.
\begin{Lem}\label{Lem-Decomp}
There is an isomorphism of coherent sheaves on $\bar{P}$
$$
\mathbf{A}_x\cong\bigoplus_{\xi\in H^{\vee}}\mathbf{A}_x(\xi),
$$
and each summand $\mathbf{A}_x(\xi)$ is torsion-free and simple (in the sense that the endomorphism space is isomorphic to $\mathbb{C}$).
\end{Lem}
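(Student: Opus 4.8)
The plan is to compute $\mathscr{FM}_{\overline{\mathcal{Q}}}$ on the pushforward of the regular representation of $H$, read off the decomposition, and then deduce torsion-freeness and simplicity formally. First I would identify $\bigoplus_{\xi\in H^{\vee}}F_{\xi}$ with the vector bundle on $\mathbf{B}H$ attached to the regular representation of $H$; as $H$ is abelian this representation is $\bigoplus_{\xi\in H^{\vee}}\xi^{-1}\cong\mathbb{C}[H]$, which is $e_{\ast}\mathcal{O}_{\Spec\mathbb{C}}$ for $e:\Spec\mathbb{C}\to\mathbf{B}H$ the atlas appearing in (\ref{CD-ix}). By the commutativity of (\ref{CD-ix}), $i_{x\ast}\big(\bigoplus_{\xi}F_{\xi}\big)=i_{x\ast}e_{\ast}\mathcal{O}_{\Spec\mathbb{C}}=r_{P\ast}x_{\ast}\mathcal{O}_{\Spec\mathbb{C}}=r_{P\ast}\underline{\mathbb{C}}_{x}$. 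Next I compute $\mathscr{FM}_{\overline{\mathcal{Q}}}(r_{P\ast}\underline{\mathbb{C}}_{x})$: the square formed by $\Id\times r_{P}:\bar{P}\times\bar{P}\to\bar{P}\times[\bar{P}/\Gamma]$ over $r_{P}:\bar{P}\to[\bar{P}/\Gamma]$ is Cartesian (as $r_{P}$ is a $\Gamma$-torsor, hence flat), so flat base change gives $\pr_{2}^{\ast}r_{P\ast}\underline{\mathbb{C}}_{x}\cong(\Id\times r_{P})_{\ast}\mathcal{O}_{\bar{P}\times\{x\}}$; the projection formula together with $(\Id\times r_{P})^{\ast}\overline{\mathcal{Q}}\cong\overline{\mathcal{P}}$ then identifies $\overline{\mathcal{Q}}\otimes^{L}\pr_{2}^{\ast}r_{P\ast}\underline{\mathbb{C}}_{x}$ with $(\Id\times r_{P})_{\ast}(\overline{\mathcal{P}}\otimes^{L}\mathcal{O}_{\bar{P}\times\{x\}})$. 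Using that $\overline{\mathcal{P}}$, restricted to $\bar{P}\times\bar{P}$, is flat over the second factor (it is a family of torsion-free rank one sheaves, by Theorem \ref{Thm-Ari2AB} together with \cite{FHR}, \cite{GS}), this derived restriction has no higher Tor and equals $\mathbf{A}_{x}$; pushing forward along $\pr_{1}$, which restricts to an isomorphism on $\bar{P}\times\{x\}$, yields $\mathscr{FM}_{\overline{\mathcal{Q}}}(r_{P\ast}\underline{\mathbb{C}}_{x})\cong\mathbf{A}_{x}$. Since $\mathscr{FM}_{\overline{\mathcal{Q}}}$ is triangulated, hence additive, this gives $\mathbf{A}_{x}\cong\bigoplus_{\xi}\mathscr{FM}_{\overline{\mathcal{Q}}}(i_{x\ast}F_{\xi})=\bigoplus_{\xi}\mathbf{A}_{x}(\xi)$, and each $\mathbf{A}_{x}(\xi)$, being a direct summand of the honest sheaf $\mathbf{A}_{x}$, is itself an honest sheaf placed in degree $0$.

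Torsion-freeness is then immediate: $\mathbf{A}_{x}$ is torsion-free (Theorem \ref{Thm-Ari2AB}(ii), \cite{FHR}, \cite{GS}), and a direct summand of a torsion-free sheaf is torsion-free. For simplicity I would invoke that $\mathscr{FM}_{\overline{\mathcal{Q}}}$ is the fibrewise equivalence of \cite{FHR} (see also \cite{GS}), in particular fully faithful, so that $\Hom_{\bar{P}}(\mathbf{A}_{x}(\xi),\mathbf{A}_{x}(\eta))\cong\Hom_{[\bar{P}/\Gamma]}(i_{x\ast}F_{\xi},i_{x\ast}F_{\eta})$. Since $i_{x}$ is a closed immersion one has $i_{x}^{\ast}i_{x\ast}=\mathrm{id}$ on coherent sheaves, so this group is $\Hom_{\mathbf{B}H}(F_{\xi},F_{\eta})$, which is $\mathbb{C}$ if $\xi=\eta$ and $0$ otherwise (the $F_{\xi}$ being pairwise non-isomorphic line bundles on $\mathbf{B}H$). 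Taking $\xi=\eta$ shows $\End_{\bar{P}}(\mathbf{A}_{x}(\xi))=\mathbb{C}$, i.e. $\mathbf{A}_{x}(\xi)$ is simple; as a byproduct the summands $\mathbf{A}_{x}(\xi)$ are pairwise non-isomorphic.

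The genuinely delicate points do not lie in the bookkeeping above but in the two inputs borrowed from prior work: that the restriction of the Poincar\'e sheaf to $\bar{P}\times\bar{P}$ is $S$-flat over each factor (so that all underived restrictions coincide with the derived ones, used both in the decomposition and in the identification of $\mathbf{A}_{x}$ as a torsion-free sheaf on $\bar{P}$), and that $\mathscr{FM}_{\overline{\mathcal{Q}}}$ is fully faithful; concretely the main thing to verify is that the kernel $\overline{\mathcal{Q}}$ used here is the one for which these facts are established in \cite{FHR}, \cite{GS}. If one preferred an argument avoiding full faithfulness, one could instead bound $\End_{\bar{P}}(\mathbf{A}_{x})$ directly: the equality $\Stab_{\Gamma}(x)=H$ equips $\mathbf{A}_{x}$ with an $H$-action, giving an inclusion $\mathbb{C}[H]\hookrightarrow\End_{\bar{P}}(\mathbf{A}_{x})$, and one would then need the reverse inequality $\dim\End_{\bar{P}}(\mathbf{A}_{x})\leq|H|$, which requires analysing the partial normalisation of $\bar{P}$ through which $\mathbf{A}_{x}$ factors and counting its connected components. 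I expect that route to be harder, so I would keep the full-faithfulness argument, which makes simplicity essentially formal once the decomposition is in place.
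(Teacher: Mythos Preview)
Your argument is correct and follows essentially the same route as the paper's own proof: identify $\bigoplus_{\xi}F_{\xi}$ with the regular representation so that $i_{x\ast}\bigoplus_{\xi}F_{\xi}\cong r_{P\ast}\underline{\mathbb{C}}_{x}$ via (\ref{CD-ix}), use the projection formula together with $(\Id\times r_{P})^{\ast}\overline{\mathcal{Q}}\cong\overline{\mathcal{P}}$ to get $\mathscr{FM}_{\overline{\mathcal{Q}}}(r_{P\ast}\underline{\mathbb{C}}_{x})\cong\mathbf{A}_{x}$, deduce torsion-freeness of each $\mathbf{A}_{x}(\xi)$ as a summand, and obtain simplicity from the equivalence of \cite{FHR}. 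Your write-up is somewhat more explicit than the paper's (flat base change, the adjunction $\Hom(i_{x\ast}F_{\xi},i_{x\ast}F_{\eta})\cong\Hom_{\mathbf{B}H}(F_{\xi},F_{\eta})$), but the substance is the same.
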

\begin{proof}
Write $F_R=\bigoplus_{\xi\in H^{\vee}}F_{\xi}$, where $R$ means the regular representation of $H$. The right hand side of the desired isomorphism is isomorphic to $\mathscr{FM}_{\overline{\mathcal{Q}}}(i_{x\ast}F_R)$. According to the commutative diagram (\ref{CD-ix}), we have $$i_{x\ast}F_R\cong i_{x\ast}e_{\ast}\underline{\mathbb{C}}\cong r_{P\ast}x_{\ast}\underline{\mathbb{C}}\cong r_{P\ast}\underline{\mathbb{C}}_x,$$ where $\underline{\mathbb{C}}_x$ is the skyscraper sheaf on $\bar{P}$ supported at $x$. Using the fact that $\overline{\mathcal{P}}\cong(\Id\times r_P)^{\ast}\overline{\mathcal{Q}}$ and the projection formula, we deduce that
$$
(\Id\times r_P)_{\ast}(\overline{\mathcal{P}}\otimes^L\pr_2^{\ast}\underline{\mathbb{C}}_x)\cong\overline{\mathcal{Q}}\otimes^L\pr_2^{\ast}i_{x\ast}F_R.
$$
Let $i_{\bar{P}}:\bar{P}\times\{x\}\rightarrow\bar{P}\times\bar{P}$ be the closed immersion. We apply $R\pr_{1\ast}$ to both sides of the above equation. Since $\pr_1=\pr_1\circ(\Id\times r_P)$ and $$\overline{\mathcal{P}}\otimes^L\pr_2^{\ast}\underline{\mathbb{C}}_x\cong i_{\bar{P}\ast}\overline{\mathcal{P}}|_{\bar{P}\times\{x\}},$$ we get $\overline{\mathcal{P}}|_{\bar{P}\times\{x\}}\cong\mathscr{FM}_{\overline{\mathcal{Q}}}(i_{x\ast}F_R)$, as desired. Since the Fourier-Mukai functor $\mathscr{FM}_{\overline{\mathcal{Q}}}$ is an equivalence of categories by \cite[Theorem A]{FHR}, each summand $\mathbf{A}_x(\xi)$ is simple. Finally, each direct summand $\mathbf{A}_x(\xi)$ is torsion-free, since $\mathbf{A}_x$ is torsion-free.
\end{proof}

For any $\chi\in G^{\vee}$, we denote by $\bar{P}_{\chi}^{irr}$ the corresponding irreducible component of $\bar{P}$.
\begin{Thm}\label{Thm-B-proof}
Let $H\subset G$ be a subgroup and let $\xi\in H^{\vee}$. Let $x\in\bar{P}$ be such that $\Stab_{\Gamma}(x)=H$. Then
$$
\Supp\mathbf{A}_x(\xi)=\bigcup_{\chi\in\Ind^G_H\xi}\bar{P}_{\chi}^{irr},
$$
where $\chi\in\Ind^G_H\xi$ means that $\chi$ is an irreducible component of the induced character $\Ind^G_H\xi$.
\end{Thm}
\begin{proof}
We will show that 
\begin{equation}\label{eq-Thm-A-proof}
P\cap\Supp\mathbf{A}_x(\xi)=\bigsqcup_{\chi\in\Ind^G_H\xi}P_{\chi}.
\end{equation}
Since $\mathbf{A}_x(\xi)$ is a torsion-free sheaf on $\bar{P}$, its support can not have irreducible components of lower dimension, and so (\ref{eq-Thm-A-proof}) implies that its support is the union of the irreducible components $\bar{P}_{\chi}^{irr}$. 

We have
\begingroup
\allowdisplaybreaks
\begin{align*}
&(\overline{\mathcal{Q}}\otimes^L\pr_2^{\ast}i_{x\ast}F_{\xi})|_{P\times[\bar{P}/\Gamma]}\\
\cong{}&\mathcal{Q}\otimes\pr_2^{\ast}i_{x\ast}F_{\xi}\\
\cong{}&(\Id\times i_x)_{\ast}((\Id\times i_x)^{\ast}\mathcal{Q}\otimes\pr_2^{\ast}F_{\xi}).
\end{align*}
\endgroup
Note that $i_x$ factors through $\iota_x:\mathbf{B}H\longrightarrow[\bar{P}^H/\Gamma]$. By Lemma \ref{Lem-barP'=barPG}, we can identify $[\bar{P}'/\Gamma]$ with $[\bar{P}^H/\Gamma]$, and the morphism $\iota_H$ defined in (\ref{eq-Def-iotaH}) is identified with the closed immersion $[\bar{P}^H/\Gamma]\rightarrow[\bar{P}/\Gamma]$. Now 
$$
(\Id\times i_x)^{\ast}\mathcal{Q}\cong(\Id\times\iota_x)^{\ast}(\Id\times\iota_H)^{\ast}\mathcal{Q}\cong(\Id\times\iota_x)^{\ast}\big((\phi^{\vee}\times\Id)^{\ast}\mathcal{Q}'\otimes\pr_1^{\ast}M\big)
$$
for some line bundle $M$ on $P$ according to Corollary \ref{Cor-Comp-Poinc-sh}, where $\phi^{\vee}:P\rightarrow P'$ is as in Lemma \ref{Lem-phiaV}. By Theorem \ref{Thm-Twisted-Sheaf}, and Proposition \ref{Prop-IndGH}, the restriction of $\mathcal{Q}'$ to $P'_{\chi}\times[\bar{P}'/\Gamma]$ is a $\Res^G_H\chi^{-1}$-twisted sheaf. We deduce that the restriction of $(\Id\times i_x)^{\ast}\mathcal{Q}$ to $P_{\chi}\times\mathbf{B}H$ is a $\Res^G_H\chi^{-1}$-twisted sheaf. Therefore $(\Id\times i_x)^{\ast}\mathcal{Q}\otimes\pr_2^{\ast}F_{\xi}$ is $0$-twisted exactly on the connected components $P_{\chi}\times\mathbf{B}H$ with $\chi\in\Ind^G_H\xi$. Now the functor $\pr_{1\ast}=R\pr_{1\ast}=R\pr_{1\ast}\circ(\Id\times i_x)_{\ast}$ takes a sheaf on $P_{\chi}\times\mathbf{B}H$ to its $0$-twisted component, we see that
$$
P_{\chi}\subset\Supp\mathscr{FM}_{\overline{\mathcal{Q}}}(i_{x\ast}F_{\xi})
$$
if and only if $\chi\in\Ind^G_H\xi$. This proves (\ref{eq-Thm-A-proof}).
\end{proof}

\subsection{Translation Operators and Eigensheaves}\label{subsec-TOE}\hfill

Recall that $\pi:X\rightarrow C$ is an integral spectral cover and $\bar{P}=\bar{P}_{\pi}$. Let $p_1$ and $p_2$ be smooth points of of $X$ such that $\pi(p_1)=\pi(p_2)=q\in C$. Then $\mathcal{O}_{X}(p_1-p_2)\in P$. Define the translation map
\begingroup
\allowdisplaybreaks
\begin{align*}
T_{p_1,p_2}:\bar{P}&\longrightarrow \bar{P},\\
F&\longmapsto F\otimes\mathcal{O}_{X}(p_1-p_2).
\end{align*}
\endgroup
Since $p_1$ and $p_2$ are smooth points, there are unique points $\tilde{p}_1$ and $\tilde{p}_2$ of the normalisation $\tilde{X}$ such that $\tilde{p}_i$ lies over $p_i$ for $i=1$, $2$. Let $\tilde{\pi}:\tilde{X}\rightarrow C$ be as in \S \ref{SS-CCPV} and $G=\Ker\tilde{\pi}^{\vee}$. Tensoring by $\mathcal{O}_{\tilde{X}}(\tilde{p}_1-\tilde{p}_2)$ defines a translation map $T_{\tilde{p}_1,\tilde{p}_2}$ on $\bar{P}_{\tilde{\pi}}$. Let $q_1$ and $q_2$ be the images of $\tilde{p}_1$ and $\tilde{p}_2$ respectively under the map $\tilde{\pi}_G:\tilde{X}\rightarrow C''$, where $\tilde{\pi}_G$ is as in the proof of Proposition \ref{Prop-IndGH}. Then $\psi_G(q_1)=\psi_G(q_2)=q$ and so $\mathcal{O}_{C''}(q_1-q_2)\in P_{C''}$. Tensoring by $\mathcal{O}_{C''}(q_1-q_2)$ defines a translation map $T_{q_1,q_2}$ on $P_{C''}$. There exists a unique $\chi_{p_1,p_2}\in G^{\vee}$, regarded as a covering transformation, such that $q_2=\chi_{p_1,p_2}(q_1)$.

\begin{Prop}\label{Prop-T-pi0}
The restriction of $T_{p_1,p_2}$ to $P$ induces the multiplication by $\chi_{p_1,p_2}^{-1}$ on $\pi_0(P)$.
\end{Prop}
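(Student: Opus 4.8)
The plan is to reduce the statement, via the chain of identifications $\pi_0(P)\cong\pi_0(P_{\tilde\pi})\cong\pi_0(P_{C''})\cong G^\vee$ already set up in Lemma \ref{Lem-conn-comp-P'} and in the proof of Proposition \ref{Prop-IndGH}, to Theorem \ref{Thm-Conn-Prym-New}. The first observation is that $T_{p_1,p_2}$ is nothing but translation of the commutative algebraic group $P$ by the element $\ell:=\mathcal{O}_X(p_1-p_2)\in P$; such a translation induces on the finite abelian quotient $\pi_0(P)$ the map of adding the class $[\ell]\in\pi_0(P)$, which in the multiplicative identification $\pi_0(P)\cong G^\vee$ is multiplication by the image of $[\ell]$. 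Hence it suffices to show that the class of $\mathcal{O}_X(p_1-p_2)$ in $\pi_0(P)\cong G^\vee$ is $\chi^{-1}_{p_1,p_2}$.

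Next I would transport $\ell$ through the two comparison morphisms. The pullback $\nu^\vee\colon\Pic_X\to\Pic_{\tilde X}$ restricts to a homomorphism $P\to P_{\tilde\pi}$ inducing an isomorphism on component groups that is compatible with the identifications with $G^\vee$ (this is \cite[Lemma 4.1 (4)]{HP} together with \cite[Lemma 2.1]{HP}; cf. Lemma \ref{Lem-conn-comp-P'}). Since $p_1$ and $p_2$ are smooth points of $X$, the normalisation $\nu$ is an isomorphism near them, so $\nu^\vee(\ell)=\nu^*\mathcal{O}_X(p_1-p_2)\cong\mathcal{O}_{\tilde X}(\tilde p_1-\tilde p_2)$. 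Then the norm map $\Nm_{\tilde\pi_G}\colon\Pic_{\tilde X}\to\Pic_{C''}$ restricts to a homomorphism $P_{\tilde\pi}\to P_{C''}$ inducing the identity on component groups, both identified with $G^\vee$, as shown in the proof of Proposition \ref{Prop-IndGH} using the commutative diagram (\ref{CD-Jpi-JC''-Vee}). Using multiplicativity of the norm map (\S\ref{Sub-Prym} (i)) and its compatibility with the Abel--Jacobi maps (\cite[Proposition 3.1]{HP}) applied to the finite morphism $\tilde\pi_G\colon\tilde X\to C''$, one obtains $\Nm_{\tilde\pi_G}\big(\mathcal{O}_{\tilde X}(\tilde p_1-\tilde p_2)\big)\cong\mathcal{O}_{C''}(q_1-q_2)$, where $q_i=\tilde\pi_G(\tilde p_i)$.

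It then remains to identify the connected component of $P_{C''}$ containing $\mathcal{O}_{C''}(q_1-q_2)$. Writing $\chi:=\chi_{p_1,p_2}$ and using that $q_1=\chi(q_2)$, hence $\chi^{-1}(q_1)=q_2$, we get
\[
\mathcal{O}_{C''}(q_1-q_2)\cong\mathcal{O}_{C''}(q_1)\otimes\chi^{\ast}\mathcal{O}_{C''}(-q_1)=f^{(-1)}_{\chi}\big(\mathcal{O}_{C''}(-q_1)\big),
\]
with $\mathcal{O}_{C''}(-q_1)\in J^{-1}_{C''}$. By Theorem \ref{Thm-Conn-Prym-New} the image of $f^{(-1)}_{\chi}$ is contained in $P_{\chi^{-1}}$, so $\mathcal{O}_{C''}(q_1-q_2)$ lies in the component indexed by $\chi^{-1}$. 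Combining the three steps, the class of $\mathcal{O}_X(p_1-p_2)$ in $\pi_0(P)\cong G^\vee$ equals $\chi^{-1}_{p_1,p_2}$, which is the assertion.

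The steps I expect to require the most care are the bookkeeping ones: checking that the identifications $\pi_0(P)\cong\pi_0(P_{\tilde\pi})\cong\pi_0(P_{C''})\cong G^\vee$ used here are precisely the ones through which Theorem \ref{Thm-Conn-Prym-New} and the definition of $\chi_{p_1,p_2}$ are phrased, so that no spurious automorphism of $G^\vee$ creeps in, and keeping the inversions and the pullback-versus-pushforward conventions for covering transformations consistent. This is not a genuine obstacle, however, since the relevant compatibilities of $\nu^\vee$ and $\Nm_{\tilde\pi_G}$ with the component-group identifications have already been established in Lemma \ref{Lem-conn-comp-P'} and in the proof of Proposition \ref{Prop-IndGH}.
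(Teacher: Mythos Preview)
Your proposal is correct and follows essentially the same route as the paper's proof: transport $\mathcal{O}_X(p_1-p_2)$ along $\nu^{\vee}$ and then $\Nm_{\tilde\pi_G}$ to land on $\mathcal{O}_{C''}(q_1-q_2)\in P_{C''}$, recognise this as lying in the image of $f^{(-1)}_{\chi_{p_1,p_2}}$, and invoke Theorem \ref{Thm-Conn-Prym-New}. The paper packages these steps into a commutative diagram of translation maps on $\pi_0$, but the content and the cited inputs (\cite[Lemma 4.1]{HP}, the argument from the proof of Proposition \ref{Prop-IndGH}, and Theorem \ref{Thm-Conn-Prym-New}) are the same.
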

\begin{proof}
We have a commutative diagram
\begin{equation}
\begin{tikzcd}[row sep=2.5em, column sep=4em]
\pi_0(P) \arrow[r, "\sim"] \arrow[d, "T_{p_1,p_2}"'] & \pi_0(P_{\tilde{\pi}}) \arrow[r, "\sim"] \arrow[d, "T_{\tilde{p}_1,\tilde{p}_2}"'] & \pi_0(P_{C''}) \arrow[d, "T_{q_1,q_2}"]\\
\pi_0(P) \arrow[r, "\sim"] & \pi_0(P_{\tilde{\pi}}) \arrow[r, "\sim"] & \pi_0(P_{C''})
\end{tikzcd}
\end{equation}
where the isomorphism $\pi_0(P)\cong\pi_0(P_{\tilde{\pi}})$ is induced by the pullback along the normalisation map $\tilde{X}\rightarrow X$. (See \cite[Lemma 4.1 (3), (4)]{HP}.) We have seen in the proof of Proposition \ref{Prop-IndGH} that $\Nm_{\tilde{\pi}_G}$ induces an isomorphism $\pi_0(P_{\tilde{\pi}})\cong\pi_0(P_{C''})$. The left hand side of the diagram commutes because $\mathcal{O}_{X}(p_1-p_2)$ pulls back to $\mathcal{O}_{\tilde{X}}(\tilde{p}_1-\tilde{p}_2)$. The right hand side of the diagram commutes because $\Nm_{\tilde{\pi}_G}(\mathcal{O}_{\tilde{X}}(\tilde{p}_1-\tilde{p}_2))=\mathcal{O}_{C''}(q_1-q_2)$. Now $\mathcal{O}_{C''}(q_1-q_2)$ lies in the image of $f^{(-1)}_{\chi_{p_1,p_2}^{-1}}$ by the definition of $\chi_{p_1,p_2}$. It follows from Theorem \ref{Thm-Conn-Prym-New} that the bijection from $\pi_0(P_{C''})$ to itself induced by $T_{q_1,q_2}$ is the multiplication by $\chi_{p_1,p_2}^{-1}$.
\end{proof} 

\begin{Thm}\label{Thm-FHEP}
Let $p_1$ and $p_2$ be two smooth points of $X$ lying in a fibre of the spectral cover $\pi:X\rightarrow C$, and let $\chi=\chi_{p_1,p_2}$ be the corresponding character of $G$ as in Proposition \ref{Prop-T-pi0}. Then for any $x\in\bar{P}$, we have the following isomorphisms of torsion-free sheaves on $\bar{P}$.
\begin{itemize}
\item[(i)] Hecke eigenproperty.
$$
T_{p_1,p_2}^{\ast}\mathbf{A}_x\cong\mathbf{A}_x.
$$
\item[(ii)] Fractional Hecke Eigenproperty.
$$
T^{\ast}_{p_1,p_2}\mathbf{A}_x(\xi)\cong\mathbf{A}_x(\xi\otimes\Res^G_H\chi)
$$
for any $\xi\in H^{\vee}$, where $H=\Stab_{\Gamma}(x)$.
\end{itemize}
\end{Thm}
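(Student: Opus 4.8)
The plan is to reduce both assertions to understanding how the translation $T:=T_{p_1,p_2}$ interacts with the Fourier--Mukai kernel $\overline{\mathcal{Q}}$, and then to use the support computation of Theorem~\ref{Thm-B-proof} to pin down the character of $H$ that appears. Throughout, write $\ell:=\mathcal{O}_X(p_1-p_2)\in P$, $\chi:=\chi_{p_1,p_2}\in G^{\vee}$, and $H=\Stab_{\Gamma}(x)$.

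\textbf{Step 1 (an intertwining).} First I would produce a line bundle $\mathcal{N}_{\ell}$ on $[\bar{P}/\Gamma]$ with $(T\times\Id)^{\ast}\overline{\mathcal{Q}}\cong\overline{\mathcal{Q}}\otimes\pr_2^{\ast}\mathcal{N}_{\ell}$. Take $\mathcal{N}_{\ell}:=\overline{\mathcal{Q}}|_{\{\ell\}\times[\bar{P}/\Gamma]}$; since $\ell$ lies in the smooth locus $P$ and $(\Id\times r_P)^{\ast}\overline{\mathcal{Q}}\cong\overline{\mathcal{P}}$ is a line bundle over $P\times\bar{P}$ by Theorem~\ref{Thm-Ari2AB}(i), $\mathcal{N}_{\ell}$ is indeed a line bundle. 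Pulling the desired isomorphism back along $\Id\times r_P$ turns it into the assertion $(t_{\ell}\times\Id)^{\ast}\overline{\mathcal{P}}\cong\overline{\mathcal{P}}\otimes\pr_2^{\ast}\big(\overline{\mathcal{P}}|_{\{\ell\}\times\bar{P}}\big)$ on $\bar{P}\times\bar{P}$; over the smooth locus this is the theorem-of-the-cube identity for the Poincar\'e bundle, the correction factor being trivial (restrict to $\bar{P}\times\{0\}$), and since Arinkin's sheaf $\overline{\mathcal{P}}$ is Cohen--Macaulay and fails to be a line bundle only in codimension $\geq 2$, the isomorphism extends. A formal base-change and projection-formula computation then gives $T^{\ast}\circ\mathscr{FM}_{\overline{\mathcal{Q}}}\cong\mathscr{FM}_{\overline{\mathcal{Q}}}\circ(\mathcal{N}_{\ell}\otimes-)$.

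\textbf{Step 2 (reduction to one character, and proof of (i)).} By Lemma~\ref{Lem-Decomp} we have $\mathbf{A}_x(\xi)=\mathscr{FM}_{\overline{\mathcal{Q}}}(i_{x\ast}F_{\xi})$, and $i_x^{\ast}\mathcal{N}_{\ell}$ is a one-dimensional $H$-representation, say $F_{\eta}$ for a unique $\eta\in H^{\vee}$ depending only on $\ell$ and $H$. Hence, by Step~1 and the projection formula for the closed immersion $i_x$,
\[
T^{\ast}\mathbf{A}_x(\xi)\;\cong\;\mathscr{FM}_{\overline{\mathcal{Q}}}\big(\mathcal{N}_{\ell}\otimes i_{x\ast}F_{\xi}\big)\;\cong\;\mathscr{FM}_{\overline{\mathcal{Q}}}\big(i_{x\ast}(F_{\eta}\otimes F_{\xi})\big)\;\cong\;\mathbf{A}_x(\xi\otimes\eta).
\]
Summing over $\xi\in H^{\vee}$, using $F_{\eta}\otimes F_R\cong F_R$ (twisting the regular representation by a character returns the regular representation) together with $i_{x\ast}F_R\cong r_{P\ast}\underline{\mathbb{C}}_x$, gives $T^{\ast}\mathbf{A}_x\cong\mathbf{A}_x$, which is assertion (i). It thus remains only to show $\eta=\Res^{K_a}_H\chi$ (writing $\Res^G_H$ below for the relevant subgroup).

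\textbf{Step 3 (identifying $\eta$ via supports, and the main obstacle).} On one hand, Theorem~\ref{Thm-B-proof} applied to $\xi\otimes\eta$ gives $\Supp T^{\ast}\mathbf{A}_x(\xi)=\Supp\mathbf{A}_x(\xi\otimes\eta)=\bigcup_{\chi'\in\Ind^{G}_{H}(\xi\otimes\eta)}\bar{P}_{\chi'}^{irr}$. On the other hand, $\Supp T^{\ast}\mathbf{A}_x(\xi)=T^{-1}\big(\Supp\mathbf{A}_x(\xi)\big)$; by Proposition~\ref{Prop-T-pi0} the translation $T$ permutes the irreducible components of $\bar{P}$ (indexed by $\pi_0(P)\cong G^{\vee}$) by multiplication by $\chi^{-1}$, so $T^{-1}\bar{P}_{\chi'}^{irr}=\bar{P}_{\chi'\chi}^{irr}$, and combining with Theorem~\ref{Thm-B-proof} for $\mathbf{A}_x(\xi)$ yields $\Supp T^{\ast}\mathbf{A}_x(\xi)=\bigcup_{\chi'\in(\Ind^{G}_{H}\xi)\cdot\chi}\bar{P}_{\chi'}^{irr}$. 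An elementary computation with restriction/induction of characters shows $(\Ind^{G}_{H}\xi)\cdot\chi=\Ind^{G}_{H}(\xi\otimes\Res^{G}_{H}\chi)$, so the two supports agree as unions of irreducible components. Since the subsets $\{\Ind^{G}_{H}\zeta\}_{\zeta\in H^{\vee}}$ partition $G^{\vee}$, this forces $\xi\otimes\eta=\xi\otimes\Res^{G}_{H}\chi$, hence $\eta=\Res^{G}_{H}\chi$, which is (ii). The main obstacle is precisely this last step: Steps~1--2 are formal once the theorem-of-the-cube identity is extended across the singular locus of $\bar{P}\times\bar{P}$ (routine, as Arinkin's Poincar\'e sheaf is Cohen--Macaulay), but extracting the unknown twist $\eta$ from geometry genuinely requires the combinatorial precision of Theorem~\ref{Thm-B-proof} — supports indexed by fibres of $\Res^{G}_{H}$ — rather than merely a count of irreducible components.
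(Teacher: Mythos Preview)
Your argument uses the same two key inputs as the paper --- Arinkin's translation identity for the Poincar\'e sheaf and the support computation Theorem~\ref{Thm-B-proof} together with Proposition~\ref{Prop-T-pi0} --- and your Step~3 is exactly the paper's endgame. The difference is organizational: the paper works with $\overline{\mathcal{P}}$ on $\bar{P}\times\bar{P}$, quotes \cite[Lemma~6.5]{Ari2} for $(t_\ell\times\Id)^{\ast}\overline{\mathcal{P}}\cong\overline{\mathcal{P}}\otimes\pr_2^{\ast}M$, restricts to $\bar{P}\times\{x\}$ to get (i) immediately, and then uses (i) plus Krull--Schmidt and the simplicity of each $\mathbf{A}_x(\xi)$ (Lemma~\ref{Lem-Decomp}) to know $T^{\ast}\mathbf{A}_x(\xi)\cong\mathbf{A}_x(\xi')$ for \emph{some} $\xi'$, before matching supports. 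You instead lift the translation identity to the kernel $\overline{\mathcal{Q}}$ on $\bar{P}\times[\bar{P}/\Gamma]$, which yields the permutation of summands directly and makes Krull--Schmidt unnecessary. That is a cleaner formulation, but your Step~1 has a small gap: verifying $(T\times\Id)^{\ast}\overline{\mathcal{Q}}\cong\overline{\mathcal{Q}}\otimes\pr_2^{\ast}\mathcal{N}_\ell$ by pulling back along $\Id\times r_P$ is not automatic, because $r_P^{\vee}$ has nontrivial kernel $\Gamma_0^{\vee}$ (cf.\ the proof of Lemma~\ref{Inj-pV}), so an isomorphism upstairs only pins down the sheaf downstairs up to a character twist. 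This can be repaired (e.g.\ by checking $\Gamma$-equivariance of the upstairs isomorphism in the spirit of Proposition~\ref{Prop-equiv-barP-a'}, or by restricting to $\{0\}\times[\bar{P}/\Gamma]$), but the paper's route sidesteps the issue entirely.
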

\begin{proof}
By \cite[Lemma 6.5]{Ari2}, we have
$$
(T_{p_1,p_2}\times\Id)^{\ast}\overline{\mathcal{P}}\cong\overline{\mathcal{P}}\otimes\pr_2^{\ast}M
$$
where $M=\overline{\mathcal{P}}|_{\{\mathcal{O}_X(p_1-p_2)\}\times\bar{P}}$ is a line bundle. Restricting this isomorphism to $\bar{P}\times\{x\}$ gives (i). Now by Lemma \ref{Lem-Decomp} we have $$T_{p_1,p_2}^{\ast}\mathbf{A}_x\cong\bigoplus_{\xi\in H^{\vee}}T_{p_1,p_2}^{\ast}\mathbf{A}_x(\xi).$$ By Theorem \ref{Thm-B-proof}, the torsion-free sheaf $\mathbf{A}_x(\xi)$ is supported on the union of the irreducible components $\bar{P}_{\chi'}^{irr}$ with $\Res^G_H\chi'=\xi$, and so by Proposition \ref{Prop-T-pi0} the pullback $T_{p_1,p_2}^{\ast}\mathbf{A}_x(\xi)$ is supported on the union of the irreducible components $\bar{P}_{\chi'}$ with $\Res^G_H\chi'=\xi\otimes\Res^G_H\chi$. Since $\mathbf{A}_x(\xi)$ is an indecomposable coherent sheaf for each $\xi\in H^{\vee}$, we deduce that $T_{p_1,p_2}^{\ast}\mathbf{A}_x(\xi)\cong\mathbf{A}_x(\xi')$ for some $\xi'\in H^{\vee}$ by Krull-Schmidt theorem and (i). We see that if $\xi'=\xi\otimes\Res^G_H\chi$, then $\mathbf{A}_x(\xi')$ and $T_{p_1,p_2}^{\ast}\mathbf{A}_x(\xi)$ have the same support; otherwise, their supports have no common irreducible components. The only possibility is  $T_{p_1,p_2}^{\ast}\mathbf{A}_x(\xi)\cong\mathbf{A}_x(\xi\otimes\Res^G_H\chi)$. This completes the proof.
\end{proof}

\addtocontents{toc}{\protect\setcounter{tocdepth}{-1}}
\bibliographystyle{alpha}
\bibliography{BIB}

\begin{thebibliography}{FGOPN21}

\bibitem[AIK77]{AIK}
Allen~B. Altman, Anthony Iarrobino, and Steven~L. Kleiman.
\newblock Irreducibility of the compactified {J}acobian.
\newblock In {\em Real and complex singularities ({P}roc. {N}inth {N}ordic
  {S}ummer {S}chool/{NAVF} {S}ympos. {M}ath., {O}slo, 1976)}, pages 1--12.
  Sijthoff and Noordhoff, Alphen aan den Rijn, 1977.

\bibitem[AK79]{AK2}
Allen~B. Altman and Steven~L. Kleiman.
\newblock Compactifying the {P}icard scheme. {II}.
\newblock {\em Amer. J. Math.}, 101(1):10--41, 1979.

\bibitem[Ari11]{Ari1}
D.~Arinkin.
\newblock Cohomology of line bundles on compactified {J}acobians.
\newblock {\em Math. Res. Lett.}, 18(6):1215--1226, 2011.

\bibitem[Ari13]{Ari2}
Dima Arinkin.
\newblock Autoduality of compactified {J}acobians for curves with plane
  singularities.
\newblock {\em J. Algebraic Geom.}, 22(2):363--388, 2013.

\bibitem[BL04]{BL}
Christina Birkenhake and Herbert Lange.
\newblock {\em Complex abelian varieties}, volume 302 of {\em Grundlehren der
  mathematischen Wissenschaften [Fundamental Principles of Mathematical
  Sciences]}.
\newblock Springer-Verlag, Berlin, second edition, 2004.

\bibitem[BLR90]{BLR}
Siegfried Bosch, Werner L\"{u}tkebohmert, and Michel Raynaud.
\newblock {\em N\'{e}ron models}, volume~21 of {\em Ergebnisse der Mathematik
  und ihrer Grenzgebiete (3) [Results in Mathematics and Related Areas (3)]}.
\newblock Springer-Verlag, Berlin, 1990.

\bibitem[BNR89]{BNR}
Arnaud Beauville, M.~S. Narasimhan, and S.~Ramanan.
\newblock Spectral curves and the generalised theta divisor.
\newblock {\em J. Reine Angew. Math.}, 398:169--179, 1989.

\bibitem[Bro09]{B}
Sylvain Brochard.
\newblock Foncteur de {P}icard d'un champ alg\'{e}brique.
\newblock {\em Math. Ann.}, 343(3):541--602, 2009.

\bibitem[Car22]{Car}
Raffaele~Marco Carbone.
\newblock The direct image of generalized divisors and the {N}orm map between
  compactified {J}acobians.
\newblock {\em Geom. Dedicata}, 216(1):Paper No. 14, 40, 2022.

\bibitem[dC17]{dC}
Mark~Andrea de~Cataldo.
\newblock A support theorem for the {H}itchin fibration: the case of {${\rm
  SL}_n$}.
\newblock {\em Compos. Math.}, 153(6):1316--1347, 2017.

\bibitem[DP12]{DP}
R.~Donagi and T.~Pantev.
\newblock Langlands duality for {H}itchin systems.
\newblock {\em Invent. Math.}, 189(3):653--735, 2012.

\bibitem[EGK02]{EGK}
Eduardo Esteves, Mathieu Gagn\'{e}, and Steven Kleiman.
\newblock Autoduality of the compactified {J}acobian.
\newblock {\em J. London Math. Soc. (2)}, 65(3):591--610, 2002.

\bibitem[EK05]{EK}
Eduardo Esteves and Steven Kleiman.
\newblock The compactified {P}icard scheme of the compactified {J}acobian.
\newblock {\em Adv. Math.}, 198(2):484--503, 2005.

\bibitem[Est01]{E}
Eduardo Esteves.
\newblock Compactifying the relative {J}acobian over families of reduced
  curves.
\newblock {\em Trans. Amer. Math. Soc.}, 353(8):3045--3095, 2001.

\bibitem[EvdGM]{EGM}
Bas Edixhoven, Gerard van~der Geer, and Ben Moonen.
\newblock Abelian varieties.
\newblock Available at \url{http://van-der-geer.nl/~gerard/AV.pdf}.

\bibitem[FGOPN21]{FGOP}
Emilio Franco, Peter~B. Gothen, Andr\'{e} Oliveira, and Ana Pe\'{o}n-Nieto.
\newblock Unramified covers and branes on the {H}itchin system.
\newblock {\em Adv. Math.}, 377:Paper No. 107493, 61, 2021.

\bibitem[FHR22]{FHR}
Emilio Franco, Robert Hanson, and Jo\~ao Ruano.
\newblock Fourier-mukai transform for fine compactified prym varieties.
\newblock 2022.

\bibitem[FW08]{FW}
Edward Frenkel and Edward Witten.
\newblock Geometric endoscopy and mirror symmetry.
\newblock {\em Commun. Number Theory Phys.}, 2(1):113--283, 2008.

\bibitem[Gro61]{EGAII}
A.~Grothendieck.
\newblock \'{E}l\'{e}ments de g\'{e}om\'{e}trie alg\'{e}brique. {II}. \'{E}tude
  globale \'{e}l\'{e}mentaire de quelques classes de morphismes.
\newblock {\em Inst. Hautes \'{E}tudes Sci. Publ. Math.}, (8):222, 1961.

\bibitem[Gro62]{FGA}
Alexander Grothendieck.
\newblock {\em Fondements de la g\'{e}om\'{e}trie alg\'{e}brique. [{E}xtraits
  du {S}\'{e}minaire {B}ourbaki, 1957--1962.]}.
\newblock Secr\'{e}tariat math\'{e}matique, Paris, 1962.

\bibitem[GS22]{GS}
Michael Groechenig and Shiyu Shen.
\newblock Complex k-theory of moduli spaces of higgs bundles, 2022.

\bibitem[HH22]{HH}
Tam\'{a}s Hausel and Nigel Hitchin.
\newblock Very stable {H}iggs bundles, equivariant multiplicity and mirror
  symmetry.
\newblock {\em Invent. Math.}, 228(2):893--989, 2022.

\bibitem[Hor22]{Ho}
Johannes Horn.
\newblock Semi-abelian spectral data for singular fibres of the
  {$\text{SL}(2,\mathbb{C})$}-{H}itchin system.
\newblock {\em Int. Math. Res. Not. IMRN}, (5):3860--3917, 2022.

\bibitem[HP12]{HP}
Tam\'{a}s Hausel and Christian Pauly.
\newblock Prym varieties of spectral covers.
\newblock {\em Geom. Topol.}, 16(3):1609--1638, 2012.

\bibitem[HT03]{HT}
Tam\'{a}s Hausel and Michael Thaddeus.
\newblock Mirror symmetry, {L}anglands duality, and the {H}itchin system.
\newblock {\em Invent. Math.}, 153(1):197--229, 2003.

\bibitem[Ill71]{ILL}
L.~Illusie.
\newblock Conditions de finitude relatives.
\newblock In {\em Th{\'e}orie des Intersections et Th{\'e}or{\`e}me de
  Riemann-Roch}, pages 222--273, Berlin, Heidelberg, 1971. Springer Berlin
  Heidelberg.

\bibitem[Li22]{Li}
Mao Li.
\newblock The {P}oincar\'{e} line bundle and autoduality of {H}itchin fibers.
\newblock {\em Selecta Math. (N.S.)}, 28(3):Paper No. 49, 52, 2022.

\bibitem[MRV19]{MRV2}
Margarida Melo, Antonio Rapagnetta, and Filippo Viviani.
\newblock Fourier-{M}ukai and autoduality for compactified {J}acobians, {II}.
\newblock {\em Geom. Topol.}, 23(5):2335--2395, 2019.

\bibitem[MS22]{MS2}
Davesh Maulik and Junliang Shen.
\newblock On the intersection cohomology of the moduli of {${\rm
  SL}_n$}-{H}iggs bundles on a curve.
\newblock {\em J. Topol.}, 15(3):1034--1057, 2022.

\bibitem[Muk81]{Muk}
Shigeru Mukai.
\newblock Duality between {$D(X)$} and {$D(\hat X)$} with its application to
  {P}icard sheaves.
\newblock {\em Nagoya Math. J.}, 81:153--175, 1981.

\bibitem[Mum08]{M}
David Mumford.
\newblock {\em Abelian varieties}, volume~5 of {\em Tata Institute of
  Fundamental Research Studies in Mathematics}.
\newblock Published for the Tata Institute of Fundamental Research, Bombay; by
  Hindustan Book Agency, New Delhi, 2008.
\newblock With appendices by C. P. Ramanujam and Yuri Manin, Corrected reprint
  of the second (1974) edition.

\bibitem[Ngo10]{Ngo}
Bao~Chau Ngo.
\newblock Le lemme fondamental pour les alg\`ebres de {L}ie.
\newblock {\em Publ. Math. Inst. Hautes \'{E}tudes Sci.}, (111):1--169, 2010.

\bibitem[NR75]{NR}
M.~S. Narasimhan and S.~Ramanan.
\newblock Generalised {P}rym varieties as fixed points.
\newblock {\em J. Indian Math. Soc. (N.S.)}, 39:1--19 (1976), 1975.

\bibitem[{Sta}18]{S-P}
The {Stacks Project Authors}.
\newblock \textit{Stacks Project}.
\newblock \url{https://stacks.math.columbia.edu}, 2018.

\end{thebibliography}
\end{document}